\newcommand{\Z}{\mathbb{Z}}
\newcommand{\sP}{\scriptscriptstyle{\mathcal{P}}}
\newcommand{\K}{\mathcal{K}}
\newcommand{\ov}{\overline}
\newcommand{\ra}{\rightarrow}
\newcommand{\lra}{\longrightarrow}
\newcommand{\Hom}{\operatorname{Hom}}
\newtheorem{thm}{Theorem}[section]
\newtheorem{lem}[thm]{Lemma}
\newtheorem{prop}[thm]{Proposition}
\newtheorem{cor}[thm]{Corollary}
\theoremstyle{definition}
\newtheorem{ex}[thm]{Example}
\newtheorem{defn}[thm]{Definition}
\newtheorem{rem}[thm]{Remark}
\numberwithin{equation}{section}
\numberwithin{figure}{section}
\newcommand{\G}{\Gamma}
\newcommand{\La}{\Lambda}
\newcommand{\Q}{\mathbb{Q}}
\newcommand{\R}{\mathcal{R}}
\newcommand{\sss}{\scriptscriptstyle}
\newcommand{\gn}{\ensuremath{G^{\sss (n)}}}
\newcommand{\gnp}{\ensuremath{G^{\sss (n+1)}}}
\newcommand{\an}{\ensuremath{A^{\sss (n)}}}
\newcommand{\bn}{\ensuremath{B^{\sss (n)}}}
\newcommand{\bnp}{\ensuremath{B^{\sss (n+1)}}}
\title{Primary Decomposition and the Fractal Nature of Knot Concordance}
\author{Tim D. Cochran$^{\dag}$}
\address{Department of Mathematics MS-136, P.O. Box 1892, Rice University, Houston, TX 77251-1892}
\email{cochran@rice.edu}
\author{Shelly Harvey$^{\dag\dag}$}
\address{Department of Mathematics MS-136, P.O. Box 1892, Rice University, Houston, TX 77251-1892}
\email{shelly@rice.edu}
\author{Constance Leidy$^{\dag\dag\dag}$}
\address{Department of Mathematics, Wesleyan University, Wesleyan Station, Middletown, CT 06459}
\email{cleidy@wesleyan.edu}
\thanks{$^{\dag}$Partially supported by the National Science Foundation  DMS-0706929}
\thanks{ $^{\dag\dag}$Partially supported by NSF CAREER DMS-0748458 and The Alfred P. Sloan Foundation}
\thanks{$^{\dag\dag\dag}$Partially supported by NSF DMS-0805867 }
\subjclass[2000]{Primary $57$M$25$; Secondary $20$J$05$}
\begin{document}
\begin{abstract} For each sequence $\mathcal{P}=(p_1(t),p_2(t),\dots)$ of polynomials we define a characteristic series of groups, called the \emph{derived series localized at $\mathcal{P}$}. These group series yield filtrations of the knot concordance group that refine the $(n)$-solvable filtration. We show that the quotients of successive terms of these refined filtrations have infinite rank. The new filtrations allow us to distinguish between knots whose classical Alexander polynomials are coprime and even to distinguish between knots with coprime higher-order Alexander polynomials. This provides evidence of higher-order analogues of the classical $p(t)$-primary decomposition of the algebraic concordance group. We use these techniques to give evidence that the set of smooth concordance classes of knots is a fractal set.
\end{abstract}

\maketitle

\section{Introduction}\label{intro}

A (classical) \textbf{knot} $K$ is the image of a tame embedding of an oriented circle in $S^3$. Two knots, $K_0\hookrightarrow S^3\times \{0\}$ and $K_1\hookrightarrow S^3\times \{1\}$, are \textbf{concordant} if there exists a proper smooth embedding of an annulus into $S^3\times [0,1]$ that restricts to the knots on $S^3\times \{0,1\}$.  The equivalence relation of concordance first arose in the early $1960's$ in work of Fox, Kervaire and Milnor  in their study of isolated singularities of $2$-spheres in $4$-manifolds and indeed, certain concordance problems are known to be \emph{equivalent} to whether higher-dimensional surgery techniques ``work'' for topological $4$-manifolds ~\cite{FM,KM1,CF}. Let $\mathcal{K}$ be the set of ambient isotopy classes of knots and let $\mathcal{C}$ denote the set of (smooth) concordance classes of knots. Since isotopic knots are concordant, there is a natural surjection $\mathcal{K}\to \mathcal{C}$. Furthermore, it is known that the connected sum operation endows $\mathcal{C}$ with the structure of an abelian group, called the \textbf{smooth knot concordance group}. The identity element is the class of the trivial knot. Any knot in this class is concordant to a trivial knot and is called a \textbf{slice knot}. Equivalently, a slice knot is one that is the boundary of a smooth embedding of a $2$-disk in $B^4$. In the late $60's$ Milnor and Tristram showed that this group has infinite rank. It is also known to contain an infinite linearly independent set of elements of order two. Much work has been done on the subject of knot concordance (for excellent surveys see ~\cite{Go1,Li1}). In particular,  ~\cite{COT} introduced a natural filtration of $\mathcal{C}$ by subgroups
$$
\cdots \subset \mathcal{F}_{n+1} \subset \mathcal{F}_{n.5}\subset\mathcal{F}_{n}\subset\cdots \subset
\mathcal{F}_1\subset \mathcal{F}_{0.5} \subset \mathcal{F}_{0} \subset \mathcal{C}.
$$
called the \textbf{($n$)-solvable filtration} of $\mathcal{C}$ and denoted $\{\mathcal{F}_{n}\}$ (defined in Section~\ref{sec:series}). The filtration is significant due to its intimate connection to the work of A. Casson and M. Freedman on the topological classification problem for $4$-manifolds; but also because the classical knot concordance invariants are neatly encapsulated in the low-order terms. The non-triviality of $\mathcal{C}$ can be measured in terms of the associated graded abelian groups $\{\mathbb{G}_n=\mathcal{F}_{n}/\mathcal{F}_{n.5}~|~n\in\mathbb{N}\}$. We ignore the other ``half'' of the filtration, $\mathcal{F}_{n.5}/\mathcal{F}_{n+1}$, where almost nothing is known!

The first term, $\mathbb{G}_0$, is essentially Levine's \emph{algebraic knot concordance group}. That it has infinite rank is the aforementioned result of Milnor and Tristram. In fact the work of Milnor, Levine, and Stolzfus in the $1960$'s resulted in a complete classification:
$$
\mathbb{G}_{0}/\text{torsion}\cong\Z^\infty\cong\bigoplus_{\substack{p(t)}}\Z^{r_p}
$$
where the sum is over all irreducible $p(t)\in \Z[t]$ with $p(1)=\pm 1$, $p(t^{-1})\doteq p(t)$, and where $r_p$ is the number of distinct pairs $(z,\overline{z})$ of unit norm complex roots of $p(t)$ ~\cite[Section 11-20]{Le10}\cite[p.120]{Hi}\cite[Proposition 3.2]{Cha2}~\cite{Sto}.  Said differently, the algebraic knot concordance group decomposes into the direct sum of its $p(t)$-primary parts, each of which, modulo torsion, is isomorphic to $\Z^{r_p}$, as detected by a \textbf{Milnor signature} associated to the pair $(z,\overline{z})$ ~\cite{M3}. Indeed, the first step in the classification of $\mathbb{G}_{0}$ is to decompose the Alexander module of $K$ (together with its Blanchfield form) into its primary parts. In this decomposition, a given knot has a non-zero $p(t)$-component only if $p(t)$ is a factor of its Alexander polynomial. This relies heavily on the fact that $\Z[t,t^{-1}]$ is a unique factorization domain. The reader should be aware that the Alexander polynomial of a knot is not itself invariant under concordance, but there are signature invariants that are associated to its roots (as above).

The primary goal of the present paper is to suggest, and give evidence for, an analogous  but much more intricate decomposition for each $\mathbb{G}_n$. The full decomposition will necessarily be much more complicated since certain so-called higher-order Alexander modules are relevant, and these are modules over noncommutative rings that are not unique factorization domains.
We herein define, to each sequence of polynomials $\mathcal{P}=(p_1(t),...,p_n(t))$, a new filtration of $\mathcal{C}$, denoted $\{\mathcal{F}_n^\mathcal{P}\}$, such that $\mathcal{F}_n\subset \mathcal{F}_{n}^\mathcal{P}$ (Section~\ref{sec:series}). Then we consider the product of all quotient maps
$$
\mathbb{G}_n/\text{torsion}\to \prod_{\substack{\mathbb{P}_n}} \frac{\mathcal{F}_n}{\mathcal{F}_{n.5}^\mathcal{P}\cap \mathcal{F}_n}
$$
where the product is taken over the set $\mathbb{P}_n$ of all ``distinct'' sequences $\mathcal{P}$. In this scheme, $p_1(t)$ should be thought of as a prime factor of the classical Alexander polynomial and the other $p_i$ are related to higher-order Alexander polynomials. The quotient corresponding to $\mathcal{P}$ should be thought of as localizing at $\mathcal{P}$ in the sense that, loosely speaking, knots whose higher-order Alexander polynomials are coprime to $\mathcal{P}$ will vanish in this quotient. We conjecture that the image of this map is the direct \emph{sum} over $\mathbb{P}_n$. This then gives the broad outlines of our proposed ``primary decomposition'' of $\mathbb{G}_n$. As evidence for this we produce, for each $\mathcal{P}$, an infinite rank summand $\Z^\infty\subset \mathbb{G}_n$ such that the composition
$$
\bigoplus_{\substack{\mathbb{P}_n}}\Z^\infty\subset \mathbb{G}_n\overset{\psi_\mathcal{P}}\twoheadrightarrow \frac{\mathcal{F}_n}{\mathcal{F}_{n.5}^\mathcal{P}\cap \mathcal{F}_n}
$$
is injective on the $\Z^\infty\subset \mathbb{G}_n$ summand corresponding to $\mathcal{P}$ and is zero on all other summands. We note that, in a subsequent paper, we exhibit this same structure for $2$-torsion elements of $\mathbb{G}_n$ ~\cite{CHL6}.

Having outlined our primary goal, we will now review some further history related to primary decomposition, and reformulate our results in a more concrete form.

In the $1970$'s the introduction of Casson-Gordon invariants in ~\cite{CG1,CG2} led to the result \cite{Ji1}:
$$
\Z^\infty\subset \mathbb{G}_{1}.
$$
A specific family of knots realizing such a $\Z^\infty$ is shown on the right-hand side of Figure ~\ref{fig:examplesofdifferentG1}. These knots result from starting with the $9_{46}$ ribbon knot, denoted $R^1$, shown on the left-hand side of Figure ~\ref{fig:examplesofdifferentG1} (here $-1$ means one full negative twist); then modifying it by tying its central bands into the shape of an auxiliary (Arf invariant zero) knot $J$. The resulting knot is denoted $R^1(J)$. If $J$ varies over a set of knots whose Milnor signatures are ``independent'' then $\{R^1(J)\}$ will be linearly independent in $\mathbb{G}_{1}$.
\begin{figure}[htbp]
\setlength{\unitlength}{1pt}
\begin{picture}(327,151)
\put(-20,0){\includegraphics{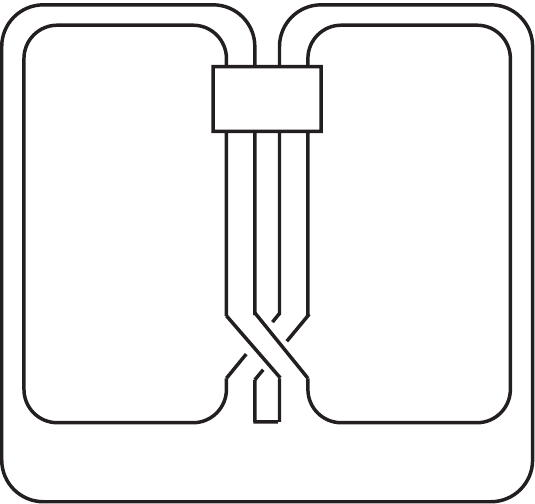}}
\put(194,0){\includegraphics{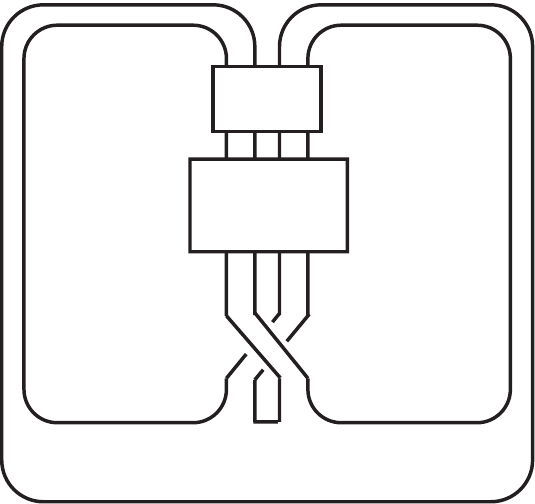}}
\put(48,113){$-1$}
\put(84,78){}
\put(266,83){$J$}
\put(264,113){$-1$}
\put(148,56){$R^{1}(J)\equiv$}
\put(-50,56){$R^{1}\equiv$}
\end{picture}
\caption{Family of knots, $R^1(J)$, in $\mathcal{G}_1$ distinguished by classical signatures of $J$}\label{fig:examplesofdifferentG1}
\end{figure}
But more recent work of Se-Goo Kim (on the ``$p$-primary splitting'' of Casson-Gordon invariants)~\cite{KiS}; and by Se-Goo Kim and Taehee Kim (using metabelian $L^{(2)}$-signatures)  ~\cite{KiKi} leads to a refinement analogous to that of Levine:
$$
\bigoplus_{\substack{p(t)}}\Z^\infty\subset\mathbb{G}_{1}.
$$
A family of knots that realizes such a subgroup is shown in Figure~\ref{fig:examplesofdifferentG1Kim}. Here the base ribbon knot, denoted $R^k$, $k>0$, is allowed to vary ($-k$ denotes $k$ full negative twists). The Alexander polynomial of $R^k$ is $p_1(t)p_1(t^{-1})$, where $p_1(t)=kt-(k+1)$, and these are coprime for different values of $k$. The result is a two parameter family, $R^{k}(J)$, distinguished up to concordance not only by the signatures of $J$ but also by the Alexander polynomials of the $R^k$.
\begin{figure}[htbp]
\setlength{\unitlength}{1pt}
\begin{picture}(327,151)
\put(84,0){\includegraphics{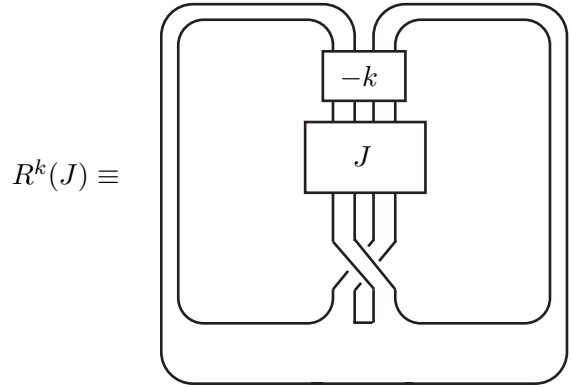}}
\put(157,83){$J$}
\put(152,113){$-k$}
\put(028,76){$R^{k}(J)\equiv$}
\end{picture}
\caption{A $2$-parameter family in $\mathcal{G}_1$, distinguished by the signatures of $J$ and the Alexander polynomial of $R^k$}\label{fig:examplesofdifferentG1Kim}
\end{figure}
This (and other results of ~\cite{KiKi}) gives strong evidence for the existence of a primary decomposition of $\mathbb{G}_{1}$.  These same authors have announced some partial results about the polynomial splitting of certain higher-order $L^{(2)}$-signatures.  However, in all these results the knots are once again distinguished only by their \emph{classical} Alexander polynomials.

Now we consider the possibility of a decomposition of $\mathbb{G}_n$ for $n>1$. A number of papers have addressed the  non-triviality of $\{\mathbb{G}_{n}\}$, \cite{Gi3,GL2,Ki1,Fr2,COT2,CT}, culminating in ~\cite{CHL3} where it was shown that $\mathbb{G}_n$ has infinite rank for any integer $n$, that is, it was shown that there exists
\begin{equation}\label{eq:ourthm}
\Z^\infty\subset \mathbb{G}_{n}.
\end{equation}
Our present work enables us to prove a substantial generalization of this fact, along the lines of the Levine-Milnor-Stoltzfus primary decomposition of $\mathbb{G}_0$ and in the spirit of the Kim-Kim work on $\mathbb{G}_1$. We prove that, for each prime $p_1(t)$ (that can occur as a divisor of the Alexander polynomial of a knot) there is a distinct infinite rank subgroup of $\mathbb{G}_n$ all of whose classical Alexander modules are cyclic of order $p_1(t)p_1(t^{-1})$. But we go much farther. We show that each of \emph{these} subgroups (consisting of knots with the same Alexander polynomial) decomposes into (infinite rank) subgroups whose members are distinguished by the orders of their \emph{second higher-order Alexander modules}; et cetera.

To see how this works in a specific case, fix $n=2$. We first describe an infinite family realizing ~(\ref{eq:ourthm}) in the case $n=2$. The left-hand side of Figure~\ref{fig:examplesofinfgen} shows a ribbon knot, $R^1$ with the same Alexander polynomial as $9_{46}$ (here $T_1$ is a certain fixed knot which is not relevant to this overview). Now, for any knot $J$ we can form $R^1_\alpha(J)$ and then define the knot $R^{1}_\alpha(R^1_\alpha(J))$ as shown on the right-hand side of Figure~\ref{fig:examplesofinfgen}. Then varying $J$ over any collection of Arf invariant zero knots with independent signatures yields an infinite family generating $\Z^{\infty}\subset \mathbb{G}_2$ \cite{CHL3}. All of these knots have the same classical Alexander polynomial, that of $R^1$. They are distinguished by the classical signatures of $J$.
\begin{figure}[htbp]
\setlength{\unitlength}{1pt}
\begin{picture}(327,151)
\put(-25,0){\includegraphics{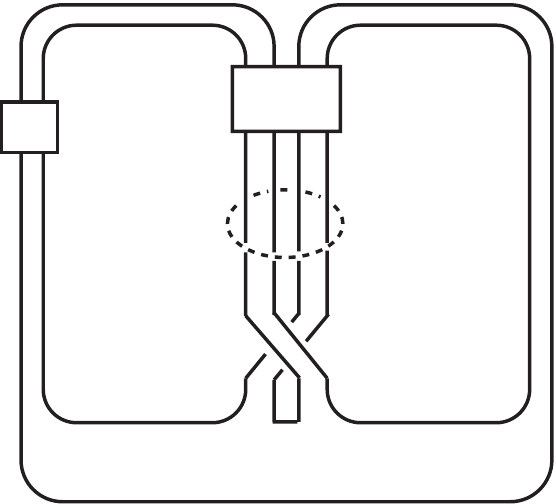}}
\put(204,0){\includegraphics{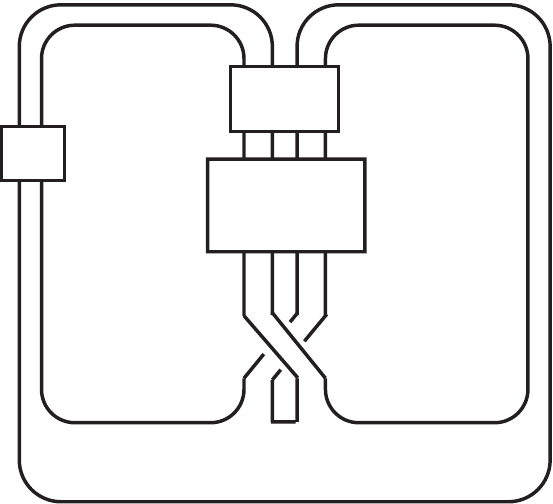}}
\put(49,113){$-1$}
\put(79,78){$\alpha$}
\put(270,83){$R^1_\alpha(J)$}
\put(279,113){$-1$}
\put(209,98){$T_1$}
\put(142,56){$R^{1}_\alpha(R^1_\alpha(J))\equiv$}
\put(-20,106){$T_1$}
\put(-48,56){$R^{1}\equiv$}
\end{picture}
\caption{A family, $R^{1}_\alpha(R^1_\alpha(J))$, in $\mathbb{G}_2$ distinguished by the classical signatures of $J$}\label{fig:examplesofinfgen}
\end{figure}

But now we can consider the family of ribbon knots $R^k$ as shown in Figure~\ref{fig:examplesofdifferent}, which, for varying $k$ have coprime Alexander polynomials.
\begin{figure}[htbp]
\setlength{\unitlength}{1pt}
\begin{picture}(327,151)
\put(80,0){\includegraphics{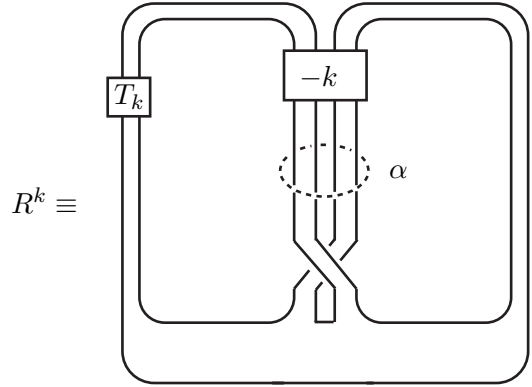}}
\put(153,113){$-k$}
\put(187,78){$\alpha$}
\put(83,106){$T_k$}
\put(44,65){$R^{k}\equiv$}
\end{picture}
\caption{Family of ribbon knots with different Alexander polynomials}\label{fig:examplesofdifferent}
\end{figure}
Then by utilizing $R^k$ and $R^m$ we can form a two parameter family of knots, $R^{k}_\alpha(R^m_\alpha(J))$, as shown in Figure~\ref{fig:examplesofdifferent2}.
\begin{figure}[htbp]
\setlength{\unitlength}{1pt}
\begin{picture}(327,151)
\put(94,0){\includegraphics{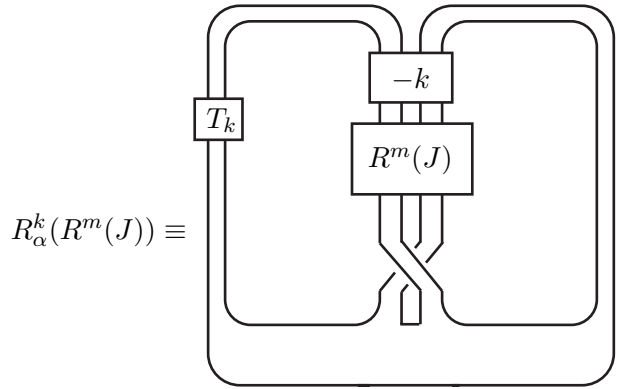}}
\put(160,83){$R^m(J)$}
\put(169,113){$-k$}
\put(99,98){$T_k$}
\put(25,56){$R^{k}_\alpha(R^m(J))\equiv$}
\end{picture}
\caption{Three parameter family of knots $R^{k}_\alpha(R^m(J))\in \mathbb{G}_2$, distinguished by torsion in first and second Alexander modules, and the signatures of $J$}\label{fig:examplesofdifferent2}
\end{figure}
Then, upon choosing a set of $J$ with independent signatures, we will show that this yields a $2$-parameter collection of $\Z^\infty$ subgroups of $\mathbb{G}_2$. For fixed $J$ and different values of $k$ these are distinguished by their classical Alexander polynomials. For $J$ and $k$ fixed, these all have isomorphic classical Alexander modules but, we claim, are distinguished by torsion in their  \emph{second} higher-order  Alexander modules.

To be more specific, recall that the higher-order Alexander modules of a knot have a purely group theoretic description as the quotient, $G^{(i)}/G^{(i+1)}$, of successive terms in the derived series of the fundamental group of the zero framed surgery on the knot ~\cite{C,COT}.  Each of these is known to be a torsion module over the ring $\Z[G/G^{(i)}]$ ~\cite[Section 2]{COT}. Taking $i=1$ gives the classical Alexander module. A Mayer-Vietoris argument shows that the second order Alexander module (taking $i=2$) of $R^{k}_\alpha(J')$, for any $J'$, contains a summand related to the classical Alexander module of $J'$ ~\cite[Thm 8.2]{C}~\cite[Lemma 5.10]{CHL6}. Specifically in the case of the knots of Figure~\ref{fig:examplesofdifferent2}, the Alexander module of $J'=R^{m}_\alpha(J)$ is cyclic of order
$$
p_2(t)p_2(t^{-1})=\left(mt-(m+1)\right)\left(mt^{-1}-(m+1)\right),
$$
so the second order Alexander module of $R^{k}_\alpha(R^{m}_\alpha(J))$ contains a submodule of the form
$$
\frac{\Z\G}{p_{2}(x)p_{2}(x^{-1})\Z\G}
$$
for $\G=G/G^{(2)}$  and for some $x\in G^{(1)}/G^{(2)}$.
In summary, the examples of Figure~\ref{fig:examplesofdifferent2} give a $3$-parameter family (varying $k$, $m$ and signatures of $J$) of $2$-solvable knots that are linearly independent modulo $\mathcal{F}_{2.5}$. But what the reader should focus on is that they are distinguished by the orders of elements in their first and second order Alexander modules. That these knots are distinct up to isotopy is obvious. What is difficult is to show that this difference persists in $\mathcal{C}$.

Moreover this pattern continues. Consider a set $\mathbb{P}_n=\{\mathcal{P}\}$ of all ``distinct'' $n$-tuples $\mathcal{P}=(p_1(t),...,p_n(t))$ of polynomials with $p_i(1)=\pm 1$ (for the definition of distinct see Definitions~\ref{def:stronglycoprime} and ~\ref{def:distinctP}). We then prove in Theorem~\ref{thm:fractal} that for each $\mathcal{P}\in\mathbb{P}_n$ there is a distinct $\Z^\infty\subset \mathcal{F}_n$, yielding a subgroup
\begin{equation}\label{eq:bigsubgroup}
\bigoplus_{\substack{\mathbb{P}_n}}\Z^\infty\subset \mathbb{G}_n.
\end{equation}
The polynomial $p_1$ relates to the order of torsion in the classical, or first, Alexander module, while the higher $p_i$ relate to the type of torsion in the $i^{th}$ higher-order Alexander module, as in the examples above. Thus we show that one can distinguish concordance classes of knots not only by their classical Alexander polynomials, but also, loosely speaking, by their higher-order Alexander polynomials.

We briefly explain our strategy to distinguish knots with different torsion in their higher-order Alexander modules, since it motivates several chapters of new mathematics we must create. The presence of a certain type of $\Z[G/G^{(n)}]$-torsion in the module $G^{(n)}/G^{(n+1)}$ can potentially be detected by localization. The process of localization of modules, when such a process exists, serves to kill torsion, depending on what subset of $\Z[G/G^{(n)}]$ is inverted. Although, there is no good notion of localizing a ring or module at a prime ideal in a general noncommutative ring, there is a classical theory of localization over Ore domains. Therefore the project of distinguishing among knots with different torsion in higher order Alexander modules hinges critically on our ability to accurately specify new families of right divisor sets in the Ore domain $\Z[G/G^{(n)}]$. Moreover, taking advantage of a philosophy introduced by the second author, we show that choosing different localizations for each $n$ leads to different group series, each an enlargement of the derived series (Section~\ref{sec:seriesfromloc}). Specifically, we distinguish among knots with different torsion in their higher-order Alexander modules by defining, for each sequence $\mathcal{P}$, a characteristic series of groups, $\{G^{(n)}_\mathcal{P}\}$, that we call the derived series localized at $\mathcal{P}$ (Section~\ref{sec:locprimes}). Then to any knot we can associate a $3$-manifold, $M_K$, the zero framed surgery on $K$ and a
coefficient system
$$
\phi:\pi_1(M_K)\equiv G\to G/G^{(n+1)}_\mathcal{P}.
$$
To this we associate the real-valued von Neumann $\rho$-invariant, $\rho(M_K,\phi)$ (Section~\ref{sec:rhoinvs}). This invariant is shown to vanish trivially on all knots \emph{except} those whose higher-order Alexander modules have the torsion characteristics to match $\mathcal{P}$. Finally, the higher-order signature invariants are shown to obstruct membership in $\{\mathcal{F}_n^\mathcal{P}\}$ (Section~\ref{sec:rhoinvs}).

This paper has a secondary goal (but related to the primary goal). Recall that a \emph{fractal set} $\mathcal{F}$ is one that admits \emph{self-similarity structures}, by which is meant merely a system of injective maps $\phi_i:\mathcal{F}\hookrightarrow \mathcal{F}$
~\cite[Def. 3.1]{BGN}. Of course any infinite set has many proper self-embeddings, so one normally expects self-similarities to be in some sense natural with respect to other structure that may exist on the set. $\mathcal{K}$ itself has many well-known self-similarity structures given by classical \emph{satellite constructions}. If $R$ is a knot and $\alpha$ is a simple closed curve in $S^3-R$ that bounds an embedded disk in $S^3$ as shown on the left-hand side of Figure~\ref{fig:satellite}, then $(R,\alpha)$ parametrizes an operator $R_\alpha :\mathcal{K}\to \mathcal{K}$ wherein $R_{\alpha}(K)$ is obtained from $R$ by tying all the strands of $R$ that pass through $\alpha$ into parallel copies of the knot $K$ as indicated schematically on the right-hand of Figure~\ref{fig:satellite}.
\begin{figure}[htbp]
\setlength{\unitlength}{1pt}
\begin{picture}(162,111)
\put(21,3){$R$}
\put(-14,52){$\alpha$}
\put(96,3){$R_{\alpha}(K)$}
\put(0,20){\includegraphics{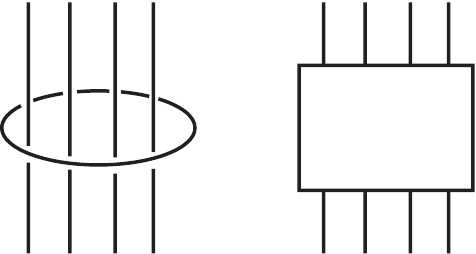}}
\put(105,53){$K$}
\end{picture}
\caption{$R_{\alpha}(K)$}\label{fig:satellite}
\end{figure}
As long as this operator is non-trivial (in the sense that $\alpha$ does not bound an embedded disk in $S^3-R$) it is known to be injective (by the uniqueness of the torus decomposition of $3$-manifolds). Moreover, the number of distinct such embeddings (varying $R$ and $\alpha$) is infinite, with the parameter space containing many natural independent parameters. For example, the Alexander polynomial of $R$ is one such parameter, as is the integer $n$ that represents the maximum depth of $\alpha$ in the derived series of $\pi_1(S^3-R)$. Any two operators with differing values of these parameters \emph{necessarily} are different operators (again by the uniqueness of the torus decomposition). Since isotopic knots are concordant, there is a natural surjective map $\mathcal{K}\to \mathcal{C}$. It is well known that each of these operators descends to give an operator (not a homomorphism)
$$
R_\alpha :\mathcal{C}\to \mathcal{C}.
$$
We conjecture (and present evidence) that many of these self-similarity structures on $\mathcal{K}$ descend to self-similarity structures on $\mathcal{C}$, that is, we conjecture that many of the operators
$R_\alpha$ on $\mathcal{C}$ are injective. For example, if $\alpha$ is a meridional loop for $R$ then $R_\alpha([K])=[R\# K]=[R]+[K]$, so $R_\alpha$ is injective. The classical operation of ``Whitehead doubling'' is another particular example of such an operation that is conjectured to be injective on $\mathcal{C}$.

We do not know if fractal structures on a set (that happens to be an abelian group) can be useful in understanding that set. But we profit from considering mathematical structures on $\mathcal{C}$ \emph{other} than its \emph{group} structure. Certainly topologists \emph{are} interested in more than the group structure of $\mathcal{C}$. In particular, they are interested in how knot concordance behaves with respect to natural geometric operations such as satellite and cabling operations that are \emph{known} to not induce homomorphisms. Furthermore there are many interesting questions such as: Is $R_\alpha$ continuous with respect to the topology on $\mathcal{C}$ induced by the $n$-solvable filtration? Is there a good metric topology on $\mathcal{C}$?

We remark that there is another interesting question (related to injectivity): Does $R_\alpha$ send large linearly independent sets to linearly independent sets? Since these operators are not homomorphisms, this question is logically independent of the question of injectivity! In the context of this paper we address both questions.

As evidence for the existence of self-similarities we show that certain of these operators, that we call \emph{robust doubling operators} (see Definitions~\ref{def:doublingop} and ~\ref{def:robust}), are injective on the subgroup consisting of essentially all known non-trivial examples (modulo torsion). Moreover we conjecture that the number of distinct robust operators (varying $R$ and $\alpha$) is infinite, with the parameter space containing natural independent parameters. This would mean that $\mathcal{C}$ embeds in itself in many distinct ways, all with disjoint images. In particular the Alexander polynomial, $p(t)$, of the knot $R$ is a natural parameter. We prove not only that each such robust $R_\alpha$ is injective on  the subgroup consisting of essentially all known examples, but that if $R_\alpha$ and $R'_\beta$ are such that $R$ and $R'$ have Alexander polynomials that are coprime, then $R_\alpha$ and $R'_\beta$ have \emph{disjoint images} on this subgroup!

\newtheorem*{thm:injectivity}{Theorem~\ref{thm:injectivity}}
\begin{thm:injectivity} If $R_\alpha$ is a robust operator then $R_\alpha:\mathcal{C}\to \mathcal{C}$ is injective on the subgroup
$$
\bigoplus_{\substack{n}}\bigoplus_{\substack{\mathbb{P}_n}}\Z^\infty\subset \mathcal{C}
$$
from ~\eqref{eq:bigsubgroup}. Moreover, if $R_\alpha$ and $R'_\beta$ are robust doubling operators for which the Alexander polynomials of $R$ and $R'$ are coprime, then $R_\alpha$ and $R'_\beta$ have disjoint images (i.e. intersecting only in $\{0\}$), when restricted to this subgroup. Furthermore, the composition $\mathcal{C}\overset{R_{\alpha}}{\longrightarrow}\mathcal{C}\to\mathcal{C}/\mathcal{F}_{n.5}$
is injective on the subgroup
$$
\bigoplus_{\substack{\mathbb{P}_n}}\Z^\infty\subset \mathcal{F}_n\subset\mathcal{C}
$$
from ~\eqref{eq:bigsubgroup}.
\end{thm:injectivity}

To view this evidence for the existence of many distinct self-similarities diagrammatically, for each knot polynomial $p_k(t)=\delta(t)\delta(t^{-1})$ with $\delta$ prime, we define a robust operator (abbreviated here as) $p_k:\mathcal{C}\to \mathcal{C}$ with the property that $p_k(\mathbb{G}_j)\subset \mathbb{G}_{j+1}$ (Example~\ref{ex:robustoperators2}). In fact the operators $R^k_\alpha$ in Figure~\ref{fig:examplesofdifferent} are the examples where $\delta_k(t)$ is linear, so the reader can focus on those. Since there are countably infinitely many such polynomials $p_k$, these maps and their compositions are parametrized by an infinite tree with countably infinite valence at each vertex, as indicated in Diagram~\ref{diagram:fractal}.
$$
\begin{diagram}\label{diagram:fractal}\dgARROWLENGTH=2.0em
\node{\mathcal{C}} \arrow{se,t}{p_1}\\ \node{\vdots}\node{\mathcal{C}}\arrow{sse,t}{p_1}\\ \node{\mathcal{C}} \arrow{ne,b}{p_k} \\ \node[2]{\vdots}\node{\mathcal{C}}\\ \node{\mathcal{C}} \arrow{se,t}{p_1}\\ \node{\vdots}\node{\mathcal{C}}\arrow{nne,t}{p_k}\\ \node{\mathcal{C}} \arrow{ne,b}{p_k}
\end{diagram}
$$

\noindent The set $\mathbb{P}_n$ parametrizes compositions of $n$ of these operators (terminating in the right-most copy of $\mathcal{C}$) as suggested by the diagram. Our conjectures would imply that each such composition of length $n$ (that is, $p_{i_n}\circ \dots\circ p_{i_1}$) that terminates at the right-most copy of $\mathcal{C}$ is an embedding and moreover that these compositions have images that intersect only in the class of the trivial knot. As evidence (re-wording Theorem~\ref{thm:injectivity}) we exhibit infinite linearly independent subsets of $\mathbb{G}_0$ the union of whose images under these compositions are linearly independent in $\mathbb{G}_n$, giving the very large subgroup of ~\eqref{eq:bigsubgroup}, as suggested by the diagram below.

$$
\begin{diagram}\label{diagram:fractal3}\dgARROWLENGTH=1.5em
\node{\mathbb{G}_{0}\to} \node{\dots\mathbb{G}_{n-2}} \arrow{se,t}{p_1}\\ \node{\mathbb{G}_{0}\to}\node{\vdots}\node{\mathbb{G}_{n-1}}\arrow{sse,t}{p_1}\\ \node{\mathbb{G}_{0}\to  }\node{\dots\mathbb{G}_{n-2}} \arrow{ne,b}{p_k} \\ \node[3]{\vdots}\node{\mathbb{G}_n}\node{\!\!\!\!\!\!\!\!\!\supset{\oplus}_{\mathbb{P}_n}~\Z^\infty}\\ \node{\mathbb{G}_{0}\to} \node{\dots\mathbb{G}_{n-2}} \arrow{se,t}{p_1}\\ \node{\mathbb{G}_{0}\to}\node{\vdots}\node{\mathbb{G}_{n-1}}\arrow{nne,t}{p_k}\\ \node{\mathbb{G}_{0}\to}\node{\dots\mathbb{G}_{n-2}} \arrow{ne,b}{p_k}
\end{diagram}
$$

To see an artistic suggestion of the self-similarity arising from iterated satellite operations, consider the case that $R$ is the $9_{46}$ knot shown on the left-hand side of Figure~\ref{fig:R3} with two designated circles along which we will perform infection. Then $R\left (R(R,R),R(R,R)\right )$ is shown on the right-hand side of Figure~\ref{fig:R3}.
\begin{figure}[htbp]
\setlength{\unitlength}{1pt}
\begin{picture}(362,271)
\put(-25,40){\includegraphics{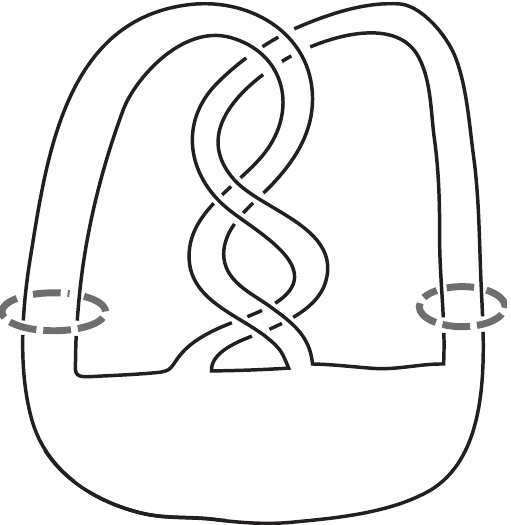}}
\put(180,0){\includegraphics{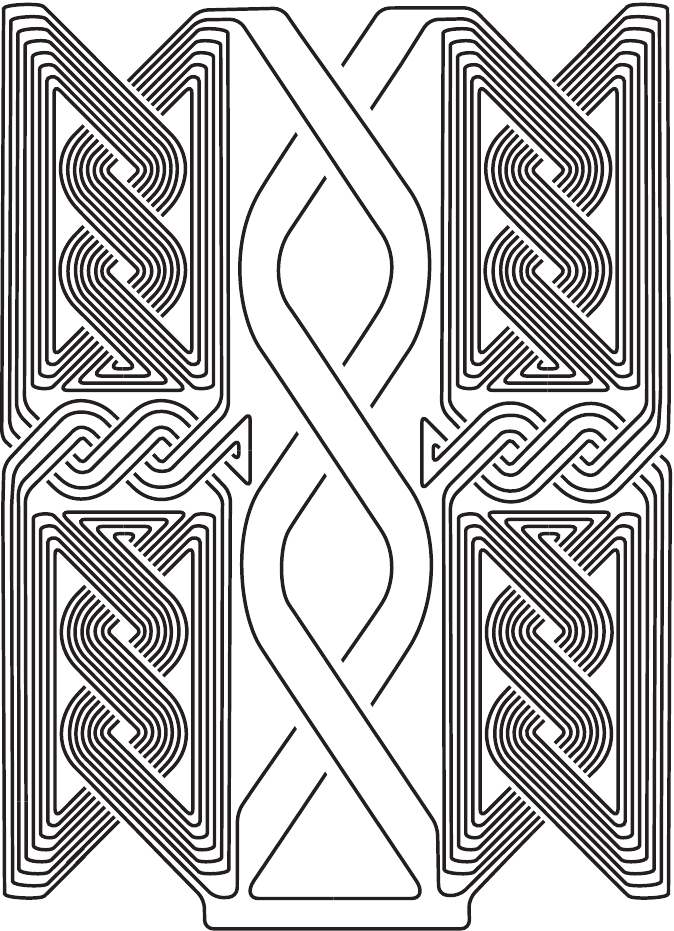}}
\put(-42,99){$\alpha_1$}
\put(127,99){$\alpha_2$}
\end{picture}
\caption{$R\left (R(R,R),R(R,R)\right )$
}\label{fig:R3}
\end{figure}

Finally, recall that Cochran-Orr-Teichner exhibited the first knots with vanishing Casson-Gordon invariants that are not slice in the topological category and showed that  $\mathbb{G}_{n}$ has infinite rank for $n=0,1,2$ ~\cite{COT}\cite{COT2}. In \cite{CT} it was shown that each $\mathbb{G}_{n}$ has positive rank and this was extended in \cite{CK}\cite{Ki2}. As a further example of the utility of different filtrations and commutator series, we show that if $n\geq 2$ then none of the knots that appeared in these papers (henceforth called \textbf{COT knots}- see Section~\ref{sec:COTknots} for precise definitions), is concordant to any of the knots that appeared in the more recent papers of the authors ~\cite{CHL1}\cite{CHL1A}\cite{CHL3}, henceforth called \textbf{CHL knots}. Recall that it was shown in the latter papers (using CHL knots) that each $\mathbb{G}_n$ has \emph{infinite} rank. In particular this implies that the subgroup of focus in the present paper
$$
\bigoplus_{\substack{\mathbb{P}_n}}\Z^\infty\subset \mathcal{F}_n\subset\mathcal{C}
$$
is not \emph{all} of $\mathcal{C}$ if $n\geq 2$.
\vspace{1in}

\textbf{Acknowledgements} We are grateful to Brendan Hassett for very enlightening conversations about some of the algebra in Section~\ref{sec:locprimes}.

\section{Commutator Series and Filtrations of the knot concordance groups}\label{sec:series}

\begin{defn}\label{def:series} A \emph{commutator series} is a function, $*$, that assigns to each group $G$ a nested sequence of normal subgroups
$$
\dots\vartriangleleft\gnp _*\vartriangleleft \gn_*\vartriangleleft\dots\vartriangleleft \ensuremath{G^{\sss (0)}}_*\equiv G,
$$
such that $\gn_*/\gnp_*$ is a torsion-free abelian group. (We restrict to torsion-free in order to avoid zero divisors in $\Z[G/\gn_*]$). A \emph{functorial commutator series} is one that is a functor from the category of groups to the category of series of groups, that is, a commutator series such that, for any group homomorphism $f:G\to P$,
$f(\gn_*)\subset P^{(n)}_*$ for each $n$. If $G_*^{(i)}$ is defined only for $i\leq n$, then this will be called a \emph{partially defined commutator series}.
\end{defn}

The model example is the \emph{rational derived series}, $\{G^{(n)}_r\}$, given by $G^{(0)}_r\equiv G$ and
$$
G^{(n+1)}_r\equiv \{x\in\gn_r~|~\exists k>0, ~x^k\in[\gn_r,\gn_r]\},
$$
first used systematically in ~\cite{Ha1}. There is also the \emph{rational lower central series}
~\cite{St}, and mixtures of these two. These (mixtures of) derived and lower central series subgroups are verbal subgroups ~\cite[Section 2.2]{MKS} and hence fully invariant. It follows readily that these model series are functorial. Other examples will be given in Section~\ref{sec:seriesfromloc}.

\begin{prop}\label{prop:commseriesprops} For any commutator series $\{\gn_*\}$,
\begin{itemize}
\item [1.] $\{x\in\gn_*~|~\exists k>0, ~x^k\in[\gn_*,\gn_*]\}\subset \gnp_*$ (and in particular $[\gn_*,\gn_*]\subset \gnp_*$, whence the name commutator series);
\item [2.] $\gn_r\subset \gn_*$, that is, every commutator series  contains the rational derived series;
\item [3.] $G/\gn_*$ is a poly-(torsion-free abelian) group (abbreviated PTFA);
\item [4.] $\Z [G/\gn_*]$ and $\Q [G/\gn_*]$ are right (and left) Ore domains.
\end{itemize}
\end{prop}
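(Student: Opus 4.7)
The plan is to establish the four items sequentially, with each using the previous. The entire argument is driven by the two defining properties of a commutator series: that $\gn_*/\gnp_*$ is \emph{abelian} (so $[\gn_*,\gn_*]\subset\gnp_*$) and \emph{torsion-free} (so any $x\in\gn_*$ with $x^k\in\gnp_*$ for some $k>0$ must already lie in $\gnp_*$).

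For (1), if $x\in\gn_*$ satisfies $x^k\in[\gn_*,\gn_*]$ for some $k>0$, then $x^k\in\gnp_*$ by abelianness, and torsion-freeness of $\gn_*/\gnp_*$ forces $x\in\gnp_*$. For (2), I induct on $n$: the base case $G^{(0)}_r = G = G^{(0)}_*$ is immediate from the definitions, and for the inductive step, assuming $\gn_r\subset\gn_*$, any $x\in G^{(n+1)}_r$ admits $k>0$ with $x^k\in[\gn_r,\gn_r]\subset[\gn_*,\gn_*]$, so $x\in\gnp_*$ by (1). For (3), I again induct on $n$, using the short exact sequence
$$
1\longrightarrow \gn_*/\gnp_*\longrightarrow G/\gnp_*\longrightarrow G/\gn_*\longrightarrow 1,
$$
in which the kernel is torsion-free abelian by the definition of a commutator series and the quotient is PTFA by the inductive hypothesis; hence $G/\gnp_*$ is PTFA.

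The substantive step is (4). I would simply invoke the well-known fact that if $\Gamma$ is a PTFA group, then $\Z[\Gamma]$ and $\Q[\Gamma]$ are right and left Ore domains. Two ingredients are needed: absence of zero divisors, which holds because PTFA groups are torsion-free (indeed bi-orderable, since torsion-free abelian groups are bi-orderable and extensions of bi-orderable groups by bi-orderable groups are bi-orderable), and the Ore condition itself, which propagates through torsion-free abelian extensions via an iterated crossed-product construction. This exact statement appears as Proposition 2.5 of \cite{COT}, applied inductively to the PTFA filtration coming from (3), and that is the natural citation to invoke. The only delicate point is that Ore-ness, not merely domainhood, survives under extensions by torsion-free abelian groups, but since this is standard in the concordance literature the proof reduces to quoting \cite{COT}.
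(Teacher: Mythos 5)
Your proposal is correct and follows essentially the same approach as the paper: items (1) and (3) follow from the definitions (abelianness gives $[\gn_*,\gn_*]\subset\gnp_*$, torsion-freeness of the quotient gives the radical closure, and the short exact sequence gives PTFA), item (2) follows by induction from (1), and item (4) is Proposition 2.5 of ~\cite{COT}. You have simply written out the details that the paper leaves implicit.
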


\begin{proof} Recall that a group is \textbf{poly-(torsion-free abelian) (abbreviated PTFA)} if it admits a finite subnormal series for which the successive quotients are torsion-free abelian groups. Properties $1$ and $3$ follow from the definitions. Property $2$ then follows inductively from $1$. Property $4$ is verified in ~\cite[Proposition 2.5]{COT}.
\end{proof}

We will now show that any commutator series that satisfies a weak functoriality induces a filtration, $\{\mathcal{F}^*_n\}$, of $\mathcal{C}$. These filtrations generalize and refine the ($n$)-solvable filtration $\{\mathcal{F}_n\}$ of ~\cite{COT}. Let $M_K$ denote the closed $3$-manifold obtained by zero framed surgery on $S^3$ along $K$. Recall that the motivation for the following filtrations is the following well-known fact: If a knot $K$ admits a slice disk $\Delta\hookrightarrow B^4$ then $M_K$ is the boundary of the $4$-manifold $W=B^4\setminus \Delta$ for which $H_2(W)=0$ and $H_1(M_K)\cong H_1(W)$.

\begin{defn}\label{def:Gnsolvable} A knot $K$ is an element of $\mathcal{F}_{n}^*$ if
the zero-framed surgery $M_K$ bounds a compact smooth $4$-manifold $W$ such that
\begin{itemize}
\item [1.] $H_1(M_K;\Z)\to H_1(W;\Z)$ is an isomorphism;
\item [2.] $H_2(W;\Z)$ has a basis consisting of connected compact oriented surfaces, $\{L_i,D_i~|~1\leq i\leq r\}$,  embedded in $W$ with trivial normal bundles, wherein the surfaces are pairwise disjoint except that, for each $i$, $L_i$ intersects $D_i$ transversely once with positive sign.
\item [3.] for each $i$, $\pi_1(L_i)\subset \pi_1(W)^{(n)}_*$
and $\pi_1(D_i)\subset \pi_1(W)^{(n)}_*$.
\end{itemize}
A knot $K\in \mathcal{F}_{n.5}^*$ if in addition,
\begin{itemize}
\item [4.] for each $i$, $\pi_1(L_i)\subset \pi_1(W)^{(n+1)}_*$
\end{itemize}
Such a $4$-manifold is called an $\boldsymbol{(n,*)}$\textbf{-solution} (respectively \textbf{an }$\boldsymbol{(n.5,*)}$-\textbf{solution}) \textbf{for} $\boldsymbol {K}$ and it is said that $\boldsymbol{K}$ and $\boldsymbol{M_K}$ are $\boldsymbol{(n,*)}$-\textbf{solvable} (respectively $\boldsymbol{(n.5,*)}$-\textbf{solvable}) \textbf{via} $\boldsymbol{W}$. The case where the commutator series is the derived series (without the torsion-free abelian restriction) is denoted $\mathcal{F}_{n}$ and we speak of being of $W$ being an ($n$)-solution, and $K$ or $M_K$ being ($n$)-solvable via $W$ ~\cite[Section 8]{COT}. If a $4$ manifold $W$ is a submanifold of another $4 $-manifold $V$ then we say that $W$ is \textbf{effectively an} $\boldsymbol{(n,*)}$\textbf{-solution} (respectively \textbf{an }$\boldsymbol{(n.5,*)}$-\textbf{solution}) with respect to $V$ if $H_1(\partial W)\cong H_1(W)$ and $W$
satisfies a modified version of conditions [$2$] and [$3$] above:
\begin{itemize}
\item [$3^\prime$.] $H_2(W)$ has a basis as in condition $2.$ such that for each $i$, $\pi_1(L_i)\subset \pi_1(V)^{(n)}_*$
and $\pi_1(D_i)\subset \pi_1(V)^{(n)}_*$.
\end{itemize}
Similarly we say that $W$ is \textbf{effectively an} $\boldsymbol{(n.5,*)}$\textbf{-solution} with respect to $V$ if in addition it satisfies a similarly modified version of condition $4.$ above.
\end{defn}

It follows from the definition that any such $W$ is spin and that the intersection form on $H_2(W)$ is a direct sum of hyperbolic pairs, hence has zero signature. Dropping the hypothesis that $W$ be smooth, one can also define $\mathcal{F}_{n}^{*,TOP}$. But work of Freedman-Quinn implies that this hypothesis is redundant, so that in fact $K\in \mathcal{F}_{n}^{*,TOP}$ if and only if $K\in \mathcal{F}_n^*$ ~\cite[Proposition 8.8A]{FQ}.

Some of the most important group series are not fully functorial, including the one of most interest in this paper (see Theorem~\ref{thm:derivedlocpfunctorial}), but usually a much weaker notion is required for applications. For example, in discussing knots, usually all the spaces one deals with have $H_1$ either infinite cyclic or trivial.

\begin{defn}\label{def:weakfunctorial} A commutator series $\{G^{(n)}_*\}$ is \textbf{weakly functorial} if, for any homomorphism $f:G\to \pi$ that induces an isomorphism $G/G^{(1)}_r\cong \pi/\pi^{(1)}_r$ (i.e. induces an isomorphism on $H_1(-;\Q)$),  $f(G^{(n)}_*)\subset \pi^{(n)}_*$ for each $n$.
\end{defn}

\begin{prop}\label{prop:functseriesfilters} Suppose $*$ is a weakly functorial commutator series defined on the class of groups with $\beta_1=1$. Then  $\{\mathcal{F}_n^*\}_{n\geq 0}$ is a filtration by subgroups of the classical (smooth) knot concordance group $\mathcal{C}$:
$$
\cdots \subset \mathcal{F}_{n+1}^* \subset \mathcal{F}_{n.5}^*\subset\mathcal{F}_n^*\subset\cdots \subset
\mathcal{F}_1^*\subset \mathcal{F}_{0.5}^* \subset \mathcal{F}_{0}^* \subset \mathcal{C}.
$$
\end{prop}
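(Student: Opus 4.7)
The plan is to imitate the proof of the analogous statement for the ordinary $(n)$-solvable filtration in \cite{COT}, with weak functoriality supplying the extra ingredient needed to transport the derived-series conditions across the $4$-manifold gluings that appear.

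First I would dispatch the chain of inclusions. The containment $\mathcal{F}_{n.5}^*\subset\mathcal{F}_{n}^*$ is immediate, since $\mathcal{F}_{n.5}^*$ merely imposes the extra condition (4). For $\mathcal{F}_{n+1}^*\subset\mathcal{F}_{n.5}^*$, note that any $(n{+}1,*)$-solution $W$ for $K$ has surfaces $L_i, D_i$ with $\pi_1(L_i),\pi_1(D_i)\subset \pi_1(W)^{(n+1)}_*$, and by Proposition~\ref{prop:commseriesprops}(1) this sits inside $\pi_1(W)^{(n)}_*$, so conditions (1)-(4) for level $n.5$ hold. Finally, $\mathcal{F}_n^*\subset \mathcal{C}$ requires showing the defining property depends only on the concordance class of $K$, which is the first real step.

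Next I would establish concordance invariance. Suppose $K_0, K_1$ are concordant via an embedded annulus $A\hookrightarrow S^3\times[0,1]$, and let $E=(S^3\times[0,1])\setminus\nu(A)$. A standard Mayer-Vietoris computation shows that $E$ has the $\mathbb{Z}$-homology of $S^1\times D^2$ and the inclusions $M_{K_i}\hookrightarrow E$ are $H_1(-;\Z)$-isomorphisms. Given an $(n,*)$-solution $W_0$ for $K_0$, form $V:=W_0\cup_{M_{K_0}}E$, which has $\partial V=M_{K_1}$. Another Mayer-Vietoris argument gives $H_1(V)\cong H_1(M_{K_1})$ and $H_2(V)\cong H_2(W_0)$, so the surfaces $\{L_i,D_i\}\subset W_0\subset V$ still satisfy (1) and (2). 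For condition (3), the inclusion $\iota:W_0\hookrightarrow V$ is an $H_1(-;\Q)$-isomorphism, so by weak functoriality of $*$, $\iota_*(\pi_1(W_0)^{(n)}_*)\subset \pi_1(V)^{(n)}_*$. Hence $\pi_1(L_i)\subset \pi_1(V)^{(n)}_*$, and likewise for $D_i$. The analogous argument with condition (4) handles the $n.5$ case. In particular, $\mathcal{F}_n^*$ is a well-defined subset of $\mathcal{C}$, and it contains the class of the unknot since $D^2\times S^1$ trivially satisfies all the conditions.

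Closure under the group operations is now routine. For connected sum: given $(n,*)$-solutions $W_1,W_2$ for $K_1,K_2$, attach a $4$-dimensional $1$-handle and then a $2$-handle along a curve that is the difference of meridians to obtain the standard cobordism $C$ from $M_{K_1}\sqcup M_{K_2}$ to $M_{K_1\# K_2}$. Setting $W=W_1\cup C\cup W_2$, a Mayer-Vietoris calculation again gives $H_1(W)\cong \Z$ and $H_2(W)\cong H_2(W_1)\oplus H_2(W_2)$, with basis the union of the basis surfaces. The same weak-functoriality argument, applied to the two inclusions $W_j\hookrightarrow W$ (which are $H_1$-isomorphisms), shows conditions (3) and (4) transfer, so $K_1\# K_2\in \mathcal{F}_n^*$ (respectively $\mathcal{F}_{n.5}^*$). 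For inverses, observe that if $-K$ denotes the concordance inverse (reverse mirror), then $M_{-K}$ is $M_K$ with reversed orientation, so reversing the orientation of any $(n,*)$-solution for $K$ produces one for $-K$ without altering $\pi_1$ conditions.

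The only real obstacle is ensuring the $\pi_1$-conditions survive the gluings in the concordance-invariance step (and, less critically, in the connected-sum step); this is precisely why the hypothesis of weak functoriality of $*$ on groups with $\beta_1=1$ is built into the statement. Every other ingredient -- the homology computations, the existence of the standard cobordism for connected sum, and the orientation-reversal argument for inverses -- is formal and independent of the specific commutator series.
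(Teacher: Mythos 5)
Your proof is correct and follows essentially the same route as the paper's: glue the $(n,*)$-solution to the homology cobordism induced by the concordance (or the standard connected-sum cobordism), verify the homology conditions by Mayer--Vietoris, and invoke weak functoriality for the $\pi_1$-conditions. The only thing you add that the paper's proof leaves implicit is the explicit verification of the nested inclusions $\mathcal{F}_{n+1}^*\subset\mathcal{F}_{n.5}^*\subset\mathcal{F}_n^*$; otherwise the arguments match.
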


The case where the commutator series is the derived series (without the torsion-free abelian restriction) is the \textbf{(n)-solvable filtration} ~\cite{COT}, denoted $\{\mathcal{F}_{n}\}$, and, for any $n$, $\mathcal{F}_n\subset\mathcal{F}_n^*$.

\begin{proof} We sketch the proof. If $K_0$ and $K_1$ are concordant then it is well known that their exteriors, and hence their zero-framed surgeries, $M_{K_0}$ and $M_{K_1}$, are homology cobordant via a $4$-manifold $C$ (obtained by performing zero framed surgery on the annulus). Suppose $K_0\in \mathcal{F}_n^*$ via $W_0$. Let $W_1=W_0\cup C$ so that $\partial W_1=M_{K_1}$. Note that the inclusion $W_0\hookrightarrow W_1$ induces isomorphisms on homology. By weak functoriality, $\pi_1(W_0)^{(k)}_*\subset \pi_1(W_1)^{(k)}_*$ for every $k$. Then it is easy to see that $K_1\in \mathcal{F}_n^*$ via $W_1$ using the surfaces from $W_0$. Therefore $\mathcal{F}_n^*$ descends to define a filtration of $\mathcal{C}$.

We claim that $\mathcal{F}_n^*$ is a subgroup.  If $K$ is a slice knot then $M_K=\partial (B^4-\Delta)$ where $H_2(B^4-\Delta)=0$. Thus $K\in \mathcal{F}_n^*$ via $B^4-\Delta$ for any $n$ and any $*$. It follows that $[0]\in \mathcal{F}_n^*$. Since $-M_K=M_{-K}$, $\mathcal{F}_n^*$ is closed under taking inverses in $\mathcal{C}$.  There is a standard cobordism $E$ whose boundary consists of $M_K$, $M_J$ and $-M_{K\#J}$. If $M_J$ is $(n,*)$-solvable via $W_J$ and $M_K$ is $(n,*)$-solvable via $W_K$, then let $W=W_J\cup W_K \cup E$ so that $\partial W= M_{K\# J}$. Note that the inclusion $W_K\hookrightarrow W$ induces an isomorphism on $H_1$ (in fact $\pi_1(W_0)$ is a retract of $\pi_1(W)$). Using weak functorality, one then easily shows that $K\# J$ is $(n,*)$-solvable via $W$ (using the union of the surfaces from $W_K$ and $W_J$. Thus $\mathcal{F}_n^*$ is closed under connected sum of knots.

Since $G^{(n)}\subset G^{(n)}_*$ it is obvious from the definitions that $\mathcal{F}_n\subset\mathcal{F}_n^*$.
\end{proof}

Examples of knots in these filtration levels can be provided by generalizations of satellite constructions.

\subsection{Doubling operators}\label{subsec:doubling}

Let $R$ be a knot in $S^3$ and $\vec{\alpha}=\{\alpha_1,\alpha_2,\ldots,\alpha_m\}$ be an ordered oriented trivial link in $S^3$, that misses $R$, bounding a collection of oriented disks that meet $R$ transversely as shown on the left-hand side of Figure~\ref{fig:infection}. Suppose $(K_1,K_2,\ldots,K_m)$ is an $m$-tuple of auxiliary knots. Let $R_{\vec{\alpha}}(K_1,\ldots,K_m)$ denote the result of the operation pictured in Figure~\ref{fig:infection}, that is, for each $\alpha_j$, take the embedded disk in $S^3$ bounded by $\alpha_j$; cut off $R$ along the disk; grab the cut strands, tie them into the knot $K_j$ (with no twisting) and reglue as shown in Figure~\ref{fig:infection}.

\begin{figure}[htbp]
\setlength{\unitlength}{1pt}
\begin{picture}(262,71)
\put(9,37){$\alpha_1$} \put(120,37){$\alpha_m$} \put(52,39){$\dots$}
\put(206,36){$\dots$} \put(183,37){$K_1$} \put(236,38){$K_m$}
\put(174,9){$R_{\vec{\alpha}}(K_1,\ldots,K_m)$}
\put(29,7){$R$} \put(82,7){$R$}
\put(20,20){\includegraphics{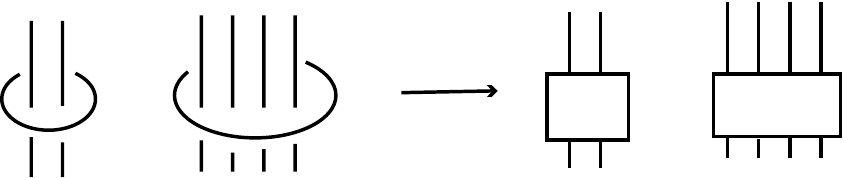}}
\end{picture}
\caption{$R_{\vec{\alpha}}(K_1,\ldots,K_m)$:
Infection of $R$ by $K_j$ along $\alpha_j$}\label{fig:infection}
\end{figure}
\noindent We will call this the \textbf{result of infection performed on the knot} $\boldsymbol{R}$ \textbf{using the infection knots} $\boldsymbol{K_j}$ \textbf{along the curves} $\mathbf{\alpha_j}$. This construction can also be
described in the following way. For each $\alpha_j$, remove a tubular neighborhood of $\alpha_j$ in $S^3$ and glue in the exterior of a tubular neighborhood of $K_j$ along their common boundary, which is a
torus, in such a way that the longitude of $\alpha_j$ is identified with the meridian of $K_j$ and the meridian of $\alpha_j$ is identified with the reverse of the longitude of $K_j$. The resulting space can be seen to be homeomorphic to $S^3$ and the image of $R$ is the new knot. In the case that $m=1$ this is the same as the classical satellite construction. In general it can be considered to be a generalized satellite construction ~\cite{COT2}.

For simplicity, in this paper we focus on the following special case.

\begin{defn}\label{def:doublingop} A \textbf{doubling operator}, $R_\alpha:\mathcal{C}\to \mathcal{C}$ is one that arises from infection on a ribbon knot $R$ along a single curve $\alpha$ where $lk(R,\alpha)=0$.
\end{defn}

\begin{prop}\label{prop:operatorsact} For any weakly functorial commutator series $*$, if $R_\alpha$ is a doubling operator with $\alpha\in \pi_1(M_R)_*^{(k)}$ then
$$
R_\alpha(\mathcal{F}_{n-k})\subset \mathcal{F}_n^*.
$$
Specifically, if $\alpha\in \pi_1(M_R)_*^{(n)}$ and $K$ is an Arf invariant zero knot, then
$$
R_\alpha(K)\in \mathcal{F}_n^*.
$$
The same holds for the more general operators $R_{\vec{\alpha}}$ if, for each $i$, $\alpha_i\in \pi_1(M_R)_*^{(k)}$ and $K_i\in \mathcal{F}_{n-k}$.
\end{prop}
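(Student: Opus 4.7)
The plan is to build an explicit $(n,*)$-solution $W$ for $R_\alpha(K)$ by combining a ribbon-disk complement for $R$ with an $(n-k)$-solution for $K$, then verify the three defining conditions. Because $R$ is ribbon, I would fix a ribbon disk $\Delta \hookrightarrow B^4$ with $\partial\Delta = R$ and set $W_R = B^4 \setminus \nu\Delta$, so that $\partial W_R = M_R$, $H_1(W_R;\Z) \cong \Z$ and $H_2(W_R;\Z)=0$. Let $V$ be an $(n-k)$-solution for $K$, with hyperbolic basis surfaces $\{L_i, D_i\}$ satisfying $\pi_1(L_i), \pi_1(D_i) \subset \pi_1(V)^{(n-k)}$. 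Form $W$ by the standard infection cobordism: push $\alpha \subset M_R$ and $\mu_K \subset M_K$ into the interiors and amalgamate $W_R$ and $V$ along a copy of $S^1\times D^2$ identifying $\alpha \leftrightarrow \mu_K$ with matching $0$-framed longitudes. Then $\partial W = M_{R_\alpha(K)}$.

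Checking the homological conditions for $W$ is routine by Mayer--Vietoris: $H_1(M_{R_\alpha(K)}) \to H_1(W) \cong \Z$ is an isomorphism, and since $H_2(W_R)=0$ one has $H_2(W)\cong H_2(V)$, so the surfaces $\{L_i, D_i\}$ viewed inside $V\subset W$ furnish the required hyperbolic symplectic basis. The substantive step is the $\pi_1$ condition. The central claim is that the inclusion-induced homomorphism $f\colon \pi_1(V) \to \pi_1(W)$ satisfies $f(\pi_1(V)) \subset \pi_1(W)^{(k)}_*$. Granting this, a short induction using $[G^{(j)}_*, G^{(j)}_*] \subset G^{(j+1)}_*$ from Proposition~\ref{prop:commseriesprops} shows that any subgroup $H \subset G^{(k)}_*$ satisfies $H^{(m)} \subset G^{(k+m)}_*$; applying this with $H = f(\pi_1(V))$ and $m = n-k$ yields $f(\pi_1(L_i)) \subset f(\pi_1(V))^{(n-k)} \subset \pi_1(W)^{(n)}_*$, and similarly for $D_i$, so $W$ is an $(n,*)$-solution.

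To prove the central claim I would first note that weak functoriality of $*$ along the maps $\pi_1(M_R) \to \pi_1(W_R) \to \pi_1(W)$, each of which induces an isomorphism on $H_1(-;\Q)$, promotes the hypothesis $\alpha \in \pi_1(M_R)^{(k)}_*$ to $\alpha \in \pi_1(W)^{(k)}_*$; since the latter is a normal subgroup, the normal closure of $\alpha$ in $\pi_1(W)$ lies inside it. The main obstacle is to show that $f(\pi_1(V))$ sits inside this normal closure. I plan to handle this via the standard handle-trading argument: without disturbing $(n-k)$-solvability, one may replace $V$ by an equivalent $(n-k)$-solution for which $\pi_1(M_K) \twoheadrightarrow \pi_1(V)$ is surjective (the new hyperbolic pairs introduced by the requisite interior surgeries have trivial $\pi_1$ and so do not violate the $(n-k)$-solution conditions). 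Then $f(\pi_1(V))$ equals the image of $\pi_1(M_K)$, which is normally generated by $\mu_K$ because $M_K$ is zero surgery on a knot; since $f(\mu_K)=\alpha$, the image lies in the normal closure of $\alpha$, as desired.

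The general $R_{\vec\alpha}$ statement follows by simultaneously infecting along each $\alpha_i$ with an $(n-k)$-solution $V_i$ for $K_i$ and repeating the argument component by component. The Arf-invariant-zero special case is immediate from the classical fact that Arf-zero knots lie in $\mathcal{F}_0$, applied with $n-k=0$.
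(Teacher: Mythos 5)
Your overall architecture matches the paper's: build a $4$-manifold $W$ from a ribbon disk complement for $R$, an $(n-k)$-solution $V$ for $K$, and a gluing region; then verify the homological conditions via Mayer--Vietoris and push the $\pi_1$ condition through weak functoriality. Your Mayer--Vietoris computation and the promotion of $\alpha \in \pi_1(M_R)_*^{(k)}$ to $\alpha \in \pi_1(W)_*^{(k)}$ via weak functoriality are correct and agree with the paper (which builds $W$ from the explicit cobordism $E$ of Lemma~\ref{lem:mickeyfacts}, but the outcome is the same).

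The gap is in the handle-trading step. You assert that one may replace $V$ by an $(n-k)$-solution for which $\pi_1(M_K)\to\pi_1(V)$ is surjective, on the grounds that ``the new hyperbolic pairs introduced by the requisite interior surgeries have trivial $\pi_1$.'' That is not true for $n-k\geq 1$. Surgery on an interior curve $\gamma$ replaces $\nu\gamma\cong S^1\times D^3$ by $D^2\times S^2$ and adds a hyperbolic pair to $H_2$: one generator is the sphere $\{pt\}\times S^2$, which does have trivial $\pi_1$, but its dual is a \emph{closed} surface obtained by gluing the disk $D^2\times\{pt\}$ to a Seifert surface for $\gamma$ inside $V\setminus\nu\gamma$, and there is no reason for the fundamental group of that closed surface to land in $\pi_1(V')^{(n-k)}$. (For $(0)$-solutions the $\pi_1$ condition on the Lagrangian surfaces is vacuous, which is exactly why this handle-trading works there and $\pi_1(V)\cong\Z$ can be arranged, as the paper cites from ~\cite{COT2}; for $n-k\geq 1$ this fails.) Your argument also shows only that $f(\pi_1(V))$ lies in the normal closure of $\alpha$, whereas what is actually needed, and what is true, is the weaker containment $f(\pi_1(V))\subset\pi_1(W)_*^{(k)}$.

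The paper's route avoids the obstruction entirely: it invokes the purely algebraic Lemma~\ref{lem:normgender}, which says that because $\pi_1(M_K)\to\pi_1(V)$ is surjective on abelianizations, the meridian normally generates $\pi_1(V)$ \emph{modulo} $\pi_1(V)^{(k)}$. Then $\pi_1(V)$ is generated by conjugates of $\mu_K$ together with $\pi_1(V)^{(k)}$. The conjugates of $\mu_K=\alpha$ land in $\pi_1(W)_*^{(k)}$ by normality, and $\pi_1(V)^{(k)}$ lands in $\pi_1(W)^{(k)}\subset\pi_1(W)_*^{(k)}$ automatically, since the ordinary derived series is contained in every commutator series by Proposition~\ref{prop:commseriesprops}. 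No surjectivity of $\pi_1(M_K)\to\pi_1(V)$, and no modification of $V$, is required. You should replace your handle-trading paragraph with this two-part argument.
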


\begin{proof} Consider the last (strongest) claim of the Proposition. Let us use $L$ to abbreviate $R_{\vec{\alpha}}(K_1,...,K_r)$. Recall from ~\cite[Lemma 2.3, Figure 2.1]{CHL3} that there is a cobordism $E$ (as shown on the left-hand side of Figure~\ref{fig:mickey} with $r=2$) whose boundary is the disjoint union of the zero-framed surgeries on $L$, $K_1$,...,$K_r$ and $R$. Recall that $E$ is obtained from the union of $M_R\times [0,1]$ and the various $M_{K_i}\times [0,1]$ by identifying the solid tori $\alpha_i\times D^2\subset M_R\times \{1\}$ with the solid tori $M_{K_i}-(S^3-K_i)$. Suppose $K_i\in \mathcal{F}_{n-k}$ via $V_i$ and suppose $S=B^4-\Delta$ where $\Delta$ is a slice disk for the slice knot $R$. Gluing these to $E$ we obtain the $4$-manifold $W$, as shown on the right-hand side of Figure~\ref{fig:mickey}, whose boundary is the zero framed surgery on
$L$.

\begin{figure}[htbp]
\setlength{\unitlength}{1pt}
\begin{picture}(350,200)
\put(-50,0){\includegraphics{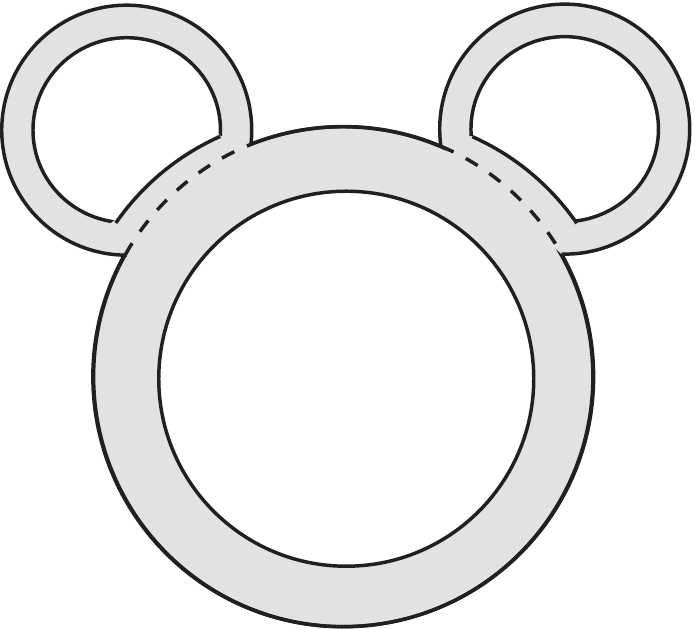}}
\put(194,0){\includegraphics{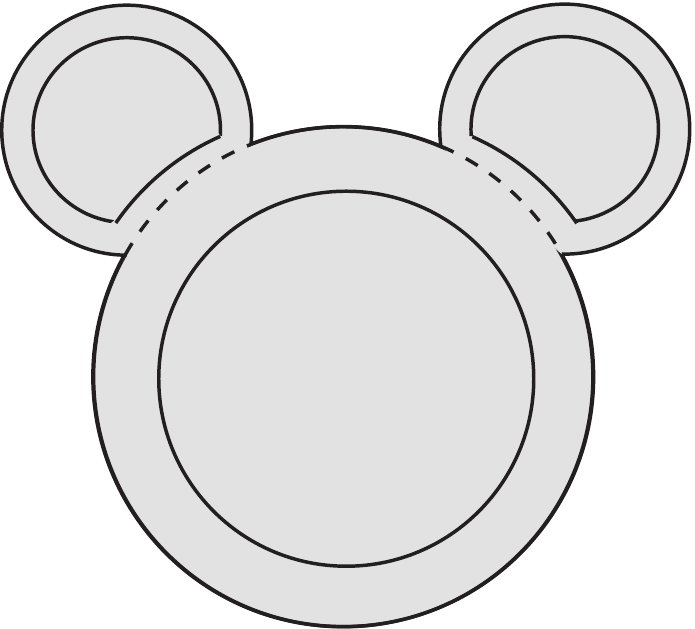}}
\put(70,100){$M_R$}
\put(88,0){$M_L$}
\put(-20,145){$M_{K_1}$}
\put(96,145){$M_{K_2}$}
\put(226,145){$V_1$}
\put(356,145){$V_2$}
\put(287,73){$S$}
\put(179,73){$W~~\equiv$}
\put(-50,73){$E\equiv$}
\end{picture}
\caption{The cobordism}\label{fig:mickey}
\end{figure}
We claim that $L\in\mathcal{F}_n^*$ via $W$. For this we must analyze the homology of $W$ using the following result.
\begin{lem}\label{lem:mickeyfacts}~\cite[Lemma 2.5]{CHL3} With regard to $E$ above, the inclusion maps induce
\begin{itemize}
\item [(1)] an epimorphism $\pi_1(M_L)\to \pi_1(E)$ whose kernel is the normal closure of the longitudes of the infecting knots $K_i$ viewed as curves $\ell_i\subset S^3-K_i\subset M_L$;
\item [(2)] isomorphisms $H_1(M_L)\to H_1(E)$ and $H_1(M_R)\to H_1(E)$;
\item [(3)] and isomorphisms $H_2(E)\cong H_2(M_L)\oplus_i H_2(M_{K_i})\cong H_2(M_R)\oplus_i H_2(M_{K_i})$.
\item [(4)] The meridian of $K_i$, $\mu_i\subset M_{K_i}$ is isotopic in $E$ to both $\alpha_i\subset M_R$ and to the longitudinal push-off of $\alpha_i$, $\ell_{\alpha_i}\subset M_L$.
\item [(5)] The longitude of $K_i$, $\ell_i\subset M_{K_i}$ is isotopic in $E$ to the reverse of the meridian of $\alpha_i$, $(\mu_{\alpha_i})^{-1}\subset M_L$ and to the longitude of $K_i$ in $S^3-K_i\subset M_L$ and to the reverse of the meridian of $\alpha_i$, $(\mu_{\alpha_i})^{-1}\subset M_R$ (the latter bounds a disk in $M_R$).
\end{itemize}
\end{lem}
The inclusion maps $M_R\to S$, $M_{K_i}\to V_i$ induce isomorphisms on $H_1$ and zero maps on $H_2$. From this and ($2$) of Lemma~\ref{lem:mickeyfacts}, it follows that $H_1(M_L)\to H_1(W)$ is an isomorphism. A Mayer-Vietoris sequence then implies that
$$
H_2(W)\cong \oplus_{i=1}^rH_2(V_i)
$$
since $H_2(S)=0$. For each fixed $i$, $H_2(V_i;\Z)$ has a basis consisting of connected compact oriented surfaces, $\{L_j,D_j|1\leq j\leq r_i\}$, satisfying the conditions of Definition~\ref{def:Gnsolvable}. In particular $\pi_1(L_j)\subset \pi_1(V_i)^{(n-k)}$
and $\pi_1(D_j)\subset \pi_1(V_i)^{(n-k)}$.
We claim that
\begin{equation}\label{eq:natural}
\pi_1(V_i)\subset \pi_1(W)^{(k)}_*.
\end{equation}
Assuming this for the moment it then would follow that
$$
\pi_1(L_j)\subset \pi_1(V_i)^{(n-k)}\subset (\pi_1(W)^{(k)}_*)^{(n-k)}\subset\pi_1(W)^{(n)}_*,
$$
where for the last inclusion we use iterated applications of part $1$ of Proposition~\ref{prop:commseriesprops}; and similarly for $\pi_1(D_j)$. This would then complete the verification that $L\in\mathcal{F}_n^*$ via $W$.

To establish our claim ~(\ref{eq:natural}), consider the inclusion $\phi:\pi_1(M_{K_i})\to \pi_1(V_i)$. Since the meridian $\mu_i$ normally generates $\pi_1(M_{K_i})$, it normally generates $\pi_1(V_i)$ modulo $\pi_1(V_i)^{(k)}$ by the following elementary result.

\begin{lem}\label{lem:normgender}~\cite[Lemma 6.5]{CHL4} Suppose $\phi:A\to B$ is a group homomorphism that is surjective on abelianizations. Then, for any positive integer $k$, $\phi(A)$ normally generates $B/B^{(k)}$.
\end{lem}

\noindent Continuing with our proof of Proposition~\ref{prop:operatorsact}, since
$$
\pi_1(V_i)^{(k)}\subset \pi_1(W)^{(k)}\subset\pi_1(W)^{(k)}_*,
$$
to establish ~(\ref{eq:natural}) we need only show that $\mu_i\in \pi_1(W)^{(k)}_*$. By property ($4$) of Lemma~\ref{lem:mickeyfacts}, $\mu_i$ is isotopic in $W$ to $\alpha_i\subset M_R$. By hypothesis $\alpha_i\in \pi_1(M_R)_*^{(k)}$ and combined with the weak functoriality of the commutator series we conclude
$$
\alpha_i\in \pi_1(M_R)_*^{(k)}\subset \pi_1(W)^{(k)}_*,
$$
as required.
\end{proof}

\
\section{Commutator Series from Localization}\label{sec:seriesfromloc}

It is a consequence of Proposition~\ref{prop:commseriesprops} that, in any commutator series, the canonical epimorphism $\gn_*\to \gn_*/\gnp_*$ factors as follows
$$
\gn_*\to \frac{\gn_*/[\gn_*,\gn_*]}{\Z -\text{torsion}}\overset{\pi_n}{\to}\gn_*/\gnp_*,
$$
Hence each commutator series is uniquely determined  by recursively specifying the normal subgroups that will be the kernels of the maps $\pi_n$.

In this section we generalize a philosophy introduced by the second author ~\cite{Ha2} to define various commutator series by killing (via the $\pi_n$) selected torsion elements of $\gn_*/[\gn_*,\gn_*]$, where the latter is viewed as a module (see below). This is accomplished by noncommutative localization. We first review (classical) noncommutative localization of rings and modules.

\begin{subsection}{Review of Classical Localization for Domains}\label{sub:classlocal}

Let $R$ be a non-trivial domain with unity and $S$ be a multiplicatively closed set of non-zero elements of $R$ with $1\in S$. Then the \emph{right quotient ring}, $RS^{-1}$, is a ring containing $R$ with the property that
\begin{itemize}
\item [i.] every element of $S$ has an inverse in $RS^{-1}$, and
\item [ii.] every element of $RS^{-1}$ is of the form $rs^{-1}$ with $r\in R, s\in S$.
\end{itemize}
Such quotient rings exist if $R$ is commutative, but in general one needs extra conditions on $S$ to guarantee their existence.

\begin{def}\label{def:divisorset} A subset $S\subset R$ is called a \emph{right divisor set} of $R$ if
\begin{itemize}
\item [1.] $1\in S$, $0\notin S$;
\item [2.] $S$ is multiplicatively closed and
\item [3.] given $r\in R, s\in S$, there exist $r'\in R$, $s'\in S$ such that $rs'=sr'$.
\end{itemize}
\end{def}
If $S$ is a right divisor set of $R$ then the right quotient ring $RS^{-1}$ exists ~\cite[Theorem 2.12, page 427]{P}. Note that the third condition is always satisfied if $R$ is commutative. A domain $R$ for which $R\setminus\{0\}$ is a right divisor set is called a \emph{right Ore domain}. In this case the right quotient field $R(R\setminus\{0\})^{-1}$ is a division ring called the (right classical) \emph{quotient field} of $R$ and will be denoted $\mathcal{K}R$. The ring $RS^{-1}$ is naturally an $R-RS^{-1}$ bimodule and is flat as a left $R$-module ~\cite[Prop. II.3.5]{Ste}.

If $R$ is a domain and $\mathcal{M}$ is a right $R$-module then an element $x\in \mathcal{M}$ is \emph{torsion} if there is some non-zero $s\in R$ such that $xs=0$. Then we say that \emph{$x$ is $s$-torsion}. If $S$ is a right divisor set of $R$ then the set of elements of $\mathcal{M}$ that are $s$-torsion for some $s\in S$ is a submodule called the \emph{$S$-torsion submodule of $\mathcal{M}$}. The connection between localization and torsion is given by the fact that
$$
\ker\left( \mathcal{M}\to \mathcal{M}\otimes_R RS^{-1}\right)
$$
is precisely the $S$-torsion submodule of $\mathcal{M}$ ~\cite[Cor.II.3.3]{Ste}. We will often abbreviate $\mathcal{M}\otimes RS^{-1}$ by $\mathcal{M}S^{-1}$, a right $RS^{-1}$-module that is called the \emph{localization of the module} $\mathcal{M}$ corresponding to $S$.
\end{subsection}

\begin{subsection}{Commutator series arising from localizations}\label{sub:commserieslocal}

We follow and generalize ~\cite[Section 2]{Ha2}.  Given any group $G$, set $G^{(0)}_\mathcal{S}\equiv G$ and suppose inductively that $G^{(k)}_\mathcal{S}$ has been defined for $k\leq n$ in such a way that $G^{(k)}_\mathcal{S}/G^{(k+1)}_\mathcal{S}$ is a torsion-free abelian group for each $k<n$. This is a partially defined commutator series.  Then $G/\gn_\mathcal{S}$ is a poly-(torsion-free-abelian) group (henceforth called PTFA) and consequently $\Q[G/\gn_\mathcal{S}]$ is a domain (Bovdi, see ~\cite[p.592]{P}), and in fact is a right Ore domain ~\cite[Lemma 3.6 iii, p.6.11]{P}). Recall that that
 $$
\frac{\gn_\mathcal{S}}{[\gn_\mathcal{S} ,\gn_\mathcal{S} ]}
 $$
 is not only an abelian group but also a right $\Z [G/\gn_\mathcal{S}]$-module where the action is induced by conjugation ($x*g=g^{-1}xg$ for any $g\in G$ and $x\in \gn_\mathcal{S})$. Henceforth the action of $g$ on $x$ will be denoted by $xg$. Suppose $S_n\subset \Q[G/\gn_\mathcal{S}]$ is a right divisor set. Then define
\begin{equation}\label{eq:defserieslocSn}
\gnp_\mathcal{S}\equiv \ker \left(\gn_\mathcal{S}\to\frac{\gn_\mathcal{S}}{[\gn_\mathcal{S} ,\gn_\mathcal{S} ]}\to \frac{\gn_\mathcal{S}}{[\gn_\mathcal{S} ,\gn_\mathcal{S} ]}\otimes_{\Z [G/\gn_\mathcal{S}]}\Q[G/\gn_\mathcal{S}]S_n^{-1}\right)
\end{equation}
where we use the fact that $\Q[G/\gn_\mathcal{S}]S_n^{-1}$ is a left $\Z[G/\gn_\mathcal{S}]$-module. The second map in this composition should really be viewed as first tensoring with $\Q$, which kills the $\Z$-torsion, and then inverting $S_n$, which kills the $S_n$-torsion. It follows easily that $\gn_\mathcal{S}/\gnp_\mathcal{S}$ is a torsion-free abelian group (since $\Q[G/\gn_\mathcal{S}]S_n^{-1}$ is a rational vector space). Hence this procedure recursively defines a commutator series  (or a partial commutator series up to $ \gnp_\mathcal{S}$), that depends only on $\mathcal{S}$, a sequence, $S_1,...,S_n$ of right divisor sets.  Note that since $S_0\subset\Q$, the choice of $S_0$ is irrelevant and we ignore it. For the same reason, for any commutator series so defined,
\begin{equation}\label{eq:firstderived}
 G^{(1)}_\mathcal{S}=G^{(1)}_r,
\end{equation}
the first term of the rational derived series (the radical of the commutator subgroup).

Elements of $\gnp_\mathcal{S}$ have a simple characterization. Associated to $S_n\subset \Q[G/\gn_\mathcal{S}]$ is a ``lifting'' $\tilde{S_n}\subset \Z[G/\gn_\mathcal{S}]$ consisting of those elements $\tilde{s}$ of $\Z[G/\gn_\mathcal{S}]$ for which $\tilde{s}r\in S_n$ for some non-zero $r\in \Q$. Note that if $\Q-\{0\}\subset S_n$ then $\tilde{S_n}\subset S_n$ since $S_n$ is multiplicatively closed.

\begin{prop}\label{prop:char*} If $x\in \gn_\mathcal{S}$, then $x\in \gnp_\mathcal{S}$ if and only if $x$ represents $\tilde{S}_n$-torsion in the module $\frac{\gn_\mathcal{S}}{[\gn_\mathcal{S} ,\gn_\mathcal{S} ]}$.
\end{prop}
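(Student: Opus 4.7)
The plan is to unpack the definition of $\gnp_\mathcal{S}$ from equation \eqref{eq:defserieslocSn} and translate the vanishing condition in the localized module into the purely algebraic torsion condition. Set $M := \gn_\mathcal{S}/[\gn_\mathcal{S},\gn_\mathcal{S}]$, a right $\Z[G/\gn_\mathcal{S}]$-module. By definition, $x\in\gnp_\mathcal{S}$ if and only if the class $[x]\in M$ lies in the kernel of
$$
M \longrightarrow M\otimes_{\Z[G/\gn_\mathcal{S}]}\Q[G/\gn_\mathcal{S}]S_n^{-1}.
$$
I would factor this map as a two-step process, first tensoring with $\Q$ over $\Z$ (equivalently, tensoring over $\Z[G/\gn_\mathcal{S}]$ with $\Q[G/\gn_\mathcal{S}]$), then tensoring the result over $\Q[G/\gn_\mathcal{S}]$ with $\Q[G/\gn_\mathcal{S}]S_n^{-1}$. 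The first step has kernel equal to the $\Z$-torsion in $M$; the second has kernel equal to the $S_n$-torsion in $M\otimes_\Z\Q$ by the cited Corollary II.3.3 of Stenström (applied to the Ore domain $\Q[G/\gn_\mathcal{S}]$ and divisor set $S_n$, using Bovdi's theorem and part (4) of Proposition~\ref{prop:commseriesprops}).

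Combining the two kernels, $x\in\gnp_\mathcal{S}$ iff there exists $s\in S_n$ such that $([x]\otimes 1)s=0$ in $M\otimes_\Z\Q$. The main technical point — and really the only thing to check — is the clearing-of-denominators bookkeeping that converts this into an annihilation statement by an element of $\tilde S_n\subset\Z[G/\gn_\mathcal{S}]$. For the forward implication, write $s=\tilde s/n$ with $\tilde s\in\Z[G/\gn_\mathcal{S}]$ and $n\in\Z\setminus\{0\}$; then $([x]\otimes 1)s=0$ in $M\otimes\Q$ forces $[x]\tilde s$ to be $\Z$-torsion in $M$, so $m[x]\tilde s=0$ in $M$ for some nonzero integer $m$. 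Since integers are central, this reads $[x](m\tilde s)=0$. Now $(m\tilde s)\cdot\frac{1}{mn}=s\in S_n$, so by the definition of $\tilde S_n$ we have $m\tilde s\in\tilde S_n$, which shows $[x]$ is $\tilde S_n$-torsion.

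For the converse, suppose $[x]\tilde s=0$ in $M$ for some $\tilde s\in\tilde S_n$. By definition of $\tilde S_n$ there is a nonzero $r\in\Q$ with $\tilde s r\in S_n$. Then in $M\otimes_{\Z[G/\gn_\mathcal{S}]}\Q[G/\gn_\mathcal{S}]S_n^{-1}$ we have
$$
[x]\otimes 1 \;=\; [x]\otimes\bigl((\tilde s r)(\tilde s r)^{-1}\bigr) \;=\; \bigl([x]\tilde s\bigr)\otimes r(\tilde s r)^{-1} \;=\; 0,
$$
so $x\in\gnp_\mathcal{S}$, completing the equivalence. The only place where one must be careful is in tracking when denominators live in $\Q$ versus when coefficients live in $\Z[G/\gn_\mathcal{S}]$; the flatness of $\Q[G/\gn_\mathcal{S}]S_n^{-1}$ as a left $\Q[G/\gn_\mathcal{S}]$-module and the standard fact about kernels of localization maps do all the real work.
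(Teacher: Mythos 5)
Your proof is correct and follows essentially the same route as the paper's: factor the localization map through $M\otimes_\Z\Q$, use that the first step kills precisely $\Z$-torsion and the second kills precisely $S_n$-torsion, and then clear denominators to pass between $S_n$ and $\tilde S_n$. The paper leaves the converse as "similar"; you supply it explicitly, but the argument is the same.
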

\begin{proof} Suppose $x\in \gnp_\mathcal{S}$. Let $\overline{x}$ be the class represented by $x$ in the module $\gn_\mathcal{S}/[\gn_\mathcal{S} ,\gn_\mathcal{S} ]$. Then there exists some $s \in S_n$ such that $\overline{x}\otimes 1$ is $s$-torsion in the module
$$
\frac{\gn_\mathcal{S}}{[\gn_\mathcal{S} ,\gn_\mathcal{S} ]}\otimes_{\Z [G/\gn_\mathcal{S}]}\Q[G/\gn_\mathcal{S}]\cong \frac{\gn_\mathcal{S}}{[\gn_\mathcal{S} ,\gn_\mathcal{S} ]}\otimes_{\Z}\Q.
$$
Let $k$ be a non-zero integer such that $sk\in \Z[G/\gn_\mathcal{S}]$. Then $\overline{x}\otimes 1$ is also $sk$-torsion. Thus
$$
0=\overline{x}\otimes sk= \overline{x}(sk)\otimes 1.
$$
Since tensoring an abelian group with $\Q$ kills only $\Z$-torsion, this implies that $\overline{x}(sk)$ is $\Z$-torsion in $\gn_\mathcal{S}/[\gn_\mathcal{S} ,\gn_\mathcal{S} ]$. Hence $\overline{x}$ is annihilated by $\tilde{s}=skt$ for some non-zero integer $t$. Since  $(1/kt)\tilde{s}=s \in S_n$, $\tilde{s}\in \tilde{S}_n$. Thus $\overline{x}$ is $\tilde{S}_n$-torsion.

The converse follows similarly.
\end{proof}

Whether or not such commutator series are functorial depends on whether or not the right divisor sets $S_1,...,S_n$ are defined in a ``functorial'' manner.

\begin{prop}\label{prop:preservSisfunctorial} Suppose sequences of right divisor sets $\mathcal{S}^A$ and $\mathcal{S}^B$ are chosen for the groups $A$ and $B$ respectively yielding partial commutator series. If $\psi:A\to B$ is a group homomorphism such that $\psi(S^A_i)\subset S^B_i$ for $0\leq i\leq n$ then $\psi(A^{(n+1)}_{\mathcal{S}})\subset B^{(n+1)}_{\mathcal{S}}$.
\end{prop}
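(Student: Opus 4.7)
The plan is to proceed by induction on $n$. The base case $n = 0$ reduces to showing $\psi(A^{(1)}_{\mathcal{S}}) \subset B^{(1)}_{\mathcal{S}}$; by equation~\eqref{eq:firstderived} both groups equal the first term of the rational derived series, whose functoriality under arbitrary group homomorphisms is standard (it is a verbal subgroup in the sense of Proposition~\ref{prop:commseriesprops}). For the inductive step, assume $\psi(A^{(k)}_{\mathcal{S}}) \subset B^{(k)}_{\mathcal{S}}$ for all $k \leq n$. Then $\psi$ descends to a group homomorphism $\bar{\psi}_n \colon A/A^{(n)}_{\mathcal{S}} \to B/B^{(n)}_{\mathcal{S}}$ and extends $\mathbb{Z}$-linearly to a ring homomorphism (still denoted $\bar{\psi}_n$) $\mathbb{Q}[A/A^{(n)}_{\mathcal{S}}] \to \mathbb{Q}[B/B^{(n)}_{\mathcal{S}}]$. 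By hypothesis $\bar{\psi}_n(S^A_n) \subset S^B_n$, which also makes the statement of the proposition well-posed.

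Next I would invoke Proposition~\ref{prop:char*}. Given $x \in A^{(n+1)}_{\mathcal{S}}$, its image $\bar{x}$ in the module $M_A := A^{(n)}_{\mathcal{S}}/[A^{(n)}_{\mathcal{S}}, A^{(n)}_{\mathcal{S}}]$ is annihilated by some $\tilde{s} \in \tilde{S}^A_n$. The restriction of $\psi$ to $A^{(n)}_{\mathcal{S}}$ induces a group homomorphism $\bar{\psi} \colon M_A \to M_B$, where $M_B$ is the analogous abelianization for $B$. Because the module actions on both sides are defined by conjugation, and $\psi$ obviously commutes with conjugation, one verifies that $\bar{\psi}(m \cdot r) = \bar{\psi}(m) \cdot \bar{\psi}_n(r)$ for $m \in M_A$ and $r \in \mathbb{Z}[A/A^{(n)}_{\mathcal{S}}]$; that is, $\bar{\psi}$ is semilinear over the ring homomorphism $\bar{\psi}_n$.

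Combining these,
$$0 = \bar{\psi}(\bar{x} \cdot \tilde{s}) = \bar{\psi}(\bar{x}) \cdot \bar{\psi}_n(\tilde{s}) = \overline{\psi(x)} \cdot \bar{\psi}_n(\tilde{s}).$$
By definition of the lifted divisor set, $\tilde{s} r \in S^A_n$ for some non-zero rational $r$; hence $\bar{\psi}_n(\tilde{s}) r = \bar{\psi}_n(\tilde{s} r) \in S^B_n$, which shows $\bar{\psi}_n(\tilde{s}) \in \tilde{S}^B_n$. Therefore $\overline{\psi(x)}$ is $\tilde{S}^B_n$-torsion in $M_B$, and the converse direction of Proposition~\ref{prop:char*} (established in the same manner as the forward direction) yields $\psi(x) \in B^{(n+1)}_{\mathcal{S}}$, completing the induction. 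The only conceptually nontrivial point is the semilinearity of $\bar{\psi}$ over $\bar{\psi}_n$; once that bookkeeping is in place, everything else is formal application of Proposition~\ref{prop:char*}.
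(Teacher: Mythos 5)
Your proof is correct and follows the same inductive skeleton as the paper's (same base case via functoriality of the rational derived series, same construction of the induced ring homomorphism from $\psi(A^{(i)}_{\mathcal{S}})\subset B^{(i)}_{\mathcal{S}}$, same use of the hypothesis $\psi(S^A_n)\subset S^B_n$). The one genuine difference is how the terminal step is carried out: the paper writes down a two-row commutative diagram whose rows are the defining compositions $\gn_{\mathcal{S}}\to \gn_{\mathcal{S}}/[\gn_{\mathcal{S}},\gn_{\mathcal{S}}]\to\gn_{\mathcal{S}}/[\gn_{\mathcal{S}},\gn_{\mathcal{S}}]\otimes\Q[G/\gn_{\mathcal{S}}]S_n^{-1}$ for $A$ and $B$, and concludes that the kernel of the top row maps into the kernel of the bottom row by commutativity. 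You instead route through Proposition~\ref{prop:char*}, identifying $\gnp_{\mathcal{S}}$ with the $\tilde{S}_n$-torsion and verifying that the semilinear map $\bar\psi$ carries $\tilde{S}_n^A$-torsion to $\tilde{S}_n^B$-torsion because $\bar\psi_n(\tilde{S}_n^A)\subset \tilde{S}_n^B$; both arguments hinge on exactly the same fact, but the diagram-chase is shorter and avoids the scalar-denominator bookkeeping around $\tilde{S}_n$, while your torsion formulation makes the mechanism (``torsion maps to torsion'') more explicit. Your note on semilinearity of $\bar\psi$ over $\bar\psi_n$ is the correct observation that makes the diagram commute, and is implicit in the paper's diagram.
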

\begin{proof} The proposition is true for $n=0$ since $A^{(1)}_\mathcal{S}=A^{(1)}_r$ and the rational derived series is functorial. Now assume it is true for $n-1$. Assume $\psi(S^A_i)\subset S^B_i$ for $0\leq i\leq n$. Then, by the induction hypothesis, $\psi(A^{(i)}_{\mathcal{S}})\subset B^{(i)}_{\mathcal{S}}$ for $0\leq i\leq n$. In particular $\psi$ induces a ring homomorphism
$$
\psi:\Z [A/\an_\mathcal{S}]\to \Z [B/\bn_\mathcal{S}],
$$
(and similarly for the rational group rings) such that, by hypothesis, $\psi(S^A_n)\subset S^B_n$. Thus we have the following commutative diagram,
$$
\begin{diagram}\label{diagram-propSfunct}\dgARROWLENGTH=1.2em
\node{\an_\mathcal{S}}\arrow{s,r}{\psi}\arrow{e,t}{}\node{\frac{\an_\mathcal{S}}{[\an_\mathcal{S} ,\an_\mathcal{S} ]}} \arrow{s,r}{\psi_*}\arrow{e,t}{}\node{\frac{\an_\mathcal{S}}{[\an_\mathcal{S} ,\an_\mathcal{S} ]}\otimes_{\Z [A/\an_\mathcal{S}]}\Q[A/\an_\mathcal{S}](S_n^A)^{-1}}
\arrow{s,r}{\psi_*}\\
\node{\bn_\mathcal{S}}\arrow{e,t}{}\node{\frac{\bn_\mathcal{S}}{[\bn_\mathcal{S} ,\bn_\mathcal{S} ]}} \arrow{e,t}{}\node{\frac{\bn_\mathcal{S}}{[\bn_\mathcal{S} ,\bn_\mathcal{S} ]}\otimes_{\Z [B/\bn_\mathcal{S}]}\Q[B/\bn_\mathcal{S}](S_n^{B})^{-1}}
\end{diagram}
$$
from which it follows immediately that $\psi(A^{(n+1)}_{\mathcal{S}})\subset B^{(n+1)}_{\mathcal{S}}$.
\end{proof}

\end{subsection}

\begin{subsection}{Examples from the previous literature}\label{subsec:examplesseries}

The series that have appeared in the previous literature may be seen in this context.

\begin{ex}\label{ex:commseriesRD} [The Rational Derived Series] If we choose each $S_n$ minimally, that is $S_n=\{1\}$, then the resulting commutator series is the \emph{rational derived series} $\gn_r$ ~\cite{Ha1}. This series is functorial.
\end{ex}
\begin{ex}\label{ex:Harveyseries} [The Torsion-Free Derived Series] ~If we choose each $S_n$ maximally, that is $S_n=\Q[G/\gn_\mathcal{S}]-\{0\}$ then the resulting commutator series is Harvey's \emph{torsion-free derived series}, $G^{(n)}_H$, ~\cite{Ha2}\cite{CH1}. This series has a remarkable monotonicity property under concordance and hence leads to invariants of links and $3$-manifolds ~\cite{Ha4}\cite{CH2} (see also ~\cite{Cha}). This series is not useful for knots since if $G$ is a group with $\beta_1(G)=1$ then $G/G^{(n)}_H\cong \Z$ for $n>0$ \cite[Example 2.9]{CH1}. This series is not functorial, although it is functorial for maps that induce $2$-connected maps on rational homology.
\end{ex}
We are interested in series that interpolate between these extremes.
\begin{ex}\label{ex:commseriesCOT} [The COT Series] The following partial commutator series was suggested by the work of ~\cite{COT} and \cite{CT}. We call it the \textbf{COT series} (at level $n+1$). Fix $n>0$. For each $0\leq k\leq n$ let $G^{(k)}_{cot}=G^{(k)}_r$. Let $S_{n,cot}=\Q[G^{(1)}/\gn_r]-\{0\}$ and set
\begin{equation}\label{eq:defCOTseries}
\gnp_{cot}\equiv \ker \left(\gn_r\to\frac{\gn_r}{[\gn_r ,\gn_r ]}\to \frac{\gn_r}{[\gn_r ,\gn_r ]}S_{n,cot}^{-1}\right)
\end{equation}
as in Equation~(\ref{eq:defserieslocSn}). This partial commutator series agrees with the rational derived series until the $(n+1)$-st term. This series is not functorial. There is an alternative series suggested by their work wherein one performs the analogous localization at \emph{each} stage.
\end{ex}
\begin{ex}\label{ex:commseriesHarvey}
Another example defined and subsequently used extensively by Harvey and others, depends only on an initial choice of an element $\psi\in H^1(G;\Z)=\Hom(G,\Z)$ ~\cite{Ha1}\cite{Lei2}\cite{Lei3}\cite{Lei4}. Given $\psi$, let $S_n=\Q[\ker\psi/\gn_\mathcal{S}]-\{0\}$. Then $\Q[G/\gn_\mathcal{S}](S_n)^{-1}$ can be identified with a twisted polynomial ring $\mathbb{K}_n[t,t^{-1}]$ with coefficients in $\mathbb{K}_n$, the (skew) field of quotients of $\Q[\ker\psi/\gn_\mathcal{S}]$, which is itself a principal ideal domain ~\cite[Section 4]{Ha1}. Thus $\gn_\mathcal{S}/\gnp_\mathcal{S}$ is a module over this principal ideal domain. The torsion submodule of this module is called the $(n-1)^{st}$-higher-order Alexander module of $G$. This series is not functorial.
\end{ex}

\end{subsection}

\section{Localization at polynomials}\label{sec:locprimes}

Our goal in this section is, loosely speaking, to define, using Section~\ref{sec:seriesfromloc}, a commutator series in which, at each stage, $S_n$ consists of certain torsion ``coprime'' to $p_n$ for some chosen $p_n\in \Q[G/\gn_\mathcal{S}]$. This statement does not really make sense over a noncommutative ring. The purpose of this section is to make sense of it. This will entail some new results. By the end of this section we will have defined a new functorial commutator series, called the \textbf{derived series localized at $\mathcal{P}$}, where $\mathcal{P}=(p_1(t),...,p_n(t),...)$ is any sequence of non-zero elements of $\Q[t,t^{-1}]$ (not necessarily prime).

Suppose $p(t)$ is a non-zero element of the Laurent polynomial ring $\Q[t^{\pm 1}]$ and $a\in A$, , where $A$ is a torsion-free abelian group. Then the group homomorphism $\langle t \rangle=\Z\to A$ given by $t\to a$ induces a ring homomorphism $\Q[t^{\pm 1}]\to \Q A$. Thus $p(a)\in\Q A$ has an obvious meaning, as the image of $p$ under this map. Note that if $p(1)\neq 0$ then $p(a)$ is non-zero for any $a$.

Over a commutative UFD, in order to ``localize at $p(a)$'', we would seek to invert all $q$ ``coprime to $p(a)$''. In our more general situation, this begs the question: What is the smallest localization of $\Q\G$ in which $q(a)$ has an inverse?'' (One could also ask about the maximal divisor sets that do \emph{not} contain the fixed $p(a)$, but we do not address this here.) To address this question we must investigate the right divisors sets that contain $q(a)$. First we have a very general result, presumably known to the experts. This result is central to all of our efforts.

\begin{prop}\label{prop:divisorsets} Suppose $A\lhd \G$ where $\Q A$ is a domain. Suppose $S$ is a right divisor set of $\Q A$ that is $\G$-invariant ($g^{-1}Sg=S$ for all $g\in\G$). Then $S$ is a right divisor set of $\Q\G$.
\end{prop}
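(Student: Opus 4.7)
The plan is to verify the three conditions of Definition~\ref{def:divisorset} for $S\subset \Q\G$. Conditions (1) and (2) --- that $1\in S$, $0\notin S$, and $S$ is multiplicatively closed --- are inherited immediately from the hypothesis that $S$ is a right divisor set in $\Q A$. The whole content therefore lies in condition~(3): given $r\in\Q\G$ and $s\in S$, produce $r'\in\Q\G$ and $s'\in S$ with $rs'=sr'$.

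I would argue in three escalating cases, with the first two already containing the essential idea. First, for $r=g$ a single element of $\G$, $\G$-invariance of $S$ gives $s':=g^{-1}sg\in S$, and then $gs'=sg$, so $r':=g\in\G\subset\Q\G$ works. Next, for a monomial $r=ag$ with $a\in\Q A$, apply the right divisor property of $S\subset\Q A$ to the pair $(a,s)$ to obtain $t\in S$ and $a_1\in\Q A$ with $at=sa_1$; setting $s':=g^{-1}tg\in S$ (again by $\G$-invariance) yields
\[
ag s' \;=\; a g\,g^{-1} t g \;=\; a t g \;=\; s a_1 g,
\]
so $r':=a_1 g$ works. The key insight is that $\G$-invariance allows one to push an element of $S$ past any group element at the cost of replacing it by a conjugate, reducing the commutation problem in $\Q\G$ to the already-solved problem inside $\Q A$.

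The main obstacle is the general case $r=\sum_{i=1}^n a_ig_i$. Applying the monomial case term by term yields $s'_i\in S$ and $r'_i\in\Q\G$ with $a_ig_is'_i=sr'_i$, but the $s'_i$ vary with $i$. To merge them I would invoke the standard observation that any finite subset $\{s'_1,\dots,s'_n\}\subset S$ has a common right multiple in $S$: iterating condition~(3) inside $\Q A$ (which by hypothesis is a right divisor set), one produces at each stage an equation $s'_i v = s'_j w$ with $w\in S$, whose common value lies in $S$ because $S$ is multiplicatively closed; inductively one obtains $s'\in S$ and elements $t_i\in\Q A$ with $s'_i t_i = s'$ for every $i$. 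Then
\[
rs' \;=\; \sum_{i=1}^n a_i g_i s'_i t_i \;=\; \sum_{i=1}^n s\, r'_i t_i \;=\; s\!\left(\sum_{i=1}^n r'_i t_i\right),
\]
so $r':=\sum_{i=1}^n r'_i t_i\in\Q\G$ completes the verification. The only place $\G$-invariance is used is in the monomial case; the common-multiple lemma is a purely internal statement about $\Q A$.
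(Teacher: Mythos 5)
Your argument is correct, and the underlying mechanism---use $\G$-invariance to conjugate $s$ past a group element, thereby reducing the commutation problem to $\Q A$, and then merge the finitely many resulting elements of $S$ via a common right multiple---is exactly what the paper does. The one bookkeeping difference is the choice of decomposition: the paper writes $\beta=\sum r_i g_i$ with $r_i\in\Q$, so the coefficients are \emph{central} scalars and require no Ore step of their own; the whole content is then to find a common right multiple in $S$ of the conjugates $s^{g_i}$, which the paper gets in one stroke from the common-denominator lemma for Ore localizations \cite[Lemma~2.13, p.~428]{P}. You instead write $r=\sum a_i g_i$ with $a_i\in\Q A$, which forces an extra preliminary application of the Ore condition in $\Q A$ to each pair $(a_i,s)$ before the merging step. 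Both are valid; the paper's $\Q$-coefficient decomposition is marginally more economical, and it also makes transparent the remark that when $A$ is abelian the common right multiple can be taken to be the literal product $\prod_i s^{g_i}$.
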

\begin{proof} Since $S$ is a right divisor set of $\Q A$, $S$ is multiplicatively closed, contains $1$ and does not contain $0$. Suppose $s\in S$ and  $\beta\in\Q\G$ where $\beta=\sum_{i=1}^k r_ig_i$ for some $r_i\in \Q$ and $g_i\in \G$. Since $S$ is a right divisor set of $\Q A$ and $s^{g_i}=g_i^{-1}sg_i$ is an element of $S$, the set $\{(s^{g_i})^{-1}~|~i=1,...,n\}$ has a common denominator $\tilde{s}\in S$ ~\cite[Lemma 2.13, p.428]{P}, that is to say there exist $\beta_i'\in\Q A$ and $\tilde{s}\in S$ such that, for each $i$,
$$
(s^{g_i})^{-1}=\beta_i'(\tilde{s})^{-1},
$$
implying that
$$
\tilde{s}=(s^{g_i})\beta_i'.
$$
(But we remark that, for applications in this paper, we will always take $A$ to be abelian, in which case $\Q A$ is commutative, and the common denominator is merely the product!)
Set $\beta'=\sum_{i=1}^k r_ig_i\beta_i'$. Then
$$
\beta \tilde{s}=\sum r_ig_i\tilde{s}=\sum r_ig_i(s^{g_i})\beta_i'=s\sum r_ig_i\beta_i'=s\beta '.
$$
Therefore $S$ is a right divisor set of $\Q\G$.
\end{proof}

\begin{defn}\label{def:S(Q)} Suppose $\mathcal{Q}$ is a set of polynomials $q_i(t)\in \Q[t^{\pm 1}]$ with $q_i(1)\neq 0$. For any $\tilde{A}\subset A\lhd \G$ where $A$ is a torsion-free abelian group, $\tilde{A}$ is a $\G$-invariant subset, and $\Q\G$ is a domain we define
$$
S=S(\mathcal{Q})=\{q_1(a_{1})...q_r(a_{r})~|~q_i\in \mathcal{Q}, a_{i}\in \tilde{A}, r\geq 0 \}\subset \Q A\subset \Q\G,
$$
the set of all finite products of evaluations of the elements of $\mathcal{Q}$. If $\mathcal{Q}$ is empty, we understand that $S(\mathcal{Q})=\{1\}$. Note that $S$ does not depend on $\G$, only on $\tilde{A}$.
\end{defn}

\begin{cor}\label{cor:divisorsets} $S=S(\mathcal{Q})$, as above, is a right divisor set of $\Q\G$. Moreover this correspondence is functorial in the sense that for any homomorphism of such triples, $\psi:(\G,A,\tilde{A})\to (\G',A',\tilde{A}')$, we have $\psi(S^{\G,\tilde{A}}(\mathcal{Q}))\subset S^{\G ',\tilde{A}'}(\mathcal{Q})$. If $\psi:\tilde{A}\to \tilde{A}'$ is surjective then $\psi(S^{\G,\tilde{A}}(\mathcal{Q}))= S^{\G ',\tilde{A}'}(\mathcal{Q})$.
\end{cor}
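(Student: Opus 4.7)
The plan is to reduce the first claim to Proposition~\ref{prop:divisorsets} by verifying that $S = S(\mathcal{Q})$, viewed as a subset of $\Q A$, satisfies both hypotheses of that proposition: namely, that $S$ is a right divisor set of $\Q A$, and that $S$ is $\G$-invariant.

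For the first hypothesis, I observe that $\Q A$ is commutative (since $A$ is abelian), so condition (3) of Definition~\ref{def:divisorset} is automatic, and it remains only to check that $1 \in S$, that $0 \notin S$, and that $S$ is multiplicatively closed. The first holds by allowing the empty product ($r = 0$), and the third is built into the definition of $S(\mathcal{Q})$ as the set of all finite products of evaluations $q_i(a_i)$. The main substantive point is $0 \notin S$: I would show that each generator $q(a)$ is nonzero in $\Q A$, case-splitting on the order of $a$. If $a = 1$, nonvanishing is exactly the hypothesis $q(1) \neq 0$; otherwise $a$ has infinite order (since $A$ is torsion-free), so the evaluation map $\Q[t^{\pm 1}] \to \Q\langle a\rangle \hookrightarrow \Q A$ is injective and hence $q(a) \neq 0$. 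Because $\Q A$ is a domain by hypothesis, any product of nonzero elements is nonzero, so $0 \notin S$. I expect this nonvanishing argument to be the only nontrivial ingredient; the rest is essentially definition-chasing.

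For $\G$-invariance, since $A \lhd \G$, conjugation by any $g \in \G$ restricts to a group automorphism of $A$, which extends to a ring automorphism of $\Q A$. Under this automorphism, a generator $q(a)$ of $S$ is carried to $q(g^{-1} a g)$, and since $\tilde{A}$ is $\G$-invariant by hypothesis, $g^{-1} a g \in \tilde{A}$, so $q(g^{-1} a g)$ is again a generator of $S$. Hence $g^{-1} S g \subset S$ for every $g \in \G$, and Proposition~\ref{prop:divisorsets} delivers the conclusion that $S$ is a right divisor set of $\Q\G$.

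For the functoriality statement, given a homomorphism $\psi:(\G,A,\tilde{A}) \to (\G',A',\tilde{A}')$, I would extend $\psi$ linearly to a ring homomorphism $\Q\G \to \Q\G'$. The identity $\psi(q(a)) = q(\psi(a))$, combined with $\psi(\tilde{A}) \subset \tilde{A}'$, shows that $\psi$ carries each generator $q(a)$ of $S^{\G,\tilde{A}}(\mathcal{Q})$ to a generator of $S^{\G',\tilde{A}'}(\mathcal{Q})$, and hence products of such to products, yielding the desired inclusion. If moreover $\psi$ restricts to a surjection $\tilde{A} \onto \tilde{A}'$, then every generator $q(a')$ of $S^{\G',\tilde{A}'}(\mathcal{Q})$ lifts to a generator $q(a)$ of $S^{\G,\tilde{A}}(\mathcal{Q})$ by choosing any preimage $a \in \tilde{A}$ of $a'$, promoting the inclusion to equality.
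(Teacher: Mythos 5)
Your proof is correct and follows essentially the same route as the paper: verify that $S$ is a right divisor set of $\Q A$ and is $\G$-invariant, invoke Proposition~\ref{prop:divisorsets}, and chase definitions for functoriality. The one small deviation is your verification that $0 \notin S$: you case-split on whether $a$ is trivial or of infinite order, whereas the paper observes more directly that each generator $q_i(a_i)$ has nonzero augmentation $q_i(1)$ and hence is nonzero.
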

\begin{proof} By construction $S$ is a multiplicatively closed subset of $\Q A$ that contains $1$ (the empty product). Certainly $q_i(a_i)\neq 0$ since its augmentation, $q_i(1)$, is non-zero. Since $A$ is torsion-free abelian, $\Q A$ is a commutative domain. Thus $0\neq S$. Hence $S$ is a right divisor set of $\Q A$. For any $g\in \G$ and $a\in \tilde{A}$,
$$
g^{-1}q(a)g=q(g^{-1}ag)=q(a')
$$
where $a'\in \tilde{A}$ since $\tilde{A}$ is $\G$-invariant . It follows that $S$ is $\G$-invariant. Proposition~\ref{prop:divisorsets} then implies that $S$ is a right divisor set of $\Q\G$.

If $\psi:(\G,A)\to (\G ',A')$ then $\psi$ induces a ring homomorphism $\psi:\Q A\to \Q A '$ with respect to which $\psi(q(a))=q(\psi(a))$. Hence $\psi(S^{\G,\tilde{A}}(\mathcal{Q}))\subset S^{\G,\tilde{A}' }(\mathcal{Q})$. If, in addition, $\psi(\tilde{A})=\tilde{A}'$, then, given $q(a')$, where $a'\in \tilde{A}'$ there is an $a\in \tilde{A}$ such that $\psi(a)=a'$ so $\psi(q(a))=q(a')$. Hence $S^{\G ',\tilde{A}'}(\mathcal{Q})\subset \psi(S^{\G,\tilde{A}}(\mathcal{Q}))$.
\end{proof}

In our applications we will focus on inverting only one-variable polynomials. Corollary~\ref{cor:divisorsets} provides us with right divisor sets that contain a fixed element $q(a)$, by, for example, taking $\mathcal{Q}=\{q(t)\}$. But how do we characterize the set $S(\mathcal{Q})$? Specifically, if $p(t)$ is relatively prime to $q(t)$ then is $p(a)\in S(\{q(t)\})$?

\begin{defn}\label{def:stronglycoprime} Two non-zero polynomials $p(t),q(t)\in\Q [t,t^{-1}]$ are said to be \textbf{strongly coprime}, denoted $\widetilde{(p,q)}=1$ if, for every pair of non-zero integers, $n,k$, $p(t^n)$ is relatively prime to $q(t^k)$. Otherwise they are said to be \textbf{isogenous}, denoted $\widetilde{(p,q)}\neq 1$. Alternatively, $\widetilde{(p,q)}\neq 1$ if and only if there exist \emph{non-zero} roots, $r_p, r_q\in \mathbb{C}$*, of $p(t)$ and $q(t)$ respectively, and non-zero integers $k,n$, such that $r_p^k=r_q^n$. Clearly, $\widetilde{(p,q)}= 1$ if and only if for each prime factor $p_i(t)$ of $p(t)$ and $q_j(t)$ of $q(t)$, $\widetilde{(p_i,q_j)}= 1$.
\end{defn}

The following is another useful characterization of being \textbf{strongly coprime}.

\begin{prop}\label{prop:characterizationequivalent} Suppose $p(t), q(t)\in \Q [t,t^{-1}]$ are non-zero. Then $p$ and $q$ are strongly coprime if and only if, for any finitely-generated free abelian group $F$ and any nontrivial $a,b\in F$, $p(a)$ is relatively prime to $q(b)$ in $\Q F$ (a unique factorization domain).
\end{prop}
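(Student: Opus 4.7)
My plan is to prove the two directions separately, with the forward direction being substantially harder. For the backward direction, I would specialize the hypothesis: fix nonzero integers $n, k$ and take $F = \langle c \rangle \cong \Z$ with $a = c^n$, $b = c^k$. The hypothesis then gives that $p(c^n)$ and $q(c^k)$ are coprime in $\Q F = \Q[c^{\pm 1}]$, which translates verbatim to $p(t^n)$ and $q(t^k)$ being coprime in $\Q[t, t^{-1}]$, verifying strong coprimality.

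For the forward direction, assuming $p, q$ strongly coprime and $a, b \in F$ nontrivial, I would first reduce to a subgroup of low rank. Let $L = \langle a, b \rangle$ and let $L' \subset F$ be its isolator (the smallest subgroup containing $L$ with $F/L'$ torsion-free). Then $L'$ is a direct summand of $F$, say $F = L' \oplus L''$, so $\Q F = (\Q L')[L'']$ is a Laurent polynomial extension of $\Q L'$. A standard comparison of leading and trailing monomials in the $L''$-variables shows that irreducible factorizations of elements of $\Q L'$ are preserved in $\Q F$, so it suffices to prove that $p(a), q(b)$ are coprime in $\Q L'$. Since $L'$ has rank $1$ or $2$ (matching that of $L$), I would split into these two cases.

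In the rank $1$ case, picking a generator $c$ of $L'$ writes $a = c^m$, $b = c^k$ for nonzero integers $m, k$, and the isomorphism $\Q L' \cong \Q[t^{\pm 1}]$ identifies $p(a), q(b)$ with $p(t^m), q(t^k)$, which are coprime by strong coprimality. In the rank $2$ case, I would choose a basis $e_1, e_2$ of $L'$ with $a = e_1^{\alpha_1} e_2^{\alpha_2}$ and $b = e_1^{\beta_1} e_2^{\beta_2}$, where $(\alpha_1, \alpha_2)$ and $(\beta_1, \beta_2)$ are $\Q$-linearly independent. Extending scalars via the faithfully flat inclusion $\Q L' \hookrightarrow \mathbb{C} L'$ (under which coprimality is preserved), I would split $p(a) = c \prod_i (e_1^{\alpha_1} e_2^{\alpha_2} - r_i)^{m_i}$ over $\mathbb{C}$ and further factor each binomial: if $g = \gcd(\alpha_1, \alpha_2)$ and $(\alpha_1', \alpha_2') = (\alpha_1, \alpha_2)/g$, then
\[
e_1^{\alpha_1} e_2^{\alpha_2} - r_i = \prod_{\zeta^g = 1} \bigl( e_1^{\alpha_1'} e_2^{\alpha_2'} - \zeta\, r_i^{1/g} \bigr),
\]
and each factor is irreducible in $\mathbb{C}[e_1^{\pm 1}, e_2^{\pm 1}]$ by a $\mathrm{GL}(2,\Z)$ change of variable sending $(\alpha_1', \alpha_2')$ to $(1,0)$. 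A direct analysis of units shows that two such irreducible binomials $e_1^{u_1} e_2^{u_2} - c$ and $e_1^{v_1} e_2^{v_2} - c'$ are associate in $\mathbb{C}[e_1^{\pm 1}, e_2^{\pm 1}]$ only when $(v_1, v_2) = \pm(u_1, u_2)$. Since $(\alpha_1, \alpha_2)$ and $(\beta_1, \beta_2)$ are $\Q$-linearly independent, their primitive directions are distinct lines in $\Q^2$, so no irreducible factor of $p(a)$ can be associate to any irreducible factor of $q(b)$, yielding coprimality.

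The main obstacle is the rank $2$ case, which requires the classification of irreducible Laurent binomials in two variables together with their associates. Notably, strong coprimality is never invoked in the rank $2$ case—$\Q$-linear independence of $a, b$ in $L'$ already suffices—so the entire content of strong coprimality is absorbed into the rank $1$ case.
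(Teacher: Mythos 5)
Your proof is correct, and it is closely related to the paper's but not identical. The paper proves the forward direction by contrapositive: it chooses a basis $\{x,y,\ldots\}$ with $a = x^n$ and $b = x^k y^m$ (with $m$ possibly $0$), factors $p(x^n)$ into linear factors $x - \alpha_i$ over $\mathbb{C}$, and uses the substitution $x = \alpha_i$ in a binomial factor $x^k y^m - s_j$ of $q(b)$ to force $m = 0$, after which the isogeny relation $\alpha_i^{nk} = s_j^n$ drops out. You instead argue directly, isolate the sublattice $L' = $ isolator of $\langle a,b\rangle$ explicitly, and split by rank. Your rank~$1$ case is the paper's $m=0$ case and your rank~$2$ case replaces the paper's one-line substitution argument with a self-contained classification of irreducible Laurent binomials $e_1^u e_2^v - c$ and their associates via $\mathrm{GL}(2,\Z)$. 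Both treatments agree on the key structural point, which you make explicit and the paper leaves implicit: strong coprimality is only used when $a$ and $b$ are $\Q$-linearly dependent; in the rank~$2$ case coprimality of $p(a)$ and $q(b)$ is automatic for any non-unit $p,q$. Your approach buys a cleaner conceptual split and a reusable statement about binomial associates; the paper's buys brevity, since the substitution $x=\alpha_i$ kills the rank~$2$ case in one step without needing the associate classification.
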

\begin{proof} $(\Rightarrow$) We show the contrapositive. Suppose that for some $F,a$ and $b$,  $p(a)$ and $q(b)$ do have a common factor over $\Q F$ that is not a unit. It follows immediately that neither $p(a)$ nor $q(b)$ is a unit. Thus they have a common factor over $\mathbb{C}F$ that is not a unit since $\mathbb{C}F$ has only the trivial units ($zf$, $z\in\mathbb{C}$ $f\in F$). Since $a$ and $b$ are non-trivial, we can choose a basis $\{x,y,x_3...x_\ell\}$ for $F$ such that $a=x^n$ and $b=x^ky^m$ for some positive integer $n$ and integers $k$ and $m$. Then we can identify $\mathbb{C} F$ with $\mathbb{C} [x^{\pm 1},y^{\pm 1},...,x_\ell^{\pm 1}]$. Observe $p(x^n)$ factors in $\mathbb{C}[x^{\pm 1},y^{\pm 1},...,x_{\ell}^{\pm 1}]$ into a unit times a product of monomials $x-\alpha_i$ where $\alpha_i$ runs over the set of all the  $n^{th}$ roots of the non-zero roots of $p(t)$ (with multiplicity). This set is non-empty since $p(a)$ is not a unit. Thus $x-\alpha_i$ divides $q(x^ky^m)$ for some $\alpha_i\neq 0$ where $\alpha^n=r$, where $r$ is a non-zero root of $p(t)$. On the other hand $q(x^ky^m)$  factors as a unit times
$$
\prod (x^ky^m-s_j)
$$
where $\{s_j\}$ are the non-zero roots of $q(t)$. This set is non-empty since $q(t)$ is not a unit. Therefore $x=\alpha_i$ is a zero of some $x^ky^m-s_j$ \emph{for every y}. This implies that $m=0$, $k\neq 0$ and that $\alpha_i^k=s_j$ so $(\alpha_i^n)^k=s_j^n$. Hence $r^k=s_j^n$. Thus  $\widetilde{(p,q)}\neq 1$,  as claimed.

$(\Leftarrow$): Take $F=\Z=\langle t \rangle$. Then for any non-zero $k$ and $n$, $a=t^n$ and $b=t^k$ are non-trivial in $F$. By hypothesis $(p(t^n),q(t^k))=1$ in $\Q F=\Q[t,t^{-1}]$. Thus $p$ and $q$ are strongly coprime.
\end{proof}

\begin{ex}\label{ex:isogeny} Consider $p(t)=t-4$ and $q(t)=t^2-4$. Then $(p,q)=1$ since they have no common roots. But $p(t)$ and $q$ are not strongly coprime since $p(t^2)=q(t)$.
\end{ex}
\begin{ex}\label{ex:isogenoustodual}  $p(t)$ is isogenous to $p(t^{-1})$ for any non-zero, non-unit polynomial.
\end{ex}
\begin{ex}\label{ex:isogenycyclo} If $p$ is cyclotomic and $q$ is prime then $p$ is isogenous to $q$ if and only if $q$ is cyclotomic.
\end{ex}
\begin{ex}\label{ex:remarklogs} Given $p(t)$ and $q(t)$, if the sets $\{\text{log}(r)~|~r\in\mathbb{C}^*, p(r)=0\}$ and $\{\text{log}(r)~|~r\in\mathbb{C}^*, q(r)=0\}$ are linearly independent in the rational vector space $\mathbb{C}/\langle 2\pi i\rangle$, then $\widetilde{(p,q)}=1$.
\end{ex}
\begin{ex}\label{ex:stronglycoprime} Consider the family of quadratic polynomials
$$
\{p_k(t)=(kt-(k+1))((k+1)t-k)~|~k\in \mathbb{Z}^+\},
$$
whose roots are positive rational integers
$$
\mathcal{R}_k=\left\{\frac{k}{k+1},\frac{k+1}{k}\right\}.
$$
These are the Alexander polynomials of the family of ribbon knots in Figure~\ref{fig:ribbonfamily} where the $-k$ inside a box symbolizes $k$ full negative twists between the bands.
\begin{figure}[htbp]
\setlength{\unitlength}{1pt}
\begin{picture}(327,151)
\put(85,0){\includegraphics{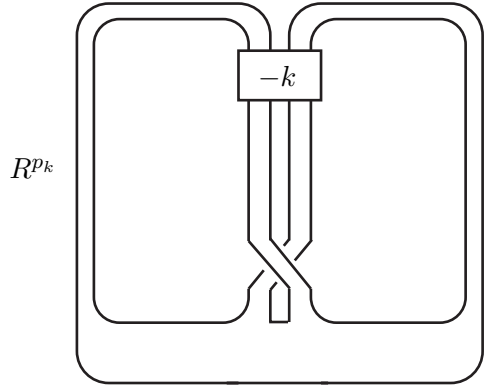}}
\put(60,78){$R^{p_k}$}
\put(154,113){$-k$}
\end{picture}
\caption{The ribbon knots $R^{p_k}$}\label{fig:ribbonfamily}
\end{figure}
We claim that $\widetilde{(p_k,p_\ell)}=1$ if $k\neq \ell$. For suppose that $p_k$ and $p_\ell$ were isogenous where $k>\ell$. It then follows that for some positive integers $n,m$ we have
$$
\left(\frac{k}{k+1}\right)^n=\left(\frac{\ell}{\ell+1}\right)^m.
$$
It follows that $n>m$ and that
\begin{equation}\label{eq:exstronglycoprime}
k^n\left(\ell +1\right)^m=\ell^m(k+1)^n.
\end{equation}
Since $k\neq \ell$ there is some prime integer $q$ and positive integer $w$ such that $q^w$ divides $k$ but does not divide $\ell$. Then $(k+1,q)=1$. Now $q^{wn}$ divides the left-hand side of ~\ref{eq:exstronglycoprime} but not the right-hand side, a contradiction.  Hence $\widetilde{(p_k,p_\ell)}=1$.
\end{ex}

We are finally prepared to define our notion of \textbf{localization at $p(t)$}.

\begin{defn}\label{def:Sp} Suppose $A\lhd \G$ where $A$ is a torsion-free abelian group and $\Q\G$ is a right Ore domain. If $p(t)\in\Q [t,t^{-1}]$ is non-zero then set
$$
S_p=S(\{q(t) \in \Q [t,t^{-1}]~|~q(1)\neq 0; ~\widetilde{(p,q)}= 1\})
$$
$$
=\{ q_1(a_1)...q_r(a_r)~|~\widetilde{(p,q_j)}=1; ~q_j(1)\neq 0; ~a_j\in A\}.
$$
In the special case that $A=\G\cong \Z =\langle \mu \rangle$ we can alternatively set
$$
S_p^*=\{ q_1(\mu^{\pm 1})...q_r(\mu^{\pm 1})~|~(p,q_j)=1, ~ q_j(1)\neq 0 ~\}.
$$
By Corollary~\ref{cor:divisorsets}, $S_p$ is a right divisor set of $\Q\G$, and $S_p^*$ is a right divisor set of $\Q[\mu,\mu^{-1}]$. We say that $\Q\G S_p^{-1}$ is $\Q\G$ \textbf{localized at p(t)}. If $\mathcal{M}$ is a right $\Q\G$-module then we say that $\mathcal{M}S_p^{-1}$ is $\mathcal{M}$ \textbf{localized at p(t)}.
\end{defn}

We have inverted $q(\mu^{-1})$ as well as $q(\mu)$  because we want to force our localized rings to inherit the natural involution. Thus if $p(t)\neq p(t^{-1})$, our notion will differ from the classical localization at $p(t)$. Secondly, we are only inverting polynomials whose augmentations are non-zero (for simplicity since it helps establish functoriality).

Over a commutative domain, localizing a module $\mathcal{M}$ at a prime ideal $\langle p \rangle$ kills all torsion in $\mathcal{M}$ except $\langle p \rangle$-torsion. A version of this remains true in our broader context. This result is the lynchpin of this paper. We focus on right modules over the Ore domain $\Q\G$ of the form
$$
\frac{\Q\G}{q(a)\Q\G},
$$

\begin{thm}\label{thm:noStorsion} Suppose $A\lhd \G$ where $A$ is a torsion-free abelian group and $\Q\G$ is a right Ore domain. Suppose $p(t)\in\Q [t,t^{-1}]$ is non-zero.  Then for any  $a_i\in A$.
$$
\frac{\Q\G}{p(a_1)\dots p(a_k)\Q\G} ~~\text{is} ~~S_p-\text{torsion-free},
$$
that is,
$$
\frac{\Q\G}{p(a_1)\dots p(a_k)\Q\G}\to \frac{\Q\G}{p(a_1)\dots p(a_k)\Q\G}S_p^{-1}
$$
is a monomorphism; whereas for any  $q(t)\in\Q [t,t^{-1}]$ with $q(1)\neq 0$ and $\widetilde{(p(t),q(t))}=1$
$$
\frac{\Q\G}{q(a)\Q\G} ~~\text{is} ~~S_p-\text{torsion},
$$
that is,
$$
\frac{\Q\G}{q(a)\Q\G}S_p^{-1}=0.
$$
\end{thm}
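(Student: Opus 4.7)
The plan is to handle the two assertions separately, with most of the work falling on the first. The second assertion is essentially immediate from the definitions: if $q(t) \in \Q[t,t^{-1}]$ satisfies $q(1) \neq 0$ and $\widetilde{(p,q)} = 1$, then $q(a) \in S_p$ directly by Definition \ref{def:Sp}. Since $\Q\G S_p^{-1}$ is flat as a left $\Q\G$-module (classical Ore localization), tensoring the short exact sequence $0 \to q(a)\Q\G \to \Q\G \to \Q\G/q(a)\Q\G \to 0$ with $\Q\G S_p^{-1}$ identifies $(\Q\G/q(a)\Q\G) S_p^{-1}$ with the cokernel of multiplication by $q(a)$ on $\Q\G S_p^{-1}$. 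But $q(a)$ is now a unit, so this cokernel vanishes.

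For the first assertion, set $P = p(a_1)\cdots p(a_k)$. I must show that if $r \in \Q\G$ and $s \in S_p$ satisfy $rs \in P\Q\G$, then $r \in P\Q\G$. The key idea is to exploit the decomposition of $\Q\G$ as a free left $\Q A$-module on a choice of coset representatives $\{\tilde g\}$ for $\G/A$. Writing $r = \sum_i r_i \tilde g_i$ and $rs = Pt$ with $t = \sum_i t_i \tilde g_i$, I compute
\begin{equation*}
rs \;=\; \sum_i r_i \tilde g_i s \;=\; \sum_i r_i \bigl(\tilde g_i s \tilde g_i^{-1}\bigr) \tilde g_i.
\end{equation*}
Normality of $A$ in $\G$ ensures that each $s^{(i)} := \tilde g_i s \tilde g_i^{-1}$ still lies in $\Q A$, and in fact $s^{(i)}\in S_p$ (by the $\G$-invariance of $S_p$ exploited in Proposition \ref{prop:divisorsets}). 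Comparing $\Q A$-coefficients on the two sides of $rs = Pt$ yields an equation
\begin{equation*}
r_i \cdot s^{(i)} \;=\; P \cdot t_i
\end{equation*}
in the commutative domain $\Q A$, for each $i$.

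Next I reduce to a UFD. Let $B \le A$ be the finitely generated subgroup containing $a_1,\dots,a_k$, all conjugates $b_l^{(i)} = \tilde g_i b_l \tilde g_i^{-1}$ of the arguments of $s$, and the $A$-supports of the (finitely many) nonzero $r_i,t_i$. Since $A$ is torsion-free abelian, $B \cong \Z^n$ and $\Q B$ is a Laurent polynomial ring, hence a UFD; the displayed equation lives in $\Q B$. The crux is then to show $\gcd(P, s^{(i)}) = 1$ in $\Q B$. A factor $p(a_j)$ or $q_l(b_l^{(i)})$ with trivial argument evaluates to a scalar; the $q_l(1)$ are units by the definition of $S_p$, and $p(1) = 0$ would make $P = 0$ and trivialize the statement. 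In all other factors the argument is non-trivial in the free abelian group $B$, so Proposition \ref{prop:characterizationequivalent}, applied to each pair $(p,q_l)$ for which $\widetilde{(p,q_l)} = 1$, guarantees $\gcd(p(a_j), q_l(b_l^{(i)})) = 1$ in $\Q B$. Multiplicativity of gcd in a UFD then gives $\gcd(P, s^{(i)}) = 1$, so $P \mid r_i$ in $\Q B \subset \Q A$. Writing $r_i = P u_i$ and factoring $P$ out on the left yields $r = P \sum_i u_i \tilde g_i \in P\Q\G$.

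The main obstacle I anticipate is the bookkeeping at the transition between $\Q\G$ and $\Q A$: verifying that conjugating $s$ by the coset representatives $\tilde g_i$ keeps us inside $S_p$, justifying the coefficient comparison as elements of the free left $\Q A$-module $\Q\G$, and cleanly disposing of the degenerate cases in which polynomial arguments become trivial in $A$. Once these points are settled, the heart of the argument — reducing to the UFD $\Q B$ and invoking the strong-coprimality characterization of Proposition \ref{prop:characterizationequivalent} — is short.
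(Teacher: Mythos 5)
Your proposal is correct and follows essentially the same strategy as the paper's proof: decompose $\Q\G$ as a free left $\Q A$-module on coset representatives, conjugate $s$ across the coset representatives (using $\G$-invariance of $S_p$), compare coefficients to get equations in $\Q A$, push into a finitely generated subgroup to land in a UFD, and invoke Proposition~\ref{prop:characterizationequivalent} to get the needed coprimality. Your second assertion uses flatness of Ore localization where the paper observes more directly that the cyclic generator $[1]$ is itself $q(a)$-torsion, and you are a bit more explicit about the degenerate case of trivial arguments (which the paper's appeal to Proposition~\ref{prop:characterizationequivalent} technically requires one to set aside), but these are cosmetic differences, not a different route.
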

\begin{proof} For the first claim, fix $p$ and the $a_i$, suppose that some $x\in \Q\G$, represents an element
$$
[x]\in\Q\G/p(a_1)\dots p(a_k)\Q\G
$$
that is $S_p$-torsion. We will show that $[x]=0$, implying that $\Q\G/p(a_1)\dots p(a_k)\Q\G$ is $S_p$-torsion-free. We have $xs=p(a_1)\dots p(a_k)y$ for some $s\in S_p$ and for some $y\in \Q\G$. We examine this equation in $\Q\G$.

Recall that, since $A\subset \G$, $\mathbb{Q}\G$, viewed as a left $\mathbb{Q}A$-module, is free on the right cosets of $A$ in $\G$ ~\cite[Chapter 1, Lemma 1.3]{P}. Thus, upon fixing a set of coset representatives, any $x\in \Q\G$ has a \emph{unique} decomposition
$$
x=\Sigma_\gamma x_\gamma\gamma ,
$$
where sum is over a set of coset representatives $\{\gamma\in \G\}$  and $x_\gamma\in\mathbb{Q}A$.  Therefore we have
$$
 (\Sigma_\gamma x_\gamma\gamma )s=p(a_1)\dots p(a_k)\Sigma_\gamma y_\gamma\gamma ,
$$
and thus
$$
\Sigma_\gamma ( x_\gamma s^{\gamma^{-1}})\gamma=\Sigma_\gamma (p(a_1)\dots p(a_k)y_\gamma)\gamma,
$$
where $s^{\gamma^{-1}}=\gamma s\gamma^{-1}$ lies in $S_p$ (since $S_p$ is closed under the action of $\G$). It follows that for each coset representative $\gamma$ we have
$$
x_\gamma s^{\gamma^{-1}} =p(a_1)\dots p(a_k)y_\gamma
$$
which is an equation in $\Q A$. Recall that, for each $\gamma$, $s^{\gamma^{-1}}=q_1(b_1)...q_k(b_r)$ for some  $b_j\in A$ and $q_j(t)$ in $\Q[t,t^{-1}]$ (all depending on $\gamma$), where $\widetilde{(p,q_j)}= 1$ and $q_j(1)\neq 0$ . Thus we have
$$
x_\gamma q_1(b_1)...q_k(b_k)~=~p(a_1)\dots p(a_k)y_\gamma.
$$
This may be viewed as an equation in $\Q F_\gamma$ for some free abelian group $F_\gamma\subset A$ of finite rank.  Since $\widetilde{(p,q_j)}= 1$, the greatest common divisor, in $\Q F_\gamma$, of $p(a_i)$ and $q_j(b_j)$ is a unit. Thus, for each $\gamma$ and each $i$, $p(a_i)$ divides $x_\gamma$ in $\Q F_\gamma$. Thus $p(a_i)$ divides each $x_\gamma$ in $\Q A$ so
$$
x=\Sigma_\gamma x_\gamma\gamma=p(a_1)\dots p(a_k)\Sigma_\gamma x'_\gamma\gamma\in p(a_1)\dots p(a_k)\Q\G
$$
implying $[x]=0$. This finishes the proof of the first claim of Theorem~\ref{thm:noStorsion}.

For the second claim of Theorem~\ref{thm:noStorsion}, note that the hypotheses ensure that $q(a)\in S_p$. Now recall that the kernel, $K$, of the canonical map
$$
\mathcal{M}\equiv\frac{\Q\G}{q(a)\Q\G}\to \frac{\Q\G}{q(a)\Q\G}S_p^{-1}\equiv \mathcal{M}S_p^{-1}
$$
is a precisely the $S_p$-torsion submodule of $\mathcal{M}$. Note that $\mathcal{M}$ is a cyclic right $\Q\G$-module generated by $[1]\in\mathcal{M}$ where $1\in \Q\G$. Clearly $[1]q(a)=[q(a)]=0$ in $\mathcal{M}$. Since the generator of $\mathcal{M}$ is $S_p$-torsion, $[1]\in K$, implying that $K=\mathcal{M}$. This establishes the second claim of Theorem~\ref{thm:noStorsion}.
\end{proof}

There is a well-known companion result in the classical case when $A=\G\cong \Z=\langle \mu\rangle$. The proof is the same as above.

\begin{prop}\label{prop:noS*torsion} Suppose $p(t)\in\Q [t,t^{-1}]$ is non-zero and $p(t^{-1})\doteq p(t)$.  Then
$$
\frac{\Q[\mu^{\pm 1}]}{\langle p(\mu)\rangle} ~~\text{is} ~~S_p^*-\text{torsion-free};
$$
whereas for any  $q(t)\in\Q [t,t^{-1}]$ with $q(1)\neq 0$ and $(p(t),q(t))=1$
$$
\frac{\Q[\mu^{\pm 1}]}{\langle q(\mu^{\pm 1})\rangle} ~~\text{is} ~~S_p^*-\text{torsion}.
$$
\end{prop}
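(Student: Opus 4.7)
The plan is to follow the template of Theorem~\ref{thm:noStorsion} essentially verbatim, while exploiting the fact that the ambient ring $\Q[\mu,\mu^{-1}]$ is a commutative PID (a localization of the PID $\Q[\mu]$). All the delicate bookkeeping from the noncommutative case--where one had to decompose elements of $\Q\G$ over a coset basis for $\G$ over $A$ as in \cite[Chapter 1, Lemma 1.3]{P}--collapses here, since now $A=\G$ is the entire group.

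First I would treat the torsion-freeness statement. Suppose $x \in \Q[\mu,\mu^{-1}]$ has class $[x]$ that is $S_p^*$-torsion in $\Q[\mu,\mu^{-1}]/\langle p(\mu)\rangle$, so that $xs = p(\mu)\,y$ for some $s \in S_p^*$ and $y \in \Q[\mu,\mu^{-1}]$. Writing $s = q_1(\mu^{\epsilon_1}) \cdots q_r(\mu^{\epsilon_r})$ with $\epsilon_i \in \{\pm 1\}$ and each $q_i \in \Q[t,t^{-1}]$ satisfying $(p(t),q_i(t))=1$ and $q_i(1)\neq 0$, it suffices to show that each factor $q_i(\mu^{\epsilon_i})$ is coprime to $p(\mu)$ in the PID $\Q[\mu,\mu^{-1}]$. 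Granting this, $\gcd(p(\mu),s)=1$, hence $p(\mu) \mid x$, so $[x]=0$.

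For factors with $\epsilon_i=+1$, the ordinary coprimality $(p(t),q_i(t))=1$ in $\Q[t]$ survives passage to the localization $\Q[\mu,\mu^{-1}]$ up to the units $c\mu^k$. For factors with $\epsilon_i=-1$ I would invoke the standing hypothesis $p(t^{-1}) \doteq p(t)$: the ring automorphism $\mu \mapsto \mu^{-1}$ of $\Q[\mu,\mu^{-1}]$ preserves gcds up to units, so
\[
\gcd(p(\mu),q_i(\mu^{-1})) \doteq \gcd(p(\mu^{-1}),q_i(\mu^{-1})) \doteq \gcd(p(\mu),q_i(\mu)) = 1.
\]
This is the one and only place where the symmetry hypothesis on $p$ enters, and I expect this to be the only subtle step in the argument.

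The second claim is essentially formal and mirrors the second half of Theorem~\ref{thm:noStorsion}. For either choice of sign, the element $q(\mu^{\pm 1})$ lies in $S_p^*$ by the definition of $S_p^*$, since $(p,q)=1$ and $q(1)\neq 0$. Therefore the cyclic generator $[1]$ of $\Q[\mu,\mu^{-1}]/\langle q(\mu^{\pm 1})\rangle$ is annihilated by an element of $S_p^*$; the whole module is $S_p^*$-torsion and so dies after inverting $S_p^*$. No real obstacle is anticipated here; the conceptual content, as above, is simply that the hypothesis $p(t^{-1}) \doteq p(t)$ is exactly what makes $S_p^*$ behave symmetrically with respect to the involution $\mu \mapsto \mu^{-1}$.
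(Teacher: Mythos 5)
Your proposal is correct and follows the same approach the paper intends: the paper dismisses the proof with the remark ``the proof is the same as above,'' referring to Theorem~\ref{thm:noStorsion}, and your argument is exactly that specialization to the commutative PID $\Q[\mu^{\pm1}]$, where the coset decomposition over $A\subset\G$ becomes vacuous. Your observation that the symmetry hypothesis $p(t^{-1})\doteq p(t)$ is precisely what replaces strong coprimality when a factor $q_i(\mu^{-1})$ appears is the right point to pin down, and your gcd chain correctly isolates where that hypothesis enters; the paper leaves this implicit, so making it explicit is a useful clarification but not a genuinely different route.
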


\begin{subsection}{The Derived Series Localized at $\mathcal{P}$}\label{sub:derivedserieslocalatp}

\

Finally we define the specific families of commutator series that we will use in our primary applications, using the method of Subsection~\ref{sec:seriesfromloc} and Definition~\ref{def:Sp}.

Fix an $n$-tuple $\mathcal{P}=(p_1(t),...,p_n(t))$ of non-zero elements of $\Q[t,t^{-1}]$. For each such $\mathcal{P}$ we now recursively define a functorial partial commutator series that we call the \textbf{(unrestricted) derived series localized at $\mathcal{P}$}. We also define, on the category of groups $\{G ~|~~G/G^{(1)}_r\cong \Z\}$, the \textbf{(polarized) derived series localized at $\mathcal{P}$}.

Given any group $G$, set $G^{(0)}_\mathcal{P}\equiv G$ and suppose inductively that $\gn_\mathcal{P}$ has been defined in such a way that $G^{(k)}_\mathcal{P}/G^{(k+1)}_\mathcal{P}$ is a torsion-free abelian group for each $k<n$. Then $G/\gn_\mathcal{P}$ is a PTFA group, so $\Q[G/\gn_\mathcal{P}]$ is a right Ore domain. Now consider $\G=G/\gn_\mathcal{P}$ and $A=G^{(n-1)}_\mathcal{P}/G^{(n)}_\mathcal{P}\lhd \G$. Note that $A$ is torsion-free abelian. Thus we may apply Definition~\ref{def:Sp}, to conclude that
$S_{p_n}$ is a right divisor set of $\Q[G/\gn_\mathcal{P}]$, where:
\begin{defn}\label{def:Sdefderivedlocalp} If $n\geq 1$,
$$
S_{p_n}=S_{p_n}(G)=\{ q_1(a_1)...q_r(a_r)~|~\widetilde{(p_n,q_j)}=1; q_j(1)\neq 0; a_j\in G^{(n-1)}_\mathcal{P}/G^{(n)}_\mathcal{P}\}.
$$
It follows that $\Q-\{0\}\subset S_{p_n}$ (take $q_j$ a non-zero constant). If $n=0$ we understand that $S_{p_n}=\{1\}$.
\end{defn}
We claim that $S_{p_n}$ is closed under the natural involution on $\Q[G/\gn_\mathcal{P}]$. For if $~\widetilde{(p_n,q_j)}=1$, then we claim that $~\widetilde{(p_n,q_j(t^{-1}))}=1$. For if not then $r_p^k=(r_q^{-1})^n$ for some non-zero roots of $p_n$ and $q_j$ and non-zero $k,n$. Thus  $r_p^k=(r_q)^{-n}$ which is a contradiction.

By Subsection~\ref{sub:commserieslocal} the sequence of right divisor sets $S_{p_n}$ defines a commutator series.
\begin{defn}\label{def:defderivedlocalp} The \textbf{(unrestricted) derived series localized at $\mathcal{P}$} is given by $G^{(0)}_{\mathcal{P}}\equiv G$ and for $n\geq 0$,
$$
\gnp_{\mathcal{P}}\equiv \ker \left(\gn_{\mathcal{P}}\to \frac{\gn_{\mathcal{P}}}{[\gn_{\mathcal{P}} ,\gn_{\mathcal{P}} ]}\otimes_{\Z [G/\gn_{\mathcal{P}}]}\Q[G/\gn_{\mathcal{P}}]S_{p_n}^{-1}\right).
$$
For a group $G$ with $\beta_1(G)=1$, we also define the \textbf{(polarized) derived series localized at $\mathcal{P}$} exactly as above, except that for $n=1$ we use $S_{p_1}^*$ (see Definition~\ref{def:Sp}) instead of $S_{p_1}$.
\end{defn}

We remark that, for either series, $G^{(0)}_\mathcal{P}=G$ and $G^{(1)}_\mathcal{P}=G^{(1)}_r$. This concludes the definition of our partially defined (up to $\gnp_\mathcal{P}$) commutator series dependent on $\mathcal{P}$.  By choosing an (infinite) sequence of polynomials, one may define an entire series.

\begin{thm}\label{thm:derivedlocpfunctorial} The (unrestricted) derived series localized at $\mathcal{P}$ is a functorial commutator series. The polarized derived series localized at $\mathcal{P}$ is functorial with respect to homomorphisms $f:G\to \pi$ for which $f_*:\Z\cong G/G^{(1)}_r\to \pi/\pi^{(1)}_r\cong \Z$ is either an isomorphism or the zero map. In particular it is weakly functorial.
\end{thm}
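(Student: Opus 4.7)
The plan is to apply Proposition~\ref{prop:preservSisfunctorial} inductively: given a homomorphism $f : G \to \pi$, I will show that $f(S^G_{p_n}) \subset S^\pi_{p_n}$ for every $n$, which by that proposition yields $f(G^{(n)}_\mathcal{P}) \subset \pi^{(n)}_\mathcal{P}$.

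For the unrestricted series, the argument is uniform in $n$. Under the inductive hypothesis that $f(G^{(i)}_\mathcal{P}) \subset \pi^{(i)}_\mathcal{P}$ for all $i \leq n$, the map $f$ descends to
$$
\bar{f} : G^{(n-1)}_\mathcal{P}/G^{(n)}_\mathcal{P} \longrightarrow \pi^{(n-1)}_\mathcal{P}/\pi^{(n)}_\mathcal{P}.
$$
A typical generator $q_1(a_1) \cdots q_r(a_r)$ of $S^G_{p_n}$, with $a_j \in G^{(n-1)}_\mathcal{P}/G^{(n)}_\mathcal{P}$, $\widetilde{(p_n, q_j)} = 1$, and $q_j(1) \neq 0$, is sent by $f$ to $q_1(\bar{f}(a_1)) \cdots q_r(\bar{f}(a_r))$, which satisfies exactly the same defining conditions and so lies in $S^\pi_{p_n}$. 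The base case $n = 0$ is trivial since $S_{p_0} = \{1\}$, and this closes the induction. The whole scheme rests on the observation that each $S_{p_n}$ is built purely from polynomial evaluations on the successive quotients $G^{(n-1)}_\mathcal{P}/G^{(n)}_\mathcal{P}$, and polynomial evaluation is intrinsically natural with respect to group homomorphisms.

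For the polarized series only the $n = 1$ step needs separate treatment, since $S^*_{p_1}$ replaces $S_{p_1}$ there. Under the hypothesis that $f_* : G/G^{(1)}_r \to \pi/\pi^{(1)}_r$ is either an isomorphism or zero, I handle the two cases directly. In the isomorphism case, a chosen generator $\mu_G$ maps to $\mu_\pi^{\pm 1}$, so every factor $q(\mu_G^{\pm 1})$ maps to $q(\mu_\pi^{\pm 1}) \in S^{*,\pi}_{p_1}$. In the zero case, $f(q(\mu_G^{\pm 1})) = q(1)$, a nonzero rational, which lies in $S^{*,\pi}_{p_1}$ by viewing it as the constant polynomial $q(t) = q(1)$ (trivially coprime to $p_1$, with nonzero augmentation). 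For $n \geq 2$ the argument from the unrestricted case applies verbatim, giving full functoriality of the polarized series under the stated hypotheses. Weak functoriality is exactly the special case in which $f_*$ is an isomorphism, so it is subsumed. The extra hypothesis on $f_*$ is required here precisely because $S^*_{p_1}$ is defined relative to a fixed generator of $G/G^{(1)}_r$, which a general homomorphism may fail to preserve; no such choice enters the definition of $S_{p_n}$ for $n \geq 2$ or of $S_{p_1}$ in the unrestricted series, which is why no additional hypothesis is needed in those cases.
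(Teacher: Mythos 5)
Your proof is correct and follows essentially the same route as the paper's: an induction reducing via Proposition~\ref{prop:preservSisfunctorial} to the claim that each $S_{p_i}$ is preserved, with the preservation established by the naturality of polynomial evaluation on the descended quotients (the paper invokes the functoriality clause of Corollary~\ref{cor:divisorsets} for this; you spell it out directly), and an identical case analysis in the $n=1$ polarized step, sending $q(\mu^{\pm 1})$ to either $q(\mu^{\pm 1})$ or the nonzero constant $q(1)$.
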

\begin{proof} We are given $\mathcal{P}=(p_1(t),...,p_n(t),...)$, a sequence of non-zero elements of $\Q[t,t^{-1}]$. First we consider the (unrestricted) derived series localized at $\mathcal{P}$.  Suppose $\psi:G\to B$ is a homomorphism. We show, by induction on $n$, that $\psi(\gn_\mathcal{P})\subset \bn_\mathcal{P}$. This holds for $n=0$ so suppose it holds for $n$. We will show that $\psi(\gnp_\mathcal{P})\subset \bnp_\mathcal{P}$. By Proposition~\ref{prop:preservSisfunctorial}, it suffices to verify that, for each $0\leq i\leq n$,
$\psi(S_{p_i}^G)\subset S_{p_i}^B$.  Since $i\leq n$, the induction hypothesis guarantees that $\psi$ induces a homomorphism of pairs
$$
\psi:(G/G^{(i)}_\mathcal{P}, G^{(i-1)}_\mathcal{P}/G^{(i)}_\mathcal{P})\to (B/B^{(i)}_\mathcal{P},B^{(i-1)}_\mathcal{P}/B^{(i)}_\mathcal{P})
$$
It then follows from the second part of Corollary~\ref{cor:divisorsets} that $\psi(S_{p_i}^G)\subset S_{p_i}^B$.

Now consider the (polarized) derived series localized at $\mathcal{P}$.  Suppose $\psi:G\to B$ is a homomorphism that induces either an isomorphism $\overline{\psi}:G/G^{(1)}_r\to B/B^{(1)}_r\cong \Z=\langle \mu\rangle$ or induces the zero map. Then $\overline{\psi}(\mu)=\pm\mu$ or $\overline{\psi}(\mu)=1$. The proof is the same as above, except that we must verify (for the case $n=1$) that $\overline{\psi}(S_{p_1}^{*,G})\subset S_{p_1}^{*,B}$. Recall that
$$
S_{p_1}^{*,G}=\{ q_1(\mu^{\pm 1})...q_r(\mu^{\pm 1})~|~(p_1,q_j)=1, ~q_j(1)\neq 0~\}.
$$
So, for any such $q$, either
$$
\overline{\psi}(q(\mu^{\pm 1}))=q(\overline{\psi}(\mu^{\pm 1}))=q(\mu^{\pm 1})\in S_{p_1}^{*,B},
$$
or
\begin{equation}\label{eq:fun}
\overline{\psi}(q(\mu^{\pm 1}))=q(\overline{\psi}(\mu^{\pm 1}))=q(1)=k \in S_{p_1}^{*,B},
\end{equation}
since the constant polynomial $k$ lies in $S_{p_1}^*$ if $k\neq 0$.
Thus $\overline{\psi}(S_{p_1}^{*,G})\subset S_{p_1}^{*,B}$.
\end{proof}

Note that the step ~\ref{eq:fun} fails if, say, $\overline{\psi}(\mu)=\mu^2$.

The following basic result is useful.

\begin{prop}\label{prop:idempotency} If $\phi :A \to B$ is surjective and $\ker\phi \subset A^{(m)}_{\sP}$ then $\phi$ induces
isomorphisms $A/A^{(n)}_{\sP} \cong B/B^{(n)}_{\sP}$ for all $n\leq m$. In particular, $(A/A^{(n)}_{\sP})^{(n)}_{\sP}=0$. For the polarized derived series localized at $\mathcal{P}$ we must also assume that $\phi$ induces an isomorphism $A/A^{(1)}_r\cong B/B^{(1)}_r\cong\Z$.
\end{prop}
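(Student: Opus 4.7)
The plan is to proceed by induction on $n$. The base case $n=0$ is trivial since $A/A^{(0)}_{\sP}=A$ and $B/B^{(0)}_{\sP}=B$ and $\phi$ is already assumed surjective. For the polarized variant the base case is bumped up to $n=1$, where the hypothesis $A/A^{(1)}_r\cong B/B^{(1)}_r\cong\Z$ is exactly what is being assumed.

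For the inductive step, suppose $n+1\leq m$ and that $\phi$ induces an isomorphism $\overline{\phi}:A/A^{(n)}_{\sP}\stackrel{\cong}{\to} B/B^{(n)}_{\sP}$. Functoriality (Theorem~\ref{thm:derivedlocpfunctorial}) gives $\phi(A^{(n+1)}_{\sP})\subset B^{(n+1)}_{\sP}$, so we get a well-defined surjection $A/A^{(n+1)}_{\sP}\onto B/B^{(n+1)}_{\sP}$. The task is to prove this is injective; equivalently, that $\phi^{-1}(B^{(n+1)}_{\sP})\cap A^{(n)}_{\sP}\subset A^{(n+1)}_{\sP}$ (the other inclusion is functoriality, and $\phi^{-1}(B^{(n+1)}_{\sP})$ already lies in $A^{(n)}_{\sP}$ by the induction hypothesis applied to the inclusion $B^{(n+1)}_{\sP}\subset B^{(n)}_{\sP}$).

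The central device is the commutative square whose horizontal arrows are the localization maps defining $A^{(n+1)}_{\sP}$ and $B^{(n+1)}_{\sP}$, and whose vertical arrows are induced by $\phi$:
\begin{equation*}
\begin{CD}
A^{(n)}_{\sP}/[A^{(n)}_{\sP},A^{(n)}_{\sP}] @>>> M_A:=\bigl(A^{(n)}_{\sP}/[A^{(n)}_{\sP},A^{(n)}_{\sP}]\bigr)\otimes_{\Z[A/A^{(n)}_{\sP}]}\Q[A/A^{(n)}_{\sP}]S_{p_n}^{-1}\\
@VV\bar\phi V @VV\phi_* V\\
B^{(n)}_{\sP}/[B^{(n)}_{\sP},B^{(n)}_{\sP}] @>>> M_B:=\bigl(B^{(n)}_{\sP}/[B^{(n)}_{\sP},B^{(n)}_{\sP}]\bigr)\otimes_{\Z[B/B^{(n)}_{\sP}]}\Q[B/B^{(n)}_{\sP}]S_{p_n}^{-1}
\end{CD}
\end{equation*}
The induction hypothesis provides a ring isomorphism $\Z[A/A^{(n)}_{\sP}]\cong \Z[B/B^{(n)}_{\sP}]$ carrying $S_{p_n}^A$ onto $S_{p_n}^B$ (by the surjective version of Corollary~\ref{cor:divisorsets}). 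Since $K=\ker\phi\subset A^{(m)}_{\sP}\subset A^{(n)}_{\sP}$, $\phi$ restricts to a surjection $A^{(n)}_{\sP}\onto B^{(n)}_{\sP}$, so $\bar\phi$ is surjective with kernel $\overline{K}$, the image of $K$ in the abelianization. Localization is exact, hence $\phi_*:M_A\to M_B$ is surjective with kernel equal to the image of $\overline{K}$ in $M_A$.

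The key observation is now: because $n+1\leq m$, $K\subset A^{(m)}_{\sP}\subset A^{(n+1)}_{\sP}$, and by the very definition~\eqref{eq:defserieslocSn}, $A^{(n+1)}_{\sP}$ is the kernel of $A^{(n)}_{\sP}\to M_A$. Therefore $\overline{K}$ dies in $M_A$, so $\phi_*$ is an isomorphism. Thus for $a\in A^{(n)}_{\sP}$, we have $a\in A^{(n+1)}_{\sP}$ iff $a$ maps to $0$ in $M_A$ iff $\phi(a)$ maps to $0$ in $M_B$ iff $\phi(a)\in B^{(n+1)}_{\sP}$, proving the desired equality $\phi^{-1}(B^{(n+1)}_{\sP})=A^{(n+1)}_{\sP}$. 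The final assertion $(A/A^{(n)}_{\sP})^{(n)}_{\sP}=0$ follows by applying the statement just proved to the quotient map $\phi:A\onto A/A^{(n)}_{\sP}$ with $m=n$. The one place requiring attention is the polarized case: one must check that the induction can begin at $n=1$ and that the map on first quotients is compatible with the restricted functoriality of Theorem~\ref{thm:derivedlocpfunctorial}, which is exactly guaranteed by the extra hypothesis on $\phi$.
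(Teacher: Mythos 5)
Your proof is correct, and it takes a genuinely different route from the one in the paper. The paper's argument is an element-by-element torsion chase: given $a\in A^{(n)}_{\sP}$ with $\phi(a)\in B^{(n+1)}_{\sP}$, it uses Proposition~\ref{prop:char*} to produce an explicit annihilator $\tilde{t}\in\tilde{S}_{p_n}(B)$ of $\pi_B(\phi(a))$, lifts it to $S_{p_n}(A)$, writes $\pi_A(a)s=\pi_A(z)$ with $z\in\ker\phi\subset A^{(n+1)}_{\sP}$, multiplies the two annihilators together, and concludes that $\pi_A(a)$ is $\tilde{S}_{p_n}(A)$-torsion. Your argument instead packages all of this at the module level: you identify the two group rings and right divisor sets via the inductive isomorphism and Corollary~\ref{cor:divisorsets}, observe that $\bar\phi$ fits into a short exact sequence of $\Z[A/A^{(n)}_{\sP}]$-modules with kernel $\overline{K}$, apply right-exactness of $-\otimes\,\Q[A/A^{(n)}_{\sP}]S_{p_n}^{-1}$ to see that $\ker\phi_*$ is the $R_S$-span of the image of $\overline{K}$ in $M_A$, and note that this image is zero because $K\subset A^{(n+1)}_{\sP}=\ker(A^{(n)}_{\sP}\to M_A)$. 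The conclusion $\phi^{-1}(B^{(n+1)}_{\sP})\cap A^{(n)}_{\sP}\subset A^{(n+1)}_{\sP}$ then falls out of the commutative square. A small point of precision: you invoke ``exactness'' of Ore localization, but all you actually need here is right-exactness of $\otimes$; once each generator $\bar{k}\otimes 1$ of the $R_S$-span dies in $M_A$, so does the whole kernel, without appealing to flatness or injectivity of $\overline{K}\otimes R_S \hookrightarrow M_A$. What your approach buys is conciseness and conceptual clarity (the isomorphism of localized modules is the real content); what the paper's approach buys is that the torsion-element characterization $\pi_A(a)$ is $\tilde{S}_{p_n}(A)$-torsion is exhibited directly, which matches the style of Proposition~\ref{prop:char*} used elsewhere in the section. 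Both handle the polarized case the same way, by using the extra hypothesis to guarantee the first-stage ring-and-divisor-set identification via $S_{p_1}^*$.
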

\begin{proof}[Proof of Proposition~\ref{prop:idempotency}] By induction we assume $\phi$ induces isomorphisms $A/A^{(i)}_{\sP} \cong B/B^{(i)}_{\sP}$ for each $i\leq n$ for some $n$ such that $0\leq n<m$. By functoriality (Theorem~\ref{thm:derivedlocpfunctorial}), $\phi\left(A^{(i)}_{\sP}\right)\subset B^{(i)}_{\sP}$ for \emph{any} $i$ so $\phi$ induces an epimorphism $A/A^{(n+1)}_{\sP} \to B/B^{(n+1)}_{\sP}$. We need to show this is injective to complete the proof.  We claim that $\phi\left(A^{(i)}_{\sP}\right)= B^{(i)}_{\sP}$ for any $i\leq n$, because for any $b\in B^{(i)}_{\sP}$, since $\phi$ is surjective, there is some $a\in A$ such that $\phi(a)=b$ and, by the inductive hypothesis, it follows that $a\in A^{(i)}_{\sP}$. Therefore  $\phi\left([A^{(n)}_{\sP},A^{(n)}_{\sP}]\right)= [B^{(n)}_{\sP},B^{(n)}_{\sP}]$, a fact we use below.  Suppose $a\in A$ such that $\phi (a)=b\in B^{(n+1)}_{\sP}$. By the inductive hypothesis, $a\in A^{(n)}_{\sP}$.
Thus we have the following commutative diagram,
$$
\begin{diagram}\label{diagram-propSfunct2}\dgARROWLENGTH=1.2em
\node{\an_{\sP}}\arrow{s,r}{\phi}\arrow{e,t}{\pi_A}\node{\frac{\an_{\sP}}{[\an_{\sP} ,\an_{\sP} ]}} \arrow{s,r}{\phi_*}\arrow{e,t}{}\node{\frac{\an_{\sP}}{[\an_{\sP} ,\an_{\sP} ]}\otimes_{\Z [A/\an_{\sP}]}\Q[A/\an_{\sP}](S_{\sP}(A))^{-1}}
\arrow{s,r}{\phi_*}\\
\node{\bn_{\sP}}\arrow{e,t}{\pi_B}\node{\frac{\bn_{\sP}}{[\bn_{\sP} ,\bn_{\sP} ]}} \arrow{e,t}{}\node{\frac{\bn_{\sP}}{[\bn_{\sP} ,\bn_{\sP} ]}\otimes_{\Z [B/\bn_{\sP}]}\Q[B/\bn_{\sP}](S_{\sP}(B))^{-1}}
\end{diagram}
$$
where $b$ is in the kernel of the bottom composition and we need to show that $a$ is in the kernel of the top composition. By Proposition~\ref{prop:char*}, $\pi_B(b)$ is $\tilde{t}$-torsion where $\tilde{t}\in\tilde{S}_{p_n}(B)$. Thus there is a non-zero rational $r$ such that $\tilde{t}r\in S_{p_n}(B)$. Since $\phi$ is surjective,
$\phi(S_{p_n}(A))=S_{p_n}(B)$, by the last part of Corollary~\ref{cor:divisorsets}. Hence $\tilde{t}r=\phi(s)$ for some $s\in S_{p_n}(A)$. Then
$$
\phi_*(\pi_A(a)s)=\phi_*(\pi_A(a)(\phi(s))=\pi_B(b)\tilde{t}r=0.
$$
Since $\pi_A$ is surjective there is some $x\in \an_{\sP}$ such that $\pi_A(x)=\pi_A(a)s$. Therefore
$$
\pi_B(\phi(x))=\phi_*(\pi_A(x))=\phi_*(\pi_A(a)s)=0.
$$
This implies that
$$
\phi(x)\in [B^{(n)}_{\sP},B^{(n)}_{\sP}]=\phi\left([A^{(n)}_{\sP},A^{(n)}_{\sP}]\right).
$$
Therefore $x=zc$ where $z\in \ker(\phi)$ and $c\in [A^{(n)}_{\sP},A^{(n)}_{\sP}]$.  Thus
$$
\pi_A(a)s=\pi_A(x)=\pi_A(zc)=\pi_A(z).
$$
Since $\ker\phi \subset A^{(m)}_{\sP}$ and $n+1\leq m$, $\ker\phi \subset A^{(n+1)}_{\sP}$. Thus $z\in A^{(n+1)}_{\sP}$. Hence, Proposition~\ref{prop:char*}, $\pi_A(z)$ is $\tilde{s}$-torsion where $\tilde{s}\in\tilde{S}_{p_n}(A)$. Let $r'$ be a non-zero integer  such that $sr'\in \tilde{S}_{p_n}(A)$. Then
$$
=\pi_A(a)(sr'\tilde{s})=(\pi_A(a)s)\tilde{s}r'=(\pi_A(z)\tilde{s})r'=0.
$$
Since $sr'\tilde{s}\in \tilde{S}_{p_n}(A)$, we have shown that $\pi_A(a)$ is $\tilde{S}_{p_n}(A)$-torsion. Hence by Proposition~\ref{prop:char*}, $a\in A^{(n+1)}_{\sP}$. This completes the proof of the first claim of Proposition~\ref{prop:idempotency}.

To prove that $(A/A^{(n)}_{\sP})^{(n)}_{\sP}=0$, apply the above to the $\phi:A\to A/A^{(n)}_{\sP}$.
\end{proof}

\end{subsection}

\begin{section}{von Neumann signature defects as obstructions}\label{sec:rhoinvs}

To each commutator series there exist obstructions, that arise as \emph{signature-defects}, that can assist in determining whether or not a given knot lies in a particular term of $\mathcal{F}^{*}$. Given a closed, oriented 3-manifold $M$, a discrete group $\G$, and a representation $\phi : \pi_1(M)
\to \G$, the \textbf{von Neumann
$\boldsymbol{\rho}$-invariant}, $\rho(M,\phi)$, was defined by Cheeger and Gromov by choosing a Riemannian metric and defining $\rho$ as the difference between the $\eta$-invariants of $M$ and its covering space induced by $\phi$. It can be thought of as an oriented homeomorphism invariant associated to an arbitrary regular covering space of $M$ ~\cite{ChGr1}. If $(M,\phi) = \partial
(W,\psi)$ for some compact, oriented 4-manifold $W$ and $\psi : \pi_1(W) \to \G$, then it is known that $\rho(M,\phi) =
\sigma^{(2)}_\G(W,\psi) - \sigma(W)$ where $\sigma^{(2)}_\G(W,\psi)$ is the
$L^{(2)}$-signature (von Neumann signature) of the equivariant intersection form defined on
$H_2(W;\mathbb{Z}\G)$ twisted by $\psi$ and $\sigma(W)$ is the ordinary
signature of $W$ ~\cite{LS}. Thus the $\rho$-invariants should be thought of as \textbf{signature defects}. They were first used to detect non-slice knots in ~\cite{COT}. For a more thorough discussion see ~\cite[Section 2]{CT}~\cite[Section 2]{COT2}. All of the coefficient systems $\G$ in this paper will be of the form $\pi/\pi^{(n)}_*$ where $\pi$ is the fundamental group of a space. Hence all such $\G$ will be PTFA. Aside from the definition, a few crucial properties that we use in this paper are:
\begin{prop}\label{prop:rhoprops}\

\begin{itemize}
\item [1.] If $\phi$ factors through $\phi': \pi_1(M)\to \G'$ where
$\G'$ is a subgroup of $\G$, then $\rho(M,\phi') = \rho(M,\phi)$.
\item [2.] If $\phi$ is trivial (the zero map), then $\rho(M,\phi) = 0$.
\item [3.] If $M=M_K$ is zero surgery on a knot $K$ and $\phi:\pi_1(M)\to \mathbb{Z}$ is the abelianization, then $\rho(M,\phi)$ is denoted $\boldsymbol{\rho_0(K)}$ and is equal to \textbf{the integral over the circle \\ of the Levine-Tristram signature function} of $K$ ~\cite[Prop. 5.1]{COT2}. Thus $\rho_0(K)$ is the average of the classical signatures of $K$.
\item [4.] If $K$ is a slice knot or link and $\phi:\pi_1(M_K)\to \G$ ($\G$ PTFA) extends over $\pi_1$ of a slice disk exterior then $\rho(M_K,\phi)=0$ by ~\cite[Theorem 4.2]{COT}.
    \item [5.] The von Neumann signature satisfies Novikov  additivity, i.e., if $W_1$ and $W_2$ intersect along a common boundary component then $\sigma^{(2)}_\G(W_1\cup W_2)=\sigma^{(2)}_\G(W_1)+\sigma^{(2)}_\G(W_2)$ ~\cite[Lemma 5.9]{COT}.
    \end{itemize}
\end{prop}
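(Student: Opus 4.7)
The plan is to handle each of the five listed properties in turn, since the proposition is really a curated list of standard facts about the Cheeger–Gromov $\rho$-invariant and its $L^{(2)}$-signature-defect interpretation. Three of the five items are direct citations to the literature; the other two follow from well-known general principles.

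For item (1), I would argue using either the analytic definition or the bounding-$4$-manifold interpretation, whichever is cleaner. Analytically, the regular covering of $M$ associated to $\phi : \pi_1(M)\to \G$ depends only on $\ker\phi$, and post-composing with the inclusion $\iota : \G' \hookrightarrow \G$ does not change the kernel, so the two coverings coincide and $\eta$-invariants agree. More usefully in practice, if $(M,\phi) = \partial(W,\psi)$, then $(M,\phi') = \partial(W,\psi')$ with $\psi = \iota\circ\psi'$. The equivariant intersection form on $H_2(W;\mathbb{Z}\G)$ twisted by $\psi$ is obtained from the form on $H_2(W;\mathbb{Z}\G')$ twisted by $\psi'$ by extension of scalars along $\mathbb{Z}\G' \hookrightarrow \mathbb{Z}\G$. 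Since the standard trace on the group von Neumann algebra $\mathcal{N}(\G)$ restricts to the standard trace on $\mathcal{N}(\G')$, one obtains $\sigma^{(2)}_\G(W,\psi) = \sigma^{(2)}_{\G'}(W,\psi')$, and then $\rho(M,\phi) = \rho(M,\phi')$ by subtracting $\sigma(W)$ from both sides.

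For item (2), note that the trivial representation factors through the trivial subgroup $\{1\} \subset \G$, so by item (1) we may assume $\G = \{1\}$. Then any bounding $(W,\psi)$ has $\sigma^{(2)}_{\{1\}}(W,\psi) = \sigma(W)$ tautologically, yielding $\rho(M,\phi) = 0$. Alternatively, $M$ itself bounds some $W$ on which the trivial representation extends, and the same computation applies. Item (3) is the content of Proposition 5.1 of \cite{COT2}, item (4) is Theorem 4.2 of \cite{COT} (where the vanishing of $\sigma^{(2)}_\G(W)$ on a slice-disk exterior follows from a half-rank-submodule argument using $H_2(W;\Q) = 0$), and item (5) is Lemma 5.9 of \cite{COT}, where Novikov additivity is inherited from the fact that the intersection form on $H_2(W_1 \cup W_2;\mathbb{Z}\G)$ decomposes as the block sum of the two twisted forms and the $L^{(2)}$-signature is additive on orthogonal direct sums.

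The only step with any real content beyond citation is (1), and the one potential obstacle is to verify that the induction along $\mathbb{Z}\G' \hookrightarrow \mathbb{Z}\G$ really preserves $L^{(2)}$-signatures. This reduces to the fact that the canonical trace on $\mathcal{N}(\G)$ restricts to the canonical trace on $\mathcal{N}(\G')$, together with the compatibility of $L^{(2)}$-dimension under induction; these are standard in the theory of group von Neumann algebras and have been used repeatedly in this setting (see, e.g., \cite[Section 5]{COT} and \cite{LS}).
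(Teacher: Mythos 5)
The paper offers no proof of this proposition; it is presented as a curated list of standard facts, with items (3), (4), and (5) cited directly and items (1) and (2) left as folklore. Your proposal therefore goes beyond the paper by supplying the standard arguments, and what you write is essentially correct.

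One small caution on your first (analytic) argument for item (1): it is true that the regular cover corresponding to $\phi'$ and the one corresponding to $\phi = \iota \circ \phi'$ coincide (both have fundamental group $\ker \phi' = \ker \phi$), but $\rho$ is not simply the $\eta$-invariant of that cover. It is the difference between the ordinary $\eta$-invariant and the von Neumann $\eta$-invariant, and the latter is computed using the trace on $\mathcal{N}(\G)$ (resp.\ $\mathcal{N}(\G')$) acting on $L^{(2)}$ sections. So having the same covering space is not by itself enough; you still need the trace-compatibility point. That point is precisely what your second argument supplies, and it is the right one: since $\mathcal{N}(\G') \hookrightarrow \mathcal{N}(\G)$ is trace-preserving, induction along $\mathbb{Z}\G' \hookrightarrow \mathbb{Z}\G$ preserves $L^{(2)}$-dimensions and hence $L^{(2)}$-signatures. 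Your reduction of item (2) to item (1) by taking $\G' = \{1\}$ is clean and correct, and your treatment of (3)--(5) correctly identifies the sources the paper is citing.
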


Generalizing the last property, we have:

\begin{thm}\label{thm:generalsignaturesobstruct} Suppose $*$ is a commutator series (no functoriality is required). Suppose $K\in \mathcal{F}_{n.5}^*$, so
the zero-framed surgery $M_K$ is $(n.5,*)$-solvable via $W$ as in Definition~\ref{def:Gnsolvable}. Let $G=\pi_1(W)$ and consider
$$
\phi:\pi_1(M_K)\to G\to G/\gnp_*\to \G,
$$
where $\G$ is an arbitrary PTFA group.
Then
$$
\sigma^{(2)}(W,\phi)-\sigma(W)=0=\rho(M_K,\phi).
$$
More generally, if $W\subset V$ is an effective $(n.5,*)$-solution with respect to $V$ and
$$
\phi:\pi_1(V)\to \pi_1(V)/\pi_1(V)^{(n+1)}_*\to \G,
$$
then
$$
\sigma^{(2)}(W,\phi)-\sigma(W)=0.
$$
\end{thm}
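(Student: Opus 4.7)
My plan is to reduce this to the standard vanishing-of-signature-defect argument for $(n.5)$-solutions (essentially COT's Theorem~4.2 and its generalizations), adapted to an arbitrary commutator series.  The two equalities in the conclusion are really one, because for a PTFA group $\Gamma$ and a 4-manifold $W$ with $\partial W = M_K$, the Cheeger--Gromov/L\"uck--Schick formula gives $\rho(M_K,\phi) = \sigma^{(2)}_\Gamma(W,\phi) - \sigma(W)$, so it suffices to prove $\sigma^{(2)}_\Gamma(W,\phi) = \sigma(W)$.  I will actually show both sides vanish.

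The key input is the geometric basis provided by $(n.5,*)$-solvability.  Writing $G = \pi_1(W)$, condition (2) gives a $\mathbb{Z}$-basis $\{L_i,D_i\}$ of $H_2(W;\mathbb{Z})$ by embedded surfaces with trivial normal bundles, pairwise disjoint apart from a single transverse intersection $L_i\cap D_i$.  By condition (4) we have $\pi_1(L_i)\subset G^{(n+1)}_*$ and by (3), $\pi_1(D_i)\subset G^{(n)}_*$.  Since $\phi$ is assumed to factor through $G/G^{(n+1)}_*$, the inclusion $L_i\hookrightarrow W$ becomes $\pi_1$-trivial upon composition with $\phi$, so each $L_i$ admits a single-sheeted lift $\widetilde L_i$ to the $\Gamma$-cover of $W$.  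The $D_i$ need not lift, but their single intersection with $L_i$ lifts to a single intersection of $\widetilde L_i$ with some specific translate of a lift of a neighborhood of that intersection point.

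Both signatures now vanish by the same algebraic pattern.  For the ordinary signature: the intersection matrix in the basis $\{L_i,D_i\}$ has $L_i\cdot L_j = 0$ (trivial normal bundle plus geometric disjointness), $L_i\cdot D_j = \delta_{ij}$, and arbitrary $D_i\cdot D_j$; a rational change of basis $D_i\mapsto D_i - \tfrac12(D_i\cdot D_i)L_i$ and $D_i\mapsto D_i - (D_i\cdot D_j)L_j$ diagonalizes into hyperbolic $2\times 2$ blocks, so $\sigma(W) = 0$.  For the $L^{(2)}$-signature: the lifts $\{\widetilde L_i\}$ span a $\mathbb{Z}\Gamma$-submodule $P$ of $H_2(W;\mathbb{Z}\Gamma)$ which is isotropic for the equivariant intersection form, since for every $g\in\Gamma$ the intersection $\widetilde L_i\cdot g\widetilde L_j$ is the count of intersections between two disjoint lifts (when $g\ne 1$ or $i\ne j$) or the self-intersection of a lift of a surface with trivial normal bundle (when $g=1$, $i=j$), both zero.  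The dual pairings $\langle \widetilde L_i, D_j\rangle_\Gamma = \delta_{ij}$ show that $P$ has rank $r$ as a $\mathbb{Z}\Gamma$-module, while the Euler-characteristic count $\operatorname{rk}_{\mathbb{Z}\Gamma}H_2(W;\mathbb{Z}\Gamma) = \operatorname{rk}_{\mathbb{Z}}H_2(W;\mathbb{Z}) = 2r$ (using condition (1) and the fact that $H_1(M_K;\mathbb{Z}\Gamma)$ and $H_1(W;\mathbb{Z}\Gamma)$ are $\mathbb{Z}\Gamma$-torsion since $\Gamma$ is infinite) shows $P$ is a half-rank submodule.  A standard lemma (COT Lemma~4.3, or Lemma~5.9--5.10) then gives $\sigma^{(2)}_\Gamma(W,\phi) = 0$.

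The main obstacle is the rank count for $H_2(W;\mathbb{Z}\Gamma)$ and the verification that $P$ really is a ``sublagrangian'' in the $L^{(2)}$-sense—one must argue that the isotropic, half-rank submodule $P$ forces vanishing of the von Neumann signature, which requires the Ore-domain structure of $\mathbb{Z}\Gamma$ (guaranteed by Proposition~\ref{prop:commseriesprops}(4), since $G/G^{(n+1)}_*$ and hence $\Gamma$ are PTFA) and the usual fact that a nonsingular hermitian form on a finitely generated free $\mathbb{Z}\Gamma$-module admitting a half-rank isotropic direct summand has trivial $L^{(2)}$-signature.  For the effective version, the argument is identical: $\phi$ factors through $\pi_1(V)/\pi_1(V)^{(n+1)}_*$ and the modified condition $(3')$ together with $(4)$ gives $\pi_1(L_i)\subset \pi_1(V)^{(n+1)}_*\subset\ker\phi$, so the $L_i$ again lift to the $\Gamma$-cover of $V$ and the same isotropic-half-basis argument applies to the intersection form on $W$ with coefficients in $\mathbb{Z}\Gamma$.
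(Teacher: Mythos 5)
Your proof is correct and follows essentially the same route the paper takes: the paper explicitly says its proof is ``identical, verbatim, to the proof of [COT, Thm.~4.2]'' and then sketches exactly the steps you carry out — $\sigma(W)=0$ from the hyperbolic basis, the isotropic half-rank submodule of $H_2(W;\mathcal{K}\G)$ spanned by the lifts $\widetilde L_i$ (using $\pi_1(L_i)\subset G^{(n+1)}_*\subset\ker\phi$), the Ore-domain/skew-field structure of $\Q\G$ from Proposition~\ref{prop:commseriesprops}, and the COT rank count. (Two minor remarks: your rational change of basis for the ordinary intersection form is unnecessary since Definition~\ref{def:Gnsolvable} already forces $D_i\cdot D_j=0$ and $D_i\cdot D_i=0$, so the form is already hyperbolic; and the rank count $\rk_{\Z\G}H_2(W;\Z\G)=\rk_{\Z}H_2(W;\Z)$ tacitly assumes $\phi$ is nontrivial, but when $\phi$ is trivial the identity $\sigma^{(2)}(W,\phi)=\sigma(W)$ holds automatically, so nothing is lost.)
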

\begin{proof} The proof is identical, verbatim, to the proof of ~\cite[Theorem 4.2]{COT} which was done only for the derived series. We sketch the key points. It follows immediately from Definition~\ref{def:Gnsolvable} that $\sigma(W)=0$. Since $\G$ is a poly-(torsion-free-abelian) group, $\Q\G$ is a right Ore domain and hence admits a classical ring of quotients which is a skew field, $\mathcal{K}\G$ (See Section~\ref{sub:classlocal} and ~\cite[Proposition 2.5]{COT}). Thus the rank of a $\Q\G$-module can be defined. Moreover, in this case, $\sigma^{(2)}(W,\phi)$ is the von-Neumann signature of the (non-singular) equivariant intersection form on the free module $H_2(W;\mathcal{K}\G)$ ~\cite[Section 5]{COT}. Therefore it suffices to exhibit a half-rank submodule of $H_2(W;\Q\G)$ on which this intersection form vanishes. The next key point is that half of the basis $\{L_i,D_i|1\leq i\leq r$ for $H_2(W;\Z)$, namely the surfaces $\{L_i\}$, lifts to the covering space determined by the kernel of $\phi$. Hence these surfaces generate a submodule of $H_2(W;\Q\G)$. Since these surfaces and their translates are disjointly embedded surfaces with product neighborhoods the equivariant intersection form on $H_2(W;\Q\G)$ vanishes on the submodule generated by them. The remainder of the proof consists of showing that this submodule is indeed of half-rank (see ~\cite[Theorem 4.2]{COT}).
\end{proof}

\end{section}

\section{Distinguishing concordance classes using the polarized derived series localized at $\mathcal{P}$: Triviality}\label{sec:examplestrivial}

Let $\mathcal{P}=(p_1(t),...,p_n(t),...)$ be a sequence of non-zero elements of $\Q[t,t^{-1}]$. By subsection~\ref{sub:derivedserieslocalatp} and  Theorem~\ref{thm:derivedlocpfunctorial} the polarized derived series localized at $\mathcal{P}$, denoted $\{\gn_\mathcal{P}\}$ is a commutator series that is weakly functorial on the class of groups with $\beta_1=1$. Thus by Proposition~\ref{prop:functseriesfilters} there is a corresponding filtration, $\{\mathcal{F}^\mathcal{P}_{n}\}$, of the knot concordance group. In the remainder of this paper we will use the \textbf{polarized} derived series localized at $\mathcal{P}$, since it allows for slightly sharper results.

\begin{defn}\label{def:distinctP} Given $\mathcal{P}=(p_1(t),...,p_n(t))$ and $\mathcal{Q}=(q_1(t),...,q_n(t))$, we say that $\mathcal{P}$ is \textbf{strongly coprime} to $\mathcal{Q}$ if either $(q_1,p_1)=1$, or, for some $k>1$, $\widetilde{(q_k,p_{k})}=1$; and otherwise we say that $\mathcal{P}$ is \textbf{isogenous} to $\mathcal{Q}$.
\end{defn}

We now consider knots of the form $K=R^n_{\alpha_n}\circ...\circ R^1_{\alpha_1}(K_0)$ with $K_0\in  \mathcal{F}_{0}$ where each $R^i_{\alpha_i}$ is a doubling operator as in Definition~\ref{def:doublingop}.  Every such knot lies in $\mathcal{F}_{n}$ and hence in $\mathcal{F}_{n}^\mathcal{P}$ by repeated applications of Proposition~\ref{prop:operatorsact}. Let $\mathcal{Q}=(q_n(t),...,q_1(t))$ where $q_i(t)$ is the order of $\alpha_i$ in $\mathcal{A}(R^i)$, the classical rational Alexander module of $R^i$ (note the descending index). Our next result shows that if $\mathcal{Q}$ is strongly coprime to $\mathcal{P}$ then $K\in \mathcal{F}_{n+1}^\mathcal{P}$. Thus the only knots of this form that (possibly) survive in
$$
\frac{\mathcal{F}_{n}}{\mathcal{F}_{n.5}^\mathcal{P}}
$$
are those wherein $\mathcal{Q}$ is isogenous to $\mathcal{P}$. This justifies thinking of $\mathcal{F}_{n}/\mathcal{F}_{n.5}^\mathcal{P}$ as localizing $\mathcal{F}_{n}/\mathcal{F}_{n.5}$ at $\mathcal{P}$.

\begin{thm}\label{thm:qliesinpnplusone} Suppose $K=R^n_{\alpha_n}\circ...\circ R^1_{\alpha_1}(K_0)$ with Arf$(K_0)=0$. Let $\mathcal{Q}=(q_n(t),...,q_1(t))$ be the sequence of orders of the classes $(\alpha_n,...,\alpha_1 )$ in $(\mathcal{A}(R^n),...,\mathcal{A}(R^1))$ . If  $\mathcal{Q}$ is strongly coprime to $\mathcal{P}$ then $K\in \mathcal{F}^\mathcal{P}_{n+1}$. Thus $K=0$ in $\mathcal{F}_{n}/\mathcal{F}^\mathcal{P}_{n.5}$.
\end{thm}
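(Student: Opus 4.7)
The plan is to construct an explicit $(n+1,\mathcal{P})$-solution for $K$. Following the construction in the proof of Proposition~\ref{prop:operatorsact}, set
\[
W \;=\; V_0 \cup E_1 \cup S_1 \cup \cdots \cup E_n \cup S_n,
\]
where $V_0$ is a $0$-solution for $K_0$ (which exists because $\operatorname{Arf}(K_0)=0$), each $S_i$ is the exterior of a slicing disk for the ribbon knot $R^i$, and each $E_i$ is the infection cobordism of Lemma~\ref{lem:mickeyfacts} for the operator $R^i_{\alpha_i}$. Iterated applications of Mayer-Vietoris together with Lemma~\ref{lem:mickeyfacts} then give $\partial W = M_K$, an isomorphism $H_1(M_K)\to H_1(W)$, and $H_2(W)\cong H_2(V_0)$ with a half-basis of surfaces inherited from $V_0$. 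Since those surfaces have $\pi_1 \subset \pi_1(V_0)$ and $\mu_{K_0}$ normally generates $\pi_1(V_0)$ modulo any honest derived term (Lemma~\ref{lem:normgender} together with Proposition~\ref{prop:commseriesprops}(2)), the task reduces to showing $\mu_{K_0}\in \pi_1(W)^{(n+1)}_{\mathcal{P}}$.

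The main tool is a \emph{propagation step}: if $\alpha_{i+1}\in\pi_1(W)^{(a)}_{\mathcal{P}}$ for some $a\ge 1$, then $\alpha_i\in\pi_1(W)^{(a+1)}_{\mathcal{P}}$. Indeed, Lemma~\ref{lem:mickeyfacts}(4) identifies $\mu_{R^i}=\mu_{L_i}=\alpha_{i+1}$ in $\pi_1(W)$; since $\mu_{R^i}$ normally generates $\pi_1(M_{R^i})$ modulo any honest derived term (Lemma~\ref{lem:normgender}) and the honest derived series is contained in the localized series (Proposition~\ref{prop:commseriesprops}(2)), polarized functoriality (Theorem~\ref{thm:derivedlocpfunctorial}) applied to the inclusion $\pi_1(M_{R^i})\hookrightarrow\pi_1(W)$ (which induces either an isomorphism or the zero map on $H_1(-;\mathbb{Q})$) yields $\pi_1(M_{R^i})\subset\pi_1(W)^{(a)}_{\mathcal{P}}$; taking commutators, using Proposition~\ref{prop:commseriesprops}(1), and using torsion-freeness of the quotient then promotes $\alpha_i\in\pi_1(M_{R^i})^{(1)}_r$ to $\alpha_i\in\pi_1(W)^{(a+1)}_{\mathcal{P}}$. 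Combining this with the baseline $\alpha_n\in\pi_1(W)^{(1)}_{\mathcal{P}}$ (via functoriality of $\pi_1(M_{R^n})\hookrightarrow\pi_1(W)$, which is an isomorphism on $H_1$) and iterating gives $\alpha_i\in\pi_1(W)^{(n+1-i)}_{\mathcal{P}}$ for all $i$; this is the baseline argument that recovers only $K\in\mathcal{F}^{\mathcal{P}}_n$.

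The missing level is supplied by the coprimeness hypothesis. Let $k$ be the index at which the condition of Definition~\ref{def:distinctP} holds; under the reverse indexing convention $\mathcal{Q}=(q_n,\dots,q_1)$ this pairs the order $q_{n-k+1}$ of $\alpha_{n-k+1}$ in $\mathcal{A}(R^{n-k+1})$ with the polynomial $p_k$ used at stage $k$ of the series. I claim $\alpha_{n-k+1}\in\pi_1(W)^{(k+1)}_{\mathcal{P}}$, one step deeper than the baseline value $k$. By Proposition~\ref{prop:char*}, this reduces to exhibiting $\alpha_{n-k+1}$ as $\tilde{S}_{p_k}$-torsion in the module $\pi_1(W)^{(k)}_{\mathcal{P}}/[\pi_1(W)^{(k)}_{\mathcal{P}},\pi_1(W)^{(k)}_{\mathcal{P}}]$. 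Since $\alpha_{n-k+1}$ has order $q_{n-k+1}$ in $\mathcal{A}(R^{n-k+1})$, the relation $\alpha_{n-k+1}\cdot q_{n-k+1}(\mu_{R^{n-k+1}})\in [\pi_1(M_{R^{n-k+1}})^{(1)}_r,\pi_1(M_{R^{n-k+1}})^{(1)}_r]$ holds in $\pi_1(M_{R^{n-k+1}})$; by the baseline propagation the image of $\pi_1(M_{R^{n-k+1}})^{(1)}_r$ in $\pi_1(W)$ lies in $\pi_1(W)^{(k)}_{\mathcal{P}}$, so this relation descends to the target module. Because $\mu_{R^{n-k+1}}=\alpha_{n-k+2}$ already sits in $\pi_1(W)^{(k-1)}_{\mathcal{P}}$, because $q_{n-k+1}(1)=\pm 1\ne 0$ (as $q_{n-k+1}$ divides an Alexander polynomial), and because $q_{n-k+1}$ is (strongly) coprime to $p_k$, the element $q_{n-k+1}(\text{image of }\mu_{R^{n-k+1}})$ lies in $S_{p_k}$ by Definition~\ref{def:Sp}, or in $S_{p_1}^*$ via Proposition~\ref{prop:noS*torsion} in the polarized case $k=1$. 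This delivers the required torsion element.

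Iterating the propagation step downward from $\alpha_{n-k+1}\in\pi_1(W)^{(k+1)}_{\mathcal{P}}$ yields $\alpha_i\in\pi_1(W)^{(n+2-i)}_{\mathcal{P}}$ for $i\le n-k+1$, in particular $\mu_{K_0}=\alpha_1\in\pi_1(W)^{(n+1)}_{\mathcal{P}}$, establishing $K\in\mathcal{F}^{\mathcal{P}}_{n+1}$. The final assertion $K=0$ in $\mathcal{F}_n/\mathcal{F}^{\mathcal{P}}_{n.5}$ is then immediate from $\mathcal{F}^{\mathcal{P}}_{n+1}\subset\mathcal{F}^{\mathcal{P}}_{n.5}$ (Proposition~\ref{prop:functseriesfilters}). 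The main obstacle is the middle step: cleanly transferring the $q_{n-k+1}$-annihilation from the classical Alexander module of $R^{n-k+1}$ into the higher-order quotient over $\mathbb{Z}[\pi_1(W)/\pi_1(W)^{(k)}_{\mathcal{P}}]$, and verifying that the annihilating polynomial evaluated on the image of $\mu_{R^{n-k+1}}$ really lies in the divisor set $S_{p_k}$ — which is precisely the point where strong (rather than ordinary) coprimeness is essential when $k>1$.
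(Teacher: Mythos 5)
Your proposal is correct and follows essentially the same strategy as the paper's proof: build the explicit cobordism $Z=V\cup E_1\cup S_1\cup\cdots\cup E_n\cup S_n$, reduce to showing $\mu_0\in\pi_1(Z)^{(n+1)}_\mathcal{P}$, establish the baseline depths of the $\alpha_i$ (this is Lemma~\ref{lem:mui}), use the coprimeness hypothesis together with Proposition~\ref{prop:char*} and Definitions~\ref{def:Sp}/\ref{def:Sdefderivedlocalp} to promote the relevant $\alpha$ one level deeper in the $\mathcal{P}$-series, and propagate that gain downward to $\mu_0$ as in the second clause of Lemma~\ref{lem:mui}. The only differences are cosmetic (you work in $\pi_1(M_{R^i})$ where the paper works in $\pi_1(M_{K_i})$, both equivalent via $E_i$; the citation should be Definition~\ref{def:Sp} rather than Proposition~\ref{prop:noS*torsion} for membership in $S^*_{p_1}$; and the invocation of polarized functoriality in the propagation step is superfluous since normality of $\pi_1(W)^{(a)}_\mathcal{P}$ already suffices).
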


\begin{proof} For simplicity of notation we recursively define $K_1=R^1_{\alpha_1}(K_0)$, ..., $K_{i}=R^i_{\alpha_i}(K_{i-1})$ for $i=1,\dots,n$ and abbreviate $K=K_n=R^n_{\alpha_n}(K_{n-1})$. Recall from Figure~\ref{fig:mickey} that, since $K_i=R^i_{\alpha_i}(K_{i-1})$, there is a cobordism $E_{i}$ whose boundary is the disjoint union of $M_{K_{i}}$, $-M_{K_{i-1}}$ and $-M_{R^i}$. Consider $X=E_n\cup E_{n-1}\cup...\cup E_1$, gluing  $E_i$ to $E_{i-1}$ along their the common boundary component $M_{K_{i-1}}$ (refer to Figure~\ref{fig:Thm6.2}). The boundary of $X$ is a disjoint union of $M_K$, $-M_{K_0}$ and $-M_{R^n},...,-M_{R^1}$. For $1\leq i\leq n$, let $S_i$ denote the exterior of any ribbon disk in $B^4$ for the ribbon knot $R^i$. Since Arf($K_0$)$=0$, $K_0\in  \mathcal{F}_{0}$ via some $V$. In fact, it is known that we can choose $V$ such that $\pi_1(V)\cong\Z$ so that the meridian $\mu_0$ of $K_0$ generates $\pi_1(V)$ ~\cite[Section 5]{COT2}. Gluing $V$ and all the the $S_i$ to $X$, we obtain a $4$-manifold, $Z$ as shown in Figure~\ref{fig:Thm6.2}. Note $\partial Z=M_{K}$.
\begin{figure}[htbp]
\setlength{\unitlength}{1pt}
\begin{picture}(350,250)
\put(-20,10){\includegraphics{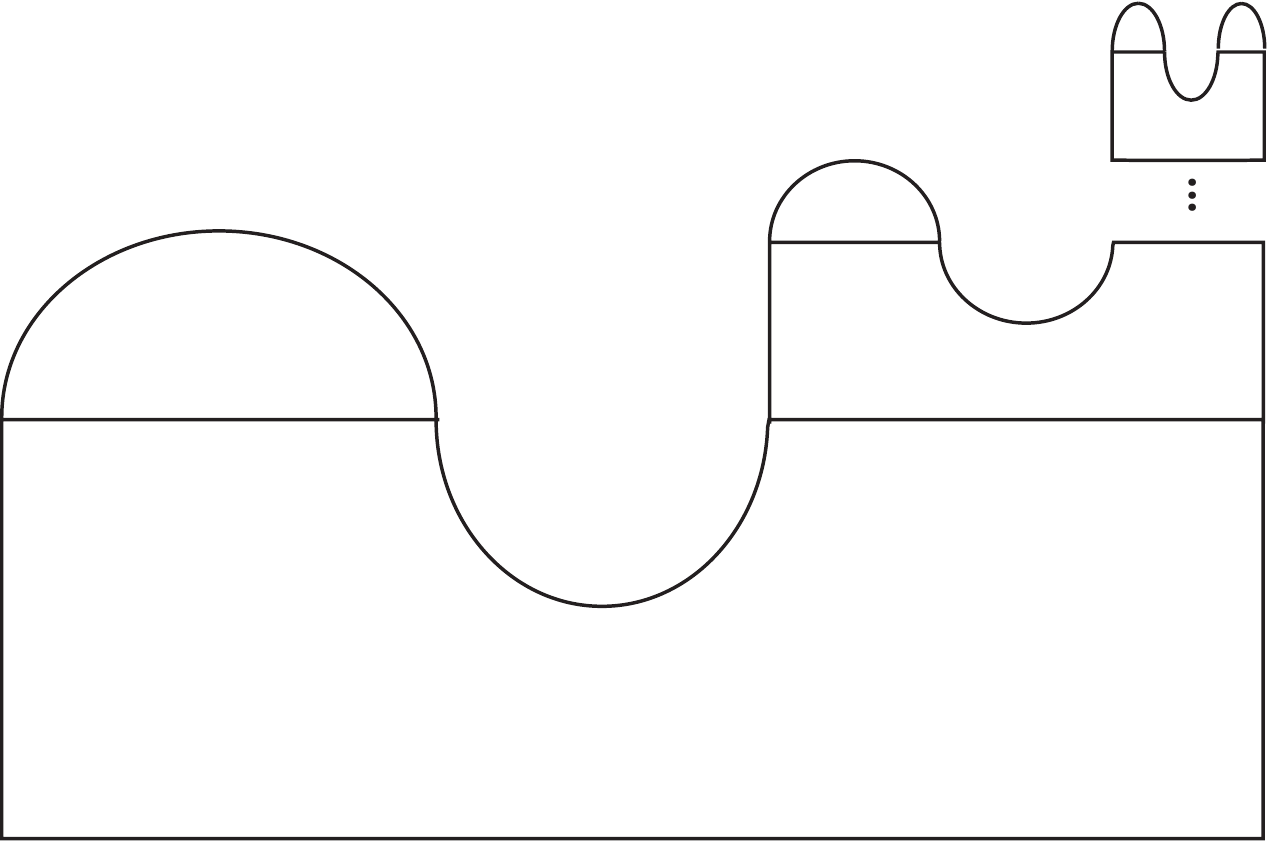}}
\put(35,120){$M_{R_n}$}
\put(36,152){$S_n$}
\put(145,45){$E_n$}
\put(145,-2){$M_K$}
\put(216,192){$S_{n-1}$}
\put(216,173){$M_{R_{n-1}}$}
\put(310,173){$M_{K_{{n-2}}}$}
\put(266,144){$E_{n-1}$}
\put(304,241){$S_1$}
\put(335,240){$V$}
\put(318,212){$E_1$}
\put(263,120){$M_{K_{n-1}}$}
\end{picture}
\caption{$Z$}\label{fig:Thm6.2}
\end{figure}

We claim that, for any $\mathcal{Q}$,
\begin{equation}\label{eq:Knsolv}
K\in \mathcal{F}_{n} ~~\text{via} ~Z,
\end{equation}
while, if $\mathcal{Q}$ is strongly coprime to $\mathcal{P}$, then
\begin{equation}\label{eq:Kn+1solv}
K\in \mathcal{F}_{n+1}^\mathcal{P} ~~\text{via} ~Z.
\end{equation}

First, as in the proof of Proposition~\ref{prop:operatorsact}, a Mayer-Vietoris sequence implies that $H_2(Z)\cong H_2(V)$ since $H_2(S_i)=0$ (this analysis is carried out in detail in the proof of ~\cite[Proof of Theorem 7.1]{CHL3}). Since $K_{0}\in \mathcal{F}_{0}$ via $V$, $H_2(V)$ has a basis of connected compact oriented surfaces, $\{L_j,D_j|1\leq j\leq r_i\}$, satisfying the conditions of Definition~\ref{def:Gnsolvable}.
We claim that, with no hypothesis on $\mathcal{Q}$,
\begin{equation}\label{eq:mu0}
\mu_0\in \pi_1(Z)^{(n)}
\end{equation}
while if $\mathcal{Q}$ is strongly coprime to $\mathcal{P}$ then
\begin{equation}\label{eq:mu1}
\mu_0\in \pi_1(Z)^{(n+1)}_{\mathcal{P}}.
\end{equation}
Then, since $\mu_0$ generates $\pi_1(V)$, for any $\mathcal{Q}$,
$$
\pi_1(L_j)\subset\pi_1(V)\subset \pi_1(Z)^{(n)},
$$
while if $\mathcal{Q}$ is strongly coprime to $\mathcal{P}$,
$$
\pi_1(L_j)\subset\pi_1(V)\subset \pi_1(Z)^{(n+1)}_{\mathcal{P}},
$$
and similarly for $\pi_1(D_j)$. This would complete the verification of claims ~(\ref{eq:Knsolv}) and ~(\ref{eq:Kn+1solv}) since $\{L_j,D_j\}$ would then satisfy the criteria of Definition~\ref{def:Gnsolvable}. In the rest of the proof we will establish claims ~(\ref{eq:mu0}) and ~(\ref{eq:mu1}).

Let $G=\pi_1(Z)$. Let $\mu_i$ denote both the meridian of $K_i$ in $M_{K_i}\subset Z$ and its homotopy class in $G$. Let $\alpha_i$ denote the circle in $M_{R_i}\subset Z$ and, by abuse of notation, a push-off of this circle in $M_{K_i}\subset Z$ (the abuse is slight since these are isotopic in $E_i$ by Lemma~\ref{lem:mickeyfacts}). To complete the proof of claims ~(\ref{eq:mu0}) and ~(\ref{eq:mu1}) and hence the proof of Theorem~\ref{thm:generalqliesinpnplusone}, we need the following lemma.

\begin{lem}\label{lem:mui} For $0\leq i\leq n$,
$$
\mu_i\in G^{(n-i)}~ \text{and} ~\alpha_{i}\in G^{(n-i+1)};
$$
and if, for some $k$, $\alpha_k\in G^{(n-k+2)}_{\mathcal{P}}$ then for each $i$, $0\leq i< k$,
$$
\mu_i\in G^{(n-i+1)}_\mathcal{P} ~ \text{and} ~\alpha_{i}\in G^{(n-i+2)}_\mathcal{P}.
$$

\end{lem}
\begin{proof}
We will establish the first clause of the lemma by reverse induction on $i$. For $i=n$, clearly $\mu_n\in G^{(0)}\equiv G$ and since, by definition of a doubling operator, $\alpha_n$ lies in $\pi_1(M_{R_n})^{(1)}$, $\alpha_n\in G^{(1)}$. This is the base of the induction. Now assume that $\mu_{i}\in G^{(n-i)}$ and $\alpha_{i}\in G^{(n-i+1)}$. By property ($4$) of Lemma~\ref{lem:mickeyfacts}, $\mu_{i-1}$ is isotopic in $E_i$ to a push-off of $\alpha_{i}\subset M_{K_{i}}$.  It follows that
$$
\mu_{i-1}\in G^{(n-i+1)}.
$$
Since $\mu_{i-1}$ normally generates $\pi_1(M_{K_{i-1}})$ it follows that $\pi_1(M_{K_{i-1}})\subset G^{(n-i+1)}$. Thus,
$$
 \alpha_{i-1}\in [\pi_1(M_{K_{i-1}}),\pi_1(M_{K_{i-1}})]\subset G^{(n-i+2)}.
$$
This completes the inductive step and establishes the first clause of the lemma.

Now suppose that $\alpha_k\in G^{(n-k+2)}_{\mathcal{P}}$. By property ($4$) of Lemma~\ref{lem:mickeyfacts}, $\mu_{k-1}$ is isotopic in $Z$ to a push-off of $\alpha_{k}$. Thus
$$
\mu_{k-1}\in G^{(n-k+2)}_\mathcal{P}.
$$
Then, as above, it follows that $\pi_1(M_{K_{k-1}})\subset G^{(n-k+2)}_\mathcal{P}$ and so
$$
\alpha_{k-1}\in [\pi_1(M_{K_{k-1}}),\pi_1(M_{K_{k-1}})]\subset [G^{(n-k+2)}_\mathcal{P},G^{(n-k+2)}_\mathcal{P}]\subset G^{(n-k+3)}_\mathcal{P},
$$
where for the last inclusion we use Proposition~\ref{prop:commseriesprops}. This establishes the second clause of the lemma in the case that $i=k-1$. But now $\mu_{k-2}$ is isotopic to a push-off of $\alpha_{k-1}$ and the argument iterates. This establishes the second clause of the lemma.
 \end{proof}

Claim ~\eqref{eq:mu0} and ~\eqref{eq:Knsolv} follow immediately.

Returning to the proof of Theorem~\ref{thm:qliesinpnplusone}, we are now reduced to verifying claim ~\eqref{eq:mu1}. Recall that by hypothesis $\mathcal{Q}=(q_n(t),...,q_1(t))$ is strongly coprime to $ \mathcal{P}=(p_1(t),...,p_n(t))$. Hence there is some $k$, $1\leq k\leq n$, such that $q_k$ is strongly coprime to $p_{n-k+1}$ (or, in the case $n-k+1=1$, $q_k=q_n$ is coprime to $p_{n-k+1}=p_1$). Recall that $\alpha_k$ is $q_k(t)$ torsion in $\mathcal{A}(R_k)$. Thus the push-off of $\alpha_k$ is $q_k(t)$ torsion in the Alexander module of $K_k$. This can be interpreted in terms of the fundamental group of $M_{K_k}$ as follows ~\cite[p.174]{R}. Suppose $q_k(t)=\sum m_jt^j$. Then the fact that $\alpha_k$ is $q_k(t)$ torsion translates to the fact that
$$
\prod_j \mu_k^{-j}\alpha_k^{m_j}\mu_k^{j}\in [\pi_1(M_{K_k})^{(1)},\pi_1(M_{K_k})^{(1)}]
$$
since $t$ acts by conjugation by the meridian $\mu_k$. Thus
$$
\prod_j \mu_k^{-j}\alpha_k^{m_j}\mu_k^{j}\in [G^{(n-k+1)},G^{(n-k+1)}]\subset [G^{(n-k+1)}_\mathcal{P} ,G^{(n-k+1)}_\mathcal{P}]
$$
since we have shown in the proof of Lemma~\ref{lem:mui} that $\pi_1(M_{K_k})\in G^{(n-k)}$. We also know from Lemma~\ref{lem:mui} that $\alpha_k\in G^{(n-k+1)}\subset G^{(n-k+1)}_\mathcal{P}$. Therefore $\alpha_k$ represents an element in the module
$$
\frac{G^{(n-k+1)}_\mathcal{P}}{[G^{(n-k+1)}_\mathcal{P} ,G^{(n-k+1)}_\mathcal{P}]},
$$
that is annihilated by $q_k(\mu_k)\in \Z[G/G^{(n-k+1)}_\mathcal{P}]$ . By Lemma~\ref{lem:mui},  $\mu_k\in G^{(n-k)}_\mathcal{P}$ so
$$
q_k(\mu_k)\in \Z[G^{(n-k)}_\mathcal{P}/G^{(n-k+1)}_\mathcal{P}].
$$
If $n-k+1\neq 1$, since $q_k$ is strongly coprime to $p_{n-k+1}$,  by Definition~\ref{def:Sdefderivedlocalp}, $q_k(\mu_k)\in S_{p_{n-k+1}}$. In the case $n-k+1=1$, $q_n$ is coprime to $p_{1}$ and $\mu_n$ is the generator of $G/G^{(1)}_\mathcal{P}=G/G^{(1)}_r$. Thus, by Definition~\ref{def:Sdefderivedlocalp}, $q_n(\mu_n)\in S_{p_{1}}^*$. Therefore, in any case, by Definition~\ref{def:defderivedlocalp}, $\alpha_k\in G^{(n-k+2)}_\mathcal{P}$. Therefore, by the second clause of Lemma~\ref{lem:mui} (applied with $i=0$), $\mu_0\in G^{(n+1)}_\mathcal{P}$. This finishes the verification of claim ~\eqref{eq:mu1}, and hence the proof of Theorem~\ref{thm:qliesinpnplusone}.
\end{proof}

In the process we have recovered part of \cite[Theorem 7.1]{CHL3}, a slight strengthening of claim (\ref{eq:Knsolv}).

\begin{cor}\label{cor:Z-cap} Suppose $K=R^n_{\alpha_n}\circ...\circ R^1_{\alpha_1}(K_0)$ with $K_0\in  \mathcal{F}_{0}$. Then $K\in \mathcal{F}_{n}$ via $Z$ (see Figure~\ref{fig:Thm6.2}) which has the additional property that, for any for any PTFA coefficient system $\phi:\pi_1(Z)\to \G$ such that $\phi(\pi_1(Z)^{(n+1)})=1$
$$
\rho(M_K,\phi)=\sigma^{(2)}_\G(Z)-\sigma(Z)=c_\phi\rho_0(K)
$$
where $c_\phi=0$ if $\phi(\mu_0)=1$ and $c_\phi=1$ if $\phi(\mu_0)\neq 1$. We may also assume that $\pi_1(M_K)\to\pi_1(Z)$ is surjective.
\end{cor}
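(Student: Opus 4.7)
The plan is to apply the standard Lück--Schick signature-defect formula to the bounded $4$-manifold $Z$ constructed in the proof of Theorem~\ref{thm:qliesinpnplusone}, and then invoke Novikov additivity (Proposition~\ref{prop:rhoprops}(5)) on the decomposition $Z = V \cup \bigcup_i S_i \cup \bigcup_i E_i$. Since $\partial Z = M_K$, the defect formula already gives $\rho(M_K,\phi) = \sigma^{(2)}_\G(Z,\phi) - \sigma(Z)$, so only the second equality requires work. Additivity yields
$$
\sigma^{(2)}_\G(Z) - \sigma(Z) = \bigl(\sigma^{(2)}_\G(V) - \sigma(V)\bigr) + \sum_{i=1}^n \bigl(\sigma^{(2)}_\G(S_i) - \sigma(S_i)\bigr) + \sum_{i=1}^n \bigl(\sigma^{(2)}_\G(E_i) - \sigma(E_i)\bigr),
$$
and the program is to show that the $S_i$ and $E_i$ contributions are zero, so that the defect collapses to a single $\rho$-invariant of $M_{K_0}$ that is then evaluated in two cases.

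Each $S_i$-contribution vanishes by Proposition~\ref{prop:rhoprops}(4): $R^i$ is a ribbon knot and $S_i$ is a ribbon disk exterior, so $\phi|_{\pi_1(M_{R^i})}$ extends over $\pi_1(S_i)$ automatically, whence $\rho(M_{R^i},\phi|_{S_i}) = \sigma^{(2)}_\G(S_i) - \sigma(S_i) = 0$. Each $E_i$-contribution also vanishes: by Lemma~\ref{lem:mickeyfacts}(3), $H_2(E_i;\Z)$ is spanned by the fundamental classes of the boundary tori coming from $M_{K_i},M_{K_{i-1}},M_{R^i}$, which can be pushed off one another in $E_i$, so the ordinary intersection form is identically zero and $\sigma(E_i)=0$. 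These same tori lift to the $\G$-cover and generate a submodule of $H_2(E_i;\Q\G)$ on which the equivariant intersection form vanishes (the product neighborhoods survive to the cover); a half-rank count exactly as in the proof of Theorem~\ref{thm:generalsignaturesobstruct} then yields $\sigma^{(2)}_\G(E_i) = 0$.

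With the $S_i$ and $E_i$ terms eliminated, the defect reduces to $\sigma^{(2)}_\G(V) - \sigma(V) = \rho(M_{K_0},\phi|_V)$. By construction $\pi_1(V) \cong \Z$, generated by the meridian $\mu_0$, so if $\phi(\mu_0)=1$ then $\phi|_V$ is the trivial representation and Proposition~\ref{prop:rhoprops}(2) gives $\rho(M_{K_0},\phi|_V)=0$, matching $c_\phi=0$. If instead $\phi(\mu_0) \neq 1$, then $\phi|_V$ factors through the abelianization $\pi_1(M_{K_0}) \to \Z$ followed by an injection $\Z \hookrightarrow \G$; Proposition~\ref{prop:rhoprops}(1) allows us to replace $\G$ by this cyclic subgroup without altering the $\rho$-invariant, and Proposition~\ref{prop:rhoprops}(3) identifies the result with $\rho_0(K_0)$. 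To turn this into $\rho_0(K)$ one invokes the classical infection formula for Levine--Tristram signatures: because each $R^i$ is slice (so $\rho_0(R^i)=0$) and $\alpha_i$ lies in the first commutator subgroup of $\pi_1(M_{R^i})$, iterated infection by the pairs $(R^i,\alpha_i)$ preserves the averaged classical signature, giving $\rho_0(K)=\rho_0(K_0)$.

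Finally, surjectivity of $\pi_1(M_K) \onto \pi_1(Z)$ is a routine diagram chase using the tools already assembled: Lemma~\ref{lem:mickeyfacts}(1) gives $\pi_1(M_{K_i}) \onto \pi_1(E_i)$ at each stage, Lemma~\ref{lem:mickeyfacts}(4) identifies the generator $\mu_0$ of $\pi_1(V)$ with a push-off of $\alpha_1$ in $M_{K_1}$, which is then transported into $M_K$ through the successive $E_j$'s, and the meridians of each $R^i$ are absorbed similarly. The main obstacle in the plan is the vanishing $\sigma^{(2)}_\G(E_i)=0$: one must verify that the tori coming from the boundary of $E_i$ in fact generate a half-rank submodule of $H_2(E_i;\Q\G)$ (not merely an isotropic one of unknown rank), which requires a careful analysis of the equivariant Mayer--Vietoris sequence for the decomposition of $E_i$ into $M_{R^i}\times I$ and the $M_{K_{i-1}}\times I$ pieces.
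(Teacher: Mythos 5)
Your computation through $\sigma^{(2)}_\G(Z)-\sigma(Z)=\sigma^{(2)}_\G(V)-\sigma(V)=c_\phi\rho_0(K_0)$ essentially matches the paper's proof (which cites \cite[Lemma 2.4]{CHL3} for the $E_i$-vanishing rather than re-deriving it; your sketch also mislabels the $H_2(E_i)$-generators as boundary tori when, by Lemma~\ref{lem:mickeyfacts}(3), they are the images of $H_2$ of the zero-surgery boundary pieces, but the structural point that $H_2(E_i)$ is boundary-supported is what matters). The surjectivity argument at the end is fine.

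The genuine error is in your penultimate step, where you claim $\rho_0(K)=\rho_0(K_0)$ on the grounds that winding-number-zero iterated infection ``preserves the averaged classical signature.'' The satellite signature formula goes the other way: when $lk(\alpha,R)=0$, the Seifert form of $R_\alpha(J)$ equals that of the \emph{pattern} $R$, not of the infecting knot $J$, so $\sigma_\omega(R_\alpha(J))=\sigma_\omega(R)$ for every $\omega$. Since each $R^i$ is slice, $\rho_0(K_i)=\rho_0(R^i)=0$ for every $i\geq 1$, hence $\rho_0(K)=0$, which is \emph{not} $\rho_0(K_0)$ in general. There is no valid bridge from $\rho_0(K_0)$ to $\rho_0(K)$; the ``$\rho_0(K)$'' in the stated corollary is a typo for $\rho_0(K_0)$, as is clear both from the paper's own proof (which ends with ``\dots is equal to $\rho_0(K_0)$'') and from every downstream application (the $\mathcal{Z}$-cap contributions in the proofs of Theorems~\ref{thm:robust} and~\ref{thm:injectivity} are $\pm\rho_0(K_0^j)$, not $\pm\rho_0(K^j)$). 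Delete that bridging step: your argument already arrives at the correct conclusion, and the extra step introduces a false identity that, if used elsewhere, would give the wrong signature contributions.
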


\begin{proof}[Proof of Corollary~\ref{cor:Z-cap}] We have seen in claim ~(\ref{eq:Knsolv}) that $K\in \mathcal{F}_{n}$ via $Z$ where $Z$ consists of a union of $E_i$'s, ribbon disk exteriors $S_i$ and the zero solution $V$ for $K_0$. It is known that any Arf invariant knot admits a $0$-solution $V$ with $\pi_1(V)\cong \Z$ ~\cite[Lemma 5.4]{COT2}. By property $4$ of Proposition~\ref{prop:rhoprops} (or Theorem~\ref{thm:generalsignaturesobstruct})
$$
\sigma^{(2)}_\G(S_i)-\sigma(S_i)=0.
$$
By ~\cite[Lemma 2.4]{CHL3},
$$
\sigma^{(2)}_\G(E_i)-\sigma(E_i)=0.
$$
By the additivity of the signatures (property $[5]$ of Proposition~\ref{prop:rhoprops}) this means
$$
\sigma^{(2)}_\G(Z)-\sigma(Z)=\sigma^{(2)}_\G(V)-\sigma(V)=\rho(\partial V,\phi)=\rho(M_{K_0},\phi).
$$
But from claim ~(\ref{eq:mu0}) we know that $\mu_0\in \pi_1(Z)^{(n)}$ implying that $\phi([\pi_1(M_{K_0}),\pi_1(M_{K_0}])=1$. Hence $\phi$ restricted to  $\pi_1(M_{K_0})$ factors through $\Z$ so, by properties $1$, $2$ and $3$ of Proposition~\ref{prop:rhoprops} $\rho(M_{K_0},\phi)$ is equal to  $\rho_0(K_0)$ (if $\phi(\mu_0)\neq 1$) or zero (if $\phi(\mu_0)=1$).

The surjectivity of the inclusion map on $\pi_1$ follows from the fact that $\pi_1(V)\cong \Z$, from known facts about ribbon disks and from property $1$ of Lemma~\ref{lem:mickeyfacts}.
\end{proof}

\begin{subsection}{Compositions of more general operators}\label{subsec:moregeneralops}

Theorem~\ref{thm:qliesinpnplusone} and Corollary~\ref{cor:Z-cap} hold for compositions of more general operators $R_{\vec\alpha}(-,\dots,-)$ since the proof makes no use of the fact that $\vec\alpha$ is a single circle. It is awkward to state the most general theorem without introducing a lot of burdensome notation, since a general operator requires a varying number of inputs. The hypotheses of the general result may be most easily understood in terms of the cobordism $Z$ constructed in Figure~\ref{fig:Thm6.2}. With regard to such a figure, let us say that $M_{R_n}$ occurs at height $n$ in the cobordism and $M_{R_{n-1}}$ occurs at height $n-1$, et cetera. Now consider $K$ a general composition of operators. Suppose $K=R_n(-,...,-)$ with $I_n$ inputs where each input is itself an iterated operator $R_{n-1}^i(-,...,-)$. The proof of Theorem~\ref{thm:qliesinpnplusone} for this general operator would start with the construction of a manifold $Z$. In the version of $Z$ for such a $K$ there would be a single copy of $M_{R_n}$ at height $n$ and then manifolds $M_{R^i_{n-1}}$, $1\leq i\leq I_n$, would occur at level $n-1$, et cetera. Consider all possible $n$-tuples $\mathcal{Q}=(q_n(t),q_{n-1}(t),...,q_1(t))$ where $q_n(t)$ is the order in $\mathcal{A}(R_n)$ of one of the circles $\vec\alpha$ parametrizing the inputs of $R_n$, and $q_{n-1}(t)$ is the order of one of the circles $\vec\alpha$ parametrizing the inputs of one of the ribbon knots at level $n-1$, et cetera. These are in $1-1$ correspondence with paths in $Z$ going from $M_K$ to a copy of a $0$-solution $V$ that pass through a sequence of $M_{K_{i}}$. They are also in $1-1$ correspondence with the number of $0$-solutions $V$ that will arise in the $Z$ corresponding to our given knot $K$. Then the proof of Theorem~\ref{thm:qliesinpnplusone} adapts verbatim to show:

\begin{thm}\label{thm:generalqliesinpnplusone}  If  every such $n$-tuple $\mathcal{Q}$ is strongly coprime to $\mathcal{P}$ then $K\in \mathcal{F}^\mathcal{P}_{n+1}$.
\end{thm}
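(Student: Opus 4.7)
The plan is to mimic the proof of Theorem~\ref{thm:qliesinpnplusone} verbatim, replacing the linear chain of cobordisms $E_n\cup\cdots\cup E_1$ by a tree-shaped cobordism that reflects the branching structure of the iterated operator $K=R_n(R^1_{n-1}(\dots),\dots,R^{I_n}_{n-1}(\dots))$. First I would build a cobordism $Z$ with $\partial Z=M_K$ as follows: at the top attach a single copy of the standard cobordism $E_{R_n,\vec\alpha_n}$ from Lemma~\ref{lem:mickeyfacts} having boundary $M_K\sqcup -M_{R_n}\sqcup -\bigsqcup_i M_{K^i_{n-1}}$; cap off $M_{R_n}$ with the ribbon disk exterior $S_{R_n}$; and then recursively glue, along each $M_{K^i_{n-1}}$, the analogous cobordism $E_{R^i_{n-1},\vec\alpha^i_{n-1}}$, its ribbon cap $S_{R^i_{n-1}}$, and further recurse. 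The recursion terminates at a disjoint collection of $0$-solutions $\{V_j\}$ for the $\mathrm{Arf}=0$ base knots $K^j_0$, chosen with $\pi_1(V_j)\cong \Z$ generated by the meridian $\mu^j_0$.

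Next I would compute $H_1$ and $H_2$ of $Z$ exactly as in the proof of Theorem~\ref{thm:qliesinpnplusone}: iterated Mayer--Vietoris (using parts (2) and (3) of Lemma~\ref{lem:mickeyfacts}, plus $H_2(S_{R^i})=0$) gives $H_1(M_K)\cong H_1(Z)$ and $H_2(Z)\cong \bigoplus_j H_2(V_j)$. Each $H_2(V_j)$ has a basis $\{L^j_k,D^j_k\}$ of surfaces with $\pi_1\subset\pi_1(V_j)^{(0)}=\pi_1(V_j)$, so to verify the $(n{+}1,\mathcal{P})$-solvability of $K$ via $Z$ it suffices to show that for every leaf $j$,
$$
\pi_1(V_j)\subset \pi_1(Z)^{(n+1)}_\mathcal{P}.
$$
Since $\pi_1(V_j)\cong\Z\langle\mu^j_0\rangle$, this reduces to showing $\mu^j_0\in G^{(n+1)}_\mathcal{P}$ for every $j$, where $G=\pi_1(Z)$.

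The heart of the argument, and the main place where I expect to have to be careful, is the branching analogue of Lemma~\ref{lem:mui}. For each leaf $j$ there is a unique downward path $\gamma_j$ in the tree from the root through $M_{K_n},M_{K^{i_{n-1}}_{n-1}},\dots,M_{K^{i_0}_0}$, and along this path one encounters exactly the $n$-tuple $\mathcal{Q}_j=(q_n,q_{n-1},\dots,q_1)$ of orders of the circles $\alpha$ used to glue the successive cobordisms. By the same reverse induction on the depth as in Lemma~\ref{lem:mui}, applied along $\gamma_j$ (using only property (4) of Lemma~\ref{lem:mickeyfacts} and the fact that the meridian of each $M_{K^{i_\ell}_\ell}$ normally generates its fundamental group), one obtains the unconditional conclusion that the meridians and infection curves along $\gamma_j$ lie in the ordinary derived series at the expected depths, and in particular $\mu^j_0\in G^{(n)}$, establishing $K\in\mathcal{F}_n$ via $Z$.

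To upgrade the depth by one under the hypothesis that every $\mathcal{Q}_j$ is strongly coprime to $\mathcal{P}$, I would fix $j$ and choose the smallest $k=k(j)$ along $\gamma_j$ such that the entry $q_k$ of $\mathcal{Q}_j$ is strongly coprime to $p_{n-k+1}$ (with the special case $k=n$ handled by ordinary coprimality and the polarized divisor set $S^*_{p_1}$). By the Alexander module torsion condition on the corresponding $\alpha_k$, a standard translation (as in the proof of Theorem~\ref{thm:qliesinpnplusone}) yields that the class of $\alpha_k$ in $G^{(n-k+1)}_\mathcal{P}/[G^{(n-k+1)}_\mathcal{P},G^{(n-k+1)}_\mathcal{P}]$ is annihilated by $q_k(\mu_k)$, which by Definition~\ref{def:Sdefderivedlocalp} lies in $S_{p_{n-k+1}}$ (respectively $S^*_{p_1}$). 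Hence $\alpha_k\in G^{(n-k+2)}_\mathcal{P}$, and iterating the second clause of the branching Lemma~\ref{lem:mui} down the remainder of $\gamma_j$ promotes the bound one level throughout, giving $\mu^j_0\in G^{(n+1)}_\mathcal{P}$. Since this holds for every leaf, $K\in\mathcal{F}^\mathcal{P}_{n+1}$ via $Z$. The only real subtlety is verifying that the branching of $Z$ does not obstruct the reverse induction: each application of Lemma~\ref{lem:mickeyfacts}(4) is purely local at a single $E_{R^i_\ell,\vec\alpha^i_\ell}$ and normal generation of $\pi_1(M_{K^{i_\ell}_\ell})$ by its meridian is also local, so the argument propagates independently along each $\gamma_j$ without interference between branches.
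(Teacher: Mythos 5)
Your proposal is correct and takes essentially the same approach the paper intends: building a tree-shaped cobordism $Z$ from the $E$-cobordisms of Lemma~\ref{lem:mickeyfacts}, ribbon-disk caps, and $0$-solutions at the leaves, then running the reverse induction of Lemma~\ref{lem:mui} along each root-to-leaf path $\gamma_j$ to place the leaf meridian $\mu_0^j$ in $\pi_1(Z)^{(n+1)}_\mathcal{P}$. The paper compresses all of this into the sentence that the argument ``adapts verbatim,'' and the observation you flag as the only subtlety — that Lemma~\ref{lem:mickeyfacts}(4) and normal generation by meridians are local to a single $E$-piece, so the branches do not interfere — is precisely the justification the paper is implicitly invoking when it says the proof ``makes no use of the fact that $\vec\alpha$ is a single circle.''
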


\end{subsection}

\section{Distinguishing concordance classes using the polarized derived series localized at $\mathcal{P}$: Nontriviality}\label{sec:examplesnontrivial}

In this section we seek a non-triviality result to complement Theorem~\ref{thm:qliesinpnplusone}. We want to show that many knots survive in $\mathcal{F}_{n}/\mathcal{F}^\mathcal{P}_{n.5}$ if their associated sequence of Alexander polynomials is isogenous to $\mathcal{P}$.

Implicitly this will establish certain injectivity results for these operators, which we will detail in Section~\ref{sec:injectivity}. But of course it is certainly not true that any doubling operator $R_\alpha:\mathcal{C}\to \mathcal{C}$ is injective . For example if $\alpha$ bounds a disk in $S^3-R$ then $R_\alpha(K)=R=[0]$ for any $K$, that is the ``infection'' has no effect.
More generally, if the link $R\cup \alpha$ is concordant to a trivial link wherein the restriction of the concordance to $\alpha$ is isotopic to the standard unknotted annulus (recall $\alpha$ is unknotted) then $R_\alpha(K)$ is smoothly concordant to $R_{trivial}(K)=R=[0]$. Even more generally, if the slice knot $R$ admits a slice disk $\Delta$ for which $\alpha$ is merely \emph{null-homotopic} in $B^4-\Delta$ then $R_\alpha(K)$ is topologically slice for any $K$ ~\cite[Corollary 1.6]{CFT}. Therefore, in order for $R_\alpha$ to a non-zero operator, it is necessary (in the topological category) that, for \emph{no} slice disk for $R$, does $\alpha$ lie in the kernel of $\pi_1(S^3-R)\to \pi_1(B^4-\Delta)$. One of the only known ways to ensure this is to demand that $\mathcal{B}\ell_\Z^K(\alpha,\alpha)\neq 0$. So we shall define a \emph{robust operator} $R_\alpha$ and require that it has this property. This condition is close to being necessary. However we will also impose some other conditions that are not necessary, partly out of simplicity but partly due to  limitations in our current technology,

\begin{subsection}{Definition and Examples of Robust operators}\label{subsec:robust}

We first recall the definition of the \emph{first-order $L^2$-signatures of a knot} (from ~\cite{CHL4}). Suppose $K$ is a knot and let $G=\pi_1(M_K)$. Then
$$
\mathcal{A}(K)\equiv G^{(1)}/G^{(2)}\otimes_{\mathbb{Z}[t,t^{-1}]}\mathbb{Q}[t,t^{-1}]
$$
Each submodule $P\subset \mathcal{A}(K)$ corresponds to a unique metabelian quotient of $G$,
$$
\phi_P:G\to G/\tilde{P},
$$
by setting
$$
\tilde{P}\equiv \text{kernel}(G^{(1)}\to G^{(1)}/G^{(2)}\to \mathcal{A}(K) \to \mathcal{A}(K)/P).
$$
\noindent  Therefore to any such submodule $P$ there corresponds a real number, the Cheeger-Gromov von Neumann $\rho$-invariant, $\rho(M_K, \phi_P:G\to G/\tilde{P})$.

\begin{defn}~\cite[Section 4]{CHL4}\cite[Section 3]{CHL3}\label{defn:firstordersignatures} The \textbf{first-order} \textbf{$L^{(2)}$}-\textbf{signatures} of a knot $K$ are the real numbers
$\rho(M_K, \phi_P)$ where $P$ is an isotropic submodule ($P\subset P^\perp)$ of $\mathcal{A}(K)$ with respect to the classical Blanchfield form $\mathcal{B}\ell_\Z^K$. The first-order signature corresponding to $P=0$ is denoted $\rho^1(K)$. For any knot $K$, the set of all first-order signatures of $K$ will be denoted $\mathcal{FOS}(K)$.
\end{defn}

For example if $R$ is an algebraically slice knot and
$$
 \mathcal{A}(R)\cong \frac{\Q[t,t^{-1}]}{\langle\delta(t)\delta(t^{-1}) \rangle}
$$
where $\delta(t)$ is prime, then $\mathcal{A}(R)$ has $3$ isotropic submodules (in fact has just $3$ proper submodules) $P_0=0$, $P=\langle\delta(t) \rangle$ and $P=\langle\delta(t^{-1}) \rangle$ (the last two coincide if $\delta(t)\doteq \delta(t^{-1})$. Thus such an $R$ has $3$ first-order $L^{(2)}$-signatures.

Suppose $R$ is a slice knot, $\Delta$ is a slice disk for $R$ and $V=B^4-\Delta$. Then $\partial V=M_R$. Set
\begin{equation}\label{eq:defP}
P_\Delta=\text{ker}\left(H_1(M_K;\Q[t,t^{-1}])\to H_1(V;\Q[t,t^{-1}])\right).
\end{equation}
Then it is well-known that $P_\Delta$ is a Lagrangian for $\mathcal{B}\ell_\Z^K$ ($P=P^\perp$). In this case we say that the Lagrangian $P_\Delta$ \textbf{corresponds to the slice disk}, $\Delta$.

\begin{defn}\label{def:robust} A doubling operator $R_\alpha:\mathcal{C}\to \mathcal{C}$ is \textbf{robust} if $R$ is a ribbon knot such that
\begin{itemize}
\item [1.] The rational Alexander module of $R$ is generated by $\alpha$ and
 $$
 \mathcal{A}(R)\cong \frac{\Q[t,t^{-1}]}{\langle\delta(t)\delta(t^{-1}) \rangle}
$$
where $\delta(t)$ is prime (in particular $\delta(t)\neq 1$).
\item [2.] For each isotropic submodule $P\subset \mathcal{A}(R)$, either the first-order signature $\rho(M_R,\phi_P)$ is non-zero, or $P$ corresponds to a ribbon disk for $R$.
 \end{itemize}
The \textbf{Alexander polynomial of an operator} $R_\alpha$ is the Alexander polynomial of the ribbon knot $R$.
 \end{defn}

\begin{ex}\label{ex:robustoperators} We will construct examples of robust operators $\{R^{p_k}_\alpha~|~ k\geq 1\}$ wherein $\delta(t)=kt-(k+1)$, that is, whose Alexander polynomials are precisely the family discussed in detail in Example~\ref{ex:stronglycoprime}
$$
\{p_k(t)=(kt-(k+1))((k+1)t-k)~|~k\geq 1\}.
$$
Consider the knot given by the solid lines on the left-hand side of Figure~\ref{fig:robustoperators}. Here $-k$ symbolizes $k$ full negative twists between the two bands (below the circle labeled $\alpha$ one sees what we mean by a one-half negative twist between the bands). The $T_k$ means that an infection by a knot $T_k$ has been performed, that is, that the left-hand band has been tied into the shape of the knot $T_k$. The knot $T_k$ will be  either the right-handed trefoil or the unknot, a choice we will specify below.
\begin{figure}[htbp]
\setlength{\unitlength}{1pt}
\begin{picture}(327,151)
\put(-20,0){\includegraphics{ribbon_family_dashed2}}
\put(194,0){\includegraphics{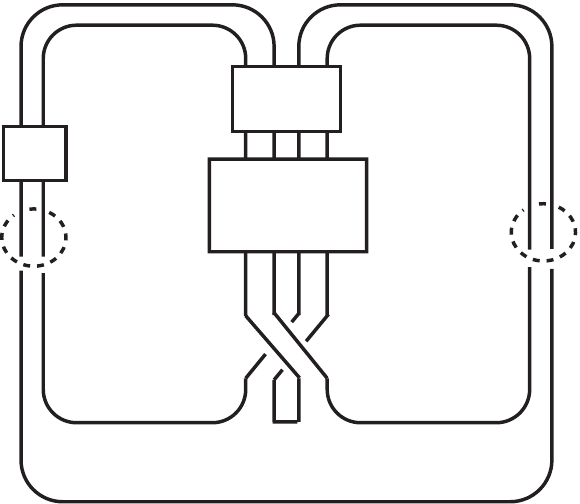}}
\put(54,113){$-k$}
\put(84,78){$\alpha$}
\put(272,83){$K$}
\put(269,113){$-k$}
\put(220,76){$\eta_+$}
\put(325,76){$\eta_-$}
\put(199,98){$T_k$}
\put(148,56){$R^{p_k}_\alpha(K)\equiv$}
\put(-17,106){$T_k$}
\put(-50,56){$R^{p_k}_\alpha\equiv$}
\end{picture}
\caption{Robust operators $R^{p_k}_\alpha$}\label{fig:robustoperators}
\end{figure}
The image of a knot $K$ under this operator is shown on the right-hand side of Figure~\ref{fig:robustoperators}. Note that $R^{p_k}_\alpha$ is a ribbon knot since, upon cutting open the left-hand band, it collapses into a $2$-component trivial  link. One easily checks that the Alexander polynomial of $R^{p_k}$ is $p_k(t)$. Since $\delta(t)$ is coprime to $\delta(t^{-1})$, the rational Alexander module is necessarily cyclic, and it can be shown that $\alpha$ represents a generator. This verifies condition $1$ of Definition~\ref{def:robust}. Now we examine condition $2$ of Definition~\ref{def:robust}. There are $3$ first-order signatures: corresponding to $P_0=0$ and $P_{\pm}=\langle\eta_{\pm} \rangle$. The circle $\eta_+$ normally generates the kernel of the inclusion map $\pi_1(S^3-R^{p_k})\to\pi_1(B^4-\Delta)$ for the ribbon disk obtained by cutting the $\eta_+$ band. Thus $P_+$ corresponds to a  ribbon disk. Hence $P_+$ satisfies condition ($2$) of Definition~\ref{def:robust}. Let $R^k$ denote the ribbon knot obtained from $R^{p_k}$ by setting $T_k=U$, the unknot. By the additivity results for the metabelian $\rho$ invariants ~\cite[Lemma 2.3, Example 3.3]{CHL3}:
\begin{eqnarray}
\rho(R^{p_k},\phi_{P_0})=\rho(R^{k},\phi_{P_0})+\rho_0(T_k),\\
\rho(R^{p_k},\phi_{P_-})=\rho(R^{k},\phi_{P_-})+\rho_0(T_k).
\end{eqnarray}
The term $\rho(R^{k},\phi_{P_0})$ is what we have called $\rho^1(R^k)$. Note that a ribbon disk, $\Delta_-$, in $B^4$ can be obtained for $R^k$ by cutting the right-hand band wherein, under the inclusion map on $\pi_1$, the circle $\eta_-$ goes to zero. Thus $\rho(R^{k},\phi_{P_-})=0$ since $\phi_{P_-}$ extends over the exterior of this ribbon disk. Hence we have
\begin{eqnarray}\label{eq:additivity1}
\rho^1(R^{p_k})\equiv\rho(R^{p_k},\phi_{P_0})=\rho^1(R^{k})+\rho_0(T_k),\\
\rho(R^{p_k},\phi_{P_-})=\rho_0(T_k)\label{eq:additivity2}.
\end{eqnarray}
Then there are two cases. In the first case, $\rho^1(R^{k})\neq 0$. In this case we take $T_k=U$ so $R^k=R^{p_k}$. Then $\rho^1(R^{p_k})=\rho^1(R^{k})\neq 0$ and $P_-$ corresponds to the ribbon disk $\Delta_-$. Thus in this case $R^{p_k}_\alpha=R^{k}_\alpha$ is robust. In the second case, $\rho^1(R^{k})=0$. In this case we take $T_k$ to be the right-handed trefoil and we have, from \eqref{eq:additivity1} and \eqref{eq:additivity2} that
$$
\rho^1(R^{p_k})=\rho_0(T_k)=\rho(R^{p_k},\phi_{P_-})\neq 0.
$$
Thus $R^{p_k}_\alpha$ is also robust in this case.
\end{ex}

\begin{ex}\label{ex:robustoperators2} More generally, we will show that, for any prime $\delta(t)$ with $\delta(1)=\pm 1$, there is a robust operator $R_\alpha$ whose Alexander polynomial is $p(t)\doteq \delta(t)\delta(t^{-1})$. By ~\cite{Ter}, given such a $\delta(t)$, there exists a ribbon knot $J$, obtained from fusing together a $2$-component trivial link with a single ribbon band, whose Alexander polynomial is $p(t)$. It follows that $\mathcal{A}(J)$ is cyclic and has either $2$ or $3$ proper submodules $P_0=0, P_+=\langle\delta(t) \rangle$ and $P_-=\langle\delta(t^{-1}) \rangle$ (in some cases $P_+=P_-$). Cutting the single band leads to a slice disk $\Delta$ to which, say, $P_+$ corresponds. Let $R$ be the knot obtained from $J$ by infection on a linking circle $\eta_+$ to the band using an auxiliary knot $K$. Then $\mathcal{A}(R)\cong \mathcal{A}(J)$ and $R$ is again a ribbon knot since cutting the band still yields a $2$-component trivial link. Hence $P_+=\langle\delta(t) \rangle=\langle\eta_+ \rangle$ corresponds to a ribbon disk. Moreover
$$
\rho(R,\phi_{P_0})=\rho(J,\phi_{P_0})+\rho_0(K),
$$
and if $P_+\neq P_-$ then
$$
\rho(R,\phi_{P_-})=\rho(J,\phi_{P_-})+\rho_0(K).
$$
Therefore, by choosing $K$ so that $\rho_0(K)$ is greater in absolute value than both $\rho(J,\phi_{P_0})$ and $\rho(J,\phi_{P_-})$ we can ensure that the first order signatures of $R$ corresponding to $P_0$ and $P_-$ are non-zero. Thus $R_\alpha$ is the desired robust operator, where $\alpha$ generates $\mathcal{A}(R)$.
\end{ex}

\end{subsection}

\begin{subsection}{The Main Theorems}\label{subsec:mainthms}

Now we can show that a composition of $n$ robust operators is a non-trivial operator, and in fact an injection when applied to certain collections of knots with independent classical signatures, even modulo $\mathcal{F}^\mathcal{P}_{n.5}$ as long as $\mathcal{P}$ corresponds to (the reverse of) the sequence of orders of the Alexander polynomials of the operators.

The following theorem also holds for the filtration induced by the unrestricted derived series localized at $\mathcal{P}$.

\begin{thm}\label{thm:robust} Suppose $R^i_{\alpha_i}$, $1\leq i\leq n$, are robust doubling operators and $\mathcal{P}=(p_1(t),...,p_n(t))$ is the sequence of orders of the classes $(\alpha_n,...,\alpha_1 )$ in $(\mathcal{A}(R^n),...,\mathcal{A}(R^1))$ (note the order of $\alpha_i$ is $p_{n-i+1}$). Suppose $\{K_0^j|~j\geq 1\}$ is an infinite set of Arf invariant zero knots such that the rational vector subspace of $\mathbb{R}$ spanned by $\{\rho_0(K_0^j)\}$ has trivial intersection with the rational span of $\mathcal{FOS}(R^1)$ (see Definition~\ref{defn:firstordersignatures}). Then
$\{K^j=K^j_n=R^n_{\alpha_n}\circ...\circ R^1_{\alpha_1}(K_0^j)|~j\geq 1\}$ is linearly independent in $\mathcal{F}_{n}/\mathcal{F}^\mathcal{P}_{n.5}$.
\end{thm}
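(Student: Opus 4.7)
I plan to derive the result via an $L^{(2)}$-signature obstruction, in the Cochran-Harvey-Leidy style. Assume toward contradiction that some nontrivial finite integer combination $J \equiv \#_j\, a_j K^j$ lies in $\mathcal{F}_{n.5}^\mathcal{P}$. By Proposition~\ref{prop:functseriesfilters} there is an $(n.5,\mathcal{P})$-solution $W$ for $J$. For each $j$ in the support of the sum, Corollary~\ref{cor:Z-cap} furnishes an explicit $(n)$-solution $Z_j$ for $K^j$ together with the signature identity
\[
\sigma^{(2)}_\Lambda(Z_j)-\sigma(Z_j)=c_\Lambda\,\rho_0(K_0^j),\qquad c_\Lambda\in\{0,1\},
\]
valid for any PTFA coefficient system $\Lambda$ annihilating $\pi_1(Z_j)^{(n+1)}$. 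I attach the standard connect-sum cobordism $E$ to $W$ and then glue signed copies of $Z_j$ to the resulting $M_{K^j}$ boundary components to produce a closed oriented $4$-manifold $N$.

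As coefficient system take $\Gamma=\pi_1(N)/\pi_1(N)^{(n+1)}_\mathcal{P}$, which is PTFA (Proposition~\ref{prop:commseriesprops}). By the weak functoriality of the polarized derived series localized at $\mathcal{P}$ (Theorem~\ref{thm:derivedlocpfunctorial}) each piece of $N$ maps compatibly into $\Gamma$; and since $\pi_1(Z_j)^{(n+1)}\subset\pi_1(N)^{(n+1)}\subset\pi_1(N)^{(n+1)}_\mathcal{P}$, Corollary~\ref{cor:Z-cap} applies to each $Z_j$ with this $\Gamma$. Novikov additivity (Proposition~\ref{prop:rhoprops}(5)), together with $\sigma^{(2)}_\Gamma(N)-\sigma(N)=0$ for the closed manifold $N$, yields
\[
\bigl[\sigma^{(2)}_\Gamma(W\cup E)-\sigma(W\cup E)\bigr]+\sum_j a_j\,\bigl[\sigma^{(2)}_\Gamma(Z_j)-\sigma(Z_j)\bigr]=0.
\]
Theorem~\ref{thm:generalsignaturesobstruct} applied to the $(n.5,\mathcal{P})$-solution $W\cup E$ makes the first bracket vanish, while Corollary~\ref{cor:Z-cap} evaluates each remaining bracket as $c_j\rho_0(K_0^j)$, producing
\[
\sum_j a_j c_j\,\rho_0(K_0^j)=0.
\]

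The heart of the argument is showing that $c_j=1$ whenever $a_j\neq 0$, equivalently that the image in $\Gamma$ of the meridian $\mu_0^j$ of $K_0^j$ is nontrivial. By Lemma~\ref{lem:mui} we have $\mu_0^j\in\pi_1(N)^{(n)}$, and I claim $\mu_0^j\notin\pi_1(N)^{(n+1)}_\mathcal{P}$. Descent past some level $k$ of the localized series would, by Proposition~\ref{prop:char*}, force $\mu_0^j$ to represent $\tilde{S}_{p_k}$-torsion in the associated module; but running the second clause of Lemma~\ref{lem:mui} in reverse shows that the polynomial naturally annihilating the corresponding element is exactly $p_k$ (the order of $\alpha_{n-k+1}$ in $\mathcal{A}(R^{n-k+1})$ by the matching hypothesis), and no polynomial is strongly coprime to itself, so $p_k\notin S_{p_k}$. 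Iterating Theorem~\ref{thm:noStorsion} applied to the cyclic submodules that arise at each level rules out descent for every $k\geq 2$. At the innermost level $k=1$, the robustness of $R^1_{\alpha_1}$ enters: any isotropic submodule $P\subset\mathcal{A}(R^1)$ induced by the slice-disk-like portion of $W$ either corresponds to a genuine ribbon disk for $R^1$ (in which case the $Z_j$ construction is redone using that ribbon disk, preserving the signature identity) or else contributes a nonzero first-order signature $\rho(M_{R^1},\phi_P)\in\mathcal{FOS}(R^1)$ as an additional summand on the left side of the signature equation.

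The upshot is an amended identity $\sum_j a_j\,\rho_0(K_0^j)\in\mathbb{Q}\!\cdot\!\mathcal{FOS}(R^1)$, and by hypothesis these $\mathbb{Q}$-spans intersect trivially, so $\sum_j a_j\rho_0(K_0^j)=0$. Choosing the $K_0^j$ (as in standard Milnor-Tristram type families) to have $\mathbb{Q}$-linearly independent $\rho_0$-values then forces $a_j=0$ for all $j$, the desired contradiction. The principal technical obstacle is the reverse-induction argument establishing $c_j=1$: one must track the interaction between the $(n.5,\mathcal{P})$-solvability of $W$ and the inclusion $\pi_1(Z_j)\to\pi_1(N)$ via Mayer-Vietoris, and confirm that no spurious ``deepening'' of $\mu_0^j$ in the localized series can be introduced by $W$'s handle structure. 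This is precisely where Theorem~\ref{thm:noStorsion} and the structural results of Section~\ref{sec:locprimes} are indispensable, since they guarantee that the relevant cyclic modules $\Q\G/p_k(a)\Q\G$ remain $S_{p_k}$-torsion-free.
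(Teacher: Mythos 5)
The central claim---that $c_j=1$ whenever $a_j\neq 0$, i.e., that $\mu_0^j$ has nontrivial image in $\Gamma$---is exactly where this theorem requires serious machinery, and your argument for it does not hold up. ``Running the second clause of Lemma~\ref{lem:mui} in reverse'' is not a valid inference: that lemma proves a forward implication (deep membership of $\alpha_k$ implies deep membership of $\mu_0$), and it has no useful converse of the form you need. More fundamentally, Theorem~\ref{thm:noStorsion} only rules out $\alpha$ being killed by the \emph{localization}; it says nothing about $\alpha$ being killed by the inclusion-induced map $H_1(M;\mathcal{R})\to H_1(W;\mathcal{R})$ coming from the $(n.5,\mathcal{P})$-solution $W$ sitting inside $N$, whose kernel is a priori a large submodule of the localized Alexander module. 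Ruling the generator out of that kernel requires showing the kernel is isotropic (Theorem~\ref{thm:selfannihil}), that the Blanchfield form over $\mathcal{R}$ is nonsingular (Lemma~\ref{lem:non-singular}), and that the localized module is nonzero (which is where Theorem~\ref{thm:noStorsion} actually enters). None of this appears in your sketch, and it is the heart of the proof.

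Relatedly, your closed manifold $N$ gives up the structure needed to run that module argument. The paper deliberately leaves one $M_n^1$ boundary component uncapped and builds a descending tower $W_n\supset W_{n-1}\supset\cdots\supset W_0$ by attaching cobordisms $E_i$ and carefully chosen ribbon disk exteriors $S_i$. At every intermediate level one must show both that $\alpha_i$ survives modulo the kernel submodule $P$ (the Blanchfield argument) and that $P$ corresponds to a ribbon disk (``Fact~2''), which needs a separate $\rho$-invariant vanishing via Theorem~\ref{thm:sigvanishesbordism}; this in turn dictates which $S_i$ to attach (Remark~\ref{rem:whichS_i}) so that Proposition~\ref{prop:idempotency} controls the quotient $\pi_1(W_i)/\pi_1(W_i)^{(k+1)}_\mathcal{P}$ as one descends. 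You invoke the robustness/$\mathcal{FOS}(R^1)$ dichotomy only at $k=1$, but it is needed at every intermediate stage. Finally, the $\mathcal{FOS}(R^1)$ term in the final signature identity is not an ``amendment'': in the paper it is the $\rho$-invariant of the explicit boundary component $M_{R^1}\subset\partial W_0$, which your closed manifold has capped off and therefore cannot see.
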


We postpone the proof of Theorem~\ref{thm:robust} until after deriving from it some important applications.

\begin{cor}\label{cor:infgen} Collections $\{K^j_n\}$ satisfying the hypotheses of Theorem~\ref{thm:robust} exist; and any such collection is the basis of a $\Z^\infty$ subgroup of $\mathcal{F}_{n}/\mathcal{F}^\mathcal{P}_{n.5}\subset\mathcal{F}^\mathcal{P}_{n}/\mathcal{F}^\mathcal{P}_{n.5}$.
\end{cor}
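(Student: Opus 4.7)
The plan is to reduce Corollary~\ref{cor:infgen} to Theorem~\ref{thm:robust} plus a standard construction of independent $\rho_0$-values. The corollary makes two claims: existence of families $\{K^j_n\}$ satisfying the hypotheses of Theorem~\ref{thm:robust}, and the fact that any such family generates a $\Z^\infty$-subgroup sitting inside $\mathcal{F}_n/\mathcal{F}^\mathcal{P}_{n.5}\subset \mathcal{F}^\mathcal{P}_n/\mathcal{F}^\mathcal{P}_{n.5}$.

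For existence, I would argue as follows. Since $R^1_{\alpha_1}$ is robust, $\mathcal{A}(R^1)\cong \Q[t^{\pm 1}]/\langle \delta(t)\delta(t^{-1})\rangle$ with $\delta$ prime, so $\mathcal{A}(R^1)$ has only the three proper submodules $P_0=0$, $P_+=\langle\delta(t)\rangle$, $P_-=\langle\delta(t^{-1})\rangle$. Hence $\mathcal{FOS}(R^1)$ is a finite set of real numbers, and its $\Q$-span $V\subset \mathbb{R}$ is a finite-dimensional $\Q$-subspace. It is standard (used throughout the COT literature, e.g.\ \cite{COT}\cite{COT2}) that there exist infinite families of Arf-invariant-zero knots $\{J^j\}$ (for example suitable connected sums of torus knots $T_{2,p}$) whose integral signature averages $\{\rho_0(J^j)\}$ are $\Q$-linearly independent in $\mathbb{R}$. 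Because $V$ is finite-dimensional, all but finitely many of the $\rho_0(J^j)$ together with $V$ remain $\Q$-linearly independent; more precisely, the quotient map $\mathbb{R}\to \mathbb{R}/V$ sends a cofinite subset of $\{\rho_0(J^j)\}$ to a $\Q$-linearly independent set. Taking $\{K_0^j\}$ to be any such cofinite subfamily produces an infinite family satisfying the hypothesis of Theorem~\ref{thm:robust}.

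Given such a family, define $K^j_n=R^n_{\alpha_n}\circ\cdots\circ R^1_{\alpha_1}(K_0^j)$ as in the statement of Theorem~\ref{thm:robust}. By Proposition~\ref{prop:operatorsact}, each $K^j_n$ lies in $\mathcal{F}_n$. Theorem~\ref{thm:robust} then asserts that the $\{K^j_n\}$ are $\Z$-linearly independent in the quotient abelian group $\mathcal{F}_n/\mathcal{F}^\mathcal{P}_{n.5}$. Since an infinite $\Z$-linearly independent subset of an abelian group generates a free abelian subgroup of infinite rank, the subgroup generated by the classes $[K^j_n]$ is isomorphic to $\Z^\infty$.

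Finally, for the inclusion
\[
\mathcal{F}_{n}/\mathcal{F}^\mathcal{P}_{n.5}\hookrightarrow \mathcal{F}^\mathcal{P}_{n}/\mathcal{F}^\mathcal{P}_{n.5},
\]
one just uses $\mathcal{F}_n\subset \mathcal{F}^\mathcal{P}_n$, which is already noted in Proposition~\ref{prop:functseriesfilters}; the induced map on quotients is injective because an element of $\mathcal{F}_n$ lying in $\mathcal{F}^\mathcal{P}_{n.5}$ already lies in $\mathcal{F}^\mathcal{P}_{n.5}\cap \mathcal{F}_n$. Under this inclusion the $\Z^\infty$ subgroup produced above maps to a $\Z^\infty$ subgroup of $\mathcal{F}^\mathcal{P}_n/\mathcal{F}^\mathcal{P}_{n.5}$, proving the corollary. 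The only step with any content beyond bookkeeping is the existence argument, and even there the hard work (producing knots with independent $\rho_0$-values) is a standard input from the literature; the novel content really lives in Theorem~\ref{thm:robust}, which is being cited as a black box here.
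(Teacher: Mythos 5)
Your proof is correct and follows essentially the same approach as the paper: cite the standard existence of a family with $\Q$-independent $\rho_0$-values (the paper uses \cite[Proposition 2.6]{COT2}), observe that $\mathcal{FOS}(R^1)$ is finite because $\mathcal{A}(R^1)$ has only three proper submodules, delete finitely many $K_0^j$ to avoid the finite-dimensional span of $\mathcal{FOS}(R^1)$, and then invoke Theorem~\ref{thm:robust} plus the inclusion $\mathcal{F}_n\subset\mathcal{F}_n^{\mathcal{P}}$. The only addition over the paper's own (very brief) proof is spelling out the bookkeeping for the inclusion of quotients, which the paper treats as clear.
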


\begin{proof}[Proof of Corollary~\ref{cor:infgen}] Clearly any such collection has the claimed property, so we need only establish that such collections exist. We saw in Examples~\ref{ex:robustoperators} and ~\ref{ex:robustoperators2} that collections of robust doubling operators $R^i_{\alpha_i}$  exist. It was shown in ~\cite[Proposition 2.6]{COT2} that there exists a set of Arf invariant zero knots $\{K_0^j~|~1\leq j< \infty\}$ such that $\{\rho_0(K_0^j)\}$ is $\mathbb{Q}$-linearly independent in $\mathbb{R}$. Since $\mathcal{FOS}(R^1)$ is a finite set when $R^1$ is a robust operator, the intersection of the subspaces spanned by $\mathcal{FOS}(R^1)$ and $\{\rho_0(K_0^j)\}$ is finite-dimensional. It follows easily that, after deleting a finite number of elements of $\{K_0^j\}$, we arrive at a collection that satisfies the hypotheses of Theorem~\ref{thm:robust}.
\end{proof}

In the following $\mathbb{P}_n$ is an arbitrary index set.

\begin{thm}\label{thm:fractal}Suppose $\{\mathcal{Q}_i~|~i\in \mathbb{P}_n\}$ is any collection of (pairwise) strongly coprime $n$-tuples $\mathcal{Q}_i=(q_{i_n}(t),\dots,q_{i_1}(t))$ of non-zero, non-unit polynomials; $\{\mathcal{R}_i~|~i\in \mathbb{P}_n\}$ is a collection of iterated robust doubling operators $\mathcal{R}_i\equiv R^{i_n}_{\alpha_n^i}\circ\dots\circ R^{i_1}_{\alpha_1^i}$ such that the sequence of orders of $(\alpha_n^i,\dots,\alpha_1^i)$ is $\mathcal{Q}_i$, and $\{\mathcal{K}^i|~i\in \mathbb{P}_n\}$ is a collection of sequences $\mathcal{K}^i=\{K^{i,j}|~1\leq j< \infty\}$ of Arf invariant knots such that, for each fixed $i$, the rational vector subspace of $\mathbb{R}$ spanned by $\{\rho_0(K^{i,j})\}$ has trivial intersection with the rational span of $\mathcal{FOS}(R^{i_1})$. Then the set
$$
\mathcal{J}=\{\mathcal{R}_i(K^{i,j})~|i\in \mathbb{P}_n, ~1\leq j< \infty\}
$$
is linearly independent in $\mathcal{F}_{n}/\mathcal{F}_{n.5}$ and hence a fortiori linearly independent in $\mathcal{F}_n\subset \mathcal{C}$. Therefore $\mathcal{F}_{n}/\mathcal{F}_{n.5}$ has distinct $\mathbb{Z}^\infty$ subgroups, indexed by the collection $\mathbb{P}_n$.
\end{thm}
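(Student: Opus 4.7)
The plan is to reduce the theorem to the single-sequence case (Theorem~\ref{thm:robust}) by projecting to a quotient in which only one ``primary component'' of the sum survives. This is the standard coprimality argument for assembling pairwise coprime pieces into a direct sum, and all the real work has already been done in Theorems~\ref{thm:qliesinpnplusone} and \ref{thm:robust}.

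Concretely, I would fix an arbitrary finite subset $\mathbb{P}\subset\mathbb{P}_n$ and suppose a finite linear combination
$$
\sum_{i\in\mathbb{P}}\sum_{j} a_{i,j}\,[\mathcal{R}_i(K^{i,j})]
$$
vanishes in $\mathcal{F}_n/\mathcal{F}_{n.5}$, aiming to show every $a_{i,j}$ is zero. For each fixed index $i_0\in\mathbb{P}$, consider the further projection
$$
\mathcal{F}_n/\mathcal{F}_{n.5} \twoheadrightarrow \mathcal{F}_n/\mathcal{F}_{n.5}^{\mathcal{Q}_{i_0}}.
$$
This is well defined: the polarized derived series localized at $\mathcal{Q}_{i_0}$ is weakly functorial on the class of groups with $\beta_1=1$ by Theorem~\ref{thm:derivedlocpfunctorial}, so Proposition~\ref{prop:functseriesfilters} yields $\mathcal{F}_{n.5}\subset\mathcal{F}_{n.5}^{\mathcal{Q}_{i_0}}$.

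For each $i\in\mathbb{P}$ with $i\neq i_0$, the hypothesis that $\mathcal{Q}_i$ and $\mathcal{Q}_{i_0}$ are strongly coprime allows me to invoke Theorem~\ref{thm:qliesinpnplusone} with test sequence $\mathcal{P}=\mathcal{Q}_{i_0}$ and knot order sequence $\mathcal{Q}=\mathcal{Q}_i$, yielding $\mathcal{R}_i(K^{i,j})\in\mathcal{F}_{n+1}^{\mathcal{Q}_{i_0}}\subset\mathcal{F}_{n.5}^{\mathcal{Q}_{i_0}}$. Hence under the above projection all terms with $i\neq i_0$ die, and we obtain
$$
\sum_j a_{i_0,j}\,[\mathcal{R}_{i_0}(K^{i_0,j})]=0 \quad\text{in } \mathcal{F}_n/\mathcal{F}_{n.5}^{\mathcal{Q}_{i_0}}.
$$
Now the hypotheses of Theorem~\ref{thm:robust} are precisely those built into the present statement (the composition $\mathcal{R}_{i_0}$ consists of robust doubling operators whose order sequence is $\mathcal{Q}_{i_0}$, and the $\rho_0$-values of $\mathcal{K}^{i_0}$ are $\mathbb{Q}$-linearly independent from the first-order signatures of the innermost ribbon knot of $\mathcal{R}_{i_0}$), so Theorem~\ref{thm:robust} forces $a_{i_0,j}=0$ for every $j$. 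Since $i_0\in\mathbb{P}$ was arbitrary, every coefficient in the original sum vanishes, establishing linear independence of $\mathcal{J}$ in $\mathbb{G}_n$.

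The final assertion about distinct $\mathbb{Z}^\infty$ subgroups is then immediate: for each $i\in\mathbb{P}_n$ the classes $\{[\mathcal{R}_i(K^{i,j})]\}_{j\geq 1}$ span a $\mathbb{Z}^\infty$ subgroup of $\mathbb{G}_n$, and the argument above shows that the subgroups for distinct indices $i$ meet only in $\{0\}$. No new mathematics is required beyond the two input theorems; the only point requiring care is bookkeeping to ensure the two inputs apply with compatible conventions, in particular that the \emph{polarized} derived series at $\mathcal{Q}_{i_0}$ is the one furnishing both the vanishing statement (Theorem~\ref{thm:qliesinpnplusone}) and the nontriviality statement (Theorem~\ref{thm:robust}), so that the orthogonality and non-degeneracy pieces glue along the same filtration $\mathcal{F}_\bullet^{\mathcal{Q}_{i_0}}$.
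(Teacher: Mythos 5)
Your argument is correct and follows essentially the same route as the paper: isolate a distinguished index $i_0$, project to $\mathcal{F}_n/\mathcal{F}_{n.5}^{\mathcal{Q}_{i_0}}$, kill the $i\neq i_0$ terms via Theorem~\ref{thm:qliesinpnplusone} using pairwise strong coprimality, and then apply Theorem~\ref{thm:robust} to force the remaining coefficients to vanish. The only cosmetic difference is that you make the projection and its well-definedness explicit where the paper leaves it implicit, which is a harmless elaboration rather than a different approach.
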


\begin{cor}\label{cor:bigsubgroup} Infinite collections $\{\mathcal{Q}_i\}$  and $\{\mathcal{K}^i|~i\in \mathbb{P}_n\}$ exist that satisfy the hypotheses of Theorem~\ref{thm:fractal}. Therefore we have exhibited
\begin{equation}\label{eq:bigsubgroup2}
\bigoplus_{\substack{\mathbb{P}_n}}\Z^\infty\subset \frac{\mathcal{F}_{n}}{\mathcal{F}_{n.5}},
\end{equation}
 as claimed in ~\eqref{eq:bigsubgroup}.
\end{cor}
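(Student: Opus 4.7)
The plan is to assemble the three ingredients required by Theorem~\ref{thm:fractal}, all of which are essentially constructed earlier in the paper.

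First, to produce an infinite index set $\mathbb{P}_n$ of pairwise strongly coprime $n$-tuples, I would use the infinite family $\{p_k(t)=(kt-(k+1))((k+1)t-k)\}_{k\geq 1}$ of Example~\ref{ex:stronglycoprime}, which is shown there to be pairwise strongly coprime (in fact, any two distinct members are strongly coprime in the single-polynomial sense of Definition~\ref{def:stronglycoprime}). Indexing by $k$, I would take $\mathcal{Q}_k=(p_k,p_k,\dots,p_k)$ for $k\geq 1$; then any two distinct tuples $\mathcal{Q}_k$ and $\mathcal{Q}_\ell$ satisfy $\widetilde{(p_k,p_\ell)}=1$ in every coordinate, so they are strongly coprime in the sense of Definition~\ref{def:distinctP}. (One could also let all coordinates vary independently over $\{p_k\}$ to obtain a much larger index set, but a single infinite family suffices to exhibit the $\Z^\infty$-many-fold sum.)

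Second, for each $i\in\mathbb{P}_n$, Example~\ref{ex:robustoperators} (or more generally Example~\ref{ex:robustoperators2}) produces a robust doubling operator $R^{p_k}_\alpha$ whose Alexander polynomial is precisely $p_k(t)$ and whose infection curve $\alpha$ generates the cyclic Alexander module. Composing these gives the required iterated operator $\mathcal{R}_i$ whose sequence of orders matches $\mathcal{Q}_i$.

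Third, for each fixed $i$, I would invoke \cite[Proposition 2.6]{COT2} to fix an infinite sequence of Arf invariant zero knots $\{J_j\}$ with $\mathbb{Q}$-linearly independent $\rho_0(J_j)\in\mathbb{R}$. Because the Alexander module of the robust ribbon knot $R^{i_1}$ is cyclic of order $\delta(t)\delta(t^{-1})$ with $\delta$ prime, it has only finitely many (in fact at most three) isotropic submodules, so the set $\mathcal{FOS}(R^{i_1})$ is finite and its rational span is finite-dimensional. Deleting the finitely many $J_j$ whose $\rho_0$ lies in this finite-dimensional subspace yields the required collection $\mathcal{K}^i=\{K^{i,j}\}$.

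With these ingredients in place, Theorem~\ref{thm:fractal} directly asserts that the set $\mathcal{J}=\{\mathcal{R}_i(K^{i,j})\}$ is linearly independent in $\mathcal{F}_n/\mathcal{F}_{n.5}$. Partitioning $\mathcal{J}$ by the index $i\in\mathbb{P}_n$, the free abelian group generated by $\mathcal{J}$ decomposes as $\bigoplus_{\mathbb{P}_n}\Z^\infty$, which embeds into $\mathcal{F}_n/\mathcal{F}_{n.5}$, yielding~\eqref{eq:bigsubgroup2}. There is no genuine obstacle here; the only delicate point is confirming that the constructions of the previous subsections actually deliver operators and knots satisfying the finiteness of $\mathcal{FOS}(R^{i_1})$ and the strongly-coprime condition on $\mathbb{P}_n$, but both follow directly from the examples cited.
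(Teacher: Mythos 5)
Your proposal follows the paper's own proof essentially verbatim: both take $\{\mathcal{Q}_i\}$ from the family $\{p_k\}$ of Example~\ref{ex:stronglycoprime}, realize them by the robust operators of Examples~\ref{ex:robustoperators} and~\ref{ex:robustoperators2}, and choose $\mathcal{K}^i$ exactly as in the proof of Corollary~\ref{cor:infgen}. The only cosmetic difference is that you initially take the diagonal tuples $(p_k,\dots,p_k)$ while the paper takes all $n$-tuples with coordinates in $\{p_k\}$; both index sets are infinite and the verification that distinct tuples are strongly coprime in the sense of Definition~\ref{def:distinctP} is the same, so this is not a substantive departure.
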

\begin{proof}[Proof of Corollary~\ref{cor:bigsubgroup}] As an example, take $\{\mathcal{Q}_i\}$ to be the set of all $n$-tuples of polynomials  $\{p_k\}$ of Example~\ref{ex:robustoperators}. As explained there, these polynomials are realized by robust doubling operators. The sets $\{\mathcal{K}^i|~i\in \mathbb{P}_n\}$ can be chosen just as in the proof of Corollary~\ref{cor:infgen}. Even more generally, one can take any maximal collection of pairwise strongly coprime $n$-tuples of polynomials. Unlike the set of all primes, such a set is not unique, since ``being strongly coprime'' (or even ``being coprime'') is not an equivalence relation.
\end{proof}

\begin{proof}[Proof that Theorem~\ref{thm:robust} implies Theorem~\ref{thm:fractal}] Assume that $\mathcal{J}$ satisfies some non-trivial equation in the abelian group $\mathcal{F}_{n}/\mathcal{F}_{n.5}$. Then there is some value, say $i_0\in \mathbb{P}_n$, that occurs non-trivially in this equation. Collecting, on one side of the equation, all the terms that correspond to $i_0$ (for varying $j$) we have an equation of the form $\tilde K=\tilde J$ in $\mathcal{F}_{n}/\mathcal{F}_{n.5}$ where
$$
\tilde K\equiv \#_{j=1}^\infty m_j\mathcal{R}_{i_0}(K^{{i_0},j})
$$
is a non-trivial finite sum and $\tilde J$ is a connected sum of knots (or their mirror images) each of which has the form $\mathcal{R}_{i}(K^{{i},j})$ where $i\neq i_0$. The distinguished value $i_0$ corresponds to a distinguished $n$-tuple $\mathcal{Q}_{i_0}$ that we denote by $\mathcal{P}$. Since $\mathcal{F}_{n.5}\subset \mathcal{F}_{n.5}^\mathcal{P}$ we may consider our equation in the quotient
$$
\mathcal{F}_{n}/\mathcal{F}_{n.5}^\mathcal{P}.
$$
The other values of $i$ correspond to $n$-tuples $\mathcal{Q}_{i}$ each of which is strongly coprime to $\mathcal{P}$. Therefore, by Theorem~\ref{thm:qliesinpnplusone}, each of the summands of $\tilde J$ lies in $\mathcal{F}^\mathcal{P}_{n+1}$ and so in $\mathcal{F}^\mathcal{P}_{n.5}$. Hence $\tilde J\in \mathcal{F}^\mathcal{P}_{n.5}$. Thus our projected equation reduces to $\tilde K=0$. This contradicts Theorem~\ref{thm:robust}. This contradiction finishes the proof of Theorem~\ref{thm:fractal}, modulo the proof of Theorem~\ref{thm:robust}.
\end{proof}

\begin{proof}[Proof of Theorem~\ref{thm:robust}] The proof is entirely analogous to the proof of ~\cite[Section 8,Step 4]{CHL3}, but here we must use our new results on the (polarized) derived series localized at $\mathcal{P}$. Here $n$ is fixed. We proceed by contradiction. Suppose that
$$
\tilde K\equiv \#_{j=1} m_jK^j\in \mathcal{F}^\mathcal{P}_{n.5}.
$$
By re-indexing and taking a mirror image if necessary, without loss of generality we may assume that $m_1>0$. As in the proof of Theorem~\ref{thm:qliesinpnplusone}, we let $K_1^j=R^1_{\alpha_1}(K_0^j)$, ..., $K_{i}^j=R^i_{\alpha_i}(K_{i-1}^j)$ and $K_n^j=K^j=R^n_{\alpha_n}(K_{n-1}^j)$. Throughout we abbreviate the zero-framed surgery $M_{K^j_n}$ by $M^j_n$.

First we will define a $4$-manifold such that $\partial W_n=M^1_n$. Suppose $\tilde{K}\in \mathcal{F}^\mathcal{P}_{n.5}$ via $V$ so $\partial V= M_{\tilde{K}}$. Let $C$ be the standard cobordism from $M_{\tilde {K}}$ to the disjoint union of  $m_j$ copies of $M_n^j$ for all $j$. Specifically
$$
\partial C= -M_{\tilde{K}}\coprod_jm_jM_n^j,
$$
where if $m_j<0$ we mean $|m_j|$ copies of $-M_n^j$. This cobordism is discussed in detail in ~\cite[p.113-116]{COT2}. Alternatively, note that any connected sum of knots $A\# B$ arises from an infection of $A$ along a meridian using the auxiliary knot $B$. From this point of view, the standard cobordism $C$ may be viewed as an instance of the cobordisms $E$ whose properties were detailed in Lemma~\ref{lem:mickeyfacts}. Now identify $C$ with $V$ along $M_{\tilde{K}}$.  Then cap off all of its boundary components except one copy of $M_n^1$ using copies of the $(n)$-solutions $\pm Z^j$ as provided by Corollary~\ref{cor:Z-cap}. The latter shall be called \textbf{$\mathcal{Z}$-caps}. Here there is a technical point concerning orientations: if $m_j>0$ then to the boundary component $M_n^j$ we must glue a copy of $-Z^j$ (and vice-versa). It is important in the proof that, since $m_1>0$, all the copies of $\pm Z^1$ occur as $-Z^1$ rather than $Z^1$. Let the result be denoted $W_n$ as shown schematically in Figure~\ref{fig:wn}. Note that $\partial W_n=M^1_n$.
\begin{figure}[htbp]
\setlength{\unitlength}{1pt}
\begin{picture}(108,106)
\put(0,0){\includegraphics{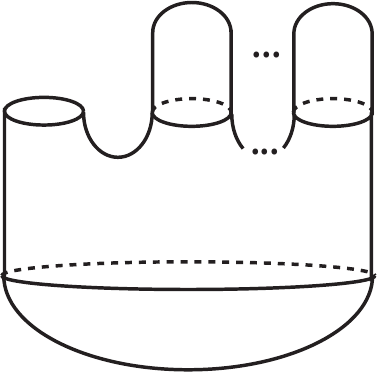}}
\put(50,42){$C$}
\put(50,10){$V$}
\put(89,85){$Z_{}^{j_k}$}
\put(48,85){$Z_{}^{j_1}$}
\put(-30,24){$M_{\tilde{K}}$}
\put(-10,27){\vector(1,0){9}}
\put(-28,71){$M_{n}^{1}$}
\put(-10,74){\vector(1,0){9}}
\end{picture}
\caption{$W_{n}$}\label{fig:wn}
\end{figure}

Now we construct a $4$-manifold $W_{n-1}$ such that $\partial W_{n-1}=M^1_{n-1}$. Since $K^1_n=R^n_{\alpha_n}(K^1_{n-1})$, there is a cobordism $E_{n}$ (as in Figure~\ref{fig:mickey}) whose boundary is the disjoint union of $-M^1_{{n}}$, $M^1_{{n-1}}$ and $M_{R^n}$. Consider $X=E_n\cup W_n$, gluing  $E_n$ to $W_n$ along their the common boundary component $M_{n}^1$ (see Figure~\ref{fig:cobordism1} with $i=n$). The boundary of $X$ is the disjoint union of $M^1_{n-1}$ and $M_{R^n}$. Let $S_n$ denote the exterior of a ribbon disk in $B^4$ for the ribbon knot $R^n$. In fact $R^n$ may have more than one ribbon disk and in this case we must choose carefully which one to employ. This choice will be made inductively in the midst of the proof (Remark~\ref{rem:whichS_i}). Let $W_{n-1}$ be obtained from $X$ by  capping off off the $M_{R^n}$ boundary component of $X$ using $S_n$. Thus $\partial W_{n-1}=M^1_{n-1}$.

Continuing in this way (refer to Figure~\ref{fig:cobordism1}), adjoining $E_i$, for $i=n,\dots,1$, and $S_i$ for $i=n,...,2$ we obtain $4$-manifolds $W_i$ such that $\partial W_{i}=M^1_{i}$ for $1\leq i\leq n$
and $\partial W_{0}=M^1_{0}\cup M_{R_1}$ since we will not cap off $M_{R^1}$ at the very last step. Recall that $M^1_{0}$ is zero surgery on $K_0^1$.

\begin{figure}[htbp]
\setlength{\unitlength}{1pt}
\begin{picture}(108,106)
\put(0,0){\includegraphics{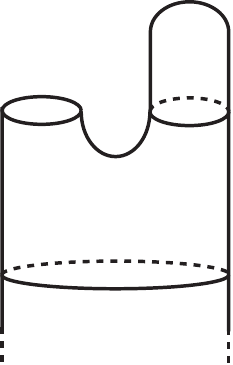}}
\put(30,42){$E_i$}
\put(30,7){$W_{i}$}
\put(49,85){$S_i$}
\put(82,73){\vector(-1,0){12}}
\put(85,70){$M_{R^i}$}
\put(-35,24){$M^1_{i}$}
\put(-15,27){\vector(1,0){12}}
\put(-43,71){$M_{i-1}^{1}$}
\put(-15,74){\vector(1,0){12}}
\end{picture}
\caption{$W_{i-1}=W_i\cup E_i \cup S_i$ for $i\neq 1$}\label{fig:cobordism1}
\end{figure}

The crucial result is that, with careful choices of the ribbon disk exteriors $S_i$, we can achieve certain subtle non-triviality results.

\begin{prop}\label{prop:familyofmickeyszero} With regard to the $4$-manifold $W_0$ constructed above (but see Remark~\ref{rem:whichS_i} of the proof for proper choice of the slice disk exteriors $S_i$) and letting $\pi=\pi_1(W_0)$
\begin{itemize}
\item [(1)] Under the inclusion $j:M_{0}^1\subset\partial W_0\to W_0$,

$$
j_*(\pi_1(M^1_{0}))\cong \mathbb{Z}\subset \pi^{(n)}/\pi^{(n+1)}_\mathcal{P};
$$
\item [(2)] Under the inclusion $j:M_{R^1}\subset\partial W_0\to W_0$,
$$
j_*(\pi_1(M_{R^1}))\cong \mathbb{Z}\subset \pi^{(n-1)}/\pi_\mathcal{P}^{(n)};
$$
\end{itemize}
\end{prop}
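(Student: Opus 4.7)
The plan is to prove both clauses in tandem by reverse induction on $i$, tracking, for $i=n,n-1,\dots,0$, the depth (in both the derived series and the polarized derived series localized at $\mathcal{P}$) of the meridians $\mu_i$ of the various $K_i^1$ and of the meridians $\mu_{R^i}$ of the $R^i$, viewed inside $\pi_1(W_i)$ (and ultimately inside $\pi=\pi_1(W_0)$). The ``upper bound'' halves of both statements, namely $j_*(\mu_0)\in\pi^{(n)}$ and $j_*(\mu_{R^1})\in\pi^{(n-1)}$, will be immediate from Lemma~\ref{lem:mui}: since $\alpha_i\in\pi_1(M_{R^i})^{(1)}$ and since, by property~($4$) of Lemma~\ref{lem:mickeyfacts}, $\mu_{i-1}$ is isotopic in $E_i$ to a parallel copy of $\alpha_i$, the same chain of argument used in the proof of Theorem~\ref{thm:qliesinpnplusone} yields $\mu_i\in\pi^{(n-i)}$ and $\alpha_i\in\pi^{(n-i+1)}$ for each $i$. (Adjoining $V$, the $\mathcal{Z}$-caps, and the ribbon disk exteriors $S_i$ can only \emph{increase} depth.) Surjectivity of $\pi_1(M_{K_0^1})\to\pi_1(V)\cong\Z$ produced by Corollary~\ref{cor:Z-cap} identifies the image of $\mu_0$ with a generator, so $j_*(\pi_1(M_0^1))$ is cyclic; similarly for $\mu_{R^1}$.

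The heart of the argument is the complementary \emph{non-membership} claim: $j_*(\mu_0)\notin\pi^{(n+1)}_\mathcal{P}$ and $j_*(\mu_{R^1})\notin\pi^{(n)}_\mathcal{P}$. First I would reduce both claims to a single assertion by contrapositive, exactly as in the last paragraph of the proof of Theorem~\ref{thm:qliesinpnplusone}: by the second clause of Lemma~\ref{lem:mui} (with the polarized, localized series in place of the unrestricted one), if $\mu_0$ were to lie in $\pi^{(n+1)}_{\mathcal{P}}$, then working upwards one concludes $\alpha_i\in\pi^{(n-i+2)}_\mathcal{P}$ for each $i$, and in particular $\mu_{R^1}\in\pi^{(n)}_\mathcal{P}$. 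So it suffices to rule out the latter: I will show that the class $[\alpha_1]$ in the module $\pi^{(n-1)}_\mathcal{P}/[\pi^{(n-1)}_\mathcal{P},\pi^{(n-1)}_\mathcal{P}]$ (tensored with $\Q$) is \emph{not} $\tilde{S}_{p_n}$-torsion as a right $\Z[\pi/\pi^{(n-1)}_\mathcal{P}]$-module, which by Proposition~\ref{prop:char*} is precisely the obstruction.

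For that I will analyze how the Alexander module of $R^1$, namely $\mathcal{A}(R^1)\cong\Q[t,t^{-1}]/\langle\delta(t)\delta(t^{-1})\rangle$ with $\delta$ prime (robustness, Definition~\ref{def:robust}(1)), sits inside the $(n{-}1)$-st stage module. A Mayer--Vietoris argument along the lines of ~\cite[Thm.~8.2]{C} identifies a natural $\Z[\pi/\pi^{(n-1)}_\mathcal{P}]$-submodule of the form
$$
\frac{\Z\Gamma}{p_n(a)\,\Z\Gamma}\,,\qquad \Gamma=\pi/\pi^{(n-1)}_\mathcal{P},\ a=[\mu_{R^1}]\in\pi^{(n-2)}_\mathcal{P}/\pi^{(n-1)}_\mathcal{P},
$$
into which $\alpha_1$ injects as the canonical generator $[1]$, provided this inclusion is not already killed by the other pieces of $W_0$. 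It is for this last proviso that the ribbon disk exteriors $S_1,\dots,S_n$ must be chosen carefully: at each level $i$, robustness (Definition~\ref{def:robust}(2)) and the Blanchfield form guarantee that among the (at most three) isotropic submodules $P\subset\mathcal{A}(R^i)$ corresponding to ribbon disks, at least one does \emph{not} contain the class of $\alpha_i$, and I choose $S_i$ to be the exterior of such a ribbon disk (this is the content of the referenced Remark about the choice of $S_i$). With these choices, the inclusion of $[\alpha_1]$ into the module above is injective. Theorem~\ref{thm:noStorsion} then applies: since $p_n$ is strongly coprime only to itself, the cyclic module $\Z\Gamma/p_n(a)\Z\Gamma$ is $S_{p_n}$-torsion-free, so $[\alpha_1]$ is not $\tilde{S}_{p_n}$-torsion, completing the argument. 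The main obstacle is precisely this algebraic persistence step: one must rule out that \emph{any} of the pieces of $W_0$---the solution $V$, the cobordism $C$, the $\mathcal{Z}$-caps, or the $S_i$---introduces a relation that makes $[\alpha_1]$ become $\tilde{S}_{p_n}$-torsion. The $V$-contribution is controlled because $V$ is only $(n.5,\mathcal{P})$-solvable, the $C$- and $\mathcal{Z}$-cap contributions were already handled in Corollary~\ref{cor:Z-cap}, and the $S_i$-contributions are precisely what forces the careful ribbon disk choice; Theorem~\ref{thm:noStorsion} then converts what is left into the required non-triviality statement.
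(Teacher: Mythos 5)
Your sketch captures the general shape of the argument (a reverse induction tracking the depths of the meridians $\mu_i$ and curves $\alpha_i$) but has three substantial gaps, the first of which is a logical error rather than merely a missing detail.

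First, the proposed reduction ``by contrapositive'' via the second clause of Lemma~\ref{lem:mui} runs in the wrong direction. That clause says: if $\alpha_k\in G^{(n-k+2)}_\mathcal{P}$ for some $k$, then $\mu_i$ and $\alpha_i$ lie deep for all $i<k$; it propagates a depth hypothesis \emph{downward} from a larger index to smaller indices, which is exactly how the proof of Theorem~\ref{thm:qliesinpnplusone} uses it. You are trying to run it \emph{upward}: you assume $\mu_0\in\pi^{(n+1)}_\mathcal{P}$ (the smallest index) and want to deduce $\alpha_i\in\pi^{(n-i+2)}_\mathcal{P}$ for all $i$ and hence $\mu_{R^1}\in\pi^{(n)}_\mathcal{P}$. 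Knowing that $\mu_0$ (hence $\alpha_1$) is deep places no constraint on $\mu_1$ or any curve at a higher level; the group-theoretic implications only flow in one direction. So clause~(2) of Proposition~\ref{prop:familyofmickeyszero} is genuinely independent of clause~(1), and both must be proved. The paper's Proposition~\ref{prop:familyofmickeys} does this with a three-part induction on the manifolds $W_i$.

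Second, the ``algebraic persistence'' step --- ruling out that the pieces $V$, $C$, the $\mathcal{Z}$-caps, and the $S_i$ collectively make $[\alpha_1]$ into $\tilde{S}_{p_n}$-torsion --- is named by you as the heart of the argument but no mechanism is given. The paper proves this by introducing the $(n,\mathcal{P})$-bordism formalism (Definition~\ref{def:npbordism}), establishing that $W_i$ is an $(n,\mathcal{P})$-bordism (property~(3) of Proposition~\ref{prop:familyofmickeys}), and then invoking Theorem~\ref{thm:selfannihil} to conclude that the kernel $P'$ of the inclusion $H_1(M^1_i;\mathcal{R})\to H_1(W_i;\mathcal{R})$ is self-annihilating for the localized Blanchfield form. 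The non-singularity of that form (Lemma~\ref{lem:non-singular}) then forces $\alpha_i\otimes 1\notin P'$ once one knows $H_1(M^1_i;\mathcal{R})\neq 0$; and \emph{that} last non-vanishing is where Theorem~\ref{thm:noStorsion} enters. Your proposal cites Theorem~\ref{thm:noStorsion} but not the self-annihilation or non-singularity results that actually make it bite. Without them, there is no argument that the inclusion does not simply kill the module.

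Third, your criterion for choosing the ribbon disk exteriors $S_i$ --- pick one whose Lagrangian does not contain $\alpha_i$ --- is vacuous (since $\alpha_i$ generates $\mathcal{A}(R^i)$, \emph{every} proper submodule omits it) and also not what is needed. What the paper requires (Fact~2 and Remark~\ref{rem:whichS_i}) is that $S_i$ be the exterior of a ribbon disk whose corresponding Lagrangian is precisely the submodule $P$ that arises geometrically as the kernel of $\mathcal{A}(K^1_i)\to H_1(W_i;\mathcal{R})$; this is what makes attaching $S_i$ harmless to the $\pi_1$-quotient, i.e.\ what gives the isomorphism~\eqref{eq:isomorph2}. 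Identifying that $P$ \emph{is} realized by a ribbon disk is itself a consequence of Theorem~\ref{thm:sigvanishesbordism} (the signature vanishing on $(n+1,\mathcal{P})$-bordisms) combined with the robustness hypothesis on $R^i$. A generic ``avoiding $\alpha_i$'' choice would not guarantee~\eqref{eq:isomorph2} and the induction would not close.
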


Before proving Proposition~\ref{prop:familyofmickeyszero}, we use it to finish the proof of Theorem~\ref{thm:robust}. Let $\phi$ denote $\pi\to\pi/\pi^{(n+1)}_\mathcal{P}\equiv \G$. We will compute
$$
\sigma^{(2)}(W_0,\phi)-\sigma(W_0).
$$
Recall that $W_0$ is a union of $V$, $C$, $E_n$ through $E_1$, $S_n$ through $S_2$ and the various $\mathcal{Z}$-caps. It follows from property $4$ of Proposition~\ref{prop:rhoprops} and Theorem~\ref{thm:generalsignaturesobstruct} respectively that
$$
\sigma^{(2)}(S_i,\phi)-\sigma(S_i)=0=\sigma^{(2)}(V,\phi)-\sigma(V).
$$
By ~\cite[Lemma 2.4]{CHL3},
$$
\sigma^{(2)}(E_i,\phi)-\sigma(E_i)=0,
$$
and
$$
\sigma^{(2)}(C,\phi)-\sigma(C)=0
$$
(the latter was also shown in ~\cite[Lemma 4.2]{COT2}). For each $\mathcal{Z}$-cap, $\pm Z^j$,
$$
\sigma^{(2)}(\pm Z^j,\phi)-\sigma(\pm Z^j)=\pm \rho(Z^j,\phi),
$$
which, by Corollary~\ref{cor:Z-cap}, is either zero or equal to $\pm\rho_0(K_0^j)$. Therefore
\begin{equation}\label{eq:1}
\rho(\partial W_0,\phi)=\sigma^{(2)}(W_0,\phi)-\sigma(W_0)=-\sum_{j}C^j\rho_0(K_0^j)
\end{equation}
where $C^1\geq 0$ (since $m_1-1\geq 0$), from which we have
\begin{equation}\label{eq:2}
\rho(M_0^1,\phi)+ \rho(M_{R^1},\phi)=\rho(\partial W_0,\phi)=-\sum_{j}C^j\rho_0(K^j_0).
\end{equation}
Then, by property $(1)$ of Proposition~\ref{prop:familyofmickeyszero}, the restriction of $\phi$ to $\pi_1(M_0^1)$ factors non-trivially through $\Z$. Hence, by properties $2$ and $3$ of Proposition~\ref{prop:rhoprops}, $\rho(M_0^1,\phi)=\rho_0(K^1_0)$. Consequently
\begin{equation}\label{eq:second}
(1+C^1)\rho_0(K_0^1)+\sum_{j>1}C^j\rho_0(K^j_0)=-\rho(M_{R^1},\phi_{R^1}).
\end{equation}
We claim that
\begin{equation}\label{eq:third}
\rho(M_{R^1},\phi_{R^1})\in \mathcal{FOS}(R^1).
\end{equation}
Granting this for the moment, we would then have
\begin{equation}\label{eq:fourth}
-(1+C^1)\rho_0(K_0^1)+\sum_{j>1}C^j\rho_0(K^j_0)\in \mathcal{FOS}(R^1).
\end{equation}
Since $1+C^1>0$  we would have expressed a non-zero  linear combination of $\{\rho_0(K^j_0)\}$ as an element of $\mathcal{FOS}(R^1)$ contradicting our choice of $\{K^j_0\}$.

To justify ~\eqref{eq:third}, note that by property $(2)$ of Proposition~\ref{prop:familyofmickeyszero},
\begin{equation}\label{eq:M_R}
j_*(\pi_1(M_{R^1}))\subset \pi^{(n-1)}.
\end{equation}
Let $G=\pi_1(M_{R^1})$ and let $\phi_R$ denote the restriction of $\phi$ to $G$. Then ~\eqref{eq:M_R} implies that $\phi_R$ factors through $G/G^{(2)}$.  We claim that kernel($\phi_R)\subset G^{(1)}$. For suppose that $x\in \text{kernel}(\phi_R)$ and $x=\mu^my$ where $\mu$ is a meridian of $R$ and $y\in G^{(1)}$. Then certainly $x$ is in the kernel of the composition
$$
\psi:G\overset{\phi_R}\longrightarrow \pi^{(n-1)}/\pi^{(n+1)}_\mathcal{P}\to\pi^{(n-1)}/\pi^{(n)}_\mathcal{P}.
$$
Moreover, by ~\eqref{eq:M_R}, $\phi_R(G^{(1)})\subset \pi^{(n)}_\mathcal{P}$ so $G^{(1)}$ is in the kernel of $\psi$. Therefore $\mu^m\in \ker\psi$ and the image of $\psi$ has order at most $m$. If $m\neq 0$ this contradicts property ($2$) of Proposition~\ref{prop:familyofmickeyszero}. Thus $m=0$ and kernel($\phi_R)\subset G^{(1)}$. Therefore $\phi_R$ is determined by the kernel, called $\tilde{P}$, of
$$
\bar\phi_R:G^{(1)}/G^{(2)}\cong \mathcal{A}_0(R^1)\to \text{image}(\phi_R).
$$
Moreover, by property $1$ of Proposition~\ref{prop:rhoprops},
$$
\rho(M_{R^1},\phi_{R^1})=\rho(M_{R^1},\bar\phi_R).
$$
Since $\tilde{P}$ is normal in $G/G^{(2)}$, it is preserved under  conjugation by a meridional element, implying that $\tilde{P}$ represents a \emph{submodule} $P\subset\mathcal{A}_0(R^1)$.  Since $R^1$ is a robust operator, the Alexander polynomial of $R^1$ is the product, $\delta(t)\delta(t^{-1})$, of two irreducible factors. Thus $\mathcal{A}_0(R^1)$ admits precisely $4$ submodules: $P_1=\mathcal{A}_0(R^1)$, $P_0=0, P_+=\langle\delta(t) \rangle$ and $P_-=\langle\delta(t^{-1} \rangle$. The first case is when $\phi_R$ factors through $\Z$. In this case $\rho(M_{R^1},\phi_R)=\rho_0(R^1)$, by property $3$ of Proposition~\ref{prop:rhoprops}, and hence vanishes since since $R^1$ is an algebraically slice knot. But, since $R^1$ is a ribbon knot, $0\in \mathcal{FOS}(R^1)$. The other cases are isotropic submodules so in all cases we have $\rho(M_{R^1},\phi_R)\in \mathcal{FOS}(R^1)$, verifying ~\eqref{eq:third}.

Therefore we have established ~\eqref{eq:fourth} and this contradiction finishes the proof of Theorem~\ref{thm:robust}, modulo the proof of Proposition~\ref{prop:familyofmickeyszero}.

\end{proof}

\end{subsection}

The proof of Proposition~\ref{prop:familyofmickeyszero} is accomplished inductively using the following. Clearly, the case $i=0$ of Proposition~\ref{prop:familyofmickeys} gives Proposition~\ref{prop:familyofmickeyszero}.

\begin{prop}\label{prop:familyofmickeys} With regard to the $4$-manifold $W_{i}$ constructed above (fixing $i$ and $n$) and letting $\pi=\pi_1(W_{i})$,

\begin{itemize}
\item [(1)] Under the inclusion $j:M_{i}^1\subset\partial W_{i}\to W_{i}$,
$$
j_*(\pi_1(M^1_{i}))\cong \mathbb{Z}\subset \pi^{(n-i)}/\pi^{(n-i+1)}_\mathcal{P};
$$
\item [(2)] in the case $i=0$, under the inclusion $j:M_{R^1}\subset\partial W_0\to W_0$,
$$
j_*(\pi_1(M_{R^1}))\cong \mathbb{Z}\subset \pi^{(n-1)}/\pi_\mathcal{P}^{(n)};
$$
\item [(3)] $W_i$ is an $(n,\mathcal{P})$-bordism for $\partial W_i$.
\end{itemize}
\end{prop}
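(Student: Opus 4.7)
The plan is to establish Proposition~\ref{prop:familyofmickeys} by reverse induction on $i$ from $i=n$ down to $i=0$, paralleling the derived-series computation of Lemma~\ref{lem:mui} but now tracking the localized series $\pi^{(k)}_{\mathcal{P}}$ alongside the rational one.

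\emph{Base case $i=n$.} The manifold $W_n$ decomposes as $V\cup C\cup\bigl(\bigcup\pm Z^j\bigr)$, where $V$ is the $(n.5,\mathcal{P})$-solution for $\tilde K$, $C$ is the standard connected-sum cobordism, and each $Z^j$ is the $(n)$-solution of Corollary~\ref{cor:Z-cap}. A Mayer--Vietoris computation analogous to \cite[Section 8, Step 4]{CHL3} yields $H_1(W_n)\cong\mathbb{Z}$ generated by $\mu^1_n$, establishing~(1) at $i=n$. Part~(3) is immediate from the $(n.5,\mathcal{P})$-solution structure of $V$ together with the fact that the $\mathcal{Z}$-caps contribute only surfaces whose fundamental groups already lie in $\pi_1(Z^j)^{(n)}\subset\pi^{(n)}_{\mathcal{P}}$.

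\emph{Inductive step for (1) and (3).} Assume (1) and (3) for $W_i$ and set $\pi=\pi_1(W_{i-1})$. By the inductive hypothesis, $j_*(\pi_1(M^1_i))\subset\pi_1(W_i)^{(n-i)}$, and weak functoriality of the rational derived series places this image in $\pi^{(n-i)}$. Because $R^i_{\alpha_i}$ is a doubling operator with $\mathrm{lk}(R^i,\alpha_i)=0$, we have $\alpha_i\in[\pi_1(M^1_i),\pi_1(M^1_i)]\subset[\pi^{(n-i)},\pi^{(n-i)}]\subset\pi^{(n-i+1)}$ by Proposition~\ref{prop:commseriesprops}. Lemma~\ref{lem:mickeyfacts}(4) identifies $\mu^1_{i-1}$ in $E_i$ with $\alpha_i$, so $\mu^1_{i-1}\in\pi^{(n-i+1)}$. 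Since $\pi_1(M^1_{i-1})$ is normally generated by $\mu^1_{i-1}$ and $\pi^{(n-i+1)}/\pi^{(n-i+2)}_{\mathcal{P}}$ is torsion-free abelian, the image of $\pi_1(M^1_{i-1})$ in that quotient is either $0$ or $\mathbb{Z}$; part~(3) for $W_{i-1}$ then follows routinely, using that $E_i$ contributes no new $H_2$ (Lemma~\ref{lem:mickeyfacts}(3)) and each $S_i$ has $H_2(S_i)=0$. Part~(2), applied at $i=0$, is similar: $\mu_{R^1}$ and $\mu^1_1$ are identified in $E_1$ by Lemma~\ref{lem:mickeyfacts}(4), part~(1) for $W_1$ places $\mu^1_1$ in $\pi_1(W_1)^{(n-1)}/\pi_1(W_1)^{(n)}_{\mathcal{P}}$ as a generator, and $W_0=W_1\cup E_1$ does not include $S_1$, so no new relations threaten this image.

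\emph{The main obstacle.} The crux is ruling out $j_*(\mu^1_{i-1})=0$ in the localized quotient: by Definition~\ref{def:defderivedlocalp}, this requires showing that the class of $\alpha_i$ in the right $\mathbb{Z}[\pi/\pi^{(n-i+1)}]$-module $\mathcal{M}:=\pi^{(n-i+1)}/[\pi^{(n-i+1)},\pi^{(n-i+1)}]$ is not $S_{p_{n-i+1}}$-torsion after tensoring with $\mathbb{Q}$. I would carry out a Mayer--Vietoris calculation over $\mathbb{Z}[\pi/\pi^{(n-i+1)}]$ in the style of \cite[Lemma 5.10]{CHL3} to realize the cyclic submodule of $\mathcal{M}$ generated by $\alpha_i$ as a quotient of $\mathbb{Z}\Gamma/\delta(a^{\pm 1})\mathbb{Z}\Gamma$, where $a$ is the image of a meridian of $R^i$, $\delta$ is the prime with $p_{n-i+1}(t)=\delta(t)\delta(t^{-1})$, and $\delta(t^{\pm 1})$ is the order of $\alpha_i$ in the ribbon-quotient Alexander module of $R^i$. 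Here the choice of ribbon disk exterior $S_i$ deferred in the proof becomes essential: robustness of $R^i$ (Definition~\ref{def:robust}) provides a Lagrangian $P_\pm$ of the Blanchfield form corresponding to a ribbon disk, so picking $S_i$ accordingly yields annihilator exactly $\delta(a^{\pm 1})$. Since strong coprimality with $p_{n-i+1}=\delta(t)\delta(t^{-1})$ forces strong coprimality with each factor $\delta(t^{\pm 1})$ (Definition~\ref{def:stronglycoprime}), Theorem~\ref{thm:noStorsion} applied to $\mathbb{Q}\Gamma/\delta(a^{\pm 1})\mathbb{Q}\Gamma$ shows this module is $S_{p_{n-i+1}}$-torsion-free, forcing $\alpha_i\neq 0$ in the localization and closing the induction.
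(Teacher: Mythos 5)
Your overall scaffolding (reverse induction, base case via integral homology, pushing $\mu_{i-1}\simeq\alpha_i$ into $\pi^{(n-i+1)}$ by Lemma~\ref{lem:mickeyfacts}(4)) matches the paper, and your observation that $S_p$-torsion-freeness of $\mathbb{Q}\Gamma/\delta(a^{\pm1})\mathbb{Q}\Gamma$ follows from the proof of Theorem~\ref{thm:noStorsion} (strong coprimality to $p=\delta\delta^{-1}$ propagates to its prime factors) is sound. But the central step of the inductive argument — ruling out $\alpha_i\in\pi^{(k+1)}_{\mathcal{P}}$ — has a logical gap. You observe that the cyclic $\mathbb{Q}\Gamma$-submodule of $\mathcal{M}$ generated by $\alpha_i$ is a \emph{quotient} of $\mathbb{Q}\Gamma/\delta(a^{\pm1})\mathbb{Q}\Gamma$, and then invoke $S_p$-torsion-freeness of the latter to conclude $\alpha_i\neq 0$ in the localization. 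That inference is invalid: a quotient of an $S_p$-torsion-free module is not $S_p$-torsion-free (the quotient could well be zero, or $S_p$-torsion). What is actually needed is that the map $\mathbb{Q}\Gamma/\delta(a^{\pm1})\mathbb{Q}\Gamma\to\mathcal{M}$ sending $1\mapsto\alpha_i$ is \emph{injective}, and nothing in your sketch supplies this. This is precisely the role of Theorem~\ref{thm:selfannihil} and Lemma~\ref{lem:non-singular} in the paper's proof: the kernel $P'$ of $H_1(M^1_i;\mathcal{R})\to H_1(W_i;\mathcal{R})$ is shown to be isotropic for a \emph{nonsingular} localized Blanchfield form on a nontrivial cyclic module, and since the generator $\alpha_i\otimes 1$ being in $P'$ would force $P'=H_1(M^1_i;\mathcal{R})$ and hence make the form identically zero, $\alpha_i\otimes 1\notin P'$. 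Without the Blanchfield/self-annihilation input your Mayer--Vietoris alone cannot rule out that $W_i$ already kills $\alpha_i$.

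A second, related gap is the choice of $S_i$. You correctly note that the choice is ``deferred'' and that robustness of $R^i$ supplies a ribbon-disk Lagrangian, but you give no mechanism for knowing \emph{which} ribbon disk to attach. In the paper this is Fact~2: one applies Theorem~\ref{thm:sigvanishesbordism} to the $(k+1,\mathcal{P})$-bordism $W_i$ to show the first-order signature $\rho(R^i_{\alpha_i},\phi_P)$ associated to the kernel $P$ vanishes, and then robustness (Definition~\ref{def:robust}, condition 2) forces $P$ to correspond to a ribbon disk; one then chooses $S_i$ to be that ribbon-disk exterior (Remark~\ref{rem:whichS_i}). Merely ``picking $S_i$ accordingly'' presupposes knowledge of $P$ that must itself be extracted from a $\rho$-invariant computation. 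So the two essential ingredients of the inductive step — the signature obstruction identifying $P$, and the Blanchfield nonsingularity argument showing $P'$ is proper — are absent from your proposal, and a direct Mayer--Vietoris computation over $\mathbb{Z}[\pi/\pi^{(n-i+1)}_{\mathcal{P}}]$ does not appear to replace either.
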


An $(n,\mathcal{P})$-bordism will be defined below.

\begin{proof}[Proof of Proposition~\ref{prop:familyofmickeys}] We proceed by reverse induction on $i$. Consider the case $i=n$ (refer to Figure~\ref{fig:wn}). By ~(\ref{eq:firstderived}),  $\pi^{(1)}_\mathcal{P}=\pi^{(1)}_r$. Thus property ($1$) of Proposition~\ref{prop:familyofmickeys} is merely the statement that the inclusion $H_1(M_n^1;\mathbb{Q})\to H_1(W_n;\mathbb{Q})$ is injective . But this is easy to verify. Since $\tilde{K}\in \mathcal{F}^\mathcal{P}_{n.5}$ via $V$, by Definition~\ref{def:Gnsolvable} (part $1$), the inclusion-induced map
$$
j_*:~H_1(M_{\tilde K};\mathbb{Z})\to H_1(V;\mathbb{Z})
$$
is an isomorphism. It follows from duality that
$$
j_*:~H_2(M_{\tilde K};\mathbb{Z})\to H_2(V;\mathbb{Z})
$$
is the zero map. Similarly, each $H_1(M_n^j)\to H_1(\pm Z^j)$ is an isomorphism and by duality $H_2(M_n^j)\to H_2(Z^j)$ is the zero map. The integral homology of $C$ was analyzed in ~\cite[p. 113-114]{COT2}. From the latter we know that $H_1(C;\mathbb{Z})\cong \mathbb{Z}$, generated by any one of the meridians of any of the knots, and that $H_2(C;\mathbb{Z})$ is $\oplus_j H_2(M^j_n;\mathbb{Z})^{|m_j|}$. In particular $H_2(C)$ arises from its ``top'' boundary. Also the generator of $i_*(H_2(M_{\tilde K}))$ is merely the sum of the generators of the $H_2(M_n^j;\mathbb{Z})$ summands. Combining this information with several Mayer-Vietoris sequences, we deduce that $H_1(M_n^1;\mathbb{Z})\cong H_1(W_n;\mathbb{Z})$ as desired. The details of these elementary deductions were given in the proof of ~\cite[Proposition 8.2]{CHL3} so we will not repeat them here. This finishes the case $i=n$ of part $(1)$ of Proposition~\ref{prop:familyofmickeys}, which is the base of our induction. By the same token we also conclude that
$$
H_2(W_n;\mathbb{Z})/j_*(H_2(\partial W_n;\mathbb{Z}))\cong H_2(W_n;\mathbb{Z})\cong H_2(V;\mathbb{Z})\oplus_{\mathbb{\mathcal{Z}}-caps}H_2(Z_n^j;\Z).
$$
To establish part $(3)$ (for $i=n$) we need more properties of $W_n$. Since $\tilde{K}\in \mathcal{F}^\mathcal{P}_{n.5}\subset \mathcal{F}^\mathcal{P}_{n}$ via $V$, $H_2(V)$ admits a basis given by $\{L^V_i,D^V_i\}$, a collection of surfaces satisfying Definition~\ref{def:Gnsolvable} for $n$. That is $V$ is an $(n,\mathcal{P})$-solution. Similarly each $\mathcal{Z}$-cap $Z^j$ is an ($n$)-solution, so each $H_2(Z^j)$ admits such a basis $\{L^j_i,D^j_i\}$ (using $\mathcal{F}_n\subset\mathcal{F}^\mathcal{P}_{n})$. The union of these bases is a basis for $H_2(W_n)$. Since the polarized derived series localized at $\mathcal{P}$ is weakly functorial, and by the naturality of the intersection form with twisted coefficients, the union of these surfaces exhibits $W_n$ as an $(n,\mathcal{P})$-solution for $M^1_n$. As we shall see below, an $(n,\mathcal{P})$-solution is an $(n,\mathcal{P})$-bordism, so we are done.

This finishes the case $i=n$ of Proposition~\ref{prop:familyofmickeys}, which is the base of our induction. To proceed to analyze the other $W_i$ we need a generalization of an $(n,\mathcal{P})$-solution. We also need to establish strong properties for such $4$-manifolds.
So we take a not-so-brief break from the proof of Proposition~\ref{prop:familyofmickeys}.

\begin{subsection}{(n,$\mathcal{P}$)-bordisms}\label{subsec:npbordisms}\

For the purposes of this subsection, $\mathcal{P}$ will denote an \textbf{arbitrary commutator series} (that need not be functorial). We retain the $\mathcal{P}$ notation in order to emphasize the current application.

The following simultaneously generalizes the notion of an $(n,\mathcal{P})$-solution (Definition~\ref{def:Gnsolvable}) and the notion of an $n$-bordism ~\cite[Section 5]{CHL3}.

\begin{defn}\label{def:npbordism} A compact spin smooth $4$-manifold $W$ is an \textbf{$(n,\mathcal{P})$-bordism}  for $\partial W$ if
\begin{itemize}
\item [2.] $H_2(W;\Z)/H_2(\partial W;\Z)$ has a basis consisting of connected compact oriented surfaces, $\{L_i,D_i|1\leq i\leq r$,  embedded in $W$ with trivial normal bundles, that are pairwise disjoint except that, for each $i$, $L_i$ intersects $D_i$ once transversely  with positive sign.
\item [3.] for each i, $\pi_1(L_i)\subset \pi_1(W)^{(n)}_\mathcal{P}$
and $\pi_1(D_i)\subset \pi_1(W)^{(n)}_\mathcal{P}$.
\end{itemize}
We call it an \textbf{$(n.5,\mathcal{P})$-bordism}  for $\partial W$ if in addition,
\begin{itemize}
\item [4.] for each i, $\pi_1(L_i)\subset \pi_1(W)^{(n+1)}_\mathcal{P}$
\end{itemize}
\end{defn}

An $(n,\mathcal{P})$-solution is \emph{a fortiori} an $(n,\mathcal{P})$-bordism. But an $(n,\mathcal{P})$-bordism need not have connected boundary and the inclusion map from the boundary does not necessarily induce an isomorphism on $H_1$.

We now state the crucial homological properties of $(n,\mathcal{P})$-bordisms that were previously established for $n$-bordisms in ~\cite[Section 5]{CHL3}. When the proofs are identical to those of ~\cite[Section 5]{CHL3}, they are not repeated here. We should point out that the rank hypothesis below on $H_1(M_i;\mathbb{Z}\Lambda)$ is \emph{always satisfied} if $\beta_1(M_i)=1$ (by ~\cite[Proposition 2.11]{COT}), which will always be the case in this paper.

\begin{lem}\label{lem:exact} Suppose $\mathcal{P}$ is an arbitrary commutator series, $W$ is a $(k,\mathcal{P})$-bordism and
$\phi:\pi_1(W)\ra\Lambda$ is a non-trivial coefficient system where
$\Lambda$ is a PTFA group with $\phi(\pi_1(W)^{(k)}_\mathcal{P})=1$. Let $\mathcal{R}$ be an Ore localization of $\mathbb{Z}\Lambda$ so $\mathbb{Z}\Lambda\subset\mathcal{R}\subset \mathcal{K}\Lambda$. Suppose, for each component $M_i$ of $\partial W$ for which $\phi$ restricted to $\pi_1(M_i)$ is nontrivial, that $\text{rank}_{\mathbb{Z}\Lambda}H_1(M_i;\mathbb{Z}\Lambda)=\beta_1(M_i)-1$. Then
\begin{itemize}
\item [1.] The $\mathbb{Q}$-rank of $(H_2(W)/j_*(H_2(\partial W))$ is equal to the $\mathcal{K}\Lambda$-rank of $H_2(W;\mathcal{R})/I$ where
$$
I=\text{image}(j_*(H_2(\partial W;\mathcal{R})\to H_2(W;\mathcal{R}))).
$$
and
\item [2.]
$$
TH_2(W,\partial W;\mathcal{R})\xrightarrow{\partial}TH_1(\partial W;\mathcal{R})\xrightarrow{j_{\ast}} TH_1(W;\mathcal{R})
$$
is exact, where $T\mathcal{M}$ denotes the $\mathcal{R}$-torsion submodule of the $\mathcal{R}$-module $\mathcal{M}$.
\end{itemize}
\end{lem}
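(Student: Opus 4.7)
The plan is to imitate, in structure, the proof of the analogous statement for $n$-bordisms given in \cite[Section 5]{CHL3}. The single ingredient that lets that argument transfer to our more general setting is the combination of the bordism hypothesis $\pi_1(L_i), \pi_1(D_i) \subset \pi_1(W)^{(k)}_{\mathcal{P}}$ (Definition~\ref{def:npbordism}(3)) with the coefficient hypothesis $\phi(\pi_1(W)^{(k)}_{\mathcal{P}}) = 1$. Together these force $\phi(\pi_1(L_i)) = \phi(\pi_1(D_i)) = 1$, so each basis surface lifts to the $\Lambda$-cover $\widetilde W$ and determines well-defined classes $[\widetilde L_i], [\widetilde D_i] \in H_2(W;\Z\Lambda) \to H_2(W;\mathcal{R})$. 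This is the same feature that drove the proof in \cite{CHL3}, and everything else is formal homological algebra over the skew field $\mathcal{K}\Lambda$.

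For part (1), I would compute the equivariant intersection form on carefully chosen lifts. Since $L_i$ meets $D_j$ transversely in $\delta_{ij}$ points in $W$, one can fix lifts so that $\widetilde L_i \cdot \widetilde D_j = \delta_{ij}$ and $\widetilde L_i \cdot \widetilde L_j = \widetilde D_i \cdot \widetilde D_j = 0$ in $\Z\Lambda$ (using Definition~\ref{def:npbordism}(2)). This exhibits a $2r \times 2r$ hyperbolic block in the intersection form on $H_2(W;\mathcal{R})/I$. Nondegeneracy of this block over the skew field $\mathcal{K}\Lambda$ shows the $[\widetilde L_i], [\widetilde D_i]$ are $\mathcal{K}\Lambda$-linearly independent modulo $I$, giving $\rank_{\mathcal{K}\Lambda} H_2(W;\mathcal{R})/I \geq 2r = \rank_{\Q} H_2(W)/j_*(H_2(\partial W))$. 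The matching upper bound is a rank count in the long exact sequence of $(W,\partial W)$ with $\mathcal{R}$ coefficients, using the hypothesis $\rank_{\Z\Lambda} H_1(M_i;\Z\Lambda) = \beta_1(M_i)-1$ for nontrivially-covered boundary components, and the vanishing of positive-degree twisted homology for boundary components on which $\phi$ is trivial.

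For part (2), the three terms sit inside the long exact sequence of the pair
$$
H_2(W,\partial W;\mathcal{R}) \xrightarrow{\partial} H_1(\partial W;\mathcal{R}) \xrightarrow{j_*} H_1(W;\mathcal{R}),
$$
which is exact because $\mathcal{R}$ is flat over $\Z\Lambda$. The inclusions $\partial(TH_2) \subseteq TH_1$ and $j_*(TH_1) \subseteq TH_1$ are automatic by $\mathcal{R}$-linearity. For the nontrivial direction, suppose $x \in TH_1(\partial W;\mathcal{R})$ with $j_*(x) = 0$, and choose any preimage $y \in H_2(W,\partial W;\mathcal{R})$ with $\partial y = x$. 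Tensoring up to $\mathcal{K}\Lambda$ kills $x$, so $y \otimes 1$ lies in the image of $H_2(W;\mathcal{K}\Lambda) \to H_2(W,\partial W;\mathcal{K}\Lambda)$. The rank equality of part (1) shows that classes in $H_2(W;\mathcal{R})$ surject onto the non-torsion quotient of $H_2(W,\partial W;\mathcal{R})$ modulo the image from $\partial W$, so by subtracting the image of a suitable class from $y$ we obtain a torsion representative $y' \in TH_2(W,\partial W;\mathcal{R})$ with $\partial y' = x$, proving exactness on torsion.

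The main obstacle is the rank bookkeeping in part (1): one must identify the $\mathcal{K}\Lambda$-rank contribution of each boundary component $M_i$ to the image $I$, which is precisely where the hypothesis $\rank_{\Z\Lambda} H_1(M_i;\Z\Lambda) = \beta_1(M_i)-1$ enters. As remarked before the lemma, this hypothesis is automatic whenever $\beta_1(M_i) = 1$ by \cite[Proposition 2.11]{COT}, which holds in all applications of the lemma in this paper. Once the rank equality of part (1) is in hand, part (2) is a formal consequence, and the whole argument is the verbatim analogue of \cite[Lemmas 5.9 and 5.10]{CHL3}.
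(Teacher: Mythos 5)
Your approach is the same as the paper's: both observe that the single new ingredient over \cite[Lemma~5.10]{CHL3} is that the bordism hypothesis $\pi_1(L_i),\pi_1(D_i)\subset\pi_1(W)^{(k)}_{\mathcal{P}}$ together with $\phi(\pi_1(W)^{(k)}_{\mathcal{P}})=1$ forces $\phi(\pi_1(L_i))=\phi(\pi_1(D_i))=1$, so the basis surfaces still generate a Lagrangian with $\Q\Lambda$ or $\mathcal{R}$ coefficients, after which the CHL3 argument transfers verbatim. One small caution on your added sketch: in part (2) the final step of adjusting a preimage $y$ by a class from $H_2(W;\mathcal{R})$ to reach a torsion representative is not a formal consequence of the rank equality of part (1) alone (a rank count does not, in general, show that $H_2(W;\mathcal{R})$ surjects onto the torsion-free quotient of $H_2(W,\partial W;\mathcal{R})$); it relies on the duality and intersection-form analysis carried out in CHL3, which is exactly what you and the paper are deferring to.
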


\begin{proof} The proof is (verbatim) identical to that of the corresponding result for $n$-bordisms \cite[Lemma~5.10]{CHL3}, with the rational derived series being replaced by the arbitrary functorial commutator series. The key point is that if $L$ is, say, a surface that is part of a Lagrangian from Definition~\ref{def:Gnsolvable} for which $\pi_1(L)\subset \pi_1(W^{(k)}_\mathcal{P})$ then $\phi(\pi_1(L))=0$ so $L$ can be part of a Lagrangian with $\Q\Lambda$ or $\mathcal{R}$ coefficients.
\end{proof}

We also have  a partial generalization of our Theorem~\ref{thm:generalsignaturesobstruct} and a generalization of ~\cite[Theorem 5.9]{CHL3}. Once again the rank hypothesis is always satisfied if $\beta_1(M_i)=1$, which will always be the case in the present paper. Once again the proof is identical to the one for ~\cite[Theorem 5.9]{CHL3}.

\begin{thm}\label{thm:sigvanishesbordism} Suppose $\mathcal{P}$ is an arbitrary commutator series,  $W$ is an $(n+1,\mathcal{P})$-bordism and
$\phi:\pi_1(W)\ra\G$ is a non-trivial coefficient system where
$\G$ is a PTFA group and $\phi(\pi_1(W^{(n+1)}_\mathcal{P}))=1$. Suppose for each component $M_i$ of $\partial W$ for which $\phi$ restricted to $\pi_1(M_i)$ is nontrivial, that $\text{rank}_{\mathbb{Z}\Lambda}H_1(M_i;\mathbb{Z}\G)=\beta_1(M_i)-1$. Then
$$
\rho(\partial W,\phi)= 0.
$$
\end{thm}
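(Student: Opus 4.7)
The plan is to exploit the standard signature-defect characterization of the $\rho$-invariant: since $W$ is a compact $4$-manifold bounding $\partial W$ and $\phi$ extends over $\pi_1(W)$, we have
$$
\rho(\partial W,\phi)\;=\;\sigma^{(2)}_\G(W,\phi)-\sigma(W),
$$
so it suffices to show that both $\sigma(W)$ and $\sigma^{(2)}_\G(W,\phi)$ vanish. The argument will parallel that of Theorem~\ref{thm:generalsignaturesobstruct}, except that $\partial W$ here may be disconnected and $H_1(\partial W)\to H_1(W)$ need not be an isomorphism, forcing us to work systematically modulo the image of $H_2(\partial W)$.

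For the ordinary signature, $j_*H_2(\partial W;\mathbb{Q})$ lies in the radical of the intersection form on $H_2(W;\mathbb{Q})$, so $\sigma(W)$ equals the signature of the induced form on the quotient $H_2(W;\mathbb{Q})/j_*H_2(\partial W;\mathbb{Q})$. By condition (2) of Definition~\ref{def:npbordism}, that quotient admits a basis $\{L_i,D_i\}_{i=1}^{r}$ whose intersection matrix is a direct sum of hyperbolic pairs $\left(\begin{smallmatrix}0 & 1\\ 1 & 0\end{smallmatrix}\right)$, each of signature zero; hence $\sigma(W)=0$.

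For the $L^{(2)}$-signature, I plan to produce an equivariant Lagrangian. Because $W$ is an $(n+1,\mathcal{P})$-bordism and $\phi(\pi_1(W)^{(n+1)}_\mathcal{P})=1$, condition (3) of Definition~\ref{def:npbordism} gives $\pi_1(L_i),\pi_1(D_i)\subset\ker\phi$, so every $L_i$ and $D_i$ lifts to a disjointly embedded surface in the $\phi$-cover $\widetilde{W}$; together with their $\G$-translates they remain pairwise disjoint. Let $I$ denote the image of $H_2(\partial W;\mathcal{K}\G)$ in $H_2(W;\mathcal{K}\G)$, which lies in the radical of the equivariant intersection form, so the form descends to $H_2(W;\mathcal{K}\G)/I$. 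The $\mathcal{K}\G$-submodule $\mathcal{L}$ of $H_2(W;\mathcal{K}\G)/I$ generated by $\{[L_i]\}$ is then isotropic. Applying Lemma~\ref{lem:exact}(1) with $\mathcal{R}=\mathcal{K}\G$ (which is where the rank hypothesis on $H_1(M_i;\mathbb{Z}\G)$ enters) yields $\rank_{\mathcal{K}\G}\bigl(H_2(W;\mathcal{K}\G)/I\bigr)=2r$. The lifted $D_i$ provide dual classes with equivariant intersections $[L_i]\cdot[D_j]=\delta_{ij}$, forcing $\{[L_i]\}$ to be $\mathcal{K}\G$-linearly independent modulo $I$; hence $\mathcal{L}$ has rank exactly $r$ and is Lagrangian. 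The standard vanishing of the $L^{(2)}$-signature of a Hermitian form admitting a Lagrangian then gives $\sigma^{(2)}_\G(W,\phi)=0$.

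The main technical point, and the chief obstacle, is Lemma~\ref{lem:exact}(1): without it we would not control the $\mathcal{K}\G$-rank of $H_2(W;\mathcal{K}\G)/I$ and the Lagrangian count would collapse. Its proof carries over verbatim from the rational-derived-series case, because the only features of $\mathcal{P}$ that it uses are that $G/G^{(n+1)}_\mathcal{P}$ is PTFA and that the surfaces supporting the Lagrangian lie in $\ker\phi$. Once this rank control is in hand, the remaining pairing and Lagrangian argument is formal and identical to \cite[Theorem~5.9]{CHL3}.
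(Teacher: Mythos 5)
Your proof is correct and follows the same approach the paper intends: reduce to the signature-defect formula, kill $\sigma(W)$ via the hyperbolic basis of $H_2(W)/j_*H_2(\partial W)$, and kill $\sigma^{(2)}_\G$ by exhibiting the submodule spanned by the lifted $L_i$ as a Lagrangian, with Lemma~\ref{lem:exact}(1) supplying the rank control on $H_2(W;\mathcal{K}\G)/I$ that makes ``isotropic of rank $r$'' into ``Lagrangian.'' The paper explicitly defers to this same argument (``identical to \cite[Theorem~5.9]{CHL3}''), so you have correctly identified both the mechanism and the one technical input that has to be re-verified for an arbitrary commutator series.
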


Higher-order Alexander modules and higher-order linking forms for classical knot exteriors and for closed $3$-manifolds with $\beta_1(M)=1$ were introduced in ~\cite[Theorem 2.13]{COT} and further developed in ~\cite{C} and ~\cite{Lei1}. These were defined on the so called higher-order Alexander modules $TH_1(M;\mathcal{R})$. These modules are not necessarily of homological dimension one.

\begin{thm}\label{thm:blanchfieldexist}~\cite[Theorem 2.13]{COT} Suppose $M$ is a closed, connected, oriented $3$-manifold with $\beta_1(M)=1$ and $\phi:\pi_1(M)\to \Lambda$ is a PTFA coefficient system. Suppose $\mathcal{R}=(\mathbb{Z}\Lambda)S^{-1}$ is an Ore localization where $S$ is closed under the natural involution on $\mathbb{Z}\Lambda$. Then there is a (possibly singular) linking form:
$$
\mathcal{B}l^M_{\mathcal{R}}: TH_1(M;\mathcal{R})\to (TH_1(M;\mathcal{R}))^{\#}\equiv \overline{\Hom _{\mathcal{R}}(TH_1(M;\mathcal{R}), \mathcal{K}\Lambda/\mathcal{R})}.
$$
\end{thm}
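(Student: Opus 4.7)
The plan is to construct $\mathcal{B}l^M_{\mathcal{R}}$ by the standard three-step machine used for classical and higher-order Blanchfield forms, adapted to the Ore-localized noncommutative setting. The involution-closedness of $S$ is what makes $\mathcal{R}$ inherit the natural involution from $\mathbb{Z}\Lambda$; without this, the bar-construction needed to view $\Hom$-groups as right $\mathcal{R}$-modules (and Poincar\'e duality with $\mathcal{R}$-coefficients) would not make sense. I will take this as the basic setup.

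First, I would exploit the short exact sequence of coefficient modules
\[
0\to \mathcal{R}\lra \mathcal{K}\Lambda\lra \mathcal{K}\Lambda/\mathcal{R}\to 0
\]
to obtain the associated Bockstein long exact sequence
\[
H_2(M;\mathcal{K}\Lambda)\lra H_2(M;\mathcal{K}\Lambda/\mathcal{R})\overset{B}{\lra} H_1(M;\mathcal{R})\lra H_1(M;\mathcal{K}\Lambda).
\]
Since $\mathcal{K}\Lambda$ is a skew field, every $H_*(M;\mathcal{K}\Lambda)$ is a free (and hence $\mathcal{R}$-torsion-free) $\mathcal{K}\Lambda$-module. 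A diagram chase then shows that $TH_1(M;\mathcal{R})$ lies in the image of $B$, and in fact the restriction $B\colon B^{-1}(TH_1(M;\mathcal{R}))\onto TH_1(M;\mathcal{R})$ is surjective.

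Second, I would apply Poincar\'e--Lefschetz duality for the closed oriented $3$-manifold $M$ with twisted coefficients,
\[
H_2(M;\mathcal{K}\Lambda/\mathcal{R})\;\cong\; H^1(M;\mathcal{K}\Lambda/\mathcal{R}),
\]
where the right-hand side is regarded as a right $\mathcal{R}$-module via the involution on $\mathcal{R}$ (this is exactly where closure of $S$ under the involution enters). Then a universal-coefficient style map gives a natural homomorphism
\[
\kappa\colon H^1(M;\mathcal{K}\Lambda/\mathcal{R})\lra \overline{\Hom_{\mathcal{R}}(H_1(M;\mathcal{R}),\mathcal{K}\Lambda/\mathcal{R})},
\]
obtained by evaluation on cycles. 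Restricting codomain to $TH_1(M;\mathcal{R})$ and composing with the duality isomorphism gives a map $\Psi\colon H_2(M;\mathcal{K}\Lambda/\mathcal{R})\to (TH_1(M;\mathcal{R}))^{\#}$.

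Finally, the linking form is defined by
\[
\mathcal{B}l^M_{\mathcal{R}}(x)\;=\;\Psi(y),\qquad \text{where }B(y)=x.
\]
Well-definedness requires showing that $\Psi$ vanishes on $\ker B$; this is a routine diagram chase using the long exact sequence together with the fact that classes coming from $H_2(M;\mathcal{K}\Lambda)$ evaluate into $\mathcal{K}\Lambda\subset\mathcal{K}\Lambda/\mathcal{R}$ and hence evaluate to zero on torsion elements. Symmetry (Hermitian property) of the form follows from the symmetry of the cup product together with the interaction of the involution and Poincar\'e duality, exactly as in the classical proof.

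The main subtlety, and the reason for the parenthetical ``possibly singular'' in the statement, is that in this noncommutative generality the universal-coefficient map $\kappa$ need not be an isomorphism: there can be an $\Ext^1_{\mathcal{R}}$-obstruction because $\mathcal{R}$ is not required to be a PID (in contrast to the classical case $\mathcal{R}=\Q[t,t^{-1}]$ or Harvey's skew Laurent ring where such a rank-one property is available, as in Example~\ref{ex:commseriesHarvey}). Consequently $\mathcal{B}l^M_{\mathcal{R}}$ is defined but may fail to be non-singular. Every other step of the construction is purely formal and goes through as in the derivations in \cite{COT,C,Lei1}.
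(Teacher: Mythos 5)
The paper does not give its own proof of this theorem; it is quoted verbatim from \cite[Theorem 2.13]{COT}, and the only comment the authors add is why $S$ must be closed under the involution. Your sketch is a reasonable reconstruction of the COT argument and matches the description the paper itself gives when re-using this construction in the proof of Lemma~\ref{lem:non-singular}: the form is the composite of Poincar\'e duality, the (inverse of the) Bockstein for the short exact sequence $0\to\mathcal{R}\to\mathcal{K}\Lambda\to\mathcal{K}\Lambda/\mathcal{R}\to 0$, and the Kronecker evaluation $\kappa$. The one organizational difference is that you run the Bockstein in homology, lift $TH_1(M;\mathcal{R})$ back through $B$, and check well-definedness of the lift, whereas COT run the inverse Bockstein in cohomology as a genuine isomorphism onto the torsion subgroup; the two routes agree. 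Your diagnosis that ``possibly singular'' means $\kappa$ may fail to be an isomorphism (an $\Ext^1_{\mathcal{R}}$ obstruction that vanishes in the PID case) is correct, and it is exactly the obstruction that Lemma~\ref{lem:non-singular} eliminates by imposing the additional hypothesis that the coefficient system factors through $\mathbb{Z}$.
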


One needs $S$ closed under the involution in order that $\Q\La\hookrightarrow\mathcal{R}$ be a map of rings with involution. When $\beta_1(M)=1$, as long as the coefficient system is non-trivial, $H_1(M;\mathcal{R})$ is a torsion module and hence $TH_1(M;\mathcal{R})=H_1(M;\mathcal{R})$ ~\cite[Proposition 2.11]{COT}.

From the above follows another generalization of key results of ~\cite[Theorem 4.4]{COT}~\cite[Theorem 6.3]{CHL3} concerning solvability and null-bordism whose generalization to null-$\mathcal{P}$-bordism will be a crucial new ingredient in our proofs. Again, the rank hypothesis is automatically satisfied if $\beta_1(M_i)=1$. The proof is identical to that of ~\cite[Theorem 6.3]{CHL3}.

\begin{thm}\label{thm:selfannihil} Suppose $\mathcal{P}$ is an arbitrary commutator series, $W$ is a $(k,\mathcal{P})$-bordism and
$\psi:\pi_1(W)\ra\Lambda$ is a non-trivial coefficient system where
$\Lambda$ is a PTFA group and $\psi(\pi_1(W)^{(k)}_\mathcal{P})=1$. Let $\mathcal{R}=(\mathbb{Q}\Lambda)S^{-1}$ be an Ore localization where $S$ is closed under involution. Suppose that, for each component $M_i$ of $\partial W$ for which $\psi$ restricted to $\pi_1(M_i)$ is nontrivial, that $\text{rank}_{\mathbb{Z}\Lambda}H_1(M_i;\mathbb{Z}\Lambda)$ is $\beta_1(M_i)-1$. If $P$ is the kernel of the inclusion-induced map
$$
TH_1(\partial W;\mathcal{R})\xrightarrow{j_{\ast}} TH_1(W;\mathcal{R}),
$$
then $P\subset P^\perp$ with respect to the Blanchfield form on $TH_1(\partial W;\mathcal{R})$.
\end{thm}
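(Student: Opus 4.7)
The plan is to transcribe the proof of \cite[Theorem 6.3]{CHL3} essentially verbatim, with the rational derived series $G^{(n)}_r$ replaced by the terms $G^{(n)}_\mathcal{P}$ of the commutator series. The argument goes through unchanged because (i) Lemma~\ref{lem:exact} supplies the exact sequence needed to identify $P$ with the image of $\partial$, and (ii) the commutator series enters only through the hypothesis $\psi(\pi_1(W)^{(k)}_\mathcal{P})=1$, which plays the same role as the analogous derived-series hypothesis in \cite{CHL3}; no functoriality of $\mathcal{P}$ is required for this particular argument.

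The key steps are as follows. First, for $x,y\in P$, Lemma~\ref{lem:exact} lets us write $x=\partial a$, $y=\partial b$ with $a,b\in TH_2(W,\partial W;\mathcal{R})$. Because $S$ is closed under the involution, $\mathcal{R}$ inherits a ring-with-involution structure from $\mathbb{Z}\Lambda$, so Poincar\'e--Lefschetz duality is available with $\mathcal{R}$ and $\mathcal{K}\Lambda$ coefficients. Via the standard Bockstein-and-intersection description associated to the short exact sequence $0\to\mathcal{R}\to\mathcal{K}\Lambda\to\mathcal{K}\Lambda/\mathcal{R}\to 0$, the pairing $\mathcal{B}\ell^{\partial W}_\mathcal{R}(x,y)$ is rewritten as a twisted equivariant intersection number computed in $W$ between $a$ and a $\mathcal{K}\Lambda/\mathcal{R}$-coefficient lift of $b$.

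Second, the $(k,\mathcal{P})$-bordism structure provides the half-rank Lagrangian input. Since $\pi_1(L_i),\pi_1(D_i)\subset \pi_1(W)^{(k)}_\mathcal{P}\subset \ker\psi$, the surfaces $\{L_i,D_i\}$ lift to the $\Lambda$-cover, producing classes $\tilde L_i,\tilde D_i\in H_2(W;\mathcal{R})$. These are disjointly embedded away from the single geometric intersection of $L_i$ with $D_i$, so the twisted intersection pairing $\lambda$ satisfies $\lambda(\tilde L_i,\tilde L_j)=0$ and $\lambda(\tilde L_i,\tilde D_j)=\pm\delta_{ij}$. By the rank equality in part (1) of Lemma~\ref{lem:exact}, these lifts together with $j_*H_2(\partial W;\mathcal{R})$ account for the full $\mathcal{K}\Lambda$-rank of $H_2(W;\mathcal{R})$, exhibiting the intersection form on $H_2(W;\mathcal{R})/I$ as hyperbolic with $\langle\tilde L_i\rangle$ a Lagrangian summand. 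Substituting this Lagrangian description into the intersection expression from the previous step forces $\mathcal{B}\ell^{\partial W}_\mathcal{R}(x,y)=0$.

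The main technical obstacle lies in the diagram chase of the first step: one must verify that the Bockstein/duality reformulation of the Blanchfield form sends elements of the torsion submodule $TH_1(\partial W;\mathcal{R})$ to intersection data that really is controlled by the rank-computation of Lemma~\ref{lem:exact}, i.e.\ that the rank equality on the torsion-free quotient translates into vanishing of the relevant $\mathcal{K}\Lambda/\mathcal{R}$-valued pairings on the torsion. Fortunately this verification - carried out in detail in \cite[Sections 5--6]{CHL3} and earlier in \cite[Theorem 4.4]{COT} - is formal in the ambient commutator series: the only property it uses is $\pi_1(W)^{(k)}_\mathcal{P}\subset \ker\psi$, so it transfers word-for-word to the present setting.
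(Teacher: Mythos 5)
Your top-level plan agrees with the paper: the paper's entire proof is the single sentence that the argument of \cite[Theorem 6.3]{CHL3} carries over verbatim once the rational derived series is replaced by $\{G^{(k)}_\mathcal{P}\}$, with Lemma~\ref{lem:exact} supplying the needed exact sequence and involution; your last paragraph says essentially this. Where your sketch goes astray is Step 2. The vanishing of $\mathcal{B}\ell^{\partial W}_\mathcal{R}(x,y)$ is \emph{not} obtained by ``substituting the Lagrangian description into an intersection expression.'' The classes $a,b\in TH_2(W,\partial W;\mathcal{R})$ are torsion relative classes which have no reason to lie in, or to pair trivially against, the subspace $\langle\tilde L_i\rangle\subset H_2(W;\mathcal{R})$, so the hyperbolic structure on $H_2(W;\mathcal{R})/I$ gives you no direct control over them; as written, ``forces $\mathcal{B}\ell^{\partial W}_\mathcal{R}(x,y)=0$'' is a genuine gap. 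The mechanism used in \cite[Theorem 4.4]{COT} and \cite[Theorem 6.3]{CHL3} is purely a naturality argument: Poincar\'e--Lefschetz duality, the Bockstein of $0\to\mathcal{R}\to\mathcal{K}\Lambda\to\mathcal{K}\Lambda/\mathcal{R}\to 0$, and the Kronecker evaluation assemble into a square commuting with respect to $\partial W\hookrightarrow W$, so that for $x=\partial a$ one has
$$
\mathcal{B}\ell^{\partial W}_\mathcal{R}(x,y)=\beta_{\mathrm{rel}}(a)\bigl(j_*(y)\bigr),
$$
for a relative pairing $\beta_{\mathrm{rel}}$ on $TH_2(W,\partial W;\mathcal{R})$; since $y\in P=\ker j_*$, the right-hand side is zero and the proof terminates (you do not even need to write $y=\partial b$). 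The Lagrangian surfaces $\tilde L_i, \tilde D_i$ and the rank count of part (1) enter only \emph{inside the proof} of Lemma~\ref{lem:exact}(2), where they ensure that the torsion boundary map $\partial$ surjects onto $\ker j_*$; once that lemma is quoted as a black box, the hyperbolic structure plays no further role in the argument.
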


In many important situations (including the ones with which we will be here concerned), we will be considering $M_K$, the zero surgery on a knot $K$,  endowed with a coefficient system $\phi:\pi_1(M_K)\to \Lambda$ that factors non-trivially through $\mathbb{Z}$, the abelianization. In this special case the higher-order Alexander module of $M_K$ and the higher-order Blanchfield form $\mathcal{B}l^K_{\mathcal{R}}$ are merely the classical Blanchfield form on the classical Alexander module, ``tensored up''. What is meant by this is the following. Since $\phi$ is both nontrivial and factors through the abelianization, the induced map $\text{image}(\phi)\equiv\mathbb{Z}\hookrightarrow \Lambda$ is an embedding so it induces embeddings
$$
\phi:\mathbb{Q}[t,t^{-1}]\hookrightarrow \mathbb{Q}\Lambda\hookrightarrow (\mathbb{Q}\Lambda)S^{-1},~\text{and}~~ \phi:\mathbb{Q}(t)\hookrightarrow \mathcal{K}\Lambda.
$$
Since $\Z\subset \La$, $\Q \La$ is a free and hence flat left $\Q[t,t^{-1}]$-module. Any Ore localization, $(\Q \La)S^{-1}$, is a flat left $\Q \La$-module ~\cite[Proposition II.3.5]{Ste}. Consequently, the homology modules of $M_K$ are simply the classical homology modules of $M_K$, ``tensored up'':
$$
H_*(M_K;\mathcal{R})\cong H_1(M_K;\mathbb{Q}\La)\otimes_{\mathbb{Q}\La} \mathcal{R} \cong H_*(M_K;\mathbb{Q}[t,t^{-1}])\otimes_{\mathbb{Q}[t,t^{-1}]}\mathcal{R}.
$$
Since $\Q [t,t^{-1}]$ is a PID, each of the classical modules $H_*(M_K;\mathbb{Q}[t,t^{-1}])$ is finitely generated and has homological dimension $1$. Moreover each such module has a finite presentation. Since tensor product with a flat module is an exact functor, the above remarks (on flatness) imply that each of the modules $H_*(M_K;\mathcal{R})$ has a finite presentation and hence is finitely generated and has homological dimension $1$. In particular the higher-order Alexander module, $H_1(M_K;\mathcal{R})$, decomposes as
\begin{equation}\label{eq:decompmodule}
H_1(M_K;\mathcal{R})\cong \mathcal{A}(K)\otimes_{\mathbb{Q}[t,t^{-1}]}(\mathbb{Q}\Lambda)S^{-1},
\end{equation}
where $(\mathbb{Q}\Lambda)S^{-1}$ is a $\mathbb{Q}[t,t^{-1}]$-module via the map $t\to \phi(\mu_K)$ ~\cite[Theorem 8.2]{C}.

Moreover the higher-order Blanchfield form on $H_1(M_K;\mathcal{R})$ (see Theorem~\ref{thm:blanchfieldexist}) has a corresponding decomposition:
\begin{equation}\label{eq:tensorup}
\mathcal{B}l_{\mathcal{R}}^K(x\otimes 1,y\otimes 1)=\ov\phi(\mathcal{B}l^K_0(x,y))
\end{equation}
for any $x,y\in \mathcal{A}(K)$, where $\mathcal{B}l_0^K$ is the localized classical Blanchfield form on the Alexander module of $K$ ~\cite[Proposition 3.6]{Lei3}~\cite[Theorem 4.7]{Lei1} (see also ~\cite[Section 5.2.2]{Cha2}), where
$$
\ov{\phi}:\mathbb{Q}(t)/\mathbb{Q}[t,t^{-1}]\longrightarrow \mathcal{K}\Lambda/\mathbb{Q}\Lambda S^{-1},
$$
is induced by $\phi$ (beware: $\ov{\phi}$ is not, in general, injective).

We will need the following, which relies crucially on the coefficient system factoring through $\Z$. Both parts of its conclusion are false without this assumption.

\begin{lem}\label{lem:non-singular} Suppose $K$ is a knot and $\phi:\pi_1(M_K)\to \Lambda$ is a PTFA coefficient system that factors nontrivially through $\Z$. Suppose $\mathcal{R}=(\mathbb{Z}\Lambda)S^{-1}$ is an Ore localization  where $S$ is closed under the natural involution on $\mathbb{Z}\Lambda$. Then the linking form $\mathcal{B}\ell^K_\mathcal{R}$ of Theorem~\ref{thm:blanchfieldexist} is a non-singular linking form on a finitely generated  module of homological dimension one (indeed the module has a square presentation matrix).
\end{lem}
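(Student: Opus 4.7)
The argument hinges on (\ref{eq:decompmodule}) and (\ref{eq:tensorup}), which say that when $\phi$ factors non-trivially through $\Z$, the higher-order Alexander module and Blanchfield form of $M_K$ are obtained from the classical ones by tensoring up along the flat base change $\Q[t,t^{-1}] \hookrightarrow \mathcal{R}$. Both conclusions of the lemma will transfer from the classical setting under this base change.

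First I would establish the structural claim. The classical Alexander module admits a square Hermitian presentation $0\to \Q[t,t^{-1}]^{2g} \xrightarrow{tV-V^T} \Q[t,t^{-1}]^{2g} \to \mathcal{A}(K)\to 0$ coming from a Seifert matrix $V$, with $\det(tV-V^T)\doteq\Delta_K(t)$. I claim $\mathcal{R}$ is flat as a left $\Q[t,t^{-1}]$-module: since $\phi$ factors non-trivially through $\Z$, $\phi(\mu_K)$ generates an infinite cyclic subgroup of $\Lambda$, whence $\Q\Lambda$ is free---and hence flat---as a left $\Q[t,t^{-1}]$-module on any set of coset representatives (\cite[Ch.~1, Lemma~1.3]{P}); composing with flatness of the Ore localization $\Q\Lambda \hookrightarrow (\Q\Lambda)S^{-1}$ (\cite[Prop.~II.3.5]{Ste}) gives the desired flatness. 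Applying $-\otimes_{\Q[t,t^{-1}]}\mathcal{R}$ to the presentation yields, via (\ref{eq:decompmodule}), a square presentation for $H_1(M_K;\mathcal{R})$, whose determinant is the image of $\Delta_K(t)$, namely $\Delta_K(\phi(\mu_K))\in \Q\Lambda$. This element is non-zero because $\phi$ embeds $\Q[t,t^{-1}]$ as a subring of the domain $\Q\Lambda$, and $\Delta_K(t)\neq 0$. Finite generation and homological dimension at most one are immediate.

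For non-singularity, formula (\ref{eq:tensorup}) together with the classical formula for the Blanchfield form gives (up to standard sign conventions)
\[
\mathcal{B}\ell^K_\mathcal{R}([x],[y]) \;=\; \bar x^{T}\bigl(\phi(t)V-V^{T}\bigr)^{-1}\bigl(\phi(t)-1\bigr)\,y \pmod{\mathcal{R}},
\]
where the inverse is taken in the skew field $\mathcal{K}\Lambda$---this is legitimate because the non-zero determinant $\Delta_K(\phi(\mu_K))$ becomes a unit in $\mathcal{K}\Lambda$. Non-singularity then follows from a direct matrix argument: denoting the square presentation matrix by $M$, if the adjoint of $[x]$ vanishes then $\bar x^{T}M^{-1}\in \mathcal{R}^{2g}$, so $\bar x$ lies in the row image of $M$ and $[x]=0$; surjectivity of the adjoint is dual.

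The technical obstacle I expect is verifying that formula (\ref{eq:tensorup}) really matches the abstract definition of $\mathcal{B}\ell^K_\mathcal{R}$ from Theorem~\ref{thm:blanchfieldexist}, which is built from Poincar\'{e} duality, a Bockstein associated to $0\to\mathcal{R}\to\mathcal{K}\Lambda\to\mathcal{K}\Lambda/\mathcal{R}\to 0$, and Kronecker evaluation. Each of these three ingredients must be shown to commute with the flat base change $\Q[t,t^{-1}]\hookrightarrow\mathcal{R}$, and this uses crucially that $\phi$ factors through $\Z$ (without that, $\Q[t,t^{-1}]$ does not even embed into $\mathcal{R}$ and (\ref{eq:tensorup}) is meaningless, explaining the remark that both conclusions can fail in general). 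This naturality statement is the content of \cite[Prop.~3.6]{Lei3} and \cite[Thm.~4.7]{Lei1} and may be cited directly.
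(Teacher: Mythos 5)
Your structural claims (finite generation, homological dimension one, square presentation matrix via flat base change from the classical Seifert-matrix presentation) are correct and match the paper's setup; the paper dispatches the square-presentation claim in exactly this one line. The problem is your argument for non-singularity.

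Your matrix argument drops the factor $(\phi(t)-1)$ silently. If the adjoint of $[x]$ vanishes, what you actually obtain is $\bar x^{T}M^{-1}(\phi(t)-1)\in\mathcal{R}^{2g}$, not $\bar x^{T}M^{-1}\in\mathcal{R}^{2g}$. Since $(\phi(t)-1)$ is not a unit of $\mathcal{R}$ (indeed none of the divisor sets $S$ in this paper are permitted to contain $t-1$, because $q(1)\neq 0$ is required throughout, and the lemma's hypothesis on $S$ imposes no such condition either), you cannot cancel it. Classically one gets around this by arguing that $(t-1)$ is coprime to $\Delta_K(t)$ in the UFD $\Q[t,t^{-1}]$; over the non-commutative domain $\mathcal{R}$ neither ``coprime'' nor ``row image of $M$ equals torsion'' is available without further work. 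Surjectivity of the adjoint is also simply asserted to be ``dual,'' which is not an argument over a non-commutative base. Finally, even granting formula~\eqref{eq:tensorup}, the map $\ov\phi:\Q(t)/\Q[t,t^{-1}]\to\mathcal{K}\La/\mathcal{R}$ is explicitly noted in the paper to be non-injective in general, so transferring non-degeneracy from the classical form by chasing a formula through $\ov\phi$ is not safe.

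The paper's proof is of a different nature and avoids all of this. It observes that $\mathcal{B}\ell^K_{\mathcal{R}}$ is by construction the composite of Poincar\'e duality, an inverse Bockstein, and the Kronecker evaluation $\kappa:H^1(M_K;\mathcal{K}\La/\mathcal{R})\to \overline{\Hom_{\mathcal{R}}(H_1(M_K;\mathcal{R}),\mathcal{K}\La/\mathcal{R})}$; the first two are isomorphisms by \cite[Theorem 2.13]{COT}. It then proves that $\kappa$ is an isomorphism by running the universal coefficient spectral sequence: because \emph{all} the modules $H_p(M_K;\mathcal{R})$ have homological dimension one (the flat base-change fact you already established), the $\Ext^q$ terms vanish for $q>1$, the spectral sequence collapses to a short exact sequence, and the potential kernel $\Ext^1_{\mathcal{R}}(H_0(M_K;\mathcal{R}),\mathcal{K}\La/\mathcal{R})$ is killed using the long exact $\Ext$-sequence of $0\to\mathcal{R}\to\mathcal{K}\La\to\mathcal{K}\La/\mathcal{R}\to 0$ together with injectivity of $\mathcal{K}\La$ as an $\mathcal{R}$-module and $\Ext^2_{\mathcal{R}}(H_0;\mathcal{R})=0$. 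This simultaneously yields injectivity and surjectivity of the adjoint and sidesteps both the $(\phi(t)-1)$ factor and the non-commutativity issues entirely. You should replace your non-singularity paragraph with this homological argument.
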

\begin{proof} The existence is guaranteed by Theorem~\ref{thm:blanchfieldexist}. We have shown in the preceding paragraphs that the module is finitely generated and has homological dimension $1$. The definition of the linking form is the composition of 3 maps: Poincar\'{e} duality with $\R$ coefficients, the inverse of a Bockstein homomorphism, and the Kronecker evalation map
$$
H^1(M_K;\K/\R)\overset{\kappa}{\to}\overline{\Hom _\R(H_1(M_K;\R),\K/\R)}.
$$
It was shown in ~\cite[Theorem 2.13]{COT} that the first two are isomorphisms. Thus we need only show that $\kappa$ is an isomorphism. There is a universal coefficient spectral sequence ~\cite[Theorem 2.3]{L4}
$$
E^{p,q}_2\cong Ext_\R^q(H_p(M_K;\R),\K/\R)\Rightarrow H^{*}(M_K;\K/\R),
$$
with differential $d^r$ of degree $(1-r,r)$. Since we have shown that the modules $H_p(M_K;\R)$ have homological dimension one, the Ext terms vanish for $q>1$. It follows that the spectral sequence collapses and the usual universal coefficient sequence holds:
$$
0\to Ext_\R^1(H_0(M_K;\R),\K/\R)\to H^{1}(M_K;\K/\R)\overset{\kappa}\to \Hom _\R(H_1(M_K;\R),\K/\R)\to 0.
$$
Associated to the short exact sequence of $\R$-modules
$$
0\to \R\to \K \to \K/\R \to 0
$$
is a long exact sequence of $Ext_\R^*(H_1(M_K;\R),-)$, yielding
$$
\to Ext_\R^1(H_0(M_K;\R),\K)\to Ext_\R^1(H_0(M_K;\R),\K/\R)\to Ext_\R^2(H_0(M_K;\R),\R)\to
$$
where the last term is zero as observed previously. But $\K$ is a torsion-free, divisible module over the Ore domain $\R$ and hence is an injective $\R$-module ~\cite[Propositions 3.8, 7.8]{Ste}. Thus $Ext_\R^1(H_0(M_K;\R),\K)=0$ and so $\kappa$ is an isomorphism.

$H_1(M_K;\R)$ has a square presentation matrix simply because $H_1(M_K;\Q[t,t^{-1}])$ does.
\end{proof}

\end{subsection}

\begin{subsection}{Return to inductive step in proof of Proposition~\ref{prop:familyofmickeys}}\label{subsect:inductivefamilyofmick}

Recall that we have already established Proposition~\ref{prop:familyofmickeys} in the base case, $i=n$, of a (downward) induction on $i$. Now assume that $W_i$, for some $i$, $1\leq i\leq n$, has been shown to satisfy properties ($1$) and ($3$) of Proposition~\ref{prop:familyofmickeys}. We will derive some further important properties of $W_i$ that will enable us to analyze $W_{i-1}$.

By construction $\partial W_i=M_{i}^1$ and by property $(3)$,  $W_i$ is an $(n,\mathcal{P})$-bordism and hence an $(n-i+1,\mathcal{P})$-bordism since $n-i+1\leq n$. Let $\pi=\pi_1(W_i)$, $\Lambda=\pi/\pi^{(n-i+1)}_\mathcal{P}$ and let $\psi:\pi\to \Lambda$ be the canonical surjection. Apply Theorem~\ref{thm:selfannihil} to $(W_i, \psi)$ with $k=n-i+1$. Property ($1$) for $W_i$ ensures that $\psi$ restricted to $\pi_1(M_{i}^1)$ is non-trivial and factors through the abelianization. Let $S=S_{p_k}(\pi)\subset \Q[\pi_\mathcal{P}^{(k-1)}/\pi_\mathcal{P}^{(k)}]$ as in Definition~\ref{def:defderivedlocalp}, unless $k=1$ in which case let $S=S^*_{p_1}$  as in Definition~\ref{def:Sp}; and let $\mathcal{R}=(\Q\La) S^{-1}$. Hence by Theorem~\ref{thm:selfannihil} and ~\eqref{eq:decompmodule}, the kernel $P'$ of the composition
$$
\mathcal{A}(K^1_i) \otimes_{\mathbb{Q}[t,t^{-1}]}\mathcal{R} \overset{\cong}{\to} H_1(M_i^1;\mathcal{R})\overset{j_*}\to H_1(W_i;\mathcal{R}).
$$
satisfies $P'\subset (P')^\perp$ with respect to the Blanchfield form $\mathcal{B}\ell^{K^1_i}_\mathcal{R}$. Recall that $K^1_{i}=R^i_{\alpha_i}(K_{i-1}^1)$  is obtained from $R^i_{\alpha_i}$ by an infection along a circle $\alpha_i$. By hypothesis this circle is a  generator of $\mathcal{A}(R^1_{\alpha_i})$. We will show that
\begin{equation}\label{eq:alphanotinP}
\alpha_i\otimes 1 \notin P'.
\end{equation}
To see this recall that
$$
\mathcal{A}(K^1_{i})\cong\mathcal{A}(R^i_{\alpha_i})\cong \frac{\Q[t,t^{-1}]}{\langle p_k(t) \rangle},
$$
since $k=n-i+1$ (see hypotheses of Theorem~\ref{thm:robust}).  This cyclic module is non-trivial since $p_k$ is not a unit, Thus
$$
\mathcal{A}(K^1_i) \otimes_{\mathbb{Q}[t,t^{-1}]}\Q \La\cong \frac{\Q\La}{p_k(\mu)\Q\La},
$$
is a non-trivial cyclic module generated by the image of $\alpha_i\otimes 1$, where here $\mu=\mu_{K^1_i}$, or more properly $\phi(\mu_{K^1_i})$, the class in $\La$ represented by the meridian of $K^1_{i}$ in $M^1_i$. By property ($1$) of the inductive assumption, $\mu$ is an element of infinite order in $\La$. Then
$$
H_1(M_i^1;\mathcal{R})\cong\mathcal{A}(K^1_i) \otimes_{\mathbb{Q}[t,t^{-1}]}\mathcal{R}= \frac{\Q\La}{p_k(\mu)\Q\La}S_{p_k}^{-1}\cong \frac{\mathcal{R}}{p_k(\mu)\mathcal{R}},
$$
a cyclic $\mathcal{R}$-module generated by the image of $\alpha_i\otimes 1$. If the generator $\alpha_i\otimes 1$ were to lie in $P'$ then, since $P'\subset (P')^\perp$, it would follow that $\mathcal{B}\ell^{K^1_i}_\mathcal{R}$ were identically zero. But by Lemma~\ref{lem:non-singular}, this linking form is nonsingular. This could only happen if $H_1(M_i^1;\mathcal{R})$ were the zero module. However, by the first part of Theorem~\ref{thm:noStorsion}, if $k\neq 1$ then
$$
\frac{\Q\La}{p_k(\mu)\Q\La}\hookrightarrow \frac{\Q\La}{p_k(\mu)\Q\La}S_{p_k}^{-1}
$$
is a monomorphism. If $k=1$ then $\La\cong \Z$ and, by Proposition~\ref{prop:noS*torsion},
$$
\frac{\Q\La}{p_1(\mu)\Q\La}\hookrightarrow \frac{\Q\La}{p_1(\mu)\Q\La}(S_{p_1}^*)^{-1}
$$
is a monomorphism (here we need that $p_1(t)\doteq p(t^{-1})$, which holds since $p_1$ is, by hypothesis, the order of the generator of the classical Alexander module of a knot). In particular, in either case, if $H_1(M_i^1;\mathcal{R})$ were the zero module, then it would follow that
$$
\frac{\Q\La}{p_k(\mu)\Q\La}=0.
$$
This would force $p_k(\mu)$ to be a unit in $\Q\La$. If there were $x\in \Q\La$ such that $p_k(\mu)x=1$, then, using the fact that $\Q\La$ is a free $\Q[\Z]=\Q[\mu,\mu^{-1}]$-module (here we use that $\mu$ is of infinite order in $\La$), we could decompose over the cosets of $\Z$ to get $p_k(\mu)x_e=1$ for some $x_e\in \Q[\mu,\mu^{-1}]$. This is not possible since $p_k$ is the Alexander polynomial of a robust knot so in particular $p_k(t)$ is not a unit. Hence
\begin{equation}\label{eq:nontrivialH1}
H_1(M_i^1;\mathcal{R})\neq 0.
\end{equation}
This contradiction establishes ~\eqref{eq:alphanotinP}.

Now we translate the homological data of ~\eqref{eq:alphanotinP} into a statement in $\pi_1$, namely we claim:
\begin{equation}\label{eq:alphanontrivial}
\alpha_i\in \pi^{(k)} ~\text{but} ~\alpha_i\notin\pi_\mathcal{P}^{(k+1)}.
\end{equation}
To establish this claim, first note that, by property ($1$) of Proposition~\ref{prop:familyofmickeys} for $W_i$, we have
$$
\pi_1(M^1_{i})^{(1)}\subset \pi^{(n-i+1)}=\pi^{(k)}.
$$
Since $\alpha_i \in \pi_1(M^1_{i})^{(1)}$,  $\alpha_i\in \pi^{(k)}$, which establishes the first part of claim ~\eqref{eq:alphanontrivial}. Recall that $H_1(W_i;\mathbb{Q}\Lambda)$ is identifiable as the ordinary rational homology of the covering space of $W$ whose fundamental group is the kernel of $\phi:\pi\to \Lambda$. Since this kernel is precisely $\pi^{(k)}_\mathcal{P}$ where $k=n-i+1$, we have that
$$
H_1(W_i;\mathbb{Q}\Lambda)\cong (\pi^{(k)}_\mathcal{P}/[\pi^{(k)}_\mathcal{P},\pi^{(k)}_\mathcal{P}])\otimes_\mathbb{Z} \mathbb{Q},
$$
and so
$$
H_1(W_i;\mathcal{R})\cong (\pi^{(k)}_\mathcal{P}/[\pi^{(k)}_\mathcal{},\pi^{(k)}_\mathcal{P}])\otimes_{\mathbb{Z}\La}\R
$$

Now consider the commutative diagram below.  The vertical map $j$ is injective by the definition of the commutator series given in \eqref{eq:defserieslocSn}.

$$
\begin{diagram}\label{diagram-pirelatehomology}\dgARROWLENGTH=1.2em
\node{\pi_1(M^1_{i})^{(1)}}\arrow[2]{e,t}{j_*}\arrow{s,r}{\pi}\node[2]{\pi^{(k)}}\arrow{e,t}{\phi}\arrow{s,r}{} \node{\pi^{(k)}_\mathcal{P}/\pi^{(k+1)}_\mathcal{P}}\arrow{s,r}{j} \\
  \node{\mathcal{A}(K^1_{i})}\arrow{e,t}{id\otimes 1}\node{ H_1(M^1_{i};\mathcal{R})}\arrow{e,t}{j_*}\node{H_1(W_i;\R)}\arrow{e,t}
{\cong}\node{(\pi^{(k)}_\mathcal{P}/[\pi^{(k)}_\mathcal{P},\pi^{(k)}_\mathcal{P}])\otimes (\Q\Lambda)S^{-1}_{p_k}}
\end{diagram}
$$
Since, by ~\eqref{eq:alphanotinP}, $\alpha_i$ is not in the kernel of the composition in the bottom row, $\alpha_i$ is not in the kernel of the composition in the top row. Hence $\alpha_i\notin \pi^{(k+1)}_\mathcal{P}$. This establishes the second part of claim ~\eqref{eq:alphanontrivial}.

Now let $P$ be the kernel of the bottom composition of the diagram above. This composition is a map of $\Q[t,t^{-1}]$-modules so certainly $P$ is a submodule. It is a proper submodule by ~\eqref{eq:alphanotinP}. Since $\mathcal{A}(R^i_{\alpha_i})$ is, by choice, a cyclic module whose order is a product of two primes $\delta(t)\delta(t^{-1})$, it has (at most) three proper submodules, $P_0=0$, $P_+=\langle\delta \rangle$ and $P_-=\langle\delta(t^{-1}) \rangle$. As previously observed, each of these submodules is isotropic for the Blanchfield form.

This establishes:
\begin{itemize}
\item [\textbf{Fact $1$}:]
The kernel, $\tilde{P}=\pi^{-1}(P)$, of the composition in the top row of the diagram above is of the form $\pi^{-1}(P)$ for some \emph{proper} submodule $P\subset \mathcal{A}(K^1_{i})$ that satisfies $P\subset P^\perp$ with respect to the classical Blanchfield form.  Note that
$$
j_*(\tilde{P})\subset \pi_\mathcal{P}^{(k+1)}.
$$
\end{itemize}

\noindent Note that the inclusions $M^1_i\hookrightarrow E_i$, and $M_{R^i}\hookrightarrow E_i$ induce an isomorphism $\mathcal{A}(K^1_{i})\cong \mathcal{A}(R^i_{\alpha_i})$. With respect to this identification we can view $P\subset \mathcal{A}(R^i_{\alpha_i})$, and we claim further that
\begin{itemize}
\item [\textbf{Fact $2$}:] If $i\geq 2$, then the $P\subset \mathcal{A}(R^i_{\alpha_i})$ that occurs in Fact $1$ corresponds to a ribbon disk for $R^i_{\alpha_i}$, and hence may be assumed to be $P_+=\langle \delta (t)\rangle$ for specificity (see Definition~\ref{def:robust}).
\end{itemize}

\begin{rem}\label{rem:whichS_i} In deciding which ribbon disk exterior $S_i$ to attach to $M_{R^i}$ to form $W_{i-1}$ from $W_i$, we must use one guaranteed by Fact $2$.
\end{rem}

 We now establish Fact $2$.  By property ($3$) of Proposition~\ref{prop:familyofmickeys} for $W_i$, $W_i$ is a $(k+1,\mathcal{P})$-bordism for $M_{i}^1$ since $k+1=n-i+2\leq n$. Now consider the coefficient system
$$
\psi:\pi\to \G=\pi/\pi_\mathcal{P}^{(k+1)}.
$$
and its restriction to $\pi_1(M_{i}^1)$ that we call $\phi$. Noting that $\psi(\pi^{(k+1)}_\mathcal{P})=1$, apply Theorem~\ref{thm:sigvanishesbordism} to $(W_i,\psi)$ to get that
$$
\rho(M_{i}^1,\phi)=0.
$$
Moreover by property $(1)$ for $W_i$, $j_*(\pi_1(M_{i}^1)^{(2)})\subset \pi_1(W_i)^{(k+1)}$. Therefore $\phi$ factors nontrivially through $\pi_1(M^1_{i})/\pi_1(M^1_{i})^{(2)}$. Moreover the map $\phi$ is determined by the composition in the top row of the above diagram, whose kernel is precisely $\tilde{P}$. So in fact by property $1$ of Proposition~\ref{prop:rhoprops},
$$
0=\rho(M_{i}^1,\phi)=\rho\left(M_{i}^1,\pi_1(M^1_{i})\to \frac{\pi_1(M^1_{i})}{\pi_1(M^1_{i})^{(2)}\tilde{P}}\right),
$$
Recall that $M_i^1$ is the zero surgery on the knot $K^1_i$. The previous equation implies that the first-order signature of $K^1_{i}$ corresponding to $P$, $\rho(K^1_{i},\phi_P)$, is zero (see Definition~\ref{defn:firstordersignatures}). But $K^1_{i}$ is obtained from the ribbon knot $R^i_{\alpha_i}$ by an infection along $\alpha_i$ using the knot $K^1_{i-1}$. Therefore (since $\phi(\alpha_i)\neq 1$ by ~\eqref{eq:alphanotinP}), using elementary additivity results for $\rho$ invariants ~\cite[Lemma 2.3]{CHL3} (more details on this computation are given in ~\cite[Example 4.3]{CHL4}),
$$
0=\rho(K^1_{i},\phi_P)=\rho(R^i_{\alpha_i},\phi_P))+\epsilon\rho_0(K^1_{i-1}).
$$
where $\epsilon\in \{0,1\}$. Since $i-1\geq 1$, $K^1_{i-1}=R^{i-1}_{\alpha_{i-1}}(K^1_{i-2})$, which lies in $\mathcal{F}_{1}$ by Proposition~\ref{prop:operatorsact}. Therefore $K^1_{i-1}$ is algebraically slice and so $\rho_0(K^1_{i-1})=0$. Hence the first-order signature
$$
\rho(R^i_{\alpha_i},\phi_P))=0.
$$
By definition of a robust operator this implies that $P$ corresponds to a ribbon disk for $R^i$. This finishes the verification of Fact $2$.

\vspace{.5in}

Finally we can recall the construction of $W_{i-1}$ and set about to establish the properties of Proposition~\ref{prop:familyofmickeys} for $W_{i-1}$. Refer to Figure~\ref{fig:cobordism1}. Recall that if $i=1$ then $W_{i-1}=W_{i}\cup E$ whereas if $i>1$ then $W_{i-1}=W_{i}\cup E_i\cup S_i$, where $S_i$ is the exterior of a ribbon disk for $R^i_{\alpha_i}$. Here we specify that we shall choose $S_i$ to be $B^4-\Delta_i$ where $\Delta_i$ satisfies Fact $2$.

\vspace{.2in}

\textbf{Property (3) of Proposition~\ref{prop:familyofmickeys}}: \textbf{$W_{i-1}$ is an  $(n,\mathcal{P})$-bordism}.

\vspace{.2in}

We have already verified this for $W_i$. Since $W_{i-1}$ is created from $W_i$ by adding pieces with $H_2=0$ (in the case of $S_i$) or with $H_2(E_i)/H_2(\partial E_i)=0$, a short Mayer Vietoris argument shows that
$$
H_2(W_{i-1};\mathbb{Z})/j_*(H_2(\partial W_{i-1};\mathbb{Z}))\cong H_2(W_{i};\mathbb{Z})/j_*(H_2(\partial W_{i};\mathbb{Z})).
$$
It follows that the same surfaces can be used to show that $W_{i-1}$ is an $(n,\mathcal{P})$-bordism as were used to show $W_i$ is an $(n,\mathcal{P})$-bordism. If, for example, $L$ is such a surface and $\pi_1(L)\subset \pi_1(W_i)^{(n)}_\mathcal{P}$ then certainly
$\pi_1(L)\subset \pi_1(W_{i-1})^{(n)}_\mathcal{P}$ by the weak functoriality of the series.

This completes the verification of the property $(3)$ of Proposition~\ref{prop:familyofmickeys} for $W_{i-1}$.

\vspace{.2in}

\textbf{Property ($1$) of Proposition~\ref{prop:familyofmickeys} holds for $W_{i-1}$}:

\vspace{.2in}

Consider $M^1_{i-1}\subset\partial W_{i-1}$. Recall that $\pi_1(M^1_{i-1})$ is normally generated by the meridian, $\mu_{i-1}$, and, by Lemma~\ref{lem:mickeyfacts}, this is isotopic in $E_{i-1}$ to a push-off of $\alpha_i$ in $M^1_{i}=\partial W_i$. By ~\eqref{eq:alphanontrivial}
$$
\alpha_i\in \pi_1(W_i)^{(k)}\subset \pi_1(W_{i-1})^{(k)}
$$
Thus
$$
\pi_1(M^1_{i-1})\subset \pi_1(W_{i-1})^{(k)}
$$
where $k=n-i+1$. Thus establishes the first part of property $(1)$ for $W_{i-1}$. To prove the second part it would suffice to show that
$j_*(\alpha_i)$ is non-zero in $\pi_1(W_{i-1})^{(k)}/\pi_1(W_{i-1})^{(k+1)}_\mathcal{P}$. Equation ~\eqref{eq:alphanontrivial} provides precisely this, except with $\pi_1(W_i)$ instead of $\pi_1(W_{i-1})$. Therefore it suffices to show that inclusion induces an isomorphism
\begin{equation}\label{eq:isomorph}
\pi_1(W_{i})/\pi_1(W_{i})^{(k+1)}_\mathcal{P}\cong \pi_1(W_{i-1})/\pi_1(W_{i-1})^{(k+1)}_\mathcal{P}.
\end{equation}
By Proposition~\ref{prop:idempotency}, it now suffices to show that
\begin{equation}\label{eq:isomorph2}
\ker\left(\pi_1(W_i))\to \pi_1(W_{i-1})\right)\subset \pi_1(W_{i})^{(k+1)}_\mathcal{P}.
\end{equation}
The map $\pi_1(W_i))\to \pi_1(W_i\cup E_i)$ is a surjection whose kernel is the normal closure of the longitude $\ell$ of the copy of $S^3-K^1_{i-1}\subset M^1_{i}$ (by property $(1)$ of Lemma~\ref{lem:mickeyfacts}). The group $\pi_1(S^3-K^1_{i-1})$ is normally generated by the meridian of this copy of $K^1_{i-1}$. By the definition of infection, this meridian is identified to a push-off of the curve $\alpha_i$ and we have seen that $\alpha_i\in \pi_1(W_i)^{(k)}$. Thus $\ell\in \pi_1(W_{i})^{(k+1)}_\mathcal{P}$ as required. If $i=1$, $W_{i-1}=W_i\cup E_i$ so this establishes ~\ref{eq:isomorph}.

Now suppose $i>1$. Then the kernel of $\pi_1(W_i\cup E_i))\to \pi_1(W_i\cup E_i\cup S_i)=\pi_1(W_{i-1})$ is the normal closure of the kernel of $\pi_1(M_{R^i})\to \pi_1(S_i)$. The latter is of course contained in the commutator subgroup of $\pi_1(M_{R^i})$. Any element of $\pi_1(M_{R^i})$ is homotopic in $E_i$ to an element of $\pi_1(M^1_i)$. Thus any element of $\pi_1(M_{R^i})^{(2)}$ is equal to an element of $\pi_1(M^1_i)^{(2)}$. But by property ($1$) of Proposition~\ref{prop:familyofmickeys} for $W_i$ (or see our big diagram above)
$$
\pi_1(M^1_i)^{(2)}\subset \pi_1(W_i)^{(k+1)}\subset \pi_1(W_i)^{(k+1)}_\mathcal{P}.
$$
Therefore we may ignore elements in $\pi_1(M_{R^i})^{(2)}$ and so it suffices to consider a generator of the kernel of
$$
\mathcal{A}(R^i)\cong\frac{\pi_1(M_{R^i})^{(1)}}{\pi_1(M_{R^i})^{(2)}}\to \frac{\pi_1(S_i)^{(1)}}{\pi_1(S_i)^{(2)}}\cong \mathcal{A}(S_i),
$$
which, by Fact $2$ and Remark~\ref{rem:whichS_i}, is the cyclic module denoted $P$. But by Fact $1$, under the identification $\mathcal{A}(R^i)\cong \mathcal{A}(M^1_i)$ we see that a representative of $P$ lies in $\tilde{P}$ and
$$
\tilde{P}\subset \pi_1(W_i)^{(k+1)}_\mathcal{P}
$$
as required.  This completes the verification of ~\ref{eq:isomorph2} and hence that of property ($1$) for $W_{i-1}$.

\vspace{.2in}

\textbf{Property ($2$) of Proposition~\ref{prop:familyofmickeys} holds for $W_{0}$}:

\vspace{.2in}

The group $j_*(\pi_1(M_{R^1_{\alpha_1}}))$ is normally generated by a meridian of $R^1_{\alpha_i}$ which is isotopic in $E_1$ to the meridian of $K^1_1$ in $\pi_1(M_1^1)$. By property $1$ of Proposition~\ref{prop:familyofmickeys} for $i=1$,
$$
j_*(\pi_1(M_1^1))\cong \Z \subset \pi_1(W_1)^{(n-1)}/\pi_1(W_1)^{(n)}_\mathcal{P}.
$$
By ~\eqref{eq:isomorph} with $i=1$ (so $k=n$)
\begin{equation}\label{eq:isomorph3}
\pi_1(W_{1})/\pi_1(W_{1})^{(n+1)}_\mathcal{P}\cong \pi_1(W_{0})/\pi_1(W_{0})^{(n+1)}_\mathcal{P},
\end{equation}
so certainly
$$
\pi_1(W_1)^{(n-1)}/\pi_1(W_1)^{(n)}_\mathcal{P}\cong \pi_1(W_0)^{(n-1)}/\pi_1(W_0)^{(n)}_\mathcal{P}.
$$
Therefore
$$
j_*(\pi_1(M_{R^1_{\alpha_1}}))\cong j_*(\pi_1(M_1^1))\cong \Z \subset\pi_1(W_0)^{(n-1)}/\pi_1(W_0)^{(n)}_\mathcal{P}.
$$
This completes the verification of property ($1$) for $W_{0}$.

This concludes the inductive proof of Proposition~\ref{prop:familyofmickeys}. The proof of Theorem~\ref{thm:robust} is now complete.

\end{subsection}

\end{proof}

In fact, note that the proof of Theorem~\ref{thm:robust} transitioned quite quickly (and necessarily) into the category of $(n,\mathcal{P})$-bordisms. Thus we see that we can prove a stronger version of Theorem~\ref{thm:robust} and Corollary~\ref{cor:infgen}. The point is that in our construction of the manifold $W_0$ of Proposition~\ref{prop:familyofmickeys}, we made very little use of the fact that the submanifold $V$ was an $(n.5,\mathcal{P})$-solution as opposed to an $(n.5,\mathcal{P})$-null-bordism.
Certainly what we needed primarily was that
$$
\sigma^{(2)}(V,\phi)-\sigma(V)=0,
$$
for any $\phi:\pi_1(V)\to\pi_1(V)/\pi_1(V)^{(n+1)}_\mathcal{P}\to \G$ ($\G$ PTFA). But Theorem~\ref{thm:sigvanishesbordism} can (almost) be used to establish this, in place of Theorem~\ref{thm:generalsignaturesobstruct}. We also needed some non-triviality for the map $\pi_1(\partial V)\to \pi_1(V)$ to establish property $(1)$ of Proposition~\ref{prop:familyofmickeys} in the base case of the induction where $i=n$ (the first paragraph of the proof). Since, in a null-bordism, $\partial V\to V$ does not necessarily induce a monomorphism on $H_1$, we must add some additional weak non-triviality condition. This results in a shift of the ``exponents'' in all the arguments in the proof.

\begin{thm}\label{thm:robustgeneralized} Let $\mathcal{P}$ be the unrestricted derived series localized at $\mathcal{P}$. No nontrivial linear combination of the knots of Theorem~\ref{thm:robust} and Corollary~\ref{cor:infgen} is $(n+1+r,\mathcal{P})$-null-bordant via $V$ where
$$
j_*(\pi_1(\partial V))\cong \Z \subset \pi_1(V)^{(r)}/ \pi_1(V)^{(r+1)}_\mathcal{P};
$$
and no such is $(n+.5+r,\mathcal{P})$-null-bordant via $V$ if, in addition,
$$
\sigma^{(2)}(V,\phi)-\sigma(V)=0
$$
for any PTFA coefficient system factoring through $\pi_1(V)/\pi_1(V)^{(n+1+r)}_\mathcal{P}$.
\end{thm}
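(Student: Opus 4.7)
The plan is to follow the proof of Theorem~\ref{thm:robust} essentially verbatim, modifying the bookkeeping so that all exponents in Proposition~\ref{prop:familyofmickeys} are shifted up by $r$, and replacing applications of Theorem~\ref{thm:generalsignaturesobstruct} (to the $(n.5,\mathcal{P})$-solution $V$) with applications of Theorem~\ref{thm:sigvanishesbordism} (to $V$ as a null-bordism). Suppose for contradiction that $\tilde{K}=\#_j m_j K^j$ (with $m_1>0$ after reindexing and mirroring) is null-bordant via a $V$ satisfying the stated hypotheses. Construct the $4$-manifold $W_0$ exactly as in the proof of Theorem~\ref{thm:robust}: glue $V$ to the standard cobordism $C$ from $M_{\tilde K}$ to a disjoint union of copies of $M_K^j$, cap off all but one boundary component with the $\mathcal{Z}$-caps from Corollary~\ref{cor:Z-cap}, and then successively attach $E_i$ and the ribbon-disk exteriors $S_i$ (chosen via the analogue of Fact~$2$).

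The inductive statement replacing Proposition~\ref{prop:familyofmickeys} is that $W_i$ is an $(n+1+r,\mathcal{P})$-bordism, and that under the inclusion $j:M_i^1\to W_i$ one has $j_*(\pi_1(M_i^1))\cong \Z\subset \pi_1(W_i)^{(n-i+r)}/\pi_1(W_i)^{(n-i+r+1)}_{\sP}$, with the analogous shifted statement for $M_{R^1}\subset\partial W_0$. The base case $i=n$ is exactly where the hypothesis $j_*(\pi_1(\partial V))\cong\Z\subset \pi_1(V)^{(r)}/\pi_1(V)^{(r+1)}_{\sP}$ is used: together with the elementary homological analysis of $C$ and the $\mathcal{Z}$-caps (whose $H_1$ injects), this hypothesis propagates to show that $j_*(\pi_1(M_n^1))$ lands in $\pi_1(W_n)^{(r)}$ and is nonzero modulo $\pi_1(W_n)^{(r+1)}_{\sP}$. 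That $W_n$ is an $(n+1+r,\mathcal{P})$-bordism follows by concatenating the Lagrangian surfaces in $V$ (which live in $\pi_1(V)^{(n+1+r)}_{\sP}$) with the Lagrangian surfaces in each $\mathcal{Z}$-cap $Z^j$ (which live in $\pi_1(Z^j)^{(n)}$, hence, by the shifted property~$(1)$ and weak functoriality, in $\pi_1(W_n)^{(n+r)}_{\sP}\supset\pi_1(W_n)^{(n+1+r)}_{\sP}$ after taking commutators appropriately—here one absorbs the extra depth using Proposition~\ref{prop:commseriesprops}(1)). The inductive step from $W_i$ to $W_{i-1}$ then runs verbatim as in Theorem~\ref{thm:robust}, with $k=n-i+1+r$ throughout: the key inputs (Theorem~\ref{thm:selfannihil} applied to $(W_i,\psi)$ at level $k$, the monomorphism part of Theorem~\ref{thm:noStorsion} and Proposition~\ref{prop:noS*torsion}, and the robust-operator hypothesis on each $R^i$) all go through unchanged, giving the analogues of Facts~$1$ and~$2$ and therefore the correct choice of ribbon disk for $S_i$.

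For the final signature computation, set $\phi:\pi_1(W_0)\to\pi_1(W_0)/\pi_1(W_0)^{(n+1+r)}_{\sP}\to\G$. The pieces $E_i$, $S_i$, and $C$ contribute zero to $\sigma^{(2)}-\sigma$ exactly as before; the $\mathcal{Z}$-cap contributions are computed by Corollary~\ref{cor:Z-cap}. The new ingredient is the contribution of $V$: for the first clause ($(n+1+r,\mathcal{P})$-null-bordism), $V$ is itself an $(n+1+r,\mathcal{P})$-bordism and $\phi(\pi_1(V)^{(n+1+r)}_{\sP})=1$ by functoriality of the localized series, so Theorem~\ref{thm:sigvanishesbordism} gives $\sigma^{(2)}(V,\phi)-\sigma(V)=0$; for the second clause ($(n+.5+r,\mathcal{P})$-null-bordism), this vanishing is imposed by hypothesis. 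Combining these, the signature-defect computation reduces, exactly as in Theorem~\ref{thm:robust}, to an equation exhibiting a non-trivial $\Q$-linear combination of the $\rho_0(K_0^j)$ as an element of $\mathcal{FOS}(R^1)$, contradicting the choice of the $K_0^j$.

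The main obstacle I expect is the careful verification that the shift by $r$ is consistent in the base case of the induction—specifically, that the Lagrangian surfaces coming from $V$ and from the $\mathcal{Z}$-caps can simultaneously be placed inside $\pi_1(W_n)^{(n+1+r)}_{\sP}$. The $V$-surfaces are automatic; the $\mathcal{Z}$-cap surfaces, however, originally sit in $\pi_1(Z^j)^{(n)}$, and one must use the shifted property~$(1)$ (which says $j_*\pi_1(M_n^j)$ lies in $\pi_1(W_n)^{(r)}$) together with weak functoriality and Proposition~\ref{prop:commseriesprops}(1) to push them into $\pi_1(W_n)^{(n+r)}_{\sP}$, and then into the correct depth $(n+1+r)$ (noting that the $(n)$-solution condition for $Z^j$ is a solvability condition, which when pushed into the larger ambient group gains the expected depth). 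Once this bookkeeping is settled, every other step is a routine translation of the argument for Theorem~\ref{thm:robust}.
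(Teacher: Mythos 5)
Your overall strategy---shift every exponent in Proposition~\ref{prop:familyofmickeys} by $r$, substitute Theorem~\ref{thm:sigvanishesbordism} for Theorem~\ref{thm:generalsignaturesobstruct} when bounding the contribution of $V$, and impose the extra signature hypothesis in the half-integer case---is exactly what the paper does, and it is stated there in a couple of sentences following the theorem. However, your shifted version of property~$(3)$ is wrong, and the reasoning you give for it is fallacious. You claim $W_i$ is an $(n+1+r,\mathcal{P})$-bordism, but it is only an $(n+r,\mathcal{P})$-bordism, which is both all that is achievable and all that is needed. Indeed the $\mathcal{Z}$-cap Lagrangian surfaces lie in $\pi_1(Z^j)^{(n)}$; since $\pi_1(Z^j)\subset\pi_1(W_n)^{(r)}$, they can only be pushed into $\pi_1(W_n)^{(n+r)}\subset\pi_1(W_n)^{(n+r)}_{\mathcal{P}}$, not into the deeper term $\pi_1(W_n)^{(n+1+r)}_{\mathcal{P}}$. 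Your attempted workaround misfires on two counts: the containment $\pi_1(W_n)^{(n+r)}_{\mathcal{P}}\supset\pi_1(W_n)^{(n+1+r)}_{\mathcal{P}}$ goes the wrong way for the conclusion you want, and Proposition~\ref{prop:commseriesprops}(1) (that $[\gn_*,\gn_*]\subset\gnp_*$) cannot be used to push a \emph{given} subgroup deeper in the filtration.

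The ``main obstacle'' you flag is therefore not an obstacle at all, because the argument never needs $W_i$ to be an $(n+1+r,\mathcal{P})$-bordism. Property~$(3)$ is invoked only in the applications of Theorem~\ref{thm:selfannihil} at level $k=n+r-i+1$ and of Theorem~\ref{thm:sigvanishesbordism} (to establish the analogue of Fact~$2$) at level $n+r-i+2$; these require only that $W_i$ be an $(n+r-i+1,\mathcal{P})$-bordism and an $(n+r-i+2,\mathcal{P})$-bordism respectively, which for $i\geq 1$ (resp.\ $i\geq 2$) follow from $W_i$ being an $(n+r,\mathcal{P})$-bordism. Once you replace $(n+1+r)$ by $(n+r)$ throughout property~$(3)$ and discard the spurious ``absorbing extra depth'' step, your proposal coincides with the paper's proof.
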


Note that an actual ($n.5,\mathcal{P}$)-solution satisfies the hypotheses with $k=0$. The more general situation requires us to prove Proposition~\ref{prop:familyofmickeys} with all occurrences of $n$ replaced by $n+r$. Otherwise the proof is identical. The additional signature condition is necessary only because the precise generalization of Theorem~\ref{thm:generalsignaturesobstruct} to $(n.5,\mathcal{P})$-null-bordisms has not appeared in the literature; rather Theorem~\ref{thm:sigvanishesbordism} requires an $(n+1,\mathcal{P})$-null-bordism.

\section{Evidence for the Injectivity of Robust Doubling Operators}\label{sec:injectivity}

For the following theorem, suppose $\mathbb{P}_{n-1}$ is the index set for a collection, $\{\mathcal{Q}_i~|~i\in \mathbb{P}_{n-1}\}$, of $n-1$-tuples $\mathcal{Q}_i=(q_{i_{n-1}}(t),\dots,q_{i_1}(t))$ of non-zero, non-unit polynomials such that, for any $i\neq j \in \mathbb{P}_{n-1}$, at least one coordinate of $\mathcal{Q}_i$ is strongly coprime to the corresponding coordinate of $\mathcal{Q}_j$. This is a slightly more stringent condition than Definition~\ref{def:stronglycoprime}. This is ncessary since, in the following result, the Alexander polynomial of the operator $R_\alpha$ will play the role of the ``first'' polynomial in $\mathcal{P}$.

\begin{thm}\label{thm:injectivity} Suppose $R_\alpha$ is a robust operator. Then, for any $n\geq 1$, the composition
$$
\mathcal{C}\overset{R_\alpha}{\longrightarrow}\mathcal{C}\to \frac{\mathcal{C}}{\mathcal{F}_{n.5}}
$$
is injective on any subgroup generated by
$$
\mathcal{J}_{n-1}=\{\mathcal{R}_i(K^{i,j})~|i\in \mathbb{P}_{n-1}, ~1\leq j< \infty\}
$$
where the $\mathcal{R}_i(K^{i,j})$ are as in Theorem~\ref{thm:fractal}, so  $R_\alpha$ is injective on subgroups
$$
\bigoplus_{\substack{\mathbb{P}_{n-1}}}\Z^\infty\subset \mathcal{F}_{n-1}\subset\mathcal{C}
$$
as in ~\eqref{eq:bigsubgroup} and Corollary~\ref{cor:bigsubgroup}. Thus, by letting $n$ vary, it follows that $R_\alpha:\mathcal{C}\to \mathcal{C}$ is injective on the subgroup
\begin{equation}\label{eq:big2}
\bigoplus_{\substack{n}}\bigoplus_{\substack{\mathbb{P}_n}}\Z^\infty\subset \mathcal{C}
\end{equation}
 as in~\eqref{eq:bigsubgroup}. Moreover, if $R_\alpha$ and $R'_\beta$ are robust operators for which the classical Alexander polynomials of $R$ and $R'$ are coprime, then $R_\alpha$ and $R'_\beta$ have distinct images (intersect only in $\{0\}$), when restricted to the subgroups ~\eqref{eq:big2}.
\end{thm}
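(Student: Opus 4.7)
The plan is to adapt the tower argument of Theorem~\ref{thm:robust} (specifically Proposition~\ref{prop:familyofmickeys}) to the present situation, where $R_\alpha$ is applied to a connected sum of lower-depth iterated operators. For the first assertion, fix a non-trivial $\tilde K = \#_{(i,j)\in S}\, m_{i,j}\, \mathcal{R}_i(K^{i,j})$ in the subgroup generated by $\mathcal{J}_{n-1}$ and suppose for contradiction that $R_\alpha(\tilde K)\in\mathcal{F}_{n.5}$. After taking mirrors if necessary, pick $(i_0,j_0)\in S$ with $m_{i_0,j_0}>0$, and set $\mathcal{P}=(p,q_{(i_0)_{n-1}},\ldots,q_{(i_0)_1})$, where $p$ is the Alexander polynomial of $R$. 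Since $\mathcal{F}_{n.5}\subset\mathcal{F}_{n.5}^{\mathcal{P}}$, there is an $(n.5,\mathcal{P})$-solution $V$ for $R_\alpha(\tilde K)$. Build a 4-manifold $W_0$ by gluing to $V$, in order: the infection cobordism $E$ for $R_\alpha$ (from Lemma~\ref{lem:mickeyfacts}); a ribbon-disk exterior $S$ for $R$; the standard connected-sum cobordism $C$ decomposing $M_{\tilde K}$ into $\sqcup\, m_{i,j}\, M_{\mathcal{R}_i(K^{i,j})}$; $\mathcal{Z}$-caps from Corollary~\ref{cor:Z-cap} for every summand except one copy of $\mathcal{R}_{i_0}(K^{i_0,j_0})$; and then, for $k=n-1,\ldots,2$, infection cobordisms $E_k$ and ribbon-disk exteriors $S^{(i_0)_k}$ for the operators making up $\mathcal{R}_{i_0}$, exactly as in the original tower. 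The final boundary is $\partial W_0 = M_{K^{i_0,j_0}}\sqcup M_{R^{(i_0)_1}}$.

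Next, establish analogs of Proposition~\ref{prop:familyofmickeyszero}(1)--(3) for $W_0$: $W_0$ is a bordism of the appropriate level, and under inclusion $\pi_1(M_{K^{i_0,j_0}})\cong\Z$ sits in $\pi_1(W_0)^{(n)}/\pi_1(W_0)^{(n+1)}_{\mathcal{P}}$, while $\pi_1(M_{R^{(i_0)_1}})\cong\Z$ sits in $\pi_1(W_0)^{(n-1)}/\pi_1(W_0)^{(n)}_{\mathcal{P}}$. This proceeds by the inverse induction of Proposition~\ref{prop:familyofmickeys}, after verifying the modified base case: a Mayer--Vietoris computation combining the standard homological properties of $V$, of $C$~\cite[p.~113]{COT2}, of $E$ and $S$ (Lemma~\ref{lem:mickeyfacts}), and of the $\mathcal{Z}$-caps (Corollary~\ref{cor:Z-cap}) gives the required $H_1$-isomorphism onto the topmost uncapped boundary component. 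With these in hand, apply $\phi\colon\pi_1(W_0)\twoheadrightarrow\pi_1(W_0)/\pi_1(W_0)^{(n+1)}_{\mathcal{P}}$ and compute $\rho(\partial W_0,\phi)=\sigma^{(2)}_\G(W_0)-\sigma(W_0)$ by Novikov additivity. The contributions from $V$ (Theorem~\ref{thm:generalsignaturesobstruct}), from the ribbon-disk exteriors $S$ and $S^{(i_0)_k}$ (Proposition~\ref{prop:rhoprops}(4)), and from $E$, $E_k$, $C$ (by \cite[Lemma~2.4]{CHL3}) all vanish, while each $\mathcal{Z}$-cap contributes $c_\phi^{(i,j)}\rho_0(K^{i,j})$ with $c_\phi^{(i,j)}\in\{0,1\}$ determined by whether $\phi$ kills the bottom meridian $\mu_0^{(i,j)}$.

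The crux is showing $c_\phi^{(i,j)}=0$ for every summand with $i\neq i_0$. For such an $(i,j)$, the strong coprimality between $\mathcal{Q}_i$ and $\mathcal{Q}_{i_0}$ in some coordinate $k$, combined with Theorem~\ref{thm:noStorsion} (or Proposition~\ref{prop:noS*torsion} when $k=1$) applied inside $W_0$, forces the relevant $\alpha$-class to lie deeper in the derived series localized at $\mathcal{P}$; iterating Lemma~\ref{lem:mui} then propagates this all the way down, placing $\mu_0^{(i,j)}\in\pi_1(W_0)^{(n+1)}_{\mathcal{P}}$. This is precisely the signature-theoretic counterpart of the argument proving Theorem~\ref{thm:qliesinpnplusone}. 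Together with $\rho(M_{K^{i_0,j_0}},\phi)=\rho_0(K^{i_0,j_0})$ (from the second property of $W_0$) and the by-now-standard identification of $\rho(M_{R^{(i_0)_1}},\phi)$ as a first-order signature of $R^{(i_0)_1}$ (exactly as in Theorem~\ref{thm:robust}), one arrives at
\[
\sum_{j} M_j\,\rho_0(K^{i_0,j}) \in \mathcal{FOS}(R^{(i_0)_1}),
\]
with $M_{j_0}>0$, contradicting the hypothesis that $\{\rho_0(K^{i_0,j})\}_j$ is rationally independent from the span of $\mathcal{FOS}(R^{(i_0)_1})$. The injectivity statement on $\bigoplus_n\bigoplus_{\mathbb{P}_n}\Z^\infty$ then follows by applying this at the maximal depth occurring in any hypothetical nonzero element of the kernel.

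For the disjoint-images claim, suppose $R_\alpha(\tilde K)=R'_\beta(\tilde K')$ in $\mathcal{C}$ with both in the big subgroup and, WLOG, $\tilde K\neq 0$; then $R_\alpha(\tilde K)\#(-R'_\beta(\tilde K'))$ is slice, so take $V$ to be its slice-disk exterior and pick $\mathcal{P}$ as above. Repeat the tower construction with two top-level branches --- one for $\tilde K$ (treated exactly as before) and one for $\tilde K'$ (where every summand is capped by a $\mathcal{Z}$-cap). For each summand of $\tilde K'$, the coprimality of the Alexander polynomial $p'$ of $R'$ with $p=p_1$ triggers the first-coordinate clause of Definition~\ref{def:distinctP}, and Proposition~\ref{prop:noS*torsion} then places the corresponding bottom meridian in $\pi_1(W_0)^{(n+1)}_{\mathcal{P}}$; the entire $R'_\beta$-branch therefore contributes nothing to the signature, and the contradiction of the previous paragraph carries through unchanged. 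The central technical obstacle throughout is precisely this ``non-contribution'' step --- verifying that extraneous meridians are driven into $\pi_1(W_0)^{(n+1)}_{\mathcal{P}}$ by the coprimality hypotheses --- which amounts to a careful re-deployment of the argument of Theorem~\ref{thm:qliesinpnplusone} inside the enlarged $4$-manifold $W_0$, leveraging the localization machinery of Section~\ref{sec:locprimes}.
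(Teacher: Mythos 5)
Your proof of the first injectivity claim only handles the case $J'=0$: you begin by supposing $R_\alpha(\tilde K)\in\mathcal{F}_{n.5}$ for a nonzero $\tilde K$ in the subgroup, but that only rules out $R_\alpha$ sending a nonzero element to the zero class. Because $R_\alpha$ is a satellite operation and not a homomorphism on $\mathcal{C}$, the knot $R_\alpha(J)\#(-R_\alpha(J'))$ is in general not concordant to $R_\alpha(J\#-J')$, so injectivity of $\mathcal{C}\overset{R_\alpha}{\longrightarrow}\mathcal{C}\to\mathcal{C}/\mathcal{F}_{n.5}$ on the subgroup --- i.e.\ that $R_\alpha(J)\equiv R_\alpha(J')$ modulo $\mathcal{F}_{n.5}$ forces $J=J'$ --- cannot be reduced to the special case $J'=0$. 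The paper's proof instead starts from $R(J)=R(J')$ with $J\neq J'$, chooses $\mathcal{P}$ with $p_1$ the polynomial of $R$ and $(p_2,\dots,p_n)$ the orders attached to the distinguished summand $\mathcal{R}_1$ of $J$ (after relabeling so that $m_{11}>n_{11}\geq 0$), and builds $W_n$ out of the $(n.5,\mathcal{P})$-solution $V$ for $R(J)\#(-R(J'))$ with \emph{two} top-level branches: one (via $E'$, $S'$, $C'$, and $\mathcal{Z}$-caps on \emph{every} summand of $J'$) peels away $-R(J')$, while the other (via $E$, $S$, $C''$, and $\mathcal{Z}$-caps only on summands with $(i,j)\neq(1,1)$) peels away $R(J)$ and leaves all $m_{11}$ copies of $M_{\mathcal{R}_1(K^{1,1})}$ uncapped; see Figures~\ref{fig:W_n} and~\ref{fig:W_{n-1}}. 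The resulting $W_0$ carries $m_{11}$ copies of $M_{K^{1,1}}$ and $m_{11}$ copies of $M_{R^{1,1}}$ in its boundary, whereas the $(1,1)$-$\mathcal{Z}$-caps, which all live in the $J'$ branch, number at most $n_{11}$; the signature imbalance $m_{11}-n_{11}>0$ is exactly what drives the contradiction, and it is invisible if one only postulates $R_\alpha(\tilde K)\in\mathcal{F}_{n.5}$. Notice that your disjoint-images argument already does use a genuine two-branch construction, so the fix is to carry the same idea over to the first claim.

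A smaller remark concerning the disjoint-images assertion: you fold the $R'_\beta$ branch into $W_0$ and run a Lemma~\ref{lem:mui}-style descent (via Proposition~\ref{prop:noS*torsion} at the top level) to drive the corresponding bottom meridians into $\pi_1(W_0)^{(n+1)}_\mathcal{P}$. The paper instead applies Theorem~\ref{thm:qliesinpnplusone} directly to the knot $R'_\beta(J'')$, concluding $R'_\beta(J'')\in\mathcal{F}_{n+1}^\mathcal{P}\subset\mathcal{F}_{n.5}^\mathcal{P}$ and hence $R_\alpha(J)\in\mathcal{F}_{n.5}^\mathcal{P}$, and then cites the first claim (whose proof works at the sharper level of $\mathcal{F}_{n.5}^\mathcal{P}$-solvability rather than $\mathcal{F}_{n.5}$). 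Your route is defensible but effectively re-proves the triviality theorem inside the $4$-manifold; the paper's reduction is shorter once the first part has been established correctly.
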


\begin{proof} We abbreviate $R_\alpha$ by $R$. Suppose
$$
R(J)= R(J') ~\text{in}~ \frac{\mathcal{F}_n}{\mathcal{F}_{n.5}}
$$
where $J=\#m_{ij}\mathcal{R}_i(K^{i,j}), J'=\#n_{ij}\mathcal{R}_i(K^{i,j})$ are distinct,  $\mathcal{R}_i(K^{i,j})\in \mathcal{J}_{n-1}$ where the sum runs over all $1\leq j< \infty$, and all $i\in \mathbb{P}_{n-1}$, but where all but finitely many of the integers $m_{ij}$ and $n_{ij}$ are zero. If $n=1$ then the index $i$ takes on only one value which we call $i=1$. We shall derive a contradiction. The strategy of the proof is identical to that of Theorem~\ref{thm:robust}. Here is a sketch. Since
$$
R(J)~\# -(R(J')) \in \mathcal{F}_{n.5},
$$
it follows that
$$
R(J)~\# -(R(J')) \in \mathcal{F}_{n.5}^\mathcal{P},
$$
for \textbf{any} $n$-tuple $\mathcal{P}$. We must choose $\mathcal{P}$ wisely so that this is false. Since $J$ and $J'$ are distinct, by relabeling both $i$ and $j$, we may assume, without loss of generality, that $m_{11}>n_{11}\geq 0$. In particular we may assume that the knot $\mathcal{R}_1(K^{1,1})$ occurs non-trivially as a summand of $J$. Recall that $\mathcal{R}_i=R^{i,{n-1}}_{\alpha^i_{n-1}}\circ\dots\circ R^{i,1}_{\alpha_1^i}$, a composition of $n-1$ robust doubling operators, so $\mathcal{R}_1=R^{1,{n-1}}\dots\circ\dots\circ R^{1,1}$. Similar to the proof of Theorem~\ref{thm:robust}, we will construct a $4$-manifold $W_0$ whose boundary is $m_{11}$ copies of the zero surgery on $R^{1,1}$ and $m_{11}$ copies of the zero surgery on $K^{1,1}$.  Let $(p_2(t),...,p_n(t))$ denote the sequence of orders of the Alexander modules of $(R^{1,{n-1}},..., R^{1,1})$. Let $p_1(t)$ be the Alexander polynomial of $R$. Let $\mathcal{P}=(p_1(t),p_2(t),...,p_{n}(t))$. Let $V$ be an $(n.5,\mathcal{P})$-solution for $R(J)~\# -R(J')$. Let $C$ be the standard cobordism from $\partial V$ to the disjoint union of the zero surgeries on $R(J)$ and $-R(J')$. Attach to $V\cup C$, along $M_{-R(J')}$, a manifold $E'$ from Lemma~\ref{lem:mickeyfacts} where $\partial E'= M_{R(J')}\coprod M_{-R} \coprod M_{-J'}$. To this attach along $M_R \subset\partial E'$, $S'$, the complement of a slice disk for $R$. Then attach the standard cobordism $C'$ from $M_{-J'}$ to the disjoint union of copies of the zero surgeries on $\pm\mathcal{R}_i(K^{i,j})$  according to the decomposition $-J'=\#-n_{ij}\mathcal{R}_i(K^{i,j})$. For each such new boundary component attach the $(n-1)$ solution $\pm Z(i,j)$ from Corollary~\ref{cor:Z-cap}. The resulting manifold, denoted $W_n$ and shown in Figure~\ref{fig:W_n},  has boundary $M_{R(J)}$.
\begin{figure}[htbp]
\setlength{\unitlength}{1pt}
\begin{picture}(200,250)
\put(-60,0){\includegraphics{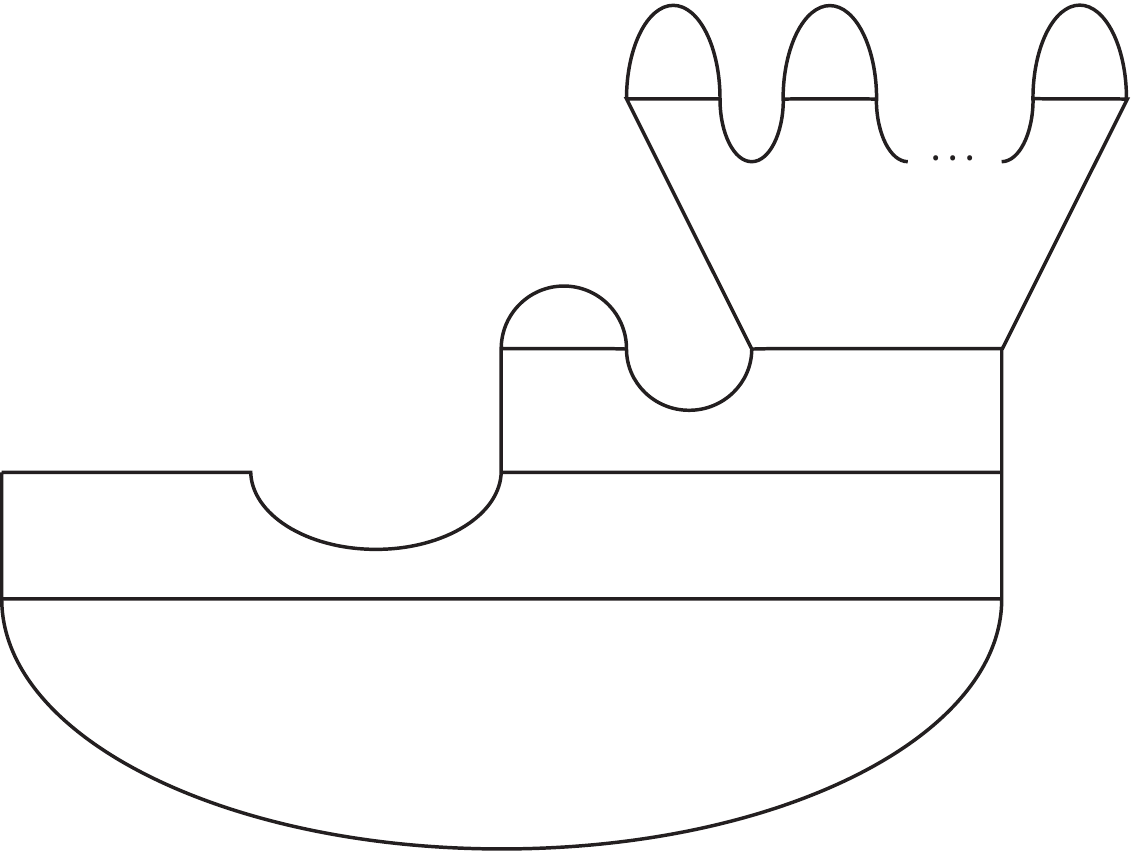}}
\put(85,82){$C$}
\put(83,29){$V$}
\put(148,115){$E'$}
\put(98,150){$S'$}
\put(-30,99){$M_{R(J)}$}
\put(178,99){$-M_{R(J')}$}
\put(178,135){$-M_{J'}$}
\put(91,135){$-M_{R}$}
\put(185,165){$C'$}
\put(188,225){$\leftarrow Z(i,j)\rightarrow$}
\end{picture}
\caption{$W_{n}$}\label{fig:W_n}
\end{figure}

Since $\pi_1(Z(i,j))\subset \pi_1(W_n)^{(1)}$, these $(n-1)$-solutions are effective ($n$)-solutions. In particular $W_n$ is an $(n,\mathcal{P})$-null-bordism for $M_{R(J)}$. Consider the coefficient system
$$
\psi: \pi_1(W_n)\to \pi_1(W_n)/\pi_1(W_n)^{(1)}_\mathcal{P}\cong \Z.
$$
By Theorem~\ref{thm:selfannihil}, with $k=1$ and $\mathcal{R}=\Q[t,t^{-1}](S^*_{p_1})^{-1}$, the kernel of the inclusion
$$
\mathcal{A}(R)(S^*_{p_1})^{-1}\cong\mathcal{A}(R(J))(S^*_{p_1})^{-1}\to \mathcal{A}(W_n)(S^*_{p_1})^{-1}
$$
is self-annihilating (in fact known to be a Lagrangian in this case). Therefore the curve $\alpha$, being a generator of $\mathcal{A}(R(J))$ and being $p_1(t)$ torsion, cannot lie in $\pi_1(W_n)^{(2)}_\mathcal{P}$. Therefore we have verified the analogue of the case $i=n$ (the base case) of Proposition~\ref{prop:familyofmickeys}.

The rest of the proof is very similar to that of Proposition~\ref{prop:familyofmickeys} and the proof of Theorem~\ref{thm:robust}, but not identical because in this case $J$, which is analogous to the knot $K_{n-1}$ in that proof, is a connected sum of knots in the images of iterated operators rather than being a single such knot. In this case to form $W_{n-1}$  first glue an $E$ to $W_n$ so that the boundary is $M_R\coprod M_J$. Consider the coefficient system
$$
\phi:\pi_1(W_n\cup E)\to \pi_1(W_n\cup E)/\pi_1(W_n\cup E)^{(2)}.
$$
Since $n\geq 1$, one easily sees that $\rho(\partial(W_n\cup E), \phi)=0$. But $\rho_0(J)=0$ if $n-1\neq 0$. Thus one shows that the first-order signature of $R$ corresponding to $P$ is zero, where $P$ is the kernel of the inclusion $\mathcal{A}(R)\cong\mathcal{A}(R(J))\to \mathcal{A}(W_n)$. Since $R$ is robust, $P$ corresponds to a  ribbon disk. Add to $E$ along $M_R$, the corresponding ribbon disk exterior $S$. Also add the standard cobordism $C''$, from $M_J$ to the disjoint union of the zero surgeries of copies of $\pm\mathcal{R}_i(K^{i,j})$  according to the decomposition $J=\#m_{ij}\mathcal{R}_i(K^{i,j})$. Then to each zero surgery on $\mathcal{R}_i(K^{i,j}$ for $(i,j)\neq (1,1)$, adjoin a $\mathcal{Z}-cap$ $Z(i,j)$ from Corollary~\ref{cor:Z-cap}. The resulting manifold, denoted $W_{n-1}$ has boundary equal to $m_{11}$ copies of the zero surgery on $\mathcal{R}_1(K^{1,1})$.
\begin{figure}[htbp]
\setlength{\unitlength}{1pt}
\begin{picture}(200,250)
\put(-83,0){\includegraphics{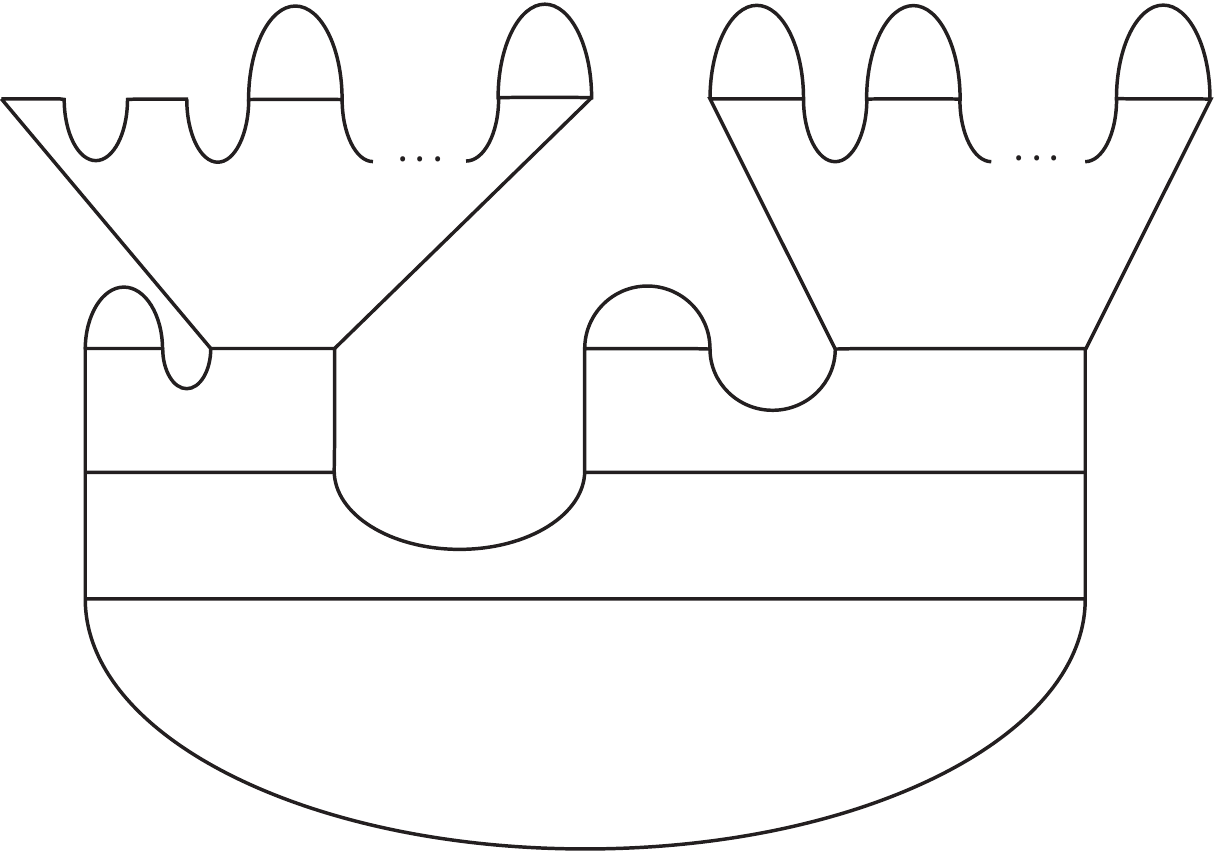}}
\put(85,82){$C$}
\put(83,29){$V$}
\put(148,115){$E'$}
\put(98,150){$S'$}
\put(-30,99){$M_{R(J)}$}
\put(-32,122){$E$}
\put(178,99){$-M_{R(J')}$}
\put(178,135){$-M_{J'}$}
\put(91,135){$-M_{R}$}
\put(-55,136){$M_{R}$}
\put(-82,223){$M_{R^{1,1}}$}
\put(-46,223){$M_{R^{1,1}}$}
\put(-15,135){$M_{J}$}
\put(-52,150){$S$}
\put(185,165){$C'$}
\put(-15,165){$C''$}
\put(185,225){$\leftarrow Z(i,j)\rightarrow$}
\put(11,225){$\leftarrow Z(i,j)\rightarrow$}
\end{picture}
\caption{$W_{n-1}$}\label{fig:W_{n-1}}
\end{figure}

This is an $(n,\mathcal{P})$-null bordism for its boundary. Now we continue to construct $4$-manifolds $W_{n-2},\dots, W_0$ just as in Proposition~\ref{prop:familyofmickeys}, except that here we have $m_{11}$ boundary components rather than $1$. We arrive finally at $W_0$ whose boundary is $m_{11}$ copies of the zero surgery on $R^{1,1}$ and $m_{11}$ copies of the zero surgery on $K^{1,1}$. Then we consider the coefficient system
$$
\phi:\pi_1(W_0)\to \pi_1(W_0)/(\pi_1(W_0))^{n+1}_\mathcal{P}.
$$
One shows that on the one hand, $\rho(\partial W_0,\phi)$ is equal to the sum of $m_{11}\rho_0(K^{1,1})$ and $m_{11}$ (possibly different) first-order signatures of $R^{1,1}$. Then we assert that
$$
\sigma^{(2)}(W_0,\phi)-\sigma(W_0)
$$
is equal to the sum of $c_j\rho_0(K^{1,j})$ where $c_1$ is less in absolute value than $m_{1,1}$. These two assertions contradict the choice of the $K^{i,j}$ (see the hypotheses of Theorem~\ref{thm:fractal}).

Clearly only the $\mathcal{Z}$-caps contribute to the signature of $W_0$. Those that correspond to values of $i\neq 1$ can be ignored since they are effective $(n+1,\mathcal{P})$-solutions, since each $i\neq 1$ corresponds to an $n-1$-tuple of orders $\mathcal{Q}_i$ that is strongly coprime to $(p_2,...,p_n)$. The $\mathcal{Z}$-caps with $i=1$ have signatures equal to zero or $\rho_0(K^{1,j})$ by Corollary~\ref{cor:Z-cap}. Therefore $|c_1|$ is less than or equal to $n_{11}$ which was less than $m_{11}$.

This concludes the proof that $R_\alpha$ is injective on the claimed subgroup, which is the first claim of the theorem.

For the second claim of the theorem, merely note that $\mathcal{J}_{n-1}$ is precisely the collection of ~\eqref{eq:bigsubgroup2} (for the case $n-1$).

For the third statement, to prove that $R_\alpha$ is injective on the subgroup ~\eqref{eq:big2}, suppose not and let $N$ be the minimum value of $n$ that occurs in the putative counterexample. Then look modulo $\mathcal{F}_{N.5}$. Since then all terms where $n>N$ then can be ignored, this contradicts the first part of the theorem.

For the final statement of the theorem, suppose $R_\alpha(J)=R'_\beta(J'')$. Thus
$$
R_\alpha(J)\#~-R'_\beta(J'')\in \mathcal{F}_{n.5}.
$$
We may assume that $J,J''$ lie in the subgroup generated by $\mathcal{J}_{n-1}$ for some $n\geq 1$.
As in the proof of the first part of the theorem, we may assume that some knot $\mathcal{R}_1(K^{1,1})$ occurs non-trivially as a summand of $J$. Recall that $\mathcal{R}_i=R^{i,n-1}_{\alpha_1^{n-1}}\circ\dots\circ R^{i,1}_{\alpha_1^i}$, a composition of $n-1$ robust doubling operators, so $\mathcal{R}_1=R^{1,{n-1}}\dots\circ\dots\circ R^{1,1}$. Let $p_1(t)$ be the Alexander polynomial of $R_\alpha$ (i.e. of $R$), and let $(p_2(t),...,p_n(t))$ denote the sequence of orders of the Alexander modules of $(R^{1,{n-1}},...,R^{1,1})$. Let $\mathcal{P}=(p_1(t),p_2(t),...,p_{n}(t))$. Let $(q_2,\dots,q_n)$ be the sequence of Alexander polynomials of the operators involved in some constituent knot of $J''$. Since the Alexander polynomial, $q_1$, of $R'$ is coprime to $p_1(t)$, the sequence $(q_1,\dots,q_n)$ is strongly coprime to $\mathcal{P}$. Thus
$$
-R'_\beta(J'')\in \mathcal{F}_{n+1}^\mathcal{P}
$$
by (a slight generalization of) Theorem~\ref{thm:qliesinpnplusone}, so we conclude
$$
R_\alpha(J)\in \mathcal{F}_{n.5}^\mathcal{P}.
$$
This implies that $J=0$ by the first part of the theorem. It follows also then that $J"=0$.
\end{proof}

\section{Another application: Cochran-Orr-Teichner knots are distinct from Cochran-Harvey-Leidy knots}\label{sec:COTknots}

In this section we show that the knots we have been discussing in this paper are not sufficient to generate $\mathcal{C}$. In fact, we show that almost none of the types of knots considered in the early papers of Cochran-Orr-Teichner, Cochran-Teichner, Kim and Friedl ~\cite{COT}\cite{COT2}\cite{CT}\cite{CK}\cite{Ki2} are even concordant to any of the knots we have considered in this paper (the exceptions being the types of knots generating $\mathcal{F}_1/\mathcal{F}_{1.5}$ which are common to both and were first considered by Casson-Gordon, Gilmer, Litherland and Livingston). On a related topic, we remark that S. Kim and T. Kim have announced a proof that the COT knot given in from ~\cite[Section 6]{COT}) is not concordant to any genus one knot. The class of CHL knots is not restricted to genus one knots.

A \textbf{COT knot} (at level $n$) is one that is obtained from a fixed slice knot $R$ (with the degree of its Alexander polynomial at least $4$) by infecting along a collection of circles $\{\alpha_1,...,\alpha_k\}$ lying in $\pi_1(S^3-R)^{(n)}$ using Arf invariant zero knots $\{K_1,...,K_k\}$. Thus
\begin{equation}\label{eq:COT}
J=R_{\{\alpha_1,...,\alpha_k\}}(K_1,...,K_k).
\end{equation}
An application of Proposition~\ref{prop:operatorsact} recovers the (previously known) fact that each such  knot $J$ is necessarily  an element of $\mathcal{F}_{n}$ ~\cite[Prop.3.1]{COT2}\cite[Prop.5.11]{CK}. In order to prove non-triviality up to concordance one also assumes that each $\rho_0(K_i)$, the average classical signature, is greater than a fixed positive constant (the Cheeger-Gromov constant of $M_R$) and that the $\{\alpha_i\}$ are carefully chosen to satisfy ~\cite[Theorem 5.13]{CK} (see also ~\cite[Theorem 4.3]{CT}). Under these conditions, it was shown in ~\cite[Theorem 5.14]{CK} (improving on ~\cite[Theorem 4.2]{CT}) that $J\notin \mathcal{F}_{n.5}$. These were the examples used to prove that $\mathcal{F}_{n}/\mathcal{F}_{n.5}$ has positive rank for $n>2$.

We claim that COT knots are even more robust than claimed by previous authors, for in fact:

\begin{prop}\label{prop:COTrobust} If $J\in \mathcal{F}_{n}$ is a COT knot (at level $n$) then
$$
J\notin \mathcal{F}_{n.5}^{cot}.
$$
where the latter is  with respect to the COT series (at level $n+1$) as given in Example~\ref{ex:commseriesCOT}.
\end{prop}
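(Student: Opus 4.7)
The plan is to transport the Cochran-Teichner / Cochran-Kim $\rho$-invariant argument into the COT-series framework. Suppose for contradiction that $J \in \mathcal{F}_{n.5}^{cot}$ via a $4$-manifold $W$; set $\pi = \pi_1(W)$ and $\Gamma = \pi/\pi^{(n+1)}_{cot}$. Since $\Gamma$ is PTFA, Theorem~\ref{thm:generalsignaturesobstruct} applied with $\ast = cot$ yields $\rho(M_J, \phi) = 0$ for the canonical coefficient system $\phi : \pi_1(M_J) \to \Gamma$.

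On the other hand, using the infection cobordism $E$ of Lemma~\ref{lem:mickeyfacts} together with the standard additivity of $\rho$-invariants, one computes
$$\rho(M_J, \phi) = \rho(M_R, \phi|_{M_R}) + \sum_{i=1}^{k} \epsilon_i \rho_0(K_i),$$
where $\epsilon_i = 1$ if $\phi$ sends a push-off of $\alpha_i$ to a non-trivial element of $\Gamma$, and $\epsilon_i = 0$ otherwise. Because at levels $\le n$ the COT series agrees with the rational derived series, $\phi|_{M_R}$ factors through the fixed quotient $\pi_1(M_R)/\pi_1(M_R)^{(n+1)}$, so $|\rho(M_R, \phi|_{M_R})|$ is bounded by the Cheeger-Gromov constant $C_R$ of $M_R$. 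Under the standing CT/CK assumption $|\rho_0(K_i)| > k\, C_R$, producing a single index $i$ with $\epsilon_i \ne 0$ suffices to force the contradiction $0 = \rho(M_J,\phi) \neq 0$.

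The heart of the argument is therefore to show that some $\alpha_{i_0} \notin \pi^{(n+1)}_{cot}$. By Proposition~\ref{prop:char*}, this amounts to $\alpha_{i_0}$ representing a non-torsion class in the localized module $H_1(W; \mathcal{R})$, where $\mathcal{R} = \mathbb{Q}[\pi/\pi^{(n)}_r]\, S_{n,cot}^{-1}$; up to twisting, $\mathcal{R}$ is a skew Laurent polynomial ring $\mathbb{K}[t^{\pm 1}]$ over the quotient skew field $\mathbb{K}$ of $\mathbb{Q}[\pi^{(1)}_r/\pi^{(n)}_r]$, which is precisely the coefficient ring implicitly used in CT/CK. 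Since $W$ is an $(n.5, cot)$-bordism for $M_J$ and $\phi|_{M_J}$ factors non-trivially through $\mathbb{Z}$, Theorem~\ref{thm:selfannihil} guarantees that the kernel $P$ of $TH_1(M_J; \mathcal{R}) \to TH_1(W; \mathcal{R})$ is self-annihilating under the (non-singular, by Lemma~\ref{lem:non-singular}) localized Blanchfield form on $TH_1(M_J; \mathcal{R}) \cong \mathcal{A}(R) \otimes_{\mathbb{Q}[t, t^{-1}]} \mathcal{R}$. The CT/CK ``special polynomial condition'' on $\{\alpha_1, \dots, \alpha_k\}$ ensures that no non-zero $\mathbb{K}$-linear combination of their images in this module lies in a self-annihilating submodule, so at least one $\alpha_{i_0}$ escapes $P$, giving $\epsilon_{i_0} \ne 0$. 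The main obstacle is precisely this last step---verifying that the CT/CK hypothesis, originally stated for the coefficient systems they used directly, still produces non-triviality in the COT-series quotient $\pi^{(n)}_r/\pi^{(n+1)}_{cot}$---but the identification of the two localizations from Example~\ref{ex:commseriesCOT} (where $S_{n,cot} = \mathbb{Q}[G^{(1)}/G^{(n)}_r] - \{0\}$) makes this a direct unwinding of definitions rather than a substantive new computation.
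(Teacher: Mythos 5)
Your first two paragraphs match the paper: you apply Theorem~\ref{thm:generalsignaturesobstruct} to get $\rho(M_J,\phi)=0$, invoke the same additivity formula, and correctly identify the crux as producing some $\alpha_{i_0}\notin \pi^{(n+1)}_{cot}$. Where the proposal goes wrong is in the third paragraph, when you try to extract this non-triviality from a single application of Theorem~\ref{thm:selfannihil} to $M_J$. The coefficient system in play there is $\phi|_{M_J}:\pi_1(M_J)\to\Lambda=\pi/\pi^{(n)}_r$, and for $n\geq 2$ (the interesting range for COT knots) this is a genuinely higher-order system that does \emph{not} factor through $\mathbb{Z}$: the classical Alexander module of $J$ equals that of $R$ and has degree $\geq 4$, so $\pi_1(M_J)$ has noncommutative image in $\Lambda$. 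Consequently your identification $TH_1(M_J;\mathcal{R})\cong\mathcal{A}(R)\otimes_{\mathbb{Q}[t,t^{-1}]}\mathcal{R}$ fails (the ``tensoring up'' principle of ~\eqref{eq:decompmodule} is only valid when $\phi$ factors through $\mathbb{Z}$), and your appeal to Lemma~\ref{lem:non-singular} is invalid for the same reason---that lemma hypothesizes a $\mathbb{Z}$-factoring system, and the paper explicitly warns that both of its conclusions are false otherwise. Relatedly, the CT/CK ``special polynomial condition'' is a condition placed on the \emph{classical} Blanchfield form of $R$, not on the $n$-th order Blanchfield form on $TH_1(M_J;\mathcal{R})$ in which your putative $P$ lives; there is no definitional unwinding converting one into the other.

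The paper does not re-prove the non-triviality at all. It observes that $G^{(n)}_r/G^{(n+1)}_{cot}$ embeds into $H_1(W;\mathbb{K}_n[t,t^{-1}])$, the twisted-PID module that Cochran--Kim actually use, and then cites the non-vanishing $j_*(\alpha_i)\neq 0$ in that module as exactly what ~\cite[Theorems 3.8, 5.13, 6.4]{CK} establish. That CK argument is a multi-level induction passing through a tower of higher-order Blanchfield forms, one at each order up to $n$; it cannot be reproduced by one invocation of Theorem~\ref{thm:selfannihil} at the top level. If you want a self-contained derivation rather than a citation, you would have to replicate the whole CK induction with the rational derived series substituted throughout, which is a substantial project and not, as your last sentence suggests, a direct unwinding of definitions.
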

The proof is postponed.
By contrast a CHL knot $J$ (at level $n$) is defined to be one obtained as the result of applying $n$ iterated doubling operators to a knot $K\in \mathcal{F}_{0}$,
$$
J=R^n_{\alpha_n}\circ...\circ R^1_{\alpha_1}(K).
$$
Here, as usual, the $R^i$ are slice knots and the $\alpha_i$ are unknotted circles in $S^3-R_i$ that have zero linking number with $R_i$. Such knots $J$ are known to be ($n$)-solvable (for example apply Proposition~\ref{prop:operatorsact}). As we have seen, certain conditions must be imposed on $\{R_i, \alpha_i,K\}$ to ensure that $J\notin \mathcal{F}_{n.5}$ (for example see Theorem~\ref{thm:robust}) , but these are not relevant to our present discussion, for we assert that in any case:

\begin{prop}\label{prop:CHLnotrobust} If $J$ is concordant to a CHL knot at level $n$ for some $n> 1$ then
$$
J\in \mathcal{F}_{n.5}^{cot}.
$$
\end{prop}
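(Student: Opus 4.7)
The plan is to reduce to showing that every CHL knot $K_n = R^n_{\alpha_n}\circ\dots\circ R^1_{\alpha_1}(K_0)$ itself lies in $\mathcal{F}_{n.5}^{cot}$; the passage to any concordant $J$ is the standard gluing of the concordance's homology cobordism onto the resulting $4$-manifold, as in the proof of Proposition~\ref{prop:functseriesfilters}. The COT series is not weakly functorial in general, but for this specific inclusion $Z\hookrightarrow Z\cup C$, Stallings' theorem gives isomorphisms on $\pi_1(-)/\pi_1(-)^{(k)}_r$ through the relevant level, and these quotients are all that determine both the set $S_{n,cot}$ and the kernel defining $G^{(n+1)}_{cot}$. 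To exhibit an $(n.5,cot)$-solution for $K_n$ I will use the standard $(n)$-solution $Z$ of Corollary~\ref{cor:Z-cap}, obtained by stacking the cobordisms $E_i$ and ribbon disk exteriors $S_i$ above a $0$-solution $V$ for $K_0$ chosen with $\pi_1(V)\cong\Z$ generated by $\mu_0$. In this model the Lagrangian $\{L_j,D_j\}$ of surfaces for $H_2(Z)$ arises entirely from $V$ and so has $\pi_1$ inside $\langle\mu_0\rangle$, and since Lemma~\ref{lem:mui} gives $\mu_0\in G^{(n)}\subset G^{(n)}_r = G^{(n)}_{cot}$ (writing $G=\pi_1(Z)$), the $(n,cot)$-conditions of Definition~\ref{def:Gnsolvable} are automatic. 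The whole proposition thus reduces to the single verification
\begin{equation*}
\mu_0 \in G^{(n+1)}_{cot}.
\end{equation*}

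The key step is to invoke the classical Alexander relation for $R^1$. Identifying $\mu_0 = \alpha_1$ in $\pi_1(Z)$ via Lemma~\ref{lem:mickeyfacts}(4), and using that $\mathcal{A}(K_1)$ is a torsion $\Z[t,t^{-1}]$-module, let $q_1(t) = \sum_j m_j t^j$ be any non-zero Laurent polynomial annihilating the class of $\mu_0$ in $\mathcal{A}(K_1)$ (for instance the Alexander polynomial of $R^1$). Inside $\pi_1(M_{K_1})$ this reads
\begin{equation*}
\prod_j \mu_1^{-j}\mu_0^{m_j}\mu_1^j \;\in\; [\pi_1(M_{K_1})^{(1)},\pi_1(M_{K_1})^{(1)}].
\end{equation*}
The crucial observation is that the inclusion-induced map $\iota_*\colon \pi_1(M_{K_1})\to G$ lands in $G^{(n-1)}$: indeed, $\pi_1(M_{K_1})$ is normally generated by its meridian $\mu_1$, Lemma~\ref{lem:mui} places $\mu_1\in G^{(n-1)}$, and $G^{(n-1)}$ is normal in $G$. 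Hence $\iota_*(\pi_1(M_{K_1})^{(2)}) \subset [G^{(n)},G^{(n)}] \subset [G^{(n)}_r, G^{(n)}_r]$, and pushing the relation forward yields the module equation
\begin{equation*}
q_1(\bar\mu_1)\cdot [\mu_0] = 0 \quad\text{in } M:= G^{(n)}_r/[G^{(n)}_r, G^{(n)}_r],
\end{equation*}
where $\bar\mu_1$ is the image of $\mu_1$ in $G/G^{(n)}_r$ acting on $M$ by conjugation.

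Finally I will check that $q_1(\bar\mu_1)$ is a non-zero element of $\Q[G^{(1)}_r/G^{(n)}_r]$, so that it belongs to the localizing set $S_{n,cot}$ of Example~\ref{ex:commseriesCOT} and $[\mu_0]$ is $S_{n,cot}$-torsion, giving $\mu_0\in G^{(n+1)}_{cot}$. Because $n\ge 2$, $\bar\mu_1$ sits in the torsion-free abelian group $G^{(n-1)}_r/G^{(n)}_r\hookrightarrow G^{(1)}_r/G^{(n)}_r$, so it is either of infinite order (in which case the powers $\bar\mu_1^j$ are $\Q$-linearly independent and $q_1(\bar\mu_1)\ne 0$ follows from $q_1\ne 0$) or trivial, meaning $\mu_1\in G^{(n)}_r$; in the latter case a second application of the normal-generation argument forces $\pi_1(M_{K_1})\subset G^{(n)}_r$ and hence $\mu_0 \in \pi_1(M_{K_1})^{(1)} \subset [G^{(n)}_r,G^{(n)}_r]$ directly, so $[\mu_0]=0$ in $M$ a fortiori. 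Either way $\mu_0\in G^{(n+1)}_{cot}$. The main obstacle, and really the only substantive point in the argument, is the containment $\iota_*(\pi_1(M_{K_1}))\subset G^{(n-1)}$: this is precisely where the iterated doubling construction pays off, transporting the $H_1$-level torsion of the ribbon knot $R^1$ down to the $n$-th derived subgroup of $G$, which is the exact depth at which the COT series performs its localization.
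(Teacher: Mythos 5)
Your argument is, at its core, the same as the paper's: identify $\mu_0$ with $\alpha_1$, push the Alexander--polynomial relation for $R^1$ into the ambient fundamental group, and note that the annihilating element $\Delta(\bar\mu_1)$ lies in $\Q[G^{(1)}/G^{(n)}_r]-\{0\}=S_{n,cot}$ precisely because $n\geq 2$ puts $\mu_1$ in $G^{(1)}$. The one genuine hiccup is the transfer from $\pi_1(Z)$ to $\pi_1(W)=\pi_1(Z\cup C)$. Stallings' theorem (and its rational refinement) controls \emph{nilpotent} quotients, not rational derived series quotients, so ``Stallings gives isomorphisms on $\pi_1(-)/\pi_1(-)^{(k)}_r$'' is not a supported step; and since the COT series is not weakly functorial, you cannot simply invoke $\pi_1(Z)^{(n+1)}_{cot}\subset\pi_1(W)^{(n+1)}_{cot}$. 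The fix is cheap and is what the paper does: set $G=\pi_1(W)$ at the outset and never compute in $\pi_1(Z)$ at all. The only functoriality actually used is that of the ordinary derived series, to transport the conclusions of Lemma~\ref{lem:mui} (namely $\mu_0\in\pi_1(Z)^{(n)}$ and $\mu_1\in\pi_1(Z)^{(n-1)}$) into $\pi_1(W)^{(n)}$ and $\pi_1(W)^{(n-1)}$; the relation $\prod_i\mu_1^{-i}\alpha_1^{m_i}\mu_1^{i}\in\pi_1(M_{K_1})^{(2)}$ lives in $\pi_1(M_{K_1})$ and pushes forward to $\pi_1(W)$ directly. Your separate treatment of the degenerate case $\bar\mu_1=1$ is correct but unnecessary; the paper disposes of it at once because $\Delta$ is the Alexander polynomial of a knot, so $\Delta(1)=\pm 1$, and hence $\Delta(\bar\mu_1)$ is a non-zero rational constant in $S_{n,cot}$ regardless of whether $\bar\mu_1$ is trivial.
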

\begin{cor}\label{cor:COTneqCHL} For $n>1$ no COT knot is concordant to any CHL knot.
\end{cor}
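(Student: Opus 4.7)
The proof plan is to derive the corollary as an immediate contradiction from Propositions~\ref{prop:COTrobust} and~\ref{prop:CHLnotrobust}. Suppose for contradiction that $n > 1$ and that a COT knot $J$ at level $n$ is concordant to a CHL knot $J'$ at level $n$. Applying Proposition~\ref{prop:COTrobust} directly to $J$ yields $J \notin \mathcal{F}_{n.5}^{cot}$, where $\mathcal{F}_{n.5}^{cot}$ refers to the filtration built from the COT commutator series at level $n+1$. On the other hand, since $J$ is concordant to the CHL knot $J'$ at level $n$, the hypothesis of Proposition~\ref{prop:CHLnotrobust} is satisfied for $J$ with the same $n$, so $J \in \mathcal{F}_{n.5}^{cot}$ with respect to the same series. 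These two assertions contradict each other, proving the corollary.

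A brief remark on matching levels seems warranted: the natural interpretation of the corollary is that the COT and CHL knots are at the same level $n$. If instead $J'$ is a CHL knot at some level $m \neq n$, then either $m < n$ (in which case $J \in \mathcal{F}_n$ while a generic CHL knot at level $m$ need not be in $\mathcal{F}_n$, contradicting concordance invariance of the $(n)$-solvable filtration) or $m > n$ (so $J' \in \mathcal{F}_m \subset \mathcal{F}_n$ and one reruns the same dichotomy at level $m+1$, applying Proposition~\ref{prop:CHLnotrobust} for $J$ with the CHL witness at level $m$ and invoking the corresponding level-$m$ version of Proposition~\ref{prop:COTrobust} for $J$ viewed as a COT knot that still sits at depth $n$). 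In every case the same pair of propositions closes the argument.

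The main obstacle lies not in this deduction, which is essentially trivial given the two supporting propositions, but in the propositions themselves. Proposition~\ref{prop:COTrobust} sharpens the known non-$(n.5)$-solvability results for COT knots of~\cite{CT,CK}, requiring one to re-run the Cheeger--Gromov signature-defect argument using a coefficient system $\pi_1(M_J) \to \pi_1(M_J)/\pi_1(M_J)^{(n+1)}_{cot}$ and verify that the obstructing $\rho$-invariant survives the localization at $S_{n,cot} = \mathbb{Q}[G^{(1)}/G^{(n)}_r]-\{0\}$; the delicate point is that this localization could a priori annihilate the very Alexander-module classes on which the argument depends, so one must check non-triviality of the infection curves after inverting $S_{n,cot}$. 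Proposition~\ref{prop:CHLnotrobust} requires the complementary upgrade: to show that an iterated-doubling CHL knot is in fact $(n.5,cot)$-solvable. This should follow by revisiting the $4$-manifold $Z$ of Corollary~\ref{cor:Z-cap} and verifying, via the localization step at level $n+1$ in the COT series, that the Lagrangian-generating surfaces in the bottom-level $0$-solution already lie in $\pi_1(Z)^{(n+1)}_{cot}$ rather than merely in $\pi_1(Z)^{(n+1)}$; here the fact that the longitudes of the infection knots map into a module killed by the divisor set $S_{n,cot}$ is exactly what is needed.
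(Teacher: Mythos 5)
Your central derivation is exactly the paper's: combine Proposition~\ref{prop:COTrobust} ($J \notin \mathcal{F}_{n.5}^{cot}$) with Proposition~\ref{prop:CHLnotrobust} ($J \in \mathcal{F}_{n.5}^{cot}$) to get an immediate contradiction, and your diagnosis that all the real work lives in those two propositions is also correct. One caution on your side remark about mismatched levels: the $m<n$ subcase as you phrase it is not a valid argument --- a CHL knot at level $m<n$ could perfectly well lie in $\mathcal{F}_n$ (it could even be slice), so ``a generic CHL knot at level $m$ need not be in $\mathcal{F}_n$'' is not a contradiction; if mismatched levels are really in play, that subcase would need a different argument (and indeed applying Proposition~\ref{prop:CHLnotrobust} at level $m$ yields only $J\in\mathcal{F}_{m.5}^{cot}$, which does not conflict with $J\notin\mathcal{F}_{n.5}^{cot}$ since the latter concerns a deeper stage), whereas the $m>n$ case is handled simply by $\mathcal{F}_m\subset\mathcal{F}_{n.5}$ together with the already-known fact that COT knots at level $n$ are not $(n.5)$-solvable.
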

\begin{cor}\label{cor:theresmore} For $n\geq 2$ the cokernel of any embedding from Equation~\eqref{eq:bigsubgroup2}
$$
\bigoplus_{\substack{\mathbb{P}_n}}\Z^\infty\subset \frac{\mathcal{F}_n}{\mathcal{F}_{n.5}}
$$
is infinite.
\end{cor}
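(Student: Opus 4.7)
The plan is to exhibit an element of infinite order in $\mathcal{F}_n/\mathcal{F}_{n.5}$ that survives in the cokernel, by exploiting the dichotomy between CHL knots and COT knots recorded in Propositions~\ref{prop:COTrobust} and~\ref{prop:CHLnotrobust}.

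First I would observe that the COT series (Example~\ref{ex:commseriesCOT}) agrees with the rational derived series at every level $k\leq n$ and satisfies $G^{(n+1)}_{cot}\supset G^{(n+1)}_r$, so any $(n.5)$-solution in the sense of Definition~\ref{def:Gnsolvable} is automatically an $(n.5,cot)$-solution. Consequently $\mathcal{F}_{n.5}\subset\mathcal{F}_{n.5}^{cot}$, and $(\mathcal{F}_{n.5}^{cot}\cap\mathcal{F}_n)/\mathcal{F}_{n.5}$ is a well-defined subgroup of $\mathcal{F}_n/\mathcal{F}_{n.5}$. By Proposition~\ref{prop:CHLnotrobust}, every CHL knot at level $n$ (for $n>1$) lies in $\mathcal{F}_{n.5}^{cot}$, and the generators $\mathcal{R}_i(K^{i,j})$ of the subgroup $\bigoplus_{\mathbb{P}_n}\Z^\infty$ produced in Corollary~\ref{cor:bigsubgroup} are CHL knots at level $n$ by construction. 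Hence this whole subgroup is contained in $(\mathcal{F}_{n.5}^{cot}\cap\mathcal{F}_n)/\mathcal{F}_{n.5}$, yielding a surjection from the cokernel of the embedding onto $\mathcal{F}_n/(\mathcal{F}_{n.5}^{cot}\cap\mathcal{F}_n)$, so it suffices to show this latter group is infinite.

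Next I would select a COT knot $J=R_{\{\alpha_1,\ldots,\alpha_k\}}(K_1,\ldots,K_k)\in\mathcal{F}_n$ at level $n$ built as in Equation~\eqref{eq:COT}, with the $\alpha_i\in\pi_1(S^3-R)^{(n)}$ chosen to satisfy the robustness criterion of~\cite[Theorem 5.13]{CK} and with the Arf-zero infection knots $K_i$ chosen so that the real-valued $\rho$-invariant signature defect underlying the proof of Proposition~\ref{prop:COTrobust} is strictly positive. Proposition~\ref{prop:COTrobust} then gives $J\notin\mathcal{F}_{n.5}^{cot}$. To upgrade this to all nonzero multiples $mJ$, I would trace through that proof: if $mJ$ were $(n.5,cot)$-solvable via some $W$, then attaching to $W$ the standard connected-sum cobordism (compare the cobordism $C$ used throughout Section~\ref{sec:examplesnontrivial}) between $M_{mJ}$ and a disjoint union of $|m|$ copies of $\pm M_J$, together with the coefficient system constructed in the proof of Proposition~\ref{prop:COTrobust}, produces a $4$-manifold whose total $\rho$-defect equals $m$ times the single-copy defect by Novikov additivity (property~5 of Proposition~\ref{prop:rhoprops}). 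Since this defect must vanish by the $(n.5,cot)$-solvability hypothesis and the single-copy defect was arranged to be nonzero, $m=0$. Thus $[J]$ has infinite order in $\mathcal{F}_n/(\mathcal{F}_{n.5}^{cot}\cap\mathcal{F}_n)$, embedding a copy of $\Z$ into the cokernel and showing the cokernel is infinite.

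The main obstacle will be this multiplicity step: Proposition~\ref{prop:COTrobust} as stated only rules out a single COT knot being $(n.5,cot)$-solvable, and one must look inside its proof to confirm that the underlying obstruction is a genuine $\R$-valued $L^{(2)}$-signature defect rather than, say, a $\mathbb{Z}/2$-valued invariant. This is, however, the expected behavior of Cheeger--Gromov $\rho$-invariants and is the same mechanism by which the positive-rank results of~\cite{CT,CK,Ki2} are extracted, so once the proof of Proposition~\ref{prop:COTrobust} is unpacked rather than treated as a black box, the argument should go through without further technical difficulty.
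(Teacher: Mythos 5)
The paper states Corollary~\ref{cor:theresmore} without proof, treating it as an immediate consequence of Propositions~\ref{prop:COTrobust} and~\ref{prop:CHLnotrobust}, so there is no written argument to compare against; but your strategy---using $\mathcal{F}_{n.5}^{cot}$ as a detector that sees CHL knots as trivial and COT knots as nontrivial---is certainly the one the authors have in mind. Two points you gloss over are genuine gaps, however.

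First, your step ``\emph{hence this whole subgroup} is contained in $(\mathcal{F}_{n.5}^{cot}\cap\mathcal{F}_n)/\mathcal{F}_{n.5}$'' requires $\mathcal{F}_{n.5}^{cot}$ to be closed under connected sum and inverse (and indeed even for $\mathcal{F}_n/(\mathcal{F}_{n.5}^{cot}\cap\mathcal{F}_n)$ to be a group at all). This is not automatic: Example~\ref{ex:commseriesCOT} explicitly notes the COT series is not functorial, so Proposition~\ref{prop:functseriesfilters} does not apply directly. One must argue (as in the proof of Proposition~\ref{prop:functseriesfilters}) that the specific inclusion-induced maps arising from the connected-sum cobordism $E$ are retracts on $\pi_1$, and that retracts send $S_{n,cot}$ to $S_{n,cot}$ injectively, so that $\pi_1(W_i)^{(n+1)}_{cot}$ is carried into $\pi_1(W)^{(n+1)}_{cot}$. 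That verification is missing.

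Second, the claim that Novikov additivity gives a total $\rho$-defect equal to ``$m$ times the single-copy defect'' is false. In the proof of Proposition~\ref{prop:COTrobust} one has $\rho(M_J,\phi)=\rho(M_R,\phi_R)+\sum_i\epsilon_i\rho_0(K_i)$ with $\epsilon_i\in\{0,1\}$ depending on $\phi$. When you glue the cobordism $C$ to a putative $(n.5,cot)$-solution $W$ for $mJ$ and restrict the coefficient system $\phi:\pi_1(W\cup C)\to\pi_1(W\cup C)/\pi_1(W\cup C)^{(n+1)}_{cot}$ to each of the $|m|$ boundary copies of $\pm M_J$, the resulting representations (and hence the sets $\{\epsilon_i\}$) can differ from copy to copy, so the copies do \emph{not} all contribute the same $\rho$-invariant. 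What actually saves the argument is definiteness of sign: under the hypothesis $\rho_0(K_i)>|\rho(M_R,\phi_R)|$ and with all copies carrying the same orientation (since $m\neq 0$ is fixed), each copy contributes $\rho$ of the same sign, and at least one contributes nonzero by the Cochran--Kim non-vanishing argument applied to the $(n)$-bordism $W\cup C$ (rather than to the original solution $W$). Making this precise requires extending \cite[Theorem 5.13]{CK} from $(n)$-solutions to $(n)$-bordisms in the sense of Definition~\ref{def:npbordism}, which you do not address; asserting that it ``should go through'' once Proposition~\ref{prop:COTrobust} is ``unpacked'' does not close that gap.
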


\begin{proof}[Proof of Proposition~\ref{prop:COTrobust}] Suppose $J$ is a COT knot and suppose $J\in \mathcal{F}_{n.5}^{cot}$.  Then $M_J$ bounds a $4$-manifold $W$ as in Definition~\ref{def:Gnsolvable}. We shall arrive at a contradiction. Letting $G=\pi_1(W)$ and

$$
\phi:\pi_1(M_J)\overset{j_*}{\to}G\to G/\gnp_{cot},
$$
we can apply Theorem~\ref{thm:generalsignaturesobstruct} to conclude that
$$
\rho(M_J,\phi)=0.
$$
But it follows from the additivity results of ~\cite[Prop.3.2]{COT2}\cite[Lemma 5.12]{CK} that
$$
\rho(M_J,\phi)=\rho(M_R,\phi_R)+\sum_{i=1}^k\epsilon_i\rho_0(K_i)
$$
where $\epsilon_i=0$ or $1$ according as $\phi(\alpha_i)=e$ or not. If any $\epsilon_i=1$, this is clearly a contradiction since, by choice, every $\rho_0(K_i)>|\rho(M_R,\phi_R)|$. Since $W$ is an ($n$)-solution, by ~\cite[Theorem 5.13]{CK}, by choice, for at least one $i$, $j_*(\alpha_i)\notin \gnp_r$. But in fact, the proof proves more. It shows the stronger fact that
$$
j_*(\alpha_i)\notin G^{(n+1)}_{cot},
$$
which implies $\phi(\alpha_i)\neq e$ and so finishes our proof by contradiction. That the proof shows the stronger fact is seen as follows. One first observes that
$$
\frac{\gn_r}{[\gn_r ,\gn_r ]}\cong H_1(W;\Z[G/\gn_r]),
$$
so
$$
\frac{\gn_r}{[\gn_r ,\gn_r ]}S_{n,cot}^{-1}\cong H_1(W;\Z[G/\gn_r]S_{n,cot}^{-1}).
$$
Now observe that, by definition, $G_{cot}^{(n+1)}$ is the kernel of the map in Equation~(\ref{eq:defCOTseries}), so $G^{(n)}_r/G_{cot}^{(n+1)}$ embeds in the codomain of the map in Equation~(\ref{eq:defCOTseries}). Combining these two facts we have that
$$
G^{(n)}_r/G_{cot}^{(n+1)}\hookrightarrow H_1(W;\Z[G/\gn_r]S_{n,cot}^{-1}).
$$
The ring $\Z[G/\gn_r]S_{n,cot}^{-1}$ is the twisted Laurent polynomial ring and (noncommutative) PID $\mathbb{K}_n[t,t^{-1}]$ that was used by Cochran-Orr-Teichner  and others. Therefore we see that to establish that $j_*(\alpha_i)\notin G^{(n+1)}_{cot}$ it is sufficient (and necessary) to show that $j_*(\alpha_i)\neq 0$ in $H_1(W;\mathbb{K}_n[t,t^{-1}])$. But in fact this is precisely what was shown by Cochran-Kim in the proof of their Theorem 5.13 (see ~\cite[Theorems 3.8, 6.4 and page 1440]{CK}). Thus we have a contradiction.
\end{proof}

\begin{proof}[Proof of Proposition~\ref{prop:CHLnotrobust}]  Suppose $J\in \mathcal{F}_{n}$ is concordant to the CHL knot $K=R^n_{\alpha_n}\circ...\circ R^1_{\alpha_1}(K_0)$ for $K_0\in  \mathcal{F}_{0}$ for some $n>1$. Then $M_K$ bounds the special $4$-manifold $Z$ as constructed in the first paragraph of the proof of Theorem~\ref{thm:qliesinpnplusone}. Let $K_1=R^1_{\alpha_1}(K_0)$,..., $K_{i}=R^i_{\alpha_1}(K_{i-1})$ and $K_n=K$.

Since $J$ is concordant to $K$,  $M_J$ is homology cobordant to $M_K$ via a $4$-manifold $C$.  Let $W=Z\cup C$ so that $\partial W=M_J$.  We claim that $J\in \mathcal{F}_{n+1}^{cot}$ via $W$, and hence $J\in \mathcal{F}_{n.5}^{cot}$. The proof of this fact is very similar to the proof of Proposition~\ref{prop:operatorsact} and to the proof of Theorem~\ref{thm:qliesinpnplusone}.

First, as in those  proofs, a Mayer-Vietoris sequence implies that $H_2(W)\cong H_2(V)$ where $K_{0}\in \mathcal{F}_{0}$ via $V$ and $\pi_1(V)\cong\Z$. Thus $H_2(V)$ has a basis of connected compact oriented surfaces, $\{L_j,D_j|1\leq j\leq r_i$, satisfying the conditions of Definition~\ref{def:Gnsolvable}.
We claim that
\begin{equation}\label{eq:natural2}
\pi_1(V)\subset \pi_1(W)^{(n+1)}_{cot}.
\end{equation}
Assuming this for the moment it then would follow from Proposition~\ref{prop:commseriesprops} that
$$
\pi_1(L_j)\subset \pi_1(V)\subset \pi_1(W)^{(n+1)}_{cot},
$$
and similarly for $\pi_1(D_j)$. This would then complete the verification that $J\in\mathcal{F}^{(n+1)}_{cot}$ via $W$.

In the rest of the proof we establish claim ~(\ref{eq:natural2}). Since $\mu_0$ generates $\pi_1(V)$,  we need only show that $\mu_0\in \pi_1(W)^{(n+1)}_{cot}$. Let  $G=\pi_1(W)$. First we show
$$
\mu_0\in G^{(n)}\equiv G^{(n)}_{cot}.
$$
This was already established in Lemma~\ref{lem:mui}. Let $\mu_1$ denote the meridian of $K_1$ in $\pi_1(M_{K_1})$. Then Lemma~\ref{lem:mui} also shows that $\mu_1\in G^{(n-1)}$.
Now we seek to show that $\mu_0\in G^{(n+1)}_{cot}$. From Equation~\ref{eq:defCOTseries}, we see that we need to establish that $\mu_0$ represents $S_{n,cot}$-torsion in the module
$$
\frac{\gn_r}{[\gn_r ,\gn_r ]}
$$
where  $S_{n,cot}=\Z[G^{(1)}/\gn_r]-\{0\}$.  Since $\mu_0$ is identified to $\alpha_1\subset M_{K_1}$, it suffices to show that $\alpha_1$ represents $S_{n,cot}$-torsion. Let $\Delta(t)$ be the Alexander polynomial of $K_1$ (which is the same as the Alexander polynomial of $R^1$). Then $\Delta(t)$ annihilates $\alpha_1$ in the Alexander module of $K_1$. This can be interpreted in term of the fundamental group of $M_{K_1}$ as follows  ~\cite[Section 7D]{R}. If $\Delta(t)=\sum m_it^i$ then we have
$$
\prod \mu_1^{-i}\alpha_1^{m_i}\mu_1^{i}\in \pi_1(M_{K_1})^{(2)}\subset [\gn_r,\gn_r]
$$
since we have shown above that $\pi_1(M_{K_1})=\langle\mu_1\rangle\subset G^{(n-1)}\subset G^{(n-1)}_r$. Therefore $\Delta(\mu_1)$ annihilates $\alpha_1$ in the module
$$
\frac{\gn_r}{[\gn_r ,\gn_r ]}.
$$
But since $\mu_1\in G^{(n-1)}$ and $n\geq 2$, $\mu_1\in G^{(1)}$. Thus $\Delta(\mu_1)\in \Z[G^{(1)}/\gn_r]$. Moreover $\Delta(\mu_1)\neq 0$ since $\Delta(1)=\pm 1$ ($\Delta$ is the Alexander polynomial of a knot in $S^3$). Hence $\Delta(\mu_1)\in S_{n,cot}$.

This concludes the verification of ~(\ref{eq:natural2}).
\end{proof}

\bibliographystyle{plain}
\bibliography{mybib7mathscinet,mybib5mathscinet}
\end{document}